\documentclass[juq]{siamonline250211}

\usepackage{cmap}
\usepackage[T1]{fontenc}
\usepackage[utf8]{inputenc}
\usepackage{lmodern}
\usepackage{microtype}

\usepackage{amsfonts,amssymb,mathtools}
\usepackage{algpseudocode}
\mathtoolsset{centercolon}

\newcommand{\nc}{\normalcolor}

\definecolor{dkblue}{rgb}{0,0,.5}
\definecolor{rust}{rgb}{0.5,0.1,0.1}
\hypersetup{
  colorlinks=true,
  linkcolor=dkblue,
  citecolor=rust,
  urlcolor=rust
}

\newsiamthm{assumption}{Assumption}
\newsiamremark{remark}{Remark}

\theoremstyle{nonumberplain}
\theoremheaderfont{\normalfont\itshape}
\theorembodyfont{\normalfont}
\theoremseparator{.}
\theoremsymbol{}

\newcommand{\R}{\mathbb R}
\newcommand{\Splus}{\mathbb S_+}
\newcommand{\Spp}{\mathbb S_{++}}
\newcommand{\one}{\mathbf 1}

\newcommand{\TheTitle}{Time-Uniform Accuracy of Ensemble Kalman Filters With  Localization}
\newcommand{\TheAuthors}{X. Cheng, D. Sanz-Alonso, and N. Waniorek}

\headers{Time-Uniform Accuracy of Ensemble Kalman Filters}{\TheAuthors}

\title{\TheTitle}
\author{
  Xiaoou Cheng\thanks{University of Chicago, Chicago, Illinois, USA.}
  \and
  Daniel Sanz-Alonso\footnotemark[1]
  \and
  Nathan Waniorek\thanks{New York University, New York, New York, USA.}
}

\ifpdf
\hypersetup{
  pdftitle={\TheTitle},
  pdfauthor={\TheAuthors}
}
\fi

\begin{document}

\maketitle

\begin{abstract}
This paper establishes time-uniform accuracy guarantees for deterministic square-root ensemble Kalman filters in linear--Gaussian state-space models under perfect-model dynamics. For hyperbolic, detectable systems, we prove that the ensemble means and covariances approximate their Kalman filter counterparts uniformly in time, with high probability and without inflation or resampling. The required ensemble size depends on the effective rank of the initial covariance and the dimension of the unstable subspace, rather than on the ambient state dimension. We also analyze a localized square-root ensemble Kalman filter for weakly coupled spatial systems. We show that the stabilizing Kalman covariance inherits spatial decay from the dynamics and derive accuracy bounds that separate sampling error from localization bias. For the localized filter, the required ensemble size depends on the local intrinsic dimension and only logarithmically on the number of spatial blocks, while the localization bias scales with the interaction strength. Our analysis combines stability and forgetting for possibly singular Riccati recursions, nonasymptotic covariance concentration, and perturbation estimates for localized covariance dynamics.
\end{abstract}

\begin{keywords}
ensemble Kalman filter, deterministic square-root filter, localization,
Riccati recursion, effective dimension, time-uniform accuracy
\end{keywords}

\begin{AMS}
62M20, 93E11, 65C35, 60G35
\end{AMS}

\section{Introduction}
\label{sec:introduction}
 Filtering aims to characterize the conditional distribution of an evolving state given noisy and incomplete observations \cite{Jazwinski1970,AschBocquetNodet2016,ReichCotter2015,SanzAlonsoStuartTaeb2023}. This conditional law, known as the filtering distribution, describes both the state estimate and its uncertainty. In linear-Gaussian models, the filtering distribution is Gaussian, and the Kalman filter computes it explicitly by recursively updating its conditional mean and covariance \cite{Kalman1960}. In high-dimensional problems, however, propagating these
quantities directly can be computationally prohibitive. Ensemble Kalman
filters (EnKFs) address this difficulty by representing the filtering
distribution through an ensemble of state realizations and estimating its mean and covariance empirically; see, e.g.
\cite{Evensen1994,EvensenVanLeeuwen1996,HoutekamerMitchell1998,Evensen2009}.

The practical usefulness of an EnKF often depends on whether a moderate
ensemble can remain accurate over long time horizons. This question is not
settled by finite-time convergence as the ensemble size tends to infinity:
sampling errors may be repeatedly propagated by unstable dynamics and may
accumulate over time. The difficulty is especially pronounced when the
ensemble size is much smaller than the state dimension, a setting that often arises in geophysical applications where the state dimension can be of order $10^9$ while the ensemble size is of order $10^2$. 
In spatially extended
systems, localization seeks to overcome this limitation by replacing global
covariance estimation with a collection of local estimation problems. This
reduces sampling error, but introduces a bias by neglecting long-range
dependence.

This paper gives time-uniform accuracy guarantees for square-root EnKFs, with
and without localization, in linear--Gaussian models with perfect-model
dynamics. Without localization, we show that detectability prevents sampling
errors from accumulating along dynamically persistent directions. The
required ensemble size is governed by the effective rank of the initial
covariance and the dimension of the unstable subspace, rather than by the
ambient state dimension. For weakly coupled spatial systems, we show that
localization replaces these global dimensions by local dimensions,
at the cost of a bias controlled by the interaction strength. 

\subsection{Contributions and overview}
Our main contributions are as follows.

\paragraph{Time-uniform accuracy without localization}
We first consider a hyperbolic, detectable linear system and a deterministic
square-root EnKF initialized by independent samples from the initial Gaussian
law. We prove high-probability bounds, uniform over all times, for the errors
in the ensemble means and covariances relative to their Kalman filter
counterparts. The required ensemble size is of order
\[
  r_{\mathrm{eff}}(\widehat P_0)+r_u+\log(1/\delta),
\]
where \(r_{\mathrm{eff}}(\widehat P_0)\) is the effective rank of the initial
covariance, \(r_u\) is the dimension of the unstable subspace, and
\(1-\delta\) is the confidence level. In particular, the ensemble size need
not exceed either the state or the observation dimension. The results require neither covariance
inflation nor resampling.

\paragraph{Time-uniform accuracy with localization}
We next consider spatial systems formed by weakly coupled blocks. The
interactions between blocks have small amplitude and decay polynomially with
distance, while the individual blocks may contain unstable dynamics. We first
show that the stabilizing Kalman covariance and gain inherit the spatial decay
of the dynamics. This provides a structural justification for a localized
square-root EnKF that performs the analysis blockwise while retaining the full
coupled dynamics in the forecast.

For this localized filter, we derive time-uniform bounds for the local
covariance blocks and ensemble means. The covariance error separates into a
sampling term and an \(O(\varepsilon)\) localization bias, where
\(\varepsilon\) measures the strength of the cross-block interactions. The
required ensemble size is of order
\[
  r_{\mathrm{eff},\mathrm{loc}}+\log(J/\delta),
\]
where \(r_{\mathrm{eff},\mathrm{loc}}\) is the largest blockwise effective rank
and \(J\) is the number of spatial blocks. Thus localization can reduce a
sampling requirement that grows proportionally to \(J\) to one that grows
only logarithmically in \(J\).

\paragraph{Proof strategy}
Our analysis exploits the exact correspondence between square-root covariance
updates and the Kalman Riccati recursion. In the nonlocalized setting, we
combine effective-rank covariance concentration with stability and forgetting
estimates for Riccati recursions that may remain singular along stable
directions. In the localized setting, we supplement these arguments with
block-decay estimates and perturbation bounds comparing the coupled Riccati
flow with its block-diagonal counterpart.

\subsection{Related work} 

\paragraph{Finite-time convergence and time-uniform state estimation} Early analyses established convergence of the EnKF in the large-ensemble limit over finite time horizons. In linear--Gaussian models, \(L^p\) convergence to the Kalman filter was proved for perfect-model dynamics in \cite{MandelCobbBeezley2011} and for dynamics with Gaussian noise in \cite{LeGlandMonbetTran2011}; analogous results hold for square-root \cite{KwiatkowskiMandel2015} and continuous-time \cite{LangeStannat2021ContinuousSquareRoot} filters. Nonasymptotic,
effective-dimension bounds over multiple assimilation cycles were established
in \cite{AlGhattasBaoSanzAlonso2024} for an EnKF with an additional
resampling step that breaks the dependence among ensemble members. Subsequent work studied time-uniform stability and state-estimation accuracy. For potentially nonlinear dynamics, \cite{KellyLawStuart2014} proves long-time stability with additive covariance inflation and state-estimation accuracy under small observation noise and complete observations. Related continuous-time results with multiplicative inflation appear in \cite{deWiljesReichStannat2018,TakedaSakajo2024}, again under complete observations. With partial observations, stability and ergodicity were established in \cite{TongMajdaKelly2016,TongMajdaKelly2016Inflation}, and time-uniform state-estimation accuracy was proved for linear--Gaussian models in \cite{MajdaTong2018}, extended to nonlinear dynamics in \cite{SanzAlonsoWaniorek2025}, and obtained in continuous time for the two-dimensional Navier--Stokes equations in \cite{biswas2024unified}. These results require covariance inflation and do not establish convergence of the EnKF to  the filtering  distribution.  
\paragraph{Time-uniform convergence} For the ensemble Kalman--Bucy filter, time-uniform convergence to the Kalman--Bucy filter was first proved under stable  linear  signal dynamics in \cite{DelMoralTugaut2018} 
 and subsequently for possibly unstable linear
signal dynamics under broader detectability and observability conditions in
\cite{bishop2019stability,bishop2020perturbation,BishopDelMoral2023}.
Fewer results are available in discrete time, where time-varying Riccati recursions present additional difficulties; see \cite{del2022note}. Recently, \cite{DelMoralNasriRemillard2026} established time-uniform convergence under detectability and controllability, with an ensemble-size requirement exceeding the state dimension. Our results instead require an ensemble size governed by effective dimensions that may be much smaller than the ambient state dimension. \paragraph{Localization} Localization is essential to operational EnKF implementations \cite{houtekamer2005ensemble,greybush2011balance} and generally takes one of two forms. \emph{Covariance tapering} replaces the empirical covariance by its Schur product with a compactly supported correlation function. Introduced in \cite{houtekamer2001sequential,hamill2001distance}, this approach was adapted to square-root filters in \cite{sakov2008deterministic,BergemannReich2010}. A common choice is the Gaspari--Cohn function \cite{gaspari1999construction}, which can empirically outperform more sophisticated tapering schemes \cite{gilpin2025numerical}. \emph{Domain localization} instead performs the analysis through local, possibly overlapping, updates. It originated in \cite{HoutekamerMitchell1998,OttEtAl2004} and was implemented efficiently in the Local Ensemble Transform Kalman Filter \cite{HuntKostelichSzunyogh2007}. In this paper, we study a domain-localized square-root EnKF. 

Rigorous localization theory remains limited. For the ensemble Kalman--Bucy filter, \cite{deWiljesTong2020} proves finite-time well-posedness, stability, and state-estimation accuracy for tapering-based localization under short-range nonlinear interactions and complete observations. In discrete time, \cite{MajdaTong2018} establishes time-uniform state-estimation accuracy for a tapering-based EnKF under banded dynamics, with ensemble size depending only logarithmically on the state dimension, while assuming stability of the localized forecast covariances. Neither work studies convergence of a localized EnKF to the Kalman filter or domain localization. Nonasymptotic comparisons between localized and mean-field EnKFs were obtained in \cite{AlGhattasChenSanzAlonsoWaniorek2025,AlGhattasSanzAlonso2024}, but only for a single time step. \paragraph{Riccati theory and covariance estimation} Our analysis builds on classical Kalman and Riccati stability theory \cite{AndersonMoore1979}, including contraction of the Riccati map on the positive-definite cone \cite{Bougerol1993} and forgetting of initial conditions \cite{OconePardoux1996}. Its second main ingredient is dimension-independent covariance concentration. Sample-covariance concentration in terms of spectral decay was characterized in \cite{KoltchinskiiLounici2017}; related tapering and thresholding estimators exploit covariance sparsity \cite{BickelLevina2008,cai2010optimal}. Extensions to infinite-dimensional settings appear in \cite{AlGhattasChenSanzAlonsoWaniorek2024, AlGhattasChenSanzAlonsoWaniorek2025}. \paragraph{Beyond linear--Gaussian models} For linear dynamics and observations with a small nonlinear perturbation, \cite{CalvelloEtAl2026} proves convergence of the EnKF to the true filtering distributions up to a bias proportional to the nonlinearities. Under general Lipschitz conditions, \cite{JorgensenBaptistaHoffmannMarzouk2026} proves convergence of the more general transport ensemble filter of \cite{SpantiniBaptistaMarzouk2022}. Neither result is uniform in time. For detectable linear--Gaussian dynamics with non-Gaussian initial conditions, \cite{hoffmann2026large} proves geometric-in-time convergence of the mean-field EnKF to the true filtering distribution. We refer to \cite{CalvelloReichStuart2025} for a survey of mean-field EnKF theory.

\subsection{Outline}
Sections~\ref{sec:problem-setting} and
\ref{sec:localized-model-algorithm} present the nonlocalized and localized
filters in parallel: each section introduces the model and assumptions, states
the algorithm, and gives the corresponding time-uniform accuracy results.
Section~\ref{sec:proofs} contains the proofs, with technical arguments
deferred to the supplementary material.

\subsection{Notation}
 $\mathbb S^{d}_+$ denotes the set of $d\times d$ symmetric, positive-semidefinite matrices, and $\mathbb S^{d}_{++}$ denotes the set of $d\times d$ symmetric, positive-definite matrices. 
 For $A \in \mathbb S^{d}_+$, its principal square root is denoted by $A^{1/2}$.  
 For symmetric matrices $A,B\in \mathbb R^{d\times d}$, we write $A\succ B$ if $A-B$ is positive definite, and $A \succeq B$ if $A-B$ is postive semidefinite. For a vector $x\in \R^d$, we denote by $\|x\|$ the Euclidean vector norm. For a matrix $A\in \mathbb{R}^{d_1\times d_2},$ we abuse notation slightly and denote by $\|A\|$ the Euclidean operator norm.

\section{The ensemble Kalman filter}
\label{sec:problem-setting}

\subsection{Model, filtering task, and assumptions}
\label{subsec:setting-assumptions}
We consider the following discrete-time linear--Gaussian state--space model:
\begin{align}
  u_0
  &\sim \mathcal N(\widehat m_0,\widehat P_0),
  &&\text{(initial state law)},
  \label{eq:initial-state-law}
  \\
  u_{t+1}
  &=Au_t,
  \qquad t=0, 1, \ldots,
  &&\text{(dynamics model)},
  \label{eq:dynamics-model}
  \\
  y_t
  &=Hu_t+\eta_t,
  \qquad
  \eta_t\stackrel{\mathrm{i.i.d.}}{\sim}\mathcal N(0,R),
  \quad t=0,1,\ldots,
  &&\text{(observation model)}.
  \label{eq:observation-model}
\end{align}

We refer to $u_t\in\R^{d_u}$ as the hidden state and to
$y_t\in\R^{d_y}$ as the observation at time $t$. 
The Gaussian initial law is specified by its mean
$\widehat m_0\in\R^{d_u}$ and covariance
$\widehat P_0\in\mathbb S^{d_u}_+$. The dynamics are deterministic. 
The state--transition matrix $A\in\R^{d_u\times d_u}$, observation operator $H\in\R^{d_y\times d_u}$, and observation-noise covariance $R\in\mathbb S^{d_y}_{++}$ are given.
The initial state $u_0$ and observation noises $(\eta_t)_{t=0}^\infty$ are mutually independent.

For $t \ge 0$, let $\mathcal Y_t:=\sigma(y_0,\ldots,y_t)$, and let $\mathcal Y_{-1}$ be
the trivial sigma-algebra. 
Filtering seeks to determine, sequentially in time, the conditional distribution of the state given the observations collected so far. At time $t$, $\mathcal L(u_t\mid\mathcal Y_{t-1})$ is the \emph{forecast} distribution, whereas $\mathcal L(u_t\mid\mathcal Y_t)$ is the filtering, or \emph{analysis}, distribution obtained after incorporating $y_t$.
 Under the linear-Gaussian model, both distributions are Gaussian:
\begin{equation}
  \mathcal L(u_t\mid\mathcal Y_{t-1})
  =\mathcal N(\widehat m_t,\widehat P_t),
  \qquad
  \mathcal L(u_t\mid\mathcal Y_t)
  =\mathcal N(m_t,P_t).
  \label{eq:forecast-filtering-laws}
\end{equation}
Thus $(\widehat m_t,\widehat P_t)$ are the forecast mean and covariance, while $(m_t,P_t)$ are the analysis counterparts. The exact moments are computed by the Kalman filter. Our goal is to determine whether the deterministic square-root EnKF introduced in Section~\ref{subsec:EnKF} accurately approximates these moments uniformly in time with a moderate ensemble size. 

Time-uniform control requires that no
dynamically persistent state direction remain invisible to
the observations. The control-theoretic notion of detectability encodes this requirement: it permits unobserved directions, but only if they
decay asymptotically. The pair $(A,H)$ is \emph{detectable} if, for all $v \in \R^{d_u},$
\[
  HA^tv=0\ \text{for all }t\ge0
  \quad\Longrightarrow\quad
  A^tv\longrightarrow0.
\]

For a hyperbolic real matrix $A$, let $\mathsf U$ and $\mathsf S$ denote its
real unstable and stable spectral subspaces, generated by the generalized
eigenspaces associated with eigenvalues of modulus greater and less than one,
respectively. Then
\[
  \R^{d_u}=\mathsf U\oplus\mathsf S.
\]
We denote by $r_u:=\dim(\mathsf U)$ the dimension of the unstable subspace. 

\begin{assumption}[Dynamics and initialization]
\label{ass:dynamics-initialization}
The matrix $A$ is hyperbolic,  i.e., 
\begin{equation}
  \operatorname{spec}(A)\cap\{z\in\mathbb C:|z|=1\}=\varnothing,
  \label{eq:hyperbolicity}
\end{equation}
and the pair $(A,H)$ is detectable. We assume that the unstable spectral
subspace is nontrivial, so $r_u\geq1$. In fixed coordinates adapted to the
spectral splitting,
\begin{equation}
  A=\begin{pmatrix}A_u&0\\0&A_s\end{pmatrix},
  \qquad
  H=\begin{pmatrix}H_u&H_s\end{pmatrix},
  \label{eq:block-A-H}
\end{equation}
with spectral radii satisfying
\[
  \rho(A_s)<1,
  \qquad
  \rho(A_u^{-1})<1.
\]
The initial covariance is nondegenerate on the unstable subspace: $(\widehat P_0)_{uu}\succ0.$ In particular, $\widehat P_0\neq0$.
\end{assumption}

Hyperbolicity excludes neutral modes and gives a clean stable--unstable
splitting. In this setting, detectability of $(A,H)$ is equivalent to
observability of the unstable restriction $(A_u,H_u)$; see \cite[Chapter~4, Problem~4.1]{AndersonMoore1979}. Equivalently, there are
an integer $L\ge1$ and a constant $c_o>0$ such that
\begin{equation}
\label{eq:observability-unstable}
  \sum_{\ell=0}^{L-1}
  \|R^{-1/2}H_uA_u^{\ell}v\|^2
  \geq c_o\|v\|^2,
  \qquad v\in\mathsf U.
\end{equation}
Thus the observations  can  resolve the unstable directions, while stable
unobserved directions decay on their own. The condition
$(\widehat P_0)_{uu}\succ0$ ensures that every unstable direction is represented
at initialization, but $\widehat P_0$ need not have full rank. The assumption
$r_u\ge1$ focuses the results
on the nontrivial case with at least one unstable direction. All estimates
obtained in coordinates adapted to the stable--unstable decomposition transfer to the original Euclidean norm
by norm equivalence.

\subsection{Deterministic square-root EnKF}\label{subsec:EnKF}

To describe both the exact Kalman filter and its ensemble approximation, for
$P\in\Splus^{d_u}$ define 
\begin{align}
  \mathsf K(P)
  &:=PH^\top\bigl(HPH^\top+R\bigr)^{-1},
  &&\text{(gain map)},
  \label{eq:gain-map}
  \\
  \Psi(P)
  &:=P-\mathsf K(P)HP,
  &&\text{(covariance-analysis map)},
  \label{eq:analysis-map}
  \\
  \Phi(P)
  &:=A\Psi(P)A^\top,
  &&\text{(forecast-to-forecast Riccati map)}.
  \label{eq:riccati-map}
\end{align}
Since $R\in\Spp^{d_y}$, these maps are well defined for every
$P\in\Splus^{d_u}$. 
The exact Kalman filter propagates the forecast and analysis moments as
follows. Given the exact forecast mean and covariance
$(\widehat m_t,\widehat P_t)$, the analysis step is
\begin{align}
m_t
  &= \widehat m_t
     + K_t\bigl(y_t-H\widehat m_t\bigr), \qquad K_t
  = \mathsf K(\widehat P_t), \label{eq:KFmeananalysis} \\
P_t
  &= \Psi(\widehat P_t). \label{eq:KFcovanalysis}
\end{align}
The subsequent forecast step is
\begin{equation}\label{eq:KFforecast}
\widehat m_{t+1}
  = A m_t, \qquad 
\widehat P_{t+1}
  = A P_t A^\top
   = \Phi(\widehat P_t).
\end{equation}

To approximate these exact Kalman recursions, fix an ensemble size
$N\ge 2$ and initialize the forecast ensemble by independent samples
from the initial Gaussian law:
\begin{equation}\label{eq:gaussian-initial-ensemble}
 \widehat u_0^{(1)},\ldots,\widehat u_0^{(N)}
\stackrel{\mathrm{i.i.d.}}{\sim}
\mathcal N(\widehat m_0,\widehat P_0).   
\end{equation}
Thus the exact and ensemble filters use the same prescribed initial
law, and any discrepancy between their initial moments is due solely
to finite-ensemble sampling. For the mean-accuracy result, we
additionally assume that the initial ensemble is independent of $u_0$
and the observation noises $(\eta_t)_{t\ge0}$.

The deterministic square-root EnKF mirrors the analysis--forecast cycle
of the exact Kalman filter, replacing the exact forecast mean and
covariance by empirical moments of an ensemble. The analysis mean is
updated using the corresponding empirical Kalman gain, while the
ensemble anomalies are transformed deterministically so that their
covariance follows the Kalman analysis map.

A particular deterministic square-root EnKF is the Ensemble Transform Kalman filter (ETKF)~\cite{BishopEtAl2001}. At forecast time $t$, let
$\widehat u_t^{(1)},\ldots,\widehat u_t^{(N)}\in\mathbb R^{d_u}$
denote the forecast ensemble. Its empirical mean, anomaly matrix, and
covariance are
\begin{equation}
\widehat m_t^N
  := \frac{1}{N}\sum_{n=1}^N \widehat u_t^{(n)},
\qquad
\widehat X_t
  := \bigl[
      \widehat u_t^{(1)}-\widehat m_t^N,\ldots,
      \widehat u_t^{(N)}-\widehat m_t^N
     \bigr],
\qquad
\widehat P_t^N
  := \frac{1}{N-1}\widehat X_t\widehat X_t^\top .
\label{eq:ensemble-forecast-statistics}
\end{equation}
 Here  
$(\widehat m_t^N,\widehat P_t^N)$ plays the role of the exact
forecast moments $(\widehat m_t,\widehat P_t)$. If
$\mathbf 1\in\mathbb R^N$ denotes the all-ones vector, then
$\widehat X_t\mathbf 1=0$, and
$\operatorname{rank}(\widehat P_t^N)\le N-1$.

Set $Y_t:=H\widehat X_t$. Replacing $(\widehat m_t,\widehat P_t)$ by
$(\widehat m_t^N,\widehat P_t^N)$ in the analysis mean update \eqref{eq:KFmeananalysis} gives
\[
m_t^N
  := \widehat m_t^N
     +K_t^N\bigl(y_t-H\widehat m_t^N\bigr),
\qquad
K_t^N
  := \mathsf K(\widehat P_t^N)
   = \widehat X_tY_t^\top
     \bigl(Y_tY_t^\top+(N-1)R\bigr)^{-1}.
\]

To perform the corresponding covariance update \eqref{eq:KFcovanalysis} without perturbing the
observations, define the symmetric ensemble transform
\begin{equation}
\Omega_t
  := I_N+\frac{1}{N-1}Y_t^\top R^{-1}Y_t,
\qquad
T_t:=\Omega_t^{-1/2},
\label{eq:symmetric-transform}
\end{equation}
and set
\[
X_t:=\widehat X_tT_t,
\qquad
P_t^N
  :=\frac{1}{N-1}X_tX_t^\top
    \overset{(\star)}{=} \Psi(\widehat P_t^N).
\]
Proposition~\ref{prop:exact-square-root-identities} verifies that the ensemble transform satisfies the
identity $(\star)$  and shows that the transform preserves anomaly
centering, \(X_t\mathbf 1=0\). 
Thus the transform updates the empirical covariance according to the
same analysis map as in \eqref{eq:KFcovanalysis}.

Let $e_n\in\mathbb R^N$ be the $n$-th coordinate vector. The analyzed
ensemble members are
\[
u_t^{(n)}:=m_t^N+X_te_n,
\qquad n=1,\ldots,N,
\]
and are propagated through the full dynamics:
\[
\widehat u_{t+1}^{(n)}:=Au_t^{(n)}.
\]
Consequently,
\[
\widehat m_{t+1}^N=Am_t^N,
\qquad
\widehat P_{t+1}^N
   =AP_t^NA^\top
   =\Phi(\widehat P_t^N),
\]
in direct analogy with the exact Kalman forecast in \eqref{eq:KFforecast}. Algorithm~\ref{alg:deterministic-square-root-enkf} summarizes the deterministic square-root EnKF. 

\begin{algorithm}[H]
\caption{Deterministic square-root EnKF}
\label{alg:deterministic-square-root-enkf}
\begin{algorithmic}[1]
\Require Ensemble size $N\geq2$ and initial forecast ensemble
$\widehat u_0^{(1)},\ldots,\widehat u_0^{(N)}$ sampled as in
\eqref{eq:gaussian-initial-ensemble}.
\For{$t=0,1,\ldots$}
  \State Compute $\widehat m_t^N$ and $\widehat X_t$ from the forecast ensemble
  using \eqref{eq:ensemble-forecast-statistics}, and set $Y_t=H\widehat X_t$.
  \State Set
  $K_t^N=\widehat X_tY_t^\top(Y_tY_t^\top+(N-1)R)^{-1}$ and
  $m_t^N=\widehat m_t^N+K_t^N(y_t-H\widehat m_t^N)$.
  \State Set
  $\Omega_t=I_N+(N-1)^{-1}Y_t^\top R^{-1}Y_t$,
  $T_t=\Omega_t^{-1/2}$, and $X_t=\widehat X_tT_t$.
  \State Form $u_t^{(n)}=m_t^N+X_te_n$ for $n=1,\ldots,N$.
  \State Forecast $\widehat u_{t+1}^{(n)}=Au_t^{(n)}$ for
  $n=1,\ldots,N$.
\EndFor
\end{algorithmic}
\end{algorithm}

The matrices $\widehat P_t^N$ and $P_t^N$ are the ensemble
approximations to the exact forecast and analysis covariances, and
their accuracy is a principal object of study below. Computationally,
however, Algorithm~\ref{alg:deterministic-square-root-enkf} evaluates the gain and transform directly from
$\widehat X_t$ and $Y_t$; it does not form or invert an empirical state
covariance.

\begin{remark}[Nonuniqueness]
The symmetric transform is a canonical choice, but it is not unique. If $U_t$
is orthogonal and $U_t\one=\one$, then
$X_t=\widehat X_tT_tU_t$ is also centered and has covariance
$\Psi(\widehat P_t^N)$; see
\cite{TippettEtAl2003,LivingsEtAl2008}.
Consequently, the results below apply to any mean-preserving deterministic
square-root update that produces the analysis covariance
$\Psi(\widehat P_t^N)$; the symmetric transform is used only to specify a
concrete implementation.
\end{remark}

\subsection{Time-uniform accuracy results}
\label{sec:main-results}

The sampling requirements involve two intrinsic dimensions. The effective rank \cite{KoltchinskiiLounici2017}
of the nonzero initial covariance is
\begin{equation}
  r_{\mathrm{eff}}(\widehat P_0)
  :=\frac{\operatorname{Tr}(\widehat P_0)}{\|\widehat P_0\|},
  \label{eq:effective-rank}
\end{equation}
while $r_u$ is the dimension of the unstable spectral subspace. For
\(\delta\in(0,1)\), define the covariance and mean initialization scales
\begin{align}
  \Delta_N(\delta)
  &:={C_0\|\widehat P_0\|}
  \left[
  \sqrt{\frac{r_{\mathrm{eff}}(\widehat P_0)+\log(4/\delta)}{N-1}}
  +\frac{r_{\mathrm{eff}}(\widehat P_0)+\log(4/\delta)}{N-1}
  \right],
  \label{eq:Delta-N-definition}
  \\
  \mu_N(\delta)
  &:={
  \sqrt{\frac{\operatorname{Tr}(\widehat P_0)}{N}}
  +\sqrt{\frac{2\|\widehat P_0\|\log(4/\delta)}{N}}
  },
  \label{eq:mu-N-definition}
\end{align}
where \(C_0\) is a universal constant obtained from the sample-covariance concentration bound in \cite[Corollary~2, Equation~(2.8)]{KoltchinskiiLounici2017}.

Let $C_{\mathrm{init}}$ be a sufficiently large universal constant. For the
main results, assume
\begin{equation}
  N-1\ge C_{\mathrm{init}}
  \left[r_{\mathrm{eff}}(\widehat P_0)+r_u+\log(4/\delta)\right].
  \label{eq:effective-rank-ensemble-size}
\end{equation}
The effective-rank term controls the operator-norm error of the empirical
covariance, while $r_u$ ensures that the ensemble spans the unstable subspace; in particular, condition \eqref{eq:effective-rank-ensemble-size} does not require either $N>d_u$ or $N>d_y$.
The effective rank can be much smaller than the ambient dimension: if
$\lambda_1\ge\lambda_2\ge\cdots$ are the nonzero eigenvalues of
$\widehat P_0$, then
$r_{\mathrm{eff}}(\widehat P_0)=\sum_k\lambda_k/\lambda_1$, which may remain
bounded or grow slowly with $d_u$ under pronounced
eigenvalue decay. If $r_u$ is also moderate, the required ensemble size can therefore be far smaller than $d_u$.

Once the initial covariances of the Kalman filter and the EnKF are fixed, their covariance recursions are deterministic and independent of the realized states and observations. Gaussian ensemble initialization is used only to control the initial empirical covariance with high probability.

The following two theorems are our first main results. They establish
high-probability, time-uniform accuracy of the EnKF forecast and analysis
covariances and means.
\begin{theorem}[Time-uniform covariance accuracy]
\label{thm:time-uniform-covariance-approximation}
Under the linear--Gaussian model
\eqref{eq:initial-state-law}--\eqref{eq:observation-model} and
Assumption~\ref{ass:dynamics-initialization}, initialize the ensemble as in
\eqref{eq:gaussian-initial-ensemble}. Fix \(\delta\in(0,1)\) and suppose
\eqref{eq:effective-rank-ensemble-size} holds. Then there is a constant
\(C<\infty\), depending on \(A,H,R,\widehat P_0\) but not on $d_u,d_y$, \(N\) or
\(\delta\), such that, with probability at least \(1-\delta\) over the initial
ensemble,
\begin{equation}
  \sup_{t\ge0}
  \left(
  \|\widehat P_t^N-\widehat P_t\|
  +\|P_t^N-P_t\|
  \right)
  \le C\Delta_N(\delta).
  \label{eq:time-uniform-covariance-approximation}
\end{equation}
\end{theorem}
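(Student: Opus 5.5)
The argument splits into a probabilistic step at $t=0$ and a deterministic stability step for the Riccati map $\Phi$. The key observation is that both $\widehat P_t$ and $\widehat P_t^N$ are orbits of the same map, since $\widehat P_{t+1}=\Phi(\widehat P_t)$ and $\widehat P_{t+1}^N=\Phi(\widehat P_t^N)$ by the identity $(\star)$. They differ only in their initial conditions $\widehat P_0$ and $\widehat P_0^N$. The task therefore reduces to a Lipschitz-type continuity of the flow $\Phi^t$ with respect to the initial condition, uniformly in $t$. This continuity must hold on a neighborhood of $\widehat P_0$ that contains $\widehat P_0^N$ with high probability.

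The first step is concentration. The Koltchinskii--Lounici bound \cite[Corollary~2]{KoltchinskiiLounici2017} gives, with probability at least $1-\delta/2$,
\[
\|\widehat P_0^N-\widehat P_0\|\le \Delta_N(\delta).
\]
Under \eqref{eq:effective-rank-ensemble-size}, this is at most a small multiple of $\|\widehat P_0\|$. I will also need $(\widehat P_0^N)_{uu}\succ c\,(\widehat P_0)_{uu}$. This follows from the same operator-norm bound applied to the $uu$ block, provided $\Delta_N$ is small relative to $\lambda_{\min}((\widehat P_0)_{uu})$. Alternatively, one can apply a separate concentration bound for the $r_u$-dimensional projection $\Pi_u\widehat u_0^{(n)}$, which is where the $r_u$ term in \eqref{eq:effective-rank-ensemble-size} enters. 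I would take this second route, so that the constant stays clean. Either way, the constant $C$ is allowed to depend on $\widehat P_0$.

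The second step is a deterministic stability theorem. Let $\mathcal K$ denote the set of $P\in\Splus^{d_u}$ with $\|P\|\le 2\|\widehat P_0\|$ and $P_{uu}\succeq \tfrac12(\widehat P_0)_{uu}$. I will show that
\[
\|\Phi^t(P)-\Phi^t(Q)\|\le C\|P-Q\| \quad \text{for all } P,Q\in\mathcal K \text{ and all } t\ge0.
\]
The plan has three parts.

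\begin{itemize}
\item[(a)] Uniform boundedness. I will show $\sup_t\|\Phi^t(P)\|<\infty$ on $\mathcal K$. The argument compares $\Psi(P)$ with the covariance of a suboptimal gain-based estimator, using observability \eqref{eq:observability-unstable} over windows of length $L$ for the unstable block. On the stable block, $\Phi$ is dominated by $P\mapsto A_sPA_s^\top$ plus bounded cross terms.
\item[(b)] Uniform lower bound on the unstable block. After $L$ steps, $(\Phi^t(P))_{uu}$ stays bounded below. This uses $P_{uu}\succ0$ and the expansion by $A_u$: information accumulates at most at rate $c_o$, while expansion rebuilds variance. The stable block may degenerate, which is exactly the singular case the paper mentions.
\item[(c)] Exponential forgetting. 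The derivative of $\Phi^t$ at $P$ in direction $E$ is
\[
D\Phi^t(P)[E]=\mathcal A_{t}E\mathcal A_t^\top,
\]
where $\mathcal A_t$ is the product $\prod_s A(I-\mathsf K(\Phi^s(P))H)$ of closed-loop matrices. I would then show $\|\mathcal A_t\|\le C\rho^t$ with $\rho<1$, uniformly over orbits started in $\mathcal K$. This is a standard consequence of (a), (b) and detectability, via a Lyapunov argument using the inverse filter covariance restricted to $\mathsf U$ together with the contraction of $A_s$ on $\mathsf S$.
\end{itemize}

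Integrating the derivative along the segment $P_\theta=(1-\theta)Q+\theta P$, which stays in $\mathcal K$ because $\mathcal K$ is convex, gives $\|\Phi^t(P)-\Phi^t(Q)\|\le C\|P-Q\|\sum$-free bound $C\rho^t\|P-Q\|\le C\|P-Q\|$.

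Finally, for the analysis covariances, $\Psi$ is Lipschitz on bounded sets with constant $\|I-\mathsf K H\|^2$-type bounds that are uniform by (a). This gives $\|P_t^N-P_t\|\le C\|\widehat P_t^N-\widehat P_t\|$, and combining with the first two steps proves \eqref{eq:time-uniform-covariance-approximation}.

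The main obstacle is step (c), and specifically making the closed-loop contraction uniform when $\Phi^t(P)$ is singular on $\mathsf S$. On the positive-definite cone, Bougerol's Hilbert-metric contraction \cite{Bougerol1993} would apply, but it is not available here. I would first handle the block structure directly: show that the $uu$ block of the closed-loop matrix contracts after $L$ steps using the lower bound from (b) and observability, and that the off-diagonal coupling is bounded. The $ss$ block contracts by $\rho(A_s)<1$ regardless, and a block-triangular argument with weighted norms would then combine the two.
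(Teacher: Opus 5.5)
Your reductions are sound. Both covariances are orbits of $\Phi$, and integrating $\mathrm D\Phi^t$ along a segment of the convex set $\mathcal K$ is a legitimate substitute for the paper's exact identity $\Phi^t(P)-\Phi^t(Q)=\mathcal B_t(P)(P-Q)\mathcal B_t(Q)^\top$. The $r_u$-dimensional route for the unstable lower bound is the right one, provided it is done after whitening by $(\widehat P_0)_{uu}^{-1/2}$, i.e.\ via the smallest singular value of an $r_u\times(N-1)$ Gaussian matrix. Your first route would make $N$ depend on $\lambda_{\min}((\widehat P_0)_{uu})/\|\widehat P_0\|$, which a universal $C_{\mathrm{init}}$ cannot absorb.

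The gap is step (c). It carries essentially the whole content of the theorem, and it is not a standard consequence of (a), (b) and detectability. The classical Lyapunov function $e^\top\widehat P_t^{-1}e$ needs uniformly positive definite covariances, normally supplied by process noise. Here every orbit satisfies $\|(\Phi^t(P))_{ss}\|\le\|A_s^t\|^2\|P\|\to0$, so that function is unavailable or unbounded. Your fallback rests on a false structural claim. With $K_u,K_s$ the row blocks of $\mathsf K(P)$,
\[
  B(P)=\begin{pmatrix}A_u(I-K_uH_u)&-A_uK_uH_s\\-A_sK_sH_u&A_s(I-K_sH_s)\end{pmatrix},
  \qquad
  K_s=(P_{su}H_u^\top+P_{ss}H_s^\top)(HPH^\top+R)^{-1}.
\]
So the closed loop is not block triangular in $\mathsf U\oplus\mathsf S$, the coupling runs both ways and is not small, and the $ss$ block is not governed by $\rho(A_s)$. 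For example, take scalar blocks, $H=(1,\tfrac12)$, $P_{uu}=P_{ss}=1$, $P_{us}=-0.99$ and $R=0.01$. Then $1-K_sH_s\approx1.9$, so the $ss$ entry of $B(P)$ exceeds one as soon as $A_s>0.53$. Likewise, the $uu$ block is not the Kalman closed loop of $(A_u,H_u)$, since $K_u$ involves $P_{us}$ and the innovation covariance involves $P_{ss}$. So (b) plus observability does not by itself make its products contract.

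A smaller point concerns (b). What is needed there is an upper bound of the form $C(A_u^t)^\top A_u^t$ on the information about the unstable component; $c_o$ is a lower bound. That upper bound must be obtained after conditioning on the stable residual $u_s-P_{su}P_{uu}^{-1}u_u$, not on $u_s$, because for singular $P$ conditioning on $u_s$ can remove all unstable variance.

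The missing idea is to avoid products of one-step closed-loop matrices altogether. The paper proves the batch identity $\mathcal B_t(P)=A^t(I+P\mathcal O_t)^{-1}$, valid for singular $P$. It factors $P=L_F\operatorname{diag}(P_{uu},S_c)L_F^\top$, so that the stable residual is uncorrelated with the unstable component and $A_F=L_F^{-1}AL_F$ is block lower triangular. It then bounds each block of $A_F^t(I+\mathsf D_P\mathcal O_{F,t})^{-1}$ through the Schur complement $Q_t=O_{uu,t}-O_{us,t}C_{s,t}O_{su,t}$ of the stable information. Two estimates drive this:
\begin{itemize}
\item $Q_t\succeq c_Q(A_u^t)^\top A_u^t$ for $t\ge t_0$, from the variational formula and the observability window after marginalizing the stable residual;
\item $\|(A_u^{-t})^\top O_{us,t}\|\le C\gamma^t$.
\end{itemize}
Together they give $\|\mathcal B_t(P)\|\le C\rho^t$ uniformly on the class, and hence forgetting from time zero with no Lyapunov function. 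Your step (c) needs an argument of this strength to handle the two-way coupling between unstable and possibly singular stable uncertainty.
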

 
\begin{remark}[Role of perfect-model dynamics]
The possibility of operator-norm covariance accuracy with \(N\le d_u\)
relies on the perfect-model dynamics in
\eqref{eq:dynamics-model}. If a full-rank process-noise covariance
\(Q\succ0\) were added, the exact forecast covariance after each
propagation step would dominate \(Q\), whereas an empirical covariance
formed from \(N\) ensemble members has rank at most \(N-1\).
Consequently, when \(N\le d_u\), its operator-norm error would be at
least \(\lambda_{\min}(Q)\).
\end{remark}

\begin{remark}[Sharper estimates]
The proof yields geometric decay of both covariance errors, and hence of the
Kalman gain error, at rate \(C\rho^t\Delta_N(\delta)\) for some
\(\rho\in(0,1)\). These refinements are used in the mean analysis and are
discussed further in Section~\ref{subsec:proof-covariance-accuracy}.
\end{remark}

\begin{theorem}[Time-uniform mean accuracy]
\label{thm:time-uniform-mean-approximation}
Under the hypotheses of
Theorem~\ref{thm:time-uniform-covariance-approximation}, assume additionally
that the initial ensemble is independent of \(u_0\) and
\((\eta_t)_{t\geq0}\). Then there is a constant
\(C<\infty\), depending on \(A,H,R,\widehat P_0\) but not on $d_u,d_y$, \(N\) or
\(\delta\), such that, with probability at least \(1-\delta\) under the joint law of the initial
ensemble, \(u_0\), and \((\eta_t)_{t\geq0}\),
\begin{equation}
  \sup_{t\ge0}
  \left(
  \|\widehat m_t^N-\widehat m_t\|
  +\|m_t^N-m_t\|
  \right)
  \le
  C\left[
  \mu_N(\delta)
  +\Delta_N(\delta)\sqrt{\log(4/\delta)}
  \right].
  \label{eq:time-uniform-mean-approximation}
\end{equation}
\end{theorem}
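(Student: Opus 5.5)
We compare the two mean recursions directly. Write $e_t:=\widehat m_t^N-\widehat m_t$ for the forecast mean error. Subtracting the Kalman mean update \eqref{eq:KFmeananalysis} from the ensemble update gives
\[
m_t^N-m_t=(I-K_tH)e_t+(K_t^N-K_t)\bigl(y_t-H\widehat m_t^N\bigr).
\]
Write $\nu_t:=y_t-H\widehat m_t$ for the Kalman innovation. Substituting $y_t-H\widehat m_t^N=\nu_t-He_t$ yields
\[
e_{t+1}=A(I-K_t^NH)e_t+A(K_t^N-K_t)\nu_t ,
\qquad e_0=\widehat m_0^N-\widehat m_0 .
\]
Unrolling, $e_t=\Gamma^N_{t,0}e_0+\sum_{s<t}\Gamma^N_{t,s+1}A(K_s^N-K_s)\nu_s$, where $\Gamma^N_{t,s}:=A(I-K^N_{t-1}H)\cdots A(I-K^N_sH)$ is the ensemble closed-loop transition.

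\textbf{Step 1 (deterministic, on the covariance event).} Work on the probability-$(1-\delta/2)$ event from Theorem~\ref{thm:time-uniform-covariance-approximation}. By the sharper estimate remarked after that theorem, on this event $\|K_t^N-K_t\|\le C\rho^t\Delta_N$. Moreover, the ensemble closed loop is exponentially stable: $\|\Gamma^N_{t,s}\|\le C\bar\rho^{\,t-s}$ uniformly in $s\le t$.

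I would obtain this stability from the exact closed loop $A(I-K_tH)$. The Kalman gains converge geometrically to the stabilizing gain $K_\infty$ (Riccati forgetting plus detectability), and $A(I-K_\infty H)$ is stable. A summable perturbation $C\rho^t\Delta_N$ of the gains, with $\Delta_N$ small by \eqref{eq:effective-rank-ensemble-size}, preserves uniform exponential stability by a discrete Gronwall or Lyapunov argument.

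One subtlety is the stable block. Since $\widehat P_t$ may be singular there, $K_t$ acts trivially on those directions. This is harmless, however, because $A_s$ itself contracts them.

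The deterministic stability just described yields $\|\Gamma^N_{t,0}e_0\|\le C\|e_0\|$. On the initial-sample side, Gaussian concentration of the sample mean gives $\|e_0\|\le\mu_N(\delta)$ with probability $1-\delta/4$, since $\mathbb E\|e_0\|^2=\operatorname{Tr}(\widehat P_0)/N$ and $\|\cdot\|$ is $\sqrt{\|\widehat P_0\|/N}$-Lipschitz.

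\textbf{Step 2 (innovation sum).} The innovations $\nu_s$ are independent $\mathcal N(0,H\widehat P_sH^\top+R)$ vectors. Crucially, they are functions of $u_0$ and $\eta$ only, so they are independent of the initial ensemble. The covariances $\widehat P_t^N$, and hence the gains $K^N_s$ and transitions $\Gamma^N$, depend only on the initial ensemble.

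Condition on the ensemble. Then
\[
S_t:=\sum_{s<t}\Gamma^N_{t,s+1}A(K_s^N-K_s)\nu_s
\]
is a centered Gaussian vector. Its covariance has operator norm bounded by $\sum_s C\bar\rho^{2(t-s)}\rho^{2s}\Delta_N^2\lesssim\Delta_N^2\,\theta^{2t}$ for some $\theta<1$.

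Its trace, however, involves $\operatorname{Tr}(H\widehat P_sH^\top+R)$, which may depend on $d_y$. To avoid this, I would use the low-rank structure: $K_s^N-K_s$ factors through at most $r_u$-plus-ensemble-span directions. Alternatively, I would write $(K_s^N-K_s)\nu_s$ through $R^{-1/2}$-whitened quantities, with trace controlled by $\operatorname{Tr}(\widehat P_s)+\operatorname{Tr}(\widehat P_s^N)\lesssim\operatorname{Tr}(\widehat P_0)$ plus error. This keeps the bound free of ambient dimensions, with constants depending on $\widehat P_0$.

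Gaussian concentration then gives $\|S_t\|\le C\Delta_N\theta^t\sqrt{\log(4/\delta)+\log(1+t)}$ for each $t$. A union bound over $t$ with failure probabilities $\delta_t\propto\delta/(t+1)^2$ is absorbed by the geometric factor $\theta^t$, since $\sup_t\theta^t\sqrt{\log(1+t)}<\infty$. This yields the term $\Delta_N\sqrt{\log(4/\delta)}$ uniformly in $t$.

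\textbf{Step 3 (conclusion and main obstacle).} The analysis error $m_t^N-m_t$ is then bounded from $e_t$ and $\|\nu_t\|$ by the same argument. Collecting the three events and rescaling $\delta$ gives \eqref{eq:time-uniform-mean-approximation}.

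The main obstacle is Step 2: obtaining a dimension-free, time-uniform bound on the Gaussian innovation sum despite the possibly large $d_y$. This needs a careful factorization of the gain error so that only effective-rank quantities enter the trace, together with the geometric decay in $s$ that makes the union bound over all times cost only $\sqrt{\log(4/\delta)}$.
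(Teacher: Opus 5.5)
Your error recursion $e_{t+1}=A(I-K_t^NH)e_t+A(K_t^N-K_t)\nu_t$ matches the paper's, as does your split into an initial-error term and an innovation-driven sum. So do the ingredients: geometric decay of $K_t^N-K_t$ on the initialization event, exponential stability of the ensemble closed loop, independence of the innovations from the ensemble, and Gaussian concentration. The argument goes through, but it differs from the paper in two places.

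\emph{Closed-loop stability.} The paper does not perturb from the exact loop. On the initialization event, $\widehat P_0^N$ and $\widehat P_0$ lie in a common class $\mathfrak C(\underline p,\overline p)$, so \eqref{eq:enkf-package-closed-loop} applies directly to $\widehat P_t^N=\Phi^t(\widehat P_0^N)$. Your discrete-Gronwall route is valid; summability of $C\rho^t\Delta_N(\delta)$ suffices, and no smallness of $\Delta_N$ is needed. However, it presupposes uniform stability of the exact time-varying loop, which is the same estimate, so the detour through $K_\infty$ buys nothing.

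\emph{Noise term.} The paper takes norms pathwise and uses $\rho^{t-1-\ell}\rho^\ell\le\rho^\ell$ to reduce everything to the single scalar $\Delta_N(\delta)\sum_\ell\rho^\ell\|\iota_\ell\|$ in \eqref{eq:enkf-mean-reduction}. It controls that scalar with one concentration inequality for a Lipschitz functional (Lemma~\ref{lem:weighted-innovation-bound}), which also covers the analysis term $\rho^t\Delta_N(\delta)\|\iota_t\|$ at no extra cost. Your conditional Gaussian bound for each $t$, plus a union bound with $\delta_t\propto\delta/(t+1)^2$, is correct. It needs more bookkeeping, though, and must be repeated for $\nu_t$ in Step~3.

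\textbf{The ``main obstacle'' you single out is not one, and the remedies you sketch fail.} The statement allows $C$ to depend on $A,H,R,\widehat P_0$, and the paper's constant contains $\tau_\iota=\sup_t\operatorname{Tr}(H\widehat P_tH^\top+R)$, which is such a quantity. With $\tau_\iota$ absorbed into $C$, the conditional covariance of your sum $S_t$ has trace at most $\sum_{s<t}\|\Gamma^N_{t,s+1}A(K_s^N-K_s)\|^2\operatorname{Tr}(H\widehat P_sH^\top+R)\le C\Delta_N(\delta)^2\theta^{2t}\tau_\iota$. Step~2 then closes immediately.

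You should drop both remedies:
\begin{itemize}
\item \emph{Low rank.} $K_s^N-K_s$ is not low rank in general. If $\widehat P_0\succ0$ and $A$ is invertible, then $\widehat P_s\succ0$, and the exact gain $\widehat P_sH^\top(H\widehat P_sH^\top+R)^{-1}$ has rank $\operatorname{rank}(H)$. Hence $\operatorname{rank}(K_s^N-K_s)\ge\operatorname{rank}(H)-(N-1)$, which can be far larger than $r_u+N$.
\item \emph{Whitening.} As you describe it, this bounds the trace by $\operatorname{Tr}(\widehat P_s)+\operatorname{Tr}(\widehat P_s^N)$. That carries no factor $\Delta_N(\delta)^2$, so it would leave an $O(1)$ contribution that does not vanish as $N\to\infty$.
\end{itemize}
In Step~3 the trace cannot be avoided anyway if you bound $(K_t^N-K_t)\nu_t$ by $\|K_t^N-K_t\|\,\|\nu_t\|$, since $\mathbb E\|\nu_t\|^2=\operatorname{Tr}(H\widehat P_tH^\top+R)\ge\operatorname{Tr}(R)$.
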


Theorems~\ref{thm:time-uniform-covariance-approximation}
and~\ref{thm:time-uniform-mean-approximation} establish time-uniform accuracy
of the nonlocalized square-root EnKF. Section~\ref{sec:localized-model-algorithm}
develops parallel guarantees for a localized filter, replacing global
intrinsic dimensions by local ones at the cost of an additional localization
bias.

\section{The localized ensemble Kalman filter}
\label{sec:localized-model-algorithm}

We now impose a spatial organization on the state space. The dynamics within
one spatial subsystem may be strong and unstable, while interactions between
distinct subsystems have small amplitude and polynomial decay.   
 In Theorem \ref{thm:B8-spatially-decaying-fixed-point}, we show that the stabilizing Kalman covariance inherits the same spatial decay of interactions that are present in the dynamics. This structure
has a statistical consequence: if the Kalman covariance and gain are
approximately local, the analysis gains can be computed from estimates of
fixed-size local covariance blocks rather than a global covariance whose
intrinsic dimension may grow with the number of blocks. In Algorithm \ref{alg:localized-square-root-enkf}, we present an EnKF that localizes only the analysis, while retaining the
full coupled forecast.
We then demonstrate in Theorems \ref{thm:B6-localized-covariance-accuracy} and \ref{thm:B7-localized-mean-accuracy} that this localized EnKF can exploit the spatial decay structure to estimate the local covariance blocks and means uniformly in time. The required ensemble size for these results scales logarithmically in the number of spatial blocks, which thus may be orders of magnitude smaller than the nominal state and observation dimensions.

\subsection{Model and covariance structure}
\label{subsec:localized-model-assumptions}

\subsubsection{Weakly coupled model and assumptions}
\label{subsubsec:localized-model-assumptions}
 We write the block-decomposition of the state space as 
\[
  \R^{d_u}=\mathsf X_1\oplus\cdots\oplus\mathsf X_J,
  \qquad \mathsf X_j\simeq\R^{d_{u,j}},
  \qquad \sum_{j=1}^Jd_{u,j}=d_u,
\]
and let \(\Pi_j:\R^{d_u}\to\mathsf X_j\) be the coordinate projection.
Denote \(u_{t,j}:=\Pi_j u_t\).
We use the corresponding observation-space decomposition
\[
  \R^{d_y}=\R^{d_{y,1}}\oplus\cdots\oplus\R^{d_{y,J}},
  \qquad \sum_{j=1}^Jd_{y,j}=d_y,
\]
and write \(y_t=(y_{t,1},\ldots,y_{t,J})\) and
\(\eta_t=(\eta_{t,1},\ldots,\eta_{t,J})\).  The blocks are ordered, and
\(|j-k|\) denotes the distance between blocks \(j\) and \(k\). Thus the
spatial geometry in the present results is the one-dimensional ordered-block
distance.
To quantify the localized and spatially decaying structure in the system, we introduce several norms and fix some notation  before stating additional assumptions.
For a block operator \(M=(M_{jk})_{j,k=1}^J\), define
\begin{equation}
  \|M\|_{\mathrm{loc}}
  :=\max\left\{
  \max_j\sum_k\|M_{jk}\|,
  \max_k\sum_j\|M_{jk}\|
  \right\}.
  \label{eq:block-local-norm}
\end{equation}
This norm is submultiplicative and dominates the Euclidean operator norm:
\[
  \|M_1M_2\|_{\mathrm{loc}}
  \le \|M_1\|_{\mathrm{loc}}\|M_2\|_{\mathrm{loc}},
  \qquad
  \|M\|\le \|M\|_{\mathrm{loc}}.
\]
For covariance matrices we use the one-index shorthand for the diagonal blocks
\begin{equation}
  \widehat P_{t,j}:=(\widehat P_t)_{jj},
  \qquad
  P_{t,j}^{N,\mathrm{loc}}:=(P_t^{N,\mathrm{loc}})_{jj},
  \label{eq:diagonal-block-shorthand}
\end{equation}
and analogous notation for other decorated covariance matrices. Two indices,
as in \(P_{t,jk}^{N,\mathrm{loc}}\), continue to denote an arbitrary block.
For a block vector \(x=(x_1,\ldots,x_J)\), set
\(\|x\|_{\max}:=\max_j\|x_j\|\). Then
\begin{equation}
  \|Mx\|_{\max}\le \|M\|_{\mathrm{loc}}\|x\|_{\max}.
  \label{eq:B7-local-norm-action}
\end{equation}
 To capture the polynomial decay between blocks, we fix \(\alpha>1\),  and set 
\[
 Z_\alpha:=\sum_{\nu\in\mathbb Z}(1+|\nu|)^{-\alpha},
 \qquad \omega_\alpha(\nu):=Z_\alpha^{-1}(1+|\nu|)^{-\alpha}.
\]
 We also  define
\begin{equation}
  \|M\|_{\mathcal J_\alpha}
  :=\max_{j,k}(1+|j-k|)^\alpha\|M_{jk}\|.
  \label{eq:block-decay-norm}
\end{equation}
One can interpret \(\omega_\alpha\) as a spatial interaction kernel, in which the influence between blocks separated by distance $|\nu|$ decays as $|\nu|^{-\alpha}$. The normalization of \(\omega_\alpha\) ensures that this quantity scales uniformly in the number of blocks. The $\|\cdot\|_{\mathcal J_\alpha}$ norm is a block-wise maximum norm weighted by the interaction kernel. This norm measures the block-wise worst-case deviation from polynomial spatial decay.  In fact, block matrices satisfying $\|M_{jk}\| \leq C (1 + |j-k|)^{-\alpha}$ form a Banach algebra~\cite{Jaffard1990,benzi2017localization}, which we refer to as the block-decay algebra. 
 We now state the spatial structural assumptions we make on the dynamics and observation models.
\begin{assumption}[Local observations and weak spatial interaction]
\label{ass:localized-structure}
The state-transition matrix has the decomposition
\begin{equation}
  A=D+E,
  \qquad
  D=\operatorname{diag}(A_1,\ldots,A_J),
  \qquad
  E_{jj}=0,
  \qquad
  \|E_{jk}\|\le\varepsilon\,\omega_\alpha(j-k),
  \label{eq:weakly-coupled-dynamics}
\end{equation}
where $A_j:\mathsf X_j\to\mathsf X_j$ and $0\le\varepsilon\le1$.  
The between-block interactions are assumed to be weak with polynomial decay, satisfying 
\begin{equation}
  \|E\|_{\mathrm{loc}}\le\varepsilon,
  \qquad
  \|E\|_{\mathcal J_\alpha}\le Z_\alpha^{-1}\varepsilon.
  \label{eq:decay-implies-weak-interaction}
\end{equation}
The observation operator and observation covariance are exactly block local:
\begin{equation}
  H=\operatorname{diag}(H_1,\ldots,H_J),
  \qquad
  R=\operatorname{diag}(R_1,\ldots,R_J),
  \label{eq:block-local-observations}
\end{equation}
where $H_j:\mathsf X_j\to\R^{d_{y,j}}$ and
$R_j\in\Spp^{d_{y,j}}$.  Thus
\[
  y_{t,j}=H_j u_{t,j}+\eta_{t,j},
  \qquad j=1,\ldots,J.
\]
Since the observation noise is Gaussian, the block-diagonal form of \(R\)
also makes \(\eta_{t,1},\ldots,\eta_{t,J}\) independent at each time.
\end{assumption}

In block form, the dynamics are
\begin{equation}
  u_{t+1,j}=A_j u_{t,j}+\sum_{k\ne j}E_{jk}u_{t,k},
  \qquad j=1,\ldots,J.
  \label{eq:weakly-coupled-block-dynamics}
\end{equation}
Thus the within-block dynamics may be strong or unstable, whereas the
influence of other blocks is weak and decays with their spatial separation. To our knowledge, the weakly coupled dynamics model presented here has not been considered previously in the literature. We argue that strong local dynamics coupled by weakly decaying spatial interactions is a natural model for spatially distributed systems.

For a block covariance \(P_j\in\Splus^{d_{u,j}}\), define the local
gain, analysis, and uncoupled Riccati maps by
\begin{align}
  \mathsf K_j(P_j)
  &:=P_j H_j^\top
  \bigl(H_jP_j H_j^\top+R_j\bigr)^{-1},
  \label{eq:local-gain-map}
  \\
  \Psi_j(P_j)
  &:=P_j-
  P_j H_j^\top
  \bigl(H_jP_j H_j^\top+R_j\bigr)^{-1}
  H_jP_j,
  \label{eq:local-analysis-map}
  \\
  \Phi_j(P_j)
  &:=A_j\Psi_j(P_j)A_j^\top.
  \label{eq:local-riccati-map}
\end{align}

For each hyperbolic block $A_j$, let \(\mathsf U_j\) and \(\mathsf S_j\)
denote its real spectral subspaces associated with eigenvalues of modulus
greater and less than one, respectively, so
$\mathsf X_j=\mathsf U_j\oplus\mathsf S_j$. In fixed coordinates adapted to
\(\mathsf X_j=\mathsf U_j\oplus\mathsf S_j\), write
\[
 A_j=\begin{pmatrix}A_{u,j}&0\\0&A_{s,j}\end{pmatrix},
 \qquad H_j=\begin{pmatrix}H_{u,j}&H_{s,j}\end{pmatrix}.
\]
For each block, we denote the dimension of the unstable spectral subspace of $A_j$ as \(r_{u,j}:=\dim(\mathsf U_j)\). 

Our analysis of the localized EnKF employs a block-diagonal Riccati trajectory as a reference, whose blocks each satisfy the properties of the EnKF introduced in Section~\ref{sec:problem-setting}. We introduce the block-diagonal reference Riccati trajectory by
\begin{equation}\label{eq:block-reference-riccati-flow}
  \widehat P_{t,j}^0:=\Phi_j^t(\widehat P_{0,j}^0),
  \qquad
  \widehat P_t^0:=\operatorname{diag}(\widehat P_{t,1}^0,\ldots,
  \widehat P_{t,J}^0).
\end{equation}
It uses only the uncoupled block dynamics, $D$, in place of the full dynamics, $A$, to compute the Riccati flow. The reference flow is auxiliary to our analysis, and is not computed by the localized EnKF algorithm.

 We now state a uniform blockwise counterpart of
Assumption~\ref{ass:dynamics-initialization} for the weakly coupled model, along with a uniform condition on the initial block-diagonal reference covariance.

\begin{assumption}[Uniform local dynamics and initialization]
\label{ass:uniform-local-riccati}
The following conditions hold uniformly in the block index \(j\) and
the number of blocks \(J\).

\begin{enumerate}
\item The block dimensions and model coefficients are uniformly bounded:
\[
  d_{u,j}\le d_{u,\mathrm{loc}},
  \qquad
  d_{y,j}\le d_{y,\mathrm{loc}},
\]
and
\[
  \|A_j\|\le a_+,
  \qquad
  \|H_j\|\le h_+,
  \qquad
  r_-I\preceq R_j\preceq r_+I.
\]

\item The matrices \(A_j\) are uniformly hyperbolic and the pairs
\((A_j,H_j)\) are uniformly detectable. Each unstable spectral
subspace is nontrivial.

\item  The block-diagonal reference initial covariance
 \( \widehat P_0^0
  =\operatorname{diag}(\widehat P_{0,1}^0,\ldots,
  \widehat P_{0,J}^0) \) satisfies, 
 uniformly in the block index \(j\), 
\begin{equation}
  (\widehat P_{0,j}^0)_{uu}
  \succeq \underline p_0 I_{\mathsf U_j},
  \qquad
  \|\widehat P_{0,j}^0\|\le\overline p_0.
  \label{eq:block-diagonal-initial-cov-class-sketch}
\end{equation}
\end{enumerate}
\end{assumption}

Uniform hyperbolicity means that the coordinates adapted to
\(\mathsf X_j=\mathsf U_j\oplus\mathsf S_j\) have condition numbers
bounded by a constant \(\kappa_{\mathrm{sp}}\), independent of \(j\)
and \(J\), and that there are constants
\(C_{\mathrm{sp}}<\infty\) and
\(\gamma_s,\gamma_u\in(0,1)\), also independent of \(j\) and \(J\),
such that
\begin{equation}
  \|A_{s,j}^t\|\le C_{\mathrm{sp}}\gamma_s^t,
  \qquad
  \|A_{u,j}^{-t}\|\le C_{\mathrm{sp}}\gamma_u^t,
  \qquad t\ge0.
  \label{eq:uniform-local-spectral-rates}
\end{equation}
Because each \(A_j\) is hyperbolic, detectability of \((A_j,H_j)\)
is equivalent to observability of its unstable restriction
\((A_{u,j},H_{u,j})\). The required uniformity means that there are
an integer \(L\ge1\) and a constant \(c_o>0\), independent of \(j\)
and \(J\), such that
\begin{equation}
  \sum_{\ell=0}^{L-1}
  \|R_j^{-1/2}H_{u,j}A_{u,j}^{\ell}v\|^2
  \ge c_o\|v\|^2,
  \qquad v\in\mathsf U_j.
  \label{eq:uniform-local-unstable-observability}
\end{equation}
Thus the observations resolve the locally unstable directions uniformly
over the blocks, while unobserved stable directions decay on their own. For a sufficiently  weak coupling, hyperbolicity of $A$ follows from uniform hyperbolicity of \(A_j\), and detectability of the coupled pair $(A,H)$ from the uniform block assumptions. Our proofs, however, do not invoke these global properties. The coupled estimates follow directly from perturbative comparison with the block-diagonal reference Riccati trajectory~\eqref{eq:block-reference-riccati-flow}, and the uniform bounds keep this comparison from deteriorating as $J$ grows. The lower bound on \((\widehat P_{0,j}^0)_{uu}\) further ensures that  every locally unstable direction is represented at initialization.   
 Overall, Assumptions \ref{ass:localized-structure} and \ref{ass:uniform-local-riccati} impose more refined structure on the system and observations than Assumption \ref{ass:dynamics-initialization}, thus enabling an improved scaling for algorithms that exploit such structure.

We bound both the EnKF sampling error and the effect of the weak interaction against  the 
block-diagonal reference flow. The corresponding block-diagonal reference analysis covariances and gains will also be used in the proof and are given by
\begin{equation}
\begin{aligned}
  P_{t,j}^0&:=\Psi_j(\widehat P_{t,j}^0),
  &K_{t,j}^0&:=\mathsf K_j(\widehat P_{t,j}^0),\\
  P_t^0&:=\Psi(\widehat P_t^0)
  =\operatorname{diag}(P_{t,1}^0,\ldots,P_{t,J}^0),
  &K_t^0&:=\mathsf K(\widehat P_t^0)
  =\operatorname{diag}(K_{t,1}^0,\ldots,K_{t,J}^0).
\end{aligned}
  \label{eq:block-reference-analysis-gain}
\end{equation}

\subsubsection{Spatial structure of the Kalman covariance}
\label{subsec:localized-steady-state-main}

The weak-interaction model has a useful consequence already at the level of
the exact Kalman filter. When $\varepsilon=0$, the stabilizing covariance is
block diagonal.  For $\varepsilon>0$, the next result shows that weak interactions preserve this
structure approximately: the off-diagonal blocks inherit the spatial decay
of the dynamics, while the diagonal blocks remain $O(\varepsilon)$-close to
their uncoupled counterparts. This provides the structural motivation for the
localized EnKF introduced in Section~\ref{subsec:localized-algorithm}.

Let \(\widehat P_{\infty,j}^0\) be the unique stabilizing fixed point of the local Riccati map $\Phi_j$, which is shown to exist in 
 Proposition~\ref{prop:B8-local-fixed-points}, and set
\[
  \widehat P_\infty^0:=\operatorname{diag}(\widehat P_{\infty,1}^0,\ldots,
  \widehat P_{\infty,J}^0),
  \qquad
  K_\infty^0:=\mathsf K(\widehat P_\infty^0),\qquad
  B_\infty^0:=D(I-K_\infty^0H).
\]
Here $B_\infty^0$ is called the forecast closed-loop map because it
 propagates a forecast error through one analysis--forecast
cycle of the uncoupled steady-state Kalman filter.
\begin{theorem}[Spatial decay of the stabilizing Kalman covariance]
\label{thm:B8-spatially-decaying-fixed-point}
Under Assumptions~\ref{ass:localized-structure} and
\ref{ass:uniform-local-riccati}, there exist
$\varepsilon_\infty>0$, $C_\infty<\infty$, and
$\rho_\infty\in(0,1)$, independent of $J$, such that for
$0\le\varepsilon\le\varepsilon_\infty$ the full Riccati map $\Phi$ has a
stabilizing fixed point $\widehat P_\infty$ that is the unique fixed point in
a fixed block-decay neighborhood of $\widehat P_\infty^0$. Define
\[
  K_\infty:=\mathsf K(\widehat P_\infty),
  \qquad
  B_\infty:=A(I-K_\infty H).
\]
Then
\begin{equation}
  \|\widehat P_\infty-\widehat P_\infty^0\|_{\mathcal J_\alpha}
  +\|K_\infty-K_\infty^0\|_{\mathcal J_\alpha}
  +\|B_\infty-B_\infty^0\|_{\mathcal J_\alpha}
  \le C_\infty\varepsilon.
  \label{eq:B8-fixed-point-decay-bound}
\end{equation}
In particular,
\begin{equation}
  \|(\widehat P_\infty)_{jk}\|
  \le C_\infty\varepsilon(1+|j-k|)^{-\alpha},
  \qquad j\ne k,
  \label{eq:B8-fixed-point-entry-decay}
\end{equation}
and
\begin{equation}
  \max_j\| \widehat P_{\infty,j}
  -\widehat P_{\infty,j}^0\|
  \le C_\infty\varepsilon.
  \label{eq:B8-fixed-point-diagonal-bias}
\end{equation}

Moreover, there exist
$\mathfrak d_*,\mathfrak d_{*,\alpha}>0$, independent of $J$, such that if
\begin{equation}
  \|\widehat P_0-\widehat P_0^0\|_{\mathrm{loc}}
  \le\mathfrak d_*,
  \qquad
  \|\widehat P_0-\widehat P_0^0\|_{\mathcal J_\alpha}
  \le\mathfrak d_{*,\alpha},
  \label{eq:B8-fixed-point-initial-neighborhood}
\end{equation}
then 
\begin{equation}
  \|\widehat P_t-\widehat P_\infty\|_{\mathrm{loc}}
  \le C_\infty\rho_\infty^t, \qquad 
  \|\widehat P_t-\widehat P_\infty\|_{\mathcal J_\alpha}
  \le C_\infty\rho_\infty^t.
  \label{eq:B8-full-fixed-point-convergence-decay}
\end{equation}
\end{theorem}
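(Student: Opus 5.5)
We will construct \(\widehat P_\infty\) by an implicit-function/contraction argument around the block-diagonal reference \(\widehat P_\infty^0\), carried out in a Banach algebra that simultaneously controls \(\|\cdot\|_{\mathrm{loc}}\) and \(\|\cdot\|_{\mathcal J_\alpha}\). Concretely, we work in the space \(\mathcal B\) of symmetric block matrices with finite norm \(\|M\|_{\mathcal B}:=\|M\|_{\mathrm{loc}}+\|M\|_{\mathcal J_\alpha}\). The standard estimate \(\sum_l(1+|j-l|)^{-\alpha}(1+|l-k|)^{-\alpha}\le c_\alpha(1+|j-k|)^{-\alpha}\) gives \(\|M_1M_2\|_{\mathcal J_\alpha}\le c_\alpha\|M_1\|_{\mathcal J_\alpha}\|M_2\|_{\mathcal J_\alpha}\), and \(\|M\|_{\mathrm{loc}}\le Z_\alpha\|M\|_{\mathcal J_\alpha}\). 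Hence \(\mathcal B\) is a Banach algebra up to a \(J\)-independent constant. By Proposition~\ref{prop:B8-local-fixed-points} and the uniform assumptions, the local fixed points \(\widehat P_{\infty,j}^0\) are uniformly bounded, the local closed-loop maps \(A_j(I-K_{\infty,j}^0H_j)\) have uniformly bounded powers decaying like \(C\rho_0^t\), and \(H_j\widehat P^0_{\infty,j}H_j^\top+R_j\succeq r_-I\). All constants below are therefore \(J\)-independent.

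\textbf{Step 1 (fixed-point equation).} Write \(\widehat P=\widehat P_\infty^0+\Delta\) and \(A=D+E\). Expanding \(\Phi(\widehat P)\), we get \(\Delta=\mathcal L_0(\Delta)+\mathcal N(\Delta,E)\), where \(\mathcal L_0(\Delta)=B_\infty^0\Delta (B_\infty^0)^\top\) is the derivative of the uncoupled Riccati map at its fixed point. The remainder \(\mathcal N\) collects the terms linear in \(E\), such as \(E\Psi(\widehat P_\infty^0)D^\top\) and its transpose, \(O(\|E\|^2)\) terms, and terms quadratic in \(\Delta\). The quadratic terms come from the resolvent expansion of \((H\widehat PH^\top+R)^{-1}\) around the block-diagonal \((H\widehat P_\infty^0H^\top+R)^{-1}\). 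Since \(H\) and \(R\) are block diagonal, this Neumann series converges in \(\mathcal B\) for \(\|\Delta\|_{\mathcal B}\) small, uniformly in \(J\).

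\textbf{Step 2 (invert \(I-\mathcal L_0\) on \(\mathcal B\)).} Because \(B_\infty^0\) is block diagonal, \(\mathcal L_0^t(\Delta)_{jk}=(B^0_{\infty,j})^t\Delta_{jk}(B^0_{\infty,k})^{t\top}\). Therefore \(\|\mathcal L_0^t\Delta\|_{\mathcal J_\alpha}\le C^2\rho_0^{2t}\|\Delta\|_{\mathcal J_\alpha}\), and the same bound holds in \(\mathrm{loc}\). So \((I-\mathcal L_0)^{-1}=\sum_t\mathcal L_0^t\) is bounded on \(\mathcal B\). The map \(\Delta\mapsto(I-\mathcal L_0)^{-1}\mathcal N(\Delta,E)\) has \(\|\mathcal N(0,E)\|_{\mathcal B}\lesssim\varepsilon\) by \eqref{eq:decay-implies-weak-interaction}, and a Lipschitz constant of order \(\varepsilon+\|\Delta\|_{\mathcal B}\). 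For \(\varepsilon\le\varepsilon_\infty\) it is therefore a contraction on a ball of radius \(C\varepsilon\). This yields existence of the fixed point, uniqueness in the block-decay neighborhood, and the bound \(\|\widehat P_\infty-\widehat P_\infty^0\|_{\mathcal J_\alpha}\le C\varepsilon\). The corresponding bounds for \(K_\infty\) and \(B_\infty\) follow from the same resolvent expansion together with \(B_\infty-B_\infty^0=E(I-K_\infty H)-D(K_\infty-K_\infty^0)H\). The entrywise decay \eqref{eq:B8-fixed-point-entry-decay} uses that the off-diagonal blocks of \(\widehat P_\infty^0\) vanish, and \eqref{eq:B8-fixed-point-diagonal-bias} is the \(j=k\) case.

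\textbf{Step 3 (stabilizing property and convergence).} Since \(\|B_\infty-B_\infty^0\|_{\mathcal B}\le C\varepsilon\) and \(B_\infty^0\) has uniformly exponentially decaying powers, a perturbation argument in the algebra gives \(\|B_\infty^t\|_{\mathcal B}\le C\rho_\infty^t\) for small \(\varepsilon\), with \(\rho_\infty\in(\rho_0,1)\). In particular \(\widehat P_\infty\) is stabilizing. For the convergence \eqref{eq:B8-full-fixed-point-convergence-decay}, we use the exact error identity
\[
\widehat P_{t+1}-\widehat P_\infty=B_\infty(\widehat P_t-\widehat P_\infty)B_t^\top,
\qquad B_t:=A(I-\mathsf K(\widehat P_t)H).
\]
Writing \(B_t=B_\infty+O(\|\widehat P_t-\widehat P_\infty\|_{\mathcal B})\) and arguing by induction, as long as the error stays below a small threshold \(\mathfrak d\), the product of closed-loop maps decays in \(\mathcal B\) like \(C\rho_\infty^t\). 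The initial neighborhood condition \eqref{eq:B8-fixed-point-initial-neighborhood}, combined with \(\|\widehat P_\infty-\widehat P_\infty^0\|_{\mathcal B}\le C\varepsilon\), places \(\widehat P_0\) within this threshold once \(\mathfrak d_*\), \(\mathfrak d_{*,\alpha}\) and \(\varepsilon_\infty\) are chosen small.

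\textbf{Main obstacle.} The hardest part is Step 3's local stability, that is, controlling the time-varying product \(\prod_s B_s\) in the decay norm rather than only in operator norm. The difficulty is that \(B_\infty\) may have \(\|B_\infty\|_{\mathcal B}>1\), with decay appearing only after \(L\) steps. I would handle this by passing to an adapted equivalent norm \(\|M\|_*:=\sup_{t\ge0}\rho_\infty^{-t}\|B_\infty^tM\|_{\mathcal B}\), in which \(B_\infty\) is a strict contraction. In that norm the perturbation \(B_s-B_\infty\) can be absorbed by a standard discrete Gronwall argument.
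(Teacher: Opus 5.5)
\textbf{Gap: the convergence claim.} Your Step 3 does not prove \eqref{eq:B8-full-fixed-point-convergence-decay} as stated. The hypothesis \eqref{eq:B8-fixed-point-initial-neighborhood} places $\widehat P_0$ near $\widehat P_0^0$. That is the block-diagonal \emph{initial} reference covariance of Assumption~\ref{ass:uniform-local-riccati}, an arbitrary element of the classes $\mathfrak C_j(\underline p_0,\overline p_0)$, and it may lie at distance of order one from $\widehat P_\infty^0$. You combine the hypothesis with $\|\widehat P_\infty-\widehat P_\infty^0\|_{\mathcal B}\le C\varepsilon$ to land inside your threshold $\mathfrak d$. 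In doing so you implicitly read it as $\|\widehat P_0-\widehat P_\infty^0\|\le\mathfrak d_*$. Your argument (the error identity, $B_t=B_\infty+O(\|\widehat P_t-\widehat P_\infty\|_{\mathcal B})$, the adapted norm) is purely local. It covers only a small basin around $\widehat P_\infty$. What is missing is a transient phase that carries the coupled trajectory from near $\widehat P_0^0$ into that basin, with $J$-independent control in the decay norm.

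\textbf{How to close it.} By Proposition~\ref{prop:B8-local-fixed-points}, the reference flow $\widehat P_t^0=\Phi_D^t(\widehat P_0^0)$ converges to $\widehat P_\infty^0$ uniformly in $j$. Because the flow is block diagonal, the convergence also holds in $\|\cdot\|_{\mathcal J_\alpha}$, so the flow enters your basin after a $J$-independent time $T_0$. You then need the coupled flow $\widehat P_t=\Phi^t(\widehat P_0)$ to track $\widehat P_t^0$ in $\|\cdot\|_{\mathcal J_\alpha}$ up to $O(\mathfrak d_{0,\alpha}+\varepsilon)$, at least until time $T_0$. This requires gain regularity in $\mathcal J_\alpha$-tubes around the whole time-varying reference trajectory, not just at $\widehat P_\infty^0$. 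It also requires a bootstrap that keeps the coupled iterates inside those tubes. This is precisely Theorem~\ref{thm:B3-localization-bias}, which the paper invokes at this point. A finite-horizon Gronwall version, with $\mathfrak d_{*,\alpha}$ and $\varepsilon_\infty$ chosen depending on $T_0$, would also suffice. Your local argument then runs from time $T_0$ onward, and $C_\infty$ absorbs the first $T_0$ steps.

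\textbf{Steps 1--2 are a valid alternative route.} The paper shows that $\Phi^{T_*}$ contracts on $\mathcal V_{\infty,\alpha,*}\cap\Splus^{d_u}$, and then obtains $\Phi(P_*)=P_*$ from uniqueness. You instead invert the Lyapunov operator $I-\mathcal L_0$ blockwise. Your route is cleaner and essentially yields Proposition~\ref{prop:B8-first-order-expansion} for free. The paper's route stays on the positive semidefinite cone, so positivity of the fixed point is automatic there. Two small repairs are needed.
\begin{enumerate}
\item Run the contraction on a ball of fixed radius $r_0$; the Lipschitz constant is $O(\varepsilon+r_0)$, so this works. Uniqueness then holds in a \emph{fixed} neighborhood, not one of radius $C\varepsilon$.
\item Verify $\widehat P_\infty\succeq0$ separately. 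You iterate over symmetric $\Delta$, and under perfect-model dynamics $\widehat P_\infty^0$ vanishes on the stable directions. A small symmetric perturbation of it therefore need not be positive semidefinite. Positivity follows, for instance, from your local convergence argument applied to the positive-semidefinite Kalman iterates started at $\widehat P_\infty^0$.
\end{enumerate}
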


Thus the weakly coupled model produces an exact Kalman covariance whose
long-range blocks are small, rather than assuming such decay directly. 
The next result identifies the leading-order effect of the interaction.
\begin{proposition}[First-order covariance response]
\label{prop:B8-first-order-expansion}
Under Assumptions~\ref{ass:localized-structure} and
\ref{ass:uniform-local-riccati}, suppose the interaction is parameterized as
$A_\varepsilon=D+\varepsilon\mathcal E$, where
$\mathcal E_{jj}=0$ and
$\|\mathcal E_{jk}\|\le\omega_\alpha(j-k)$.
Write $\widehat P_\infty^\varepsilon$ for the corresponding stabilizing fixed
point from Theorem~\ref{thm:B8-spatially-decaying-fixed-point}. Then, as
$\varepsilon\downarrow0$, uniformly in $J$,
\begin{equation}
  \widehat P_\infty^\varepsilon
  =\widehat P_\infty^0
  +\varepsilon \widehat P_\infty^{(1)}
  +O_{\mathcal J_\alpha}(\varepsilon^2),
  \label{eq:B8-first-order-expansion}
\end{equation}
where $\widehat P_\infty^{(1)}$ is the unique solution in the block-decay
algebra to 
\begin{equation}
  \widehat P_\infty^{(1)}
  -B_\infty^0\widehat P_\infty^{(1)}(B_\infty^0)^\top
  =G_\infty^{(1)},
  \label{eq:B8-first-order-lyapunov}
\end{equation}
with
\begin{equation}
  G_\infty^{(1)}
  :=\mathcal E\Psi(\widehat P_\infty^0)D^\top
  +D\Psi(\widehat P_\infty^0)\mathcal E^\top.
  \label{eq:B8-first-order-forcing}
\end{equation}
Equivalently,
\begin{equation}
  \widehat P_\infty^{(1)}
  =\sum_{\ell=0}^\infty
  (B_\infty^0)^{\ell} G_\infty^{(1)}
  ((B_\infty^0)^\top)^{\ell},
  \label{eq:B8-first-order-series}
\end{equation}
and the series converges in $\mathcal J_\alpha$ with a dimension-independent
bound. The stability of $B_\infty^0$ ensures that interaction effects from
successive analysis--forecast cycles decay and the series is summable. The
remainder in \eqref{eq:B8-first-order-expansion} is also uniform in
$J$.
\end{proposition}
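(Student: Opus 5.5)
The plan is to treat the stabilizing fixed point as the zero of a smooth map on the block-decay algebra and to linearize it at \(\varepsilon=0\). Set \(\mathcal F(P,\varepsilon):=P-\Phi_\varepsilon(P)\), where \(\Phi_\varepsilon(P)=A_\varepsilon\Psi(P)A_\varepsilon^\top\) and \(A_\varepsilon=D+\varepsilon\mathcal E\). Write \(\Delta^\varepsilon:=\widehat P_\infty^\varepsilon-\widehat P_\infty^0\). Theorem~\ref{thm:B8-spatially-decaying-fixed-point} already gives \(\|\Delta^\varepsilon\|_{\mathcal J_\alpha}\le C_\infty\varepsilon\) uniformly in \(J\), and it also gives membership in a fixed block-decay neighborhood. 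It therefore suffices to show that the residual \(\Delta^\varepsilon-\varepsilon\widehat P_\infty^{(1)}\) is \(O(\varepsilon^2)\) in \(\mathcal J_\alpha\). The argument has three steps: identify the linearization, invert it with a dimension-free bound, and control the quadratic Taylor remainder.

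\emph{Step 1 (linearization).} I would use the standard Riccati derivative identity. For \(K=\mathsf K(P)\), \(D_P\Psi(P)[X]=(I-KH)X(I-KH)^\top\). At \(P=\widehat P_\infty^0\) and \(\varepsilon=0\) this gives
\[
D_P\Phi_0(\widehat P_\infty^0)[X]=B_\infty^0X(B_\infty^0)^\top=:\mathcal L(X).
\]
Differentiating in \(\varepsilon\) gives \(\partial_\varepsilon\Phi_\varepsilon(\widehat P_\infty^0)|_{\varepsilon=0}=G_\infty^{(1)}\), as in \eqref{eq:B8-first-order-forcing}. Expanding \(0=\mathcal F(\widehat P_\infty^0+\Delta^\varepsilon,\varepsilon)\), and using \(\mathcal F(\widehat P_\infty^0,0)=0\), yields
\[
(I-\mathcal L)\Delta^\varepsilon=\varepsilon G_\infty^{(1)}+\mathcal R_\varepsilon .
\]
The remainder \(\mathcal R_\varepsilon\) collects three kinds of terms: the \(\varepsilon^2\mathcal E\Psi(\cdot)\mathcal E^\top\) term, the cross terms \(\varepsilon\mathcal E[\Psi(\widehat P_\infty^0+\Delta^\varepsilon)-\Psi(\widehat P_\infty^0)]D^\top\) and their transposes, and the second-order Taylor remainder of \(\Psi\) in \(\Delta^\varepsilon\).

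\emph{Step 2 (inverting \(I-\mathcal L\) on \(\mathcal J_\alpha\)).} Because \(D\), \(H\), \(K_\infty^0\) are block diagonal, \(B_\infty^0=\operatorname{diag}(B_{\infty,j}^0)\) is block diagonal. It then acts blockwise as \((\mathcal L^\ell X)_{jk}=(B_{\infty,j}^0)^\ell X_{jk}((B_{\infty,k}^0)^\top)^\ell\), and this leaves the spatial weights \((1+|j-k|)^\alpha\) untouched. Uniform detectability, uniform hyperbolicity and the bounds in Assumption~\ref{ass:uniform-local-riccati} give uniform exponential stability \(\|(B_{\infty,j}^0)^\ell\|\le C\rho^\ell\) with \(C,\rho\) independent of \(j,J\); this is the local closed-loop stability that accompanies the stabilizing local fixed points of Proposition~\ref{prop:B8-local-fixed-points}. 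Hence \(\|\mathcal L^\ell\|_{\mathcal J_\alpha\to\mathcal J_\alpha}\le C^2\rho^{2\ell}\). The Neumann series \(\sum_\ell\mathcal L^\ell\) therefore converges, which proves existence and uniqueness in the block-decay algebra and establishes \eqref{eq:B8-first-order-series} with a \(J\)-independent bound. It also shows that \(G_\infty^{(1)}\) lies in \(\mathcal J_\alpha\), since \(\|\mathcal E\|_{\mathcal J_\alpha}\le Z_\alpha^{-1}\) and products with block-diagonal bounded matrices preserve the norm. Subtracting the equation for \(\varepsilon\widehat P_\infty^{(1)}\) gives
\[
\Delta^\varepsilon-\varepsilon\widehat P_\infty^{(1)}=(I-\mathcal L)^{-1}\mathcal R_\varepsilon,
\]
so it remains only to show \(\|\mathcal R_\varepsilon\|_{\mathcal J_\alpha}\le C\varepsilon^2\).

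\emph{Step 3 (uniform quadratic remainder — the main obstacle).} The difficulty is that \(\Psi\) involves \((HPH^\top+R)^{-1}\), and the second derivative must be bounded in \(\mathcal J_\alpha\) uniformly in \(J\). Write
\[
HPH^\top+R=S_0+H\Delta H^\top,\qquad S_0=\operatorname{diag}(H_j\widehat P_{\infty,j}^0H_j^\top+R_j)\succeq r_-I .
\]
Here \(S_0^{-1}\) is block diagonal with uniformly bounded blocks. The perturbation satisfies \(\|H\Delta H^\top\|_{\mathrm{loc}}\le h_+^2\|\Delta\|_{\mathrm{loc}}\), and \(\|\Delta\|_{\mathrm{loc}}\le Z_\alpha\|\Delta\|_{\mathcal J_\alpha}\le C\varepsilon\). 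The inverse is therefore a convergent Neumann series in the submultiplicative \(\mathrm{loc}\) norm.

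To control the series in \(\mathcal J_\alpha\), I would use the algebra estimate \(\|M_1M_2\|_{\mathcal J_\alpha}\le C_\alpha(\|M_1\|_{\mathcal J_\alpha}\|M_2\|_{\mathrm{loc}}+\|M_1\|_{\mathrm{loc}}\|M_2\|_{\mathcal J_\alpha})\), which follows from \((1+|j-k|)^\alpha\le2^\alpha[(1+|j-l|)^\alpha+(1+|l-k|)^\alpha]\). This shows the \(n\)-th term of the series has \(\mathcal J_\alpha\)-norm at most \(n(C\varepsilon)^{n}\), so the series is summable for small \(\varepsilon\). Consequently \(\Psi\) is \(C^2\) on a \(\mathcal J_\alpha\)-ball around \(\widehat P_\infty^0\) with \(J\)-independent bounds on its second derivative.

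The quadratic Taylor term is then bounded by \(C\|\Delta^\varepsilon\|_{\mathcal J_\alpha}^2\le C\varepsilon^2\). The cross terms are bounded by \(\varepsilon\cdot C\|\Delta^\varepsilon\|_{\mathcal J_\alpha}\le C\varepsilon^2\), and the \(\varepsilon^2\mathcal E\Psi\mathcal E^\top\) term is directly \(O(\varepsilon^2)\). This gives \(\|\mathcal R_\varepsilon\|_{\mathcal J_\alpha}\le C\varepsilon^2\), and \eqref{eq:B8-first-order-expansion} follows with a remainder uniform in \(J\).
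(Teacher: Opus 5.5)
Your proposal is correct and follows essentially the same route as the paper. You Taylor-expand the fixed-point equation to get $(I-\mathcal L)X_\varepsilon=\varepsilon G_\infty^{(1)}+\mathcal R_\varepsilon$, bound $\mathcal R_\varepsilon$ by $C\varepsilon^2$ using the $O(\varepsilon)$ estimate of Theorem~\ref{thm:B8-spatially-decaying-fixed-point}, and invert $I-\mathcal L$ by a Neumann series using the uniform exponential stability of the block-diagonal $B_\infty^0$. The only difference is that you spell out the $J$-uniform quadratic remainder, via a Neumann series for $(HPH^\top+R)^{-1}$ and a mixed $\mathrm{loc}$/$\mathcal J_\alpha$ product bound, whereas the paper simply asserts it as a Taylor expansion in the normalized submultiplicative norm $\|\cdot\|_{\alpha,*}$.
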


The proofs of Theorem~\ref{thm:B8-spatially-decaying-fixed-point} and
Proposition~\ref{prop:B8-first-order-expansion} are given in
Section~\ref{sec:localized-steady-state-decay}. 
These results provide structural motivation for the localized EnKF algorithm, but are not used in the proofs of the time-uniform accuracy results below.

At steady state, the decay of $K_\infty$ 
implied by \eqref{eq:B8-fixed-point-decay-bound}
shows that distant observations have
only a weak influence on each local analysis. The localized EnKF update below
therefore computes its gains using only the diagonal covariance blocks,
avoiding the sampling cost of a global covariance estimate. It nevertheless
retains the full matrix $A$ in every forecast, so the weak cross-block
interactions continue to propagate through the ensemble. The resulting
deterministic localization error is of order $\varepsilon$, as quantified in
Section~\ref{sec:localized-main-results}.

\subsection{Localized square-root EnKF}
\label{subsec:localized-algorithm}
We now describe the localized square-root EnKF that we analyze in Section \ref{sec:localized-main-results}. 
The localized ensemble is initialized by
\begin{equation}
  \widehat u_0^{(1),\mathrm{loc}},\ldots,
  \widehat u_0^{(N),\mathrm{loc}}
  \stackrel{\mathrm{i.i.d.}}{\sim}
  \mathcal N(\widehat m_0,\widehat P_0).
  \label{eq:localized-gaussian-initial-ensemble}
\end{equation}
For the mean-accuracy result Theorem \ref{thm:B7-localized-mean-accuracy}, this ensemble is assumed independent of the
signal and observation noises.  The covariance result Theorem \ref{thm:B6-localized-covariance-accuracy} does not require this
additional independence.
The algorithm is defined for a general initial covariance \(\widehat P_0\);
the accuracy results below specialize to the block-diagonal reference
initialization $\widehat P_0 = \widehat P_0^0$ in Assumption~\ref{ass:uniform-local-riccati}.
The algorithm maintains a full ensemble
\(\widehat u_t^{(1),\mathrm{loc}},\ldots,
\widehat u_t^{(N),\mathrm{loc}}\in\R^{d_u}\). Its local forecast blocks are
\[
  \widehat u_{t,j}^{(n),\mathrm{loc}}
  :=\Pi_j\widehat u_t^{(n),\mathrm{loc}}.
\]
For each block \(j\), define the local mean, local anomaly matrix, and local covariance as
\begin{equation}
\begin{aligned}
  \widehat m_{t,j}^{N,\mathrm{loc}}
  &:=\frac1N\sum_{n=1}^N\widehat u_{t,j}^{(n),\mathrm{loc}},
  &
  \widehat X_{t,j}^{\mathrm{loc}}
  &:=[\widehat u_{t,j}^{(1),\mathrm{loc}}-
  \widehat m_{t,j}^{N,\mathrm{loc}},\ldots,
  \widehat u_{t,j}^{(N),\mathrm{loc}}-
  \widehat m_{t,j}^{N,\mathrm{loc}}],
  \\
  \widehat P_{t,j}^{N,\mathrm{loc}}
  &:=\frac1{N-1}\widehat X_{t,j}^{\mathrm{loc}}
  (\widehat X_{t,j}^{\mathrm{loc}})^\top.
\end{aligned}
\label{eq:localized-forecast-statistics}
\end{equation}
We also write
\(\widehat X_t^{\mathrm{loc}}
:=[(\widehat X_{t,1}^{\mathrm{loc}})^\top,\ldots,
(\widehat X_{t,J}^{\mathrm{loc}})^\top]^\top\)
for the stacked forecast anomaly matrix, and set
\(
  Y_{t,j}^{\mathrm{loc}}:=H_j\widehat X_{t,j}^{\mathrm{loc}}.
\)
As in the EnKF, the local gain can be evaluated directly
from the anomaly matrices:
\begin{equation}
\begin{aligned}
  K_{t,j}^{N,\mathrm{loc}}
  &:=\widehat X_{t,j}^{\mathrm{loc}}
  (Y_{t,j}^{\mathrm{loc}})^\top
  \left[Y_{t,j}^{\mathrm{loc}}(Y_{t,j}^{\mathrm{loc}})^\top
  +(N-1)R_j\right]^{-1}
  =\mathsf K_j(\widehat P_{t,j}^{N,\mathrm{loc}}),\\
  m_{t,j}^{N,\mathrm{loc}}
  &:=\widehat m_{t,j}^{N,\mathrm{loc}}
  +K_{t,j}^{N,\mathrm{loc}}
  \bigl(y_{t,j}-H_j\widehat m_{t,j}^{N,\mathrm{loc}}\bigr).
\end{aligned}
  \label{eq:localized-analysis-mean}
\end{equation}
For each \(j\), use the symmetric local transform
\begin{equation}
  \Omega_{t,j}^{\mathrm{loc}}
  :=I_N+\frac1{N-1}
  (Y_{t,j}^{\mathrm{loc}})^\top R_j^{-1}Y_{t,j}^{\mathrm{loc}},
  \qquad
  T_{t,j}^{\mathrm{loc}}
  :=(\Omega_{t,j}^{\mathrm{loc}})^{-1/2},
  \label{eq:localized-square-root-transform}
\end{equation}
and set
\( 
  X_{t,j}^{\mathrm{loc}}
  :=\widehat X_{t,j}^{\mathrm{loc}}T_{t,j}^{\mathrm{loc}}.
\)
Stacking the local quantities gives the global forecast mean, analysis mean, and
block-diagonal gain
\begin{equation}
\begin{aligned}
  \widehat m_t^{N,\mathrm{loc}}
  &:=(\widehat m_{t,1}^{N,\mathrm{loc}},\ldots,
  \widehat m_{t,J}^{N,\mathrm{loc}}), &
   m_t^{N,\mathrm{loc}}
  &:=(m_{t,1}^{N,\mathrm{loc}},\ldots,m_{t,J}^{N,\mathrm{loc}}),
\\
  K_t^{N,\mathrm{loc}}
  &:=\operatorname{diag}(K_{t,1}^{N,\mathrm{loc}},\ldots,
  K_{t,J}^{N,\mathrm{loc}}).
\end{aligned}
  \label{eq:localized-global-forecast-mean-gain}
\end{equation}
Thus
\[
  m_t^{N,\mathrm{loc}}
  =\widehat m_t^{N,\mathrm{loc}}
  +K_t^{N,\mathrm{loc}}(y_t-H\widehat m_t^{N,\mathrm{loc}}).
\]
Stack the local anomalies according to the fixed block
decomposition: 
\begin{equation}
  X_t^{\mathrm{loc}}
  :=\begin{bmatrix}
  X_{t,1}^{\mathrm{loc}}\\ \vdots\\ X_{t,J}^{\mathrm{loc}}
  \end{bmatrix}.
  \label{eq:localized-stacked-analysis-statistics}
\end{equation}
The \(n\)-th analyzed ensemble member is
\[
  u_t^{(n),\mathrm{loc}}
  :=m_t^{N,\mathrm{loc}}+X_t^{\mathrm{loc}}e_n.
\]
The forecast is then performed with the full matrix \(A\):
\begin{equation}
  \widehat u_{t+1}^{(n),\mathrm{loc}}
  :=A u_t^{(n),\mathrm{loc}},
  \qquad n=1,\ldots,N.
  \label{eq:localized-full-forecast}
\end{equation}
Thus neighboring or interacting blocks communicate through every forecast,
even though the analysis uses only local covariances and local observations. 

\begin{algorithm}[H]
\caption{Localized square-root EnKF}
\label{alg:localized-square-root-enkf}
\begin{algorithmic}[1]
\Require A global initial forecast ensemble
\(\widehat u_0^{(1),\mathrm{loc}},\ldots,
\widehat u_0^{(N),\mathrm{loc}}\), sampled as in
\eqref{eq:localized-gaussian-initial-ensemble}.
\For{$t=0,1,\ldots$}
  \For{$j=1,\ldots,J$}
    \State Compute \(\widehat m_{t,j}^{N,\mathrm{loc}}\),
    \(\widehat X_{t,j}^{\mathrm{loc}}\), and
    \(Y_{t,j}^{\mathrm{loc}}=H_j\widehat X_{t,j}^{\mathrm{loc}}\) from the
    local forecast ensemble.
    \State Evaluate \(K_{t,j}^{N,\mathrm{loc}}\) and update
    \(m_{t,j}^{N,\mathrm{loc}}\) using
    \eqref{eq:localized-analysis-mean}.
    \State Apply the symmetric local transform
    \(X_{t,j}^{\mathrm{loc}}
    =\widehat X_{t,j}^{\mathrm{loc}}(\Omega_{t,j}^{\mathrm{loc}})^{-1/2}\).
  \EndFor
  \State Stack the local means and anomaly matrices into
  \(m_t^{N,\mathrm{loc}}\) and \(X_t^{\mathrm{loc}}\).
  \State Form the global analyzed ensemble and forecast every member by
  \(\widehat u_{t+1}^{(n),\mathrm{loc}}=Au_t^{(n),\mathrm{loc}}\).
\EndFor
\end{algorithmic}
\end{algorithm}

The analysis update uses only local anomaly matrices and local
observation-space inverses; it does not form an empirical state
covariance and is embarrassingly parallel over the local blocks. 
On the other hand, the
polynomially decaying interactions are retained in the full forecast
and are not truncated. 
This distinction is important for time-uniform mean accuracy. Replacing
\(A\) by \(D\) in the forecast would simplify the covariance recursion,
but would introduce a persistent model-mismatch error in the comparison
with the coupled Kalman mean. In an unstable system, this error need not
remain uniformly bounded even when the cross-block interactions are
weak. Retaining the full matrix \(A\) avoids this obstruction.

For the theoretical results, we define the full covariance
\begin{equation}
  P_t^{N,\mathrm{loc}}
  :=\frac1{N-1}X_t^{\mathrm{loc}}(X_t^{\mathrm{loc}})^\top,
  \label{eq:full-localized-analysis-covariance}
\end{equation}
and write \(P_{t,jk}^{N,\mathrm{loc}}\) for its \((j,k)\)-block.  This full
matrix is not formed by Algorithm~\ref{alg:localized-square-root-enkf}.
The diagonal blocks satisfy the exact local square-root identity
\begin{equation}
  P_{t,j}^{N,\mathrm{loc}}
  =\frac1{N-1}X_{t,j}^{\mathrm{loc}}
  (X_{t,j}^{\mathrm{loc}})^\top
  =\Psi_j(\widehat P_{t,j}^{N,\mathrm{loc}}).
  \label{eq:localized-exact-analysis-identity}
\end{equation}

The symmetric local transforms are part of the algorithmic specification.
A common mean-preserving orthogonal right factor leaves the full analyzed
covariance unchanged, but using different block-dependent orthogonal right
factors generally changes the cross-covariances and hence subsequent local
forecast covariances. Thus the nonuniqueness discussed after
Algorithm~\ref{alg:deterministic-square-root-enkf} does not extend to
block-dependent transforms without further qualification.

\subsection{Time-uniform accuracy}
\label{sec:localized-main-results}
 In this section, we state our main results for the localized square-root EnKF, demonstrating the time-uniform accuracy of the local covariance blocks and ensemble means. 
 Recall that \(\widehat P_t\), \(P_t=\Psi(\widehat P_t)\), and
\(K_t=\mathsf K(\widehat P_t)\) defined in Section~\ref{subsec:EnKF} are the full Kalman forecast covariance,
analysis covariance, and gain for the coupled spatial model, and  
\(\widehat P_t^0=\operatorname{diag}(\widehat P_{t,1}^0,\ldots,
\widehat P_{t,J}^0)\) is the block-diagonal reference Riccati trajectory defined in \eqref{eq:block-reference-riccati-flow}.
As an analytical comparison flow, the reference trajectory separates the sampling error of the localized
ensemble from the deterministic discrepancy generated by the weak cross-block interactions. Localization changes the relevant sampling problem from estimating one global
covariance to estimating all diagonal covariance blocks. The complexity is
therefore governed by the largest blockwise effective rank, together with the
logarithmic cost of controlling all blocks simultaneously.

Define
\begin{equation}
  r_{\mathrm{eff},j}:=\frac{\operatorname{Tr}(\widehat P_{0,j})}{\|\widehat P_{0,j}\|},
  \qquad
  r_{\mathrm{eff},\mathrm{loc}}:=\max_j r_{\mathrm{eff},j}.
  \label{eq:B5-local-effective-ranks}
\end{equation}
For \(\delta\in(0,1)\), define the local covariance initialization scale
\begin{equation}
  \Delta_{N,\mathrm{loc}}(\delta)
  :={C_0}\max_j\left\{\|\widehat P_{0,j}\|
  \left[\sqrt{\frac{r_{\mathrm{eff},j}
  +\log\left(4J/\delta\right)}{N-1}}
  +\frac{r_{\mathrm{eff},j}+\log\left(4J/\delta\right)}{N-1}\right]\right\},
  \label{eq:B5-local-covariance-scale}
\end{equation}
where \(C_0\) is the universal constant in the effective-rank Gaussian
sample-covariance inequality.
Here \(r_{\mathrm{eff},\mathrm{loc}}\) measures the largest blockwise intrinsic
dimension, while the factor \(\log J\) is the cost of simultaneous covariance
concentration over all blocks. When $\widehat P_0=\widehat P_0^0$,
Assumption~\ref{ass:uniform-local-riccati} gives
\begin{equation}
  r_{u,j}\le
  \frac{\overline p_0}{\underline p_0}\,r_{\mathrm{eff},j},
  \qquad j=1,\ldots,J,
  \label{eq:local-unstable-dimension-effective-rank}
\end{equation}
so the local effective rank also controls the number of unstable directions in each block.

This gives a substantial reduction when the system consists of many comparable
blocks. For example, in a block system satisfying the hypotheses of both
sections, the global effective rank and unstable dimension in \eqref{eq:effective-rank-ensemble-size} may grow proportionally to $J$
while the local intrinsic dimensions remain bounded. The covariance
ensemble-size requirement is then reduced from order $J$ to order $\log J$,
and the leading square-root sampling scale from order $\sqrt{J/N}$ to
$\sqrt{(\log J)/N}$. In exchange, the localized theorem controls the diagonal
covariance blocks rather than the full covariance and incurs a deterministic
$O(\varepsilon)$ bias.

\subsubsection{Time-uniform covariance accuracy}
\label{sec:localized-covariance-accuracy}
The first result quantifies the time-uniform localized covariance accuracy by the local covariance initialization scale and the deterministic bias introduced by localization.
\begin{theorem}[Time-uniform localized covariance accuracy]
\label{thm:B6-localized-covariance-accuracy}
Under Assumptions~\ref{ass:localized-structure} and
\ref{ass:uniform-local-riccati}, suppose that
$
  \widehat P_0=\widehat P_0^0.
$
Initialize the localized ensemble as in
\eqref{eq:localized-gaussian-initial-ensemble} and fix \(\delta\in(0,1)\).
There exist \(\varepsilon_0>0\), \(C_{\mathrm{init,loc}}<\infty\), and
\(C<\infty\), depending only on the uniform model parameters and independent
of \(J,N,\delta\), such that, if
\(0\le\varepsilon\le\varepsilon_0\) and
\begin{equation}
  N-1\ge
  C_{\mathrm{init,loc}}\left[
    r_{\mathrm{eff},\mathrm{loc}}
    +\log\left(\frac{4J}{\delta}\right)
  \right],
  \label{eq:B6-local-ensemble-size}
\end{equation}
then, with probability at least \(1-\delta\) over the initial ensemble,
\begin{equation}
  \sup_{t\ge0}\left[
  \max_j
  \left\|\widehat P_{t,j}^{N,\mathrm{loc}}-\widehat P_{t,j}\right\|
  +
  \max_j
  \left\|P_{t,j}^{N,\mathrm{loc}}-P_{t,j}\right\|
  \right]
  \le
  C\bigl[\Delta_{N,\mathrm{loc}}(\delta)+\varepsilon\bigr].
  \label{eq:B6-time-uniform-covariance-accuracy}
\end{equation}
\end{theorem}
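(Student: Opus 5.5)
We compare both the localized ensemble covariances and the exact coupled Kalman covariances with the block-diagonal reference flow $\widehat P_t^0$ from \eqref{eq:block-reference-riccati-flow}. By the triangle inequality,
\[
\|\widehat P_{t,j}^{N,\mathrm{loc}}-\widehat P_{t,j}\|
\le \|\widehat P_{t,j}^{N,\mathrm{loc}}-\widehat P_{t,j}^0\|
+\|\widehat P_{t,j}^0-\widehat P_{t,j}\|.
\]
It therefore suffices to prove two time-uniform bounds:
\begin{itemize}
\item a \emph{sampling-plus-coupling} bound $\max_j\|\widehat P_{t,j}^{N,\mathrm{loc}}-\widehat P_{t,j}^0\|\le C[\Delta_{N,\mathrm{loc}}(\delta)+\varepsilon]$;
\item a \emph{deterministic bias} bound $\max_j\|\widehat P_{t,j}-\widehat P_{t,j}^0\|\le C\varepsilon$.
\end{itemize}
The analysis blocks follow from the forecast blocks. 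We have $P_{t,j}=(\Psi(\widehat P_t))_{jj}=\Psi_j(\widehat P_{t,j})$ because $H$ and $R$ are block diagonal, and $P^{N,\mathrm{loc}}_{t,j}=\Psi_j(\widehat P^{N,\mathrm{loc}}_{t,j})$ by \eqref{eq:localized-exact-analysis-identity}. Since $\Psi_j$ is Lipschitz on bounded sets with constants uniform in $j$, the analysis bounds follow.

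For the sampling step at $t=0$, we apply the effective-rank concentration bound of Koltchinskii--Lounici to each block $\widehat P_{0,j}=\widehat P_{0,j}^0$ at confidence level $\delta/J$. A union bound then gives, with probability at least $1-\delta$,
\[
\max_j\|\widehat P^{N,\mathrm{loc}}_{0,j}-\widehat P^0_{0,j}\|\le\Delta_{N,\mathrm{loc}}(\delta).
\]
Under \eqref{eq:B6-local-ensemble-size} this deviation is a small fraction of $\underline p_0$, using \eqref{eq:local-unstable-dimension-effective-rank}. Hence each empirical block is still nondegenerate on $\mathsf U_j$ and lies in a uniform class of initial conditions. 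On that class the blockwise analogue of the nonlocalized theory (stability and forgetting for the possibly singular local Riccati maps $\Phi_j$, used in Theorem~\ref{thm:time-uniform-covariance-approximation}) yields uniform bounds and geometric contraction:
\[
\|\Phi_j^t(P)-\Phi_j^t(P')\|\le C\rho^t\|P-P'\|,
\]
with $C,\rho$ independent of $j$ and $J$.

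The coupling step is the core of the argument. We write the forecast recursion of the full localized covariance. Setting $\mathcal T_t:=\operatorname{diag}(T^{\mathrm{loc}}_{t,j})$ as a block-row operation, we have
\[
\widehat P^{N,\mathrm{loc}}_{t+1}=A\,P^{N,\mathrm{loc}}_tA^\top,\qquad A=D+E,
\]
where $P^{N,\mathrm{loc}}_t$ has diagonal blocks $\Psi_j(\widehat P^{N,\mathrm{loc}}_{t,j})$ and off-diagonal blocks $\frac1{N-1}X_{t,j}^{\mathrm{loc}}(X_{t,k}^{\mathrm{loc}})^\top$. Taking the $(j,j)$ block gives
\[
\widehat P^{N,\mathrm{loc}}_{t+1,j}=\Phi_j(\widehat P^{N,\mathrm{loc}}_{t,j})+\mathcal R_{t,j},
\]
where $\mathcal R_{t,j}$ collects terms containing at least one factor $E$. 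Using $\|E\|_{\mathrm{loc}}\le\varepsilon$ and a uniform bound on $\|P^{N,\mathrm{loc}}_t\|_{\mathrm{loc}}$, we get $\|\mathcal R_{t,j}\|\le C\varepsilon$. The same decomposition applies to the exact coupled flow, $\widehat P_{t+1,j}=\Phi_j(\widehat P_{t,j})+\mathcal R'_{t,j}$, because $\Psi(\widehat P_t)$ is block-separable on the diagonal but its off-diagonal blocks feed in through $E$.

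Both recursions are therefore perturbed local Riccati flows with perturbation $O(\varepsilon)$ per step. A discrete Duhamel argument with the exponential forgetting of $\Phi_j$ sums these perturbations into a geometric series:
\[
\|\widehat P^{N,\mathrm{loc}}_{t,j}-\widehat P^0_{t,j}\|\le C\rho^t\Delta_{N,\mathrm{loc}}+C\varepsilon\sum_{s<t}\rho^{t-s}.
\]
This gives the claimed bound. To run the argument we need the perturbed trajectories to stay inside the uniform class where contraction holds. We handle this by a bootstrap or induction in $t$, taking $\varepsilon_0$ and $C_{\mathrm{init,loc}}^{-1}$ small enough.

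The main obstacle is the a priori uniform control of $\|P^{N,\mathrm{loc}}_t\|_{\mathrm{loc}}$, and of $\|P_t\|_{\mathrm{loc}}$ for the exact flow, uniformly in $t$ and $J$. Summing $\|E_{jk}\|$ against off-diagonal blocks requires this control; operator-norm control alone would reintroduce $J$. For the localized ensemble, Cauchy--Schwarz gives $\|P^{N,\mathrm{loc}}_{t,jk}\|\le\|P^{N,\mathrm{loc}}_{t,j}\|^{1/2}\|P^{N,\mathrm{loc}}_{t,k}\|^{1/2}$. A bound on the diagonal blocks thus bounds every block, and combined with $\|E\|_{\mathrm{loc}}\le\varepsilon$ this controls $\mathcal R_{t,j}$ without summing decay of the cross blocks. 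The same Cauchy--Schwarz trick handles the exact flow. The induction hypothesis then only needs to carry $\max_j\|\cdot_{jj}\|\le\overline p$ together with proximity to $\widehat P^0_{t,j}$. Care is also needed with the singular stable directions: forgetting must be established for initial data that are only positive on $\mathsf U_j$. For this I would first try to reuse the argument behind Theorem~\ref{thm:time-uniform-covariance-approximation} blockwise, with all constants tracked to depend only on $d_{u,\mathrm{loc}},a_+,h_+,r_\pm,\kappa_{\mathrm{sp}},C_{\mathrm{sp}},\gamma_{s,u},L,c_o,\underline p_0,\overline p_0$.
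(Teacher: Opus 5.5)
Your ensemble half is sound and is essentially the paper's Theorem~\ref{thm:B4-empirical-covariance-stability}: union-bound concentration, the perturbed local recursion with residual controlled by the max-block Cauchy--Schwarz bound on empirical cross-blocks, the first-exit bootstrap, and the Duhamel sum with blockwise forgetting. (A uniform bound on $\|P_t^{N,\mathrm{loc}}\|_{\mathrm{loc}}$, which you mention first, would not be $J$-uniform. The max-block bound you then use is the right one.)

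The gap is the deterministic-bias half. Your identity $P_{t,j}=(\Psi(\widehat P_t))_{jj}=\Psi_j(\widehat P_{t,j})$ is false. Block-diagonal $H$ and $R$ do not make the exact Kalman analysis blockwise once $\widehat P_t$ has nonzero cross-blocks. In that case $S_t=H\widehat P_tH^\top+R$ has off-diagonal blocks $H_j(\widehat P_t)_{jk}H_k^\top$, its inverse is full, and
\[
  (\Psi(\widehat P_t))_{jj}
  =\widehat P_{t,j}-\sum_{k,l}(\widehat P_t)_{jk}H_k^\top
  \bigl[S_t^{-1}\bigr]_{kl}H_l(\widehat P_t)_{lj}.
\]
A simple case already fails: two scalar blocks with $H=R=I_2$, unit variances and correlation $c$. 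The $(1,1)$ analysis entry is $(2-c^2)/(4-c^2)$, which differs from $\frac12=\Psi_1(1)$ whenever $c\neq0$.

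Consequently, in your recursion $\widehat P_{t+1,j}=\Phi_j(\widehat P_{t,j})+\mathcal R'_{t,j}$, the residual contains $A_j[(\Psi(\widehat P_t))_{jj}-\Psi_j(\widehat P_{t,j})]A_j^\top$. This term depends on the cross-blocks, at second order, with no factor of $E$. Cauchy--Schwarz bounds those cross-blocks only by $O(1)$, so this residual is $O(1)$, not $O(\varepsilon)$. The ensemble avoids this only because Algorithm~\ref{alg:localized-square-root-enkf} performs the analysis blockwise by construction; the exact filter does not. The same false identity also invalidates your derivation of the analysis-block bound for $P_{t,j}$ via Lipschitz continuity of $\Psi_j$.

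What is missing is a proof that the exact filter's cross-covariances stay small in the row-sum sense, namely $\|\widehat P_t-\widehat P_t^0\|_{\mathrm{loc}}\le C\varepsilon$ uniformly in $t$ and $J$. Operator-norm or block-maximum control is not enough. The analysis correction sums over $k,l$, and $S_t^{-1}$ must be controlled in $\|\cdot\|_{\mathrm{loc}}$ by a Neumann series around the block-diagonal $H\widehat P_t^0H^\top+R$. This forces you to propagate the full matrix rather than its diagonal blocks. Moreover, the forcing $E\Psi D^\top+D\Psi E^\top+E\Psi E^\top$ is $O(\varepsilon)$ in $\|\cdot\|_{\mathrm{loc}}$ only if $\|\Psi(\widehat P_t)\|_{\mathrm{loc}}$ is already bounded, so the argument must be run inside a tube.

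The paper does exactly this in Theorem~\ref{thm:B3-localization-bias}, in three steps:
\begin{itemize}
\item In a $\|\cdot\|_{\mathrm{loc}}$-tube around $\widehat P_t^0$, $\Phi$ and $\Phi_D$ differ by $O(\varepsilon)$ per step (Lemma~\ref{lem:B3-tube-regularity}).
\item $T_*$ steps of $\Phi_D$ contract, because the derivative at the reference is $Z\mapsto\mathcal B Z\mathcal B^\top$ with $\mathcal B$ block diagonal and exponentially stable. Off-diagonal blocks $Z_{jk}\mapsto\mathcal B_jZ_{jk}\mathcal B_k^\top$ are therefore damped too (Lemma~\ref{lem:B3-finite-step-perturbation}).
\item The grid recursion $e_{\ell+1}\le\theta_*e_\ell+c_*\varepsilon$ closes the bootstrap.
\end{itemize}
The analysis-block bound then goes through the reference, using $P_{t,j}^0=(\Psi(\widehat P_t^0))_{jj}$ and the $\|\cdot\|_{\mathrm{loc}}$-Lipschitz estimate for $\Psi$ in the tube, rather than through $\Psi_j$.
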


The right-hand side separates the blockwise covariance-sampling error
$\Delta_{N,\mathrm{loc}}(\delta)$ from the $O(\varepsilon)$ localization bias.
Increasing $N$ reduces the first term but not the second, so localization is
most effective when the local intrinsic dimensions are small and the
cross-block interactions are weak.

\begin{remark}[Sharper estimates]
The proof also gives geometric forgetting of the initial sampling error, the
corresponding gain estimate, and an extension to sufficiently close
non-block-diagonal initial covariances. See
Proposition~\ref{prop:localized-covariance-package} and
Theorems~\ref{thm:B3-localization-bias} and
\ref{thm:B4-empirical-covariance-stability}.
\end{remark}

\begin{remark}[What is and is not approximated]
Theorem~\ref{thm:B6-localized-covariance-accuracy} controls the local forecast
and analysis covariance blocks, not the full rank-deficient empirical
covariance in a global operator norm. When $N\ll d_u$, the latter is necessarily
rank deficient; such global control is neither asserted by the theorem nor
needed to implement the localized analysis.
\end{remark}

\subsubsection{Time-uniform mean accuracy}
\label{sec:localized-mean-accuracy}

The covariance estimate also controls the localized gain; combined with
stability of the resulting mean recursion, this yields the following
uniform-in-time moment bound for the local ensemble means.
\begin{theorem}[Time-uniform localized mean accuracy]
\label{thm:B7-localized-mean-accuracy}
Under the conditions of
Theorem~\ref{thm:B6-localized-covariance-accuracy} and
the linear--Gaussian model
\eqref{eq:initial-state-law}--\eqref{eq:observation-model}, assume in addition
that the initial ensemble is independent of the signal and observation
noises.  There is a constant
\(C<\infty\), independent of \(J,N,\delta,q\), such that there exists an event,
measurable with respect to the initial centered anomaly matrix and of
probability at least \(1-\delta\), on which, for every \(q\ge1\),
\begin{align}
  &\sup_{t\ge0}\Bigg[
  \left(
  \mathbb E\left[
  \|\widehat m_t^{N,\mathrm{loc}}-\widehat m_t\|_{\max}^q
  \,\middle|\,\widehat X_0^{\mathrm{loc}}
  \right]
  \right)^{1/q}
  \nonumber\\
  &\qquad+
  \left(
  \mathbb E\left[
  \|m_t^{N,\mathrm{loc}}-m_t\|_{\max}^q
  \,\middle|\,\widehat X_0^{\mathrm{loc}}
  \right]
  \right)^{1/q}
  \Bigg]
  \le
  C\sqrt{q+\log(2J)}
  \left[
  N^{-1/2}
  +\Delta_{N,\mathrm{loc}}(\delta)
  +\varepsilon
  \right].
  \label{eq:B7-time-uniform-localized-mean}
\end{align}
The conditional expectations hold the centered initial anomalies fixed and
average over the initial sample mean, the signal, and the observation noises.
\end{theorem}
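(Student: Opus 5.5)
We will write the error recursion for the localized mean and the exact Kalman mean, and bound it in the block-max norm through the covariance estimates of Theorem~\ref{thm:B6-localized-covariance-accuracy}. Set $\widehat e_t:=\widehat m_t^{N,\mathrm{loc}}-\widehat m_t$. Using $\widehat m_{t+1}=A(I-K_tH)\widehat m_t+AK_ty_t$ and the analogous localized identity with $K_t^{N,\mathrm{loc}}$, we get
\[
  \widehat e_{t+1}=A(I-K_tH)\widehat e_t+A\bigl(K_t^{N,\mathrm{loc}}-K_t\bigr)\bigl(y_t-H\widehat m_t^{N,\mathrm{loc}}\bigr).
\]
The forcing equals $A(K_t^{N,\mathrm{loc}}-K_t)(H(u_t-\widehat m_t)+\eta_t-H\widehat e_t)$. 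We will move the $\widehat e_t$ piece into the propagator, which becomes $B_t^{N}:=A(I-K_t^{N,\mathrm{loc}}H)$. The remaining forcing is driven by the innovation $\nu_t:=H(u_t-\widehat m_t)+\eta_t$. Conditionally on $\mathcal Y_{t-1}$, $\nu_t$ is Gaussian with covariance $H\widehat P_tH^\top+R$. Since the covariance recursions are deterministic given $\widehat X_0^{\mathrm{loc}}$, the innovations are independent Gaussians, and all gains are fixed once we condition on the initial anomalies. Thus $\widehat e_t=\Theta_{t,0}\widehat e_0+\sum_{s<t}\Theta_{t,s+1}A(K_s^{N,\mathrm{loc}}-K_s)\nu_s$ with $\Theta_{t,s}:=B_{t-1}^N\cdots B_s^N$. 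Conditionally on $\widehat X_0^{\mathrm{loc}}$, this is a centered Gaussian vector, because $\widehat e_0$, the sample-mean error, is Gaussian with covariance $\widehat P_0/N$ and is independent of the anomalies.

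\textbf{Step 1 (good event and gain bounds).} On the event of Theorem~\ref{thm:B6-localized-covariance-accuracy}, which depends only on the initial anomalies, we will use the blockwise covariance bounds and the sharper geometric forgetting from Proposition~\ref{prop:localized-covariance-package}. From these we obtain $\|K_t^{N,\mathrm{loc}}-K_t\|_{\mathrm{loc}}\le C[\rho^t\Delta_{N,\mathrm{loc}}+\varepsilon]$. The diagonal blocks come from the Lipschitz property of $\mathsf K_j$ on bounded sets. The off-diagonal blocks of $K_t$ are $O(\varepsilon)$ in $\|\cdot\|_{\mathrm{loc}}$, by the block-decay bounds for the coupled Riccati flow near the reference flow (Theorem~\ref{thm:B3-localization-bias}).

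\textbf{Step 2 (uniform exponential stability of $\Theta_{t,s}$ in $\|\cdot\|_{\mathrm{loc}}$).} Write $B_t^N=\operatorname{diag}_j\bigl(A_j(I-K_{t,j}^{N,\mathrm{loc}}H_j)\bigr)+E(I-K_t^{N,\mathrm{loc}}H)$. The block-diagonal part is a local closed-loop map with gain from a covariance within $C[\Delta_{N,\mathrm{loc}}+\varepsilon]$ of the reference flow $\widehat P_{t,j}^0$. Uniform detectability and the local Riccati stability results used for the nonlocalized theorem, applied blockwise with uniform constants, give $\|\prod \text{diag}\|_{\mathrm{loc}}\le C\rho^{t-s}$. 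A discrete Gronwall argument then absorbs the $O(\varepsilon)$ coupling perturbation, together with the $O(\Delta_{N,\mathrm{loc}})$ gain perturbation, provided $\varepsilon_0$ and $C_{\mathrm{init,loc}}^{-1}$ are small. This yields $\|\Theta_{t,s}\|_{\mathrm{loc}}\le C\bar\rho^{\,t-s}$. I expect this to be the main obstacle: the products are time-inhomogeneous and the local stability has to be made uniform in $J$. I would handle it through a uniform $L$-step contraction of the block closed-loop maps, perturbed additively.

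\textbf{Step 3 (Gaussian moments in the max norm).} For each block $j$, $(\widehat e_t)_j$ is conditionally centered Gaussian in $\R^{d_{u,j}}$. Its covariance has norm at most
\[
\sigma_t^2\lesssim \|\Theta_{t,0}\|_{\mathrm{loc}}^2\|\widehat P_0\|_{\mathrm{loc}}/N+\sum_s\|\Theta_{t,s+1}\|_{\mathrm{loc}}^2(\rho^s\Delta+\varepsilon)^2\|H\widehat P_sH^\top+R\|_{\mathrm{loc}}.
\]
Here $\|\widehat P_s\|_{\mathrm{loc}}$ is bounded uniformly through its proximity to the reference flow, so $\sigma_t\le C[N^{-1/2}+\Delta+\varepsilon]$ uniformly in $t$. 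Since $d_{u,j}\le d_{u,\mathrm{loc}}$, standard Gaussian norm concentration gives $\|(\widehat e_t)_j\|$ $q$-th moments of order $\sqrt q\,\sigma_t$. A union bound over the $J$ blocks, equivalently a max-of-sub-Gaussians moment bound, then gives $\mathbb E[\|\widehat e_t\|_{\max}^q]^{1/q}\le C\sqrt{q+\log(2J)}\,\sigma_t$.

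\textbf{Step 4 (analysis mean).} Finally, $m_t^{N,\mathrm{loc}}-m_t=(I-K_t^{N,\mathrm{loc}}H)\widehat e_t+(K_t^{N,\mathrm{loc}}-K_t)\nu_t$. This is again conditionally Gaussian with a blockwise covariance of the same order, so Step 3 gives the same bound and completes \eqref{eq:B7-time-uniform-localized-mean}.
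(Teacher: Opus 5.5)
Your proposal is correct and shares the paper's skeleton. Both arguments condition on $\widehat X_0^{\mathrm{loc}}$ so that gains and propagators are frozen, use the recursion $\widehat e_{t+1}=B_t^{N,\mathrm{loc}}\widehat e_t+A(K_t^{N,\mathrm{loc}}-K_t)\iota_t$, bound the gain error by $C[\rho^t\Delta_{N,\mathrm{loc}}(\delta)+\varepsilon]$, prove robust exponential stability of the closed-loop products in $\|\cdot\|_{\mathrm{loc}}$, and finish with a Gaussian block-maximum bound.

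\textbf{Where the routes differ.} The genuine difference is how the random inputs are aggregated.
\begin{itemize}
\item The paper applies conditional Minkowski term by term. It bounds each $\|\iota_s\|_{q\mid X}$ separately by $C\mathcal I_{q,\mathrm{loc}}$ and sums the weights, which gives $t\rho^{t-1}\Delta_{N,\mathrm{loc}}(\delta)$ and $\sum_s\rho^{t-1-s}\varepsilon$.
\item You instead use that, given $\widehat X_0^{\mathrm{loc}}$, $\widehat e_t$ is exactly a centered Gaussian vector. You bound each diagonal covariance block through $\|(MSM^\top)_{jj}\|\le\|M\|_{\mathrm{loc}}^2\|S\|_{\mathrm{loc}}$ and apply one max-of-Gaussians estimate, using $d_{u,j}\le d_{u,\mathrm{loc}}$ for the trace term.
\end{itemize}
Your route adds variances in quadrature and gives a sub-Gaussian tail for $\|\widehat e_t\|_{\max}$ directly. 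It relies on joint Gaussianity and on mutual independence of $\widehat e_0$ and the innovations across time. The paper's route needs only per-time block-maximum moment bounds on $\widehat e_0$ and $\iota_t$, so it is more robust. It loses nothing here because the weights are geometric, and it records the finer intermediate bound \eqref{eq:B7-time-uniform-localized-mean-detailed}.

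A smaller difference is in the stability step. You perturb from the block-diagonal reference product $B_t^0=D(I-K_t^0H)$, treating $E(I-K_t^{N,\mathrm{loc}}H)$ and the gain error as the perturbation. The paper (Lemma~\ref{lem:B7-localized-closed-loop-stability}) perturbs from the full Kalman closed loop $B_t$ via Corollary~\ref{cor:B3-full-closed-loop-stability}. Both are instances of Lemma~\ref{lem:B3-robust-products}. Make the reference $B_t^0$ explicit in your Step 2. The diagonal parts $A_j(I-K_{t,j}^{N,\mathrm{loc}}H_j)$ are evaluated along $\widehat P_{t,j}^{N,\mathrm{loc}}$, which is not an exact local Riccati trajectory. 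Their products are therefore not covered directly by the local Riccati stability estimates (Corollary~\ref{cor:block-diagonal-reference-stability}).

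\textbf{One imprecision in Step 1.} In general $(K_t)_{jj}\neq\mathsf K_j(\widehat P_{t,j})$, because $(H\widehat P_tH^\top+R)^{-1}$ is not block diagonal. So the diagonal blocks of $K_t^{N,\mathrm{loc}}-K_t$ do not follow from Lipschitz continuity of $\mathsf K_j$ alone. Split instead
\[
K_t^{N,\mathrm{loc}}-K_t=(K_t^{N,\mathrm{loc}}-K_t^0)+(K_t^0-K_t).
\]
The first term is block diagonal and is controlled by Proposition~\ref{prop:localized-covariance-package}. The second is $O(\varepsilon)$ in $\|\cdot\|_{\mathrm{loc}}$, diagonal and off-diagonal blocks together, by the gain conclusion of Theorem~\ref{thm:B3-localization-bias}. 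With this fix your bound $\|K_t^{N,\mathrm{loc}}-K_t\|_{\mathrm{loc}}\le C[\rho^t\Delta_{N,\mathrm{loc}}(\delta)+\varepsilon]$ holds as claimed, and the rest of the argument goes through.
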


The factor $\sqrt{q+\log(2J)}$ combines Gaussian moment growth with the
logarithmic cost of controlling the maximum over all blocks. Inside the
brackets, $N^{-1/2}$ is the initial sample-mean scale,
$\Delta_{N,\mathrm{loc}}(\delta)$ is the covariance-sampling scale, and
$\varepsilon$ is the localization bias. Unlike the pathwise bound in
Theorem~\ref{thm:time-uniform-mean-approximation}, the localized result is
formulated in uniform-in-time moments; Remark~\ref{rem:B7-no-pathwise-supremum}
explains why the persistent gain discrepancy precludes a corresponding
infinite-time pathwise supremum.

\begin{remark}[Sharper estimates]
We prove a sharper
conditional-moment estimate in \eqref{eq:B7-time-uniform-localized-mean-detailed};
Proposition~\ref{prop:B7-refined-gain-decomposition} supplies the gain bound used
in its proof.
\end{remark}

\begin{remark}[Why the result is formulated in moments]
\label{rem:B7-no-pathwise-supremum}
When \(\varepsilon>0\), the gain discrepancy contains a persistent
\(O(\varepsilon)\) component.  It is driven at every time by Gaussian
innovations.  Even the scalar stable recursion
\[
  z_{t+1}=\rho z_t+\varepsilon\xi_t,
  \qquad |\rho|<1,
  \qquad \xi_t\stackrel{\mathrm{i.i.d.}}{\sim}\mathcal N(0,1),
\]
satisfies \(\sup_{t\ge0}|z_t|=\infty\) almost surely.  Thus an unnormalized
infinite-time pathwise supremum is generally unavailable, whereas the
uniform-in-time moment bound in \eqref{eq:B7-time-uniform-localized-mean} is stable and its persistent
localization contribution is proportional to \(\varepsilon\).
\end{remark}

Theorems~\ref{thm:B6-localized-covariance-accuracy} and
\ref{thm:B7-localized-mean-accuracy} show that weakly coupled localization
trades a controlled $O(\varepsilon)$ bias for sampling requirements governed by
local intrinsic dimension and only logarithmic dependence on the number of
blocks.

\section{Proofs}
\label{sec:proofs}

Throughout this section, \(C<\infty\) and \(\rho\in(0,1)\) denote constants
that depend only on the fixed model parameters and may change from line to
line.  In the localized setting they are also independent of the number of
blocks. Their value is never allowed to depend on \( d_u,d_y, t,N,J,\delta\), or \(q\);
any additional dependence is stated explicitly.
We consider the EnKF and localized EnKF in turn.

\subsection{Ensemble Kalman filter}
\label{subsec:enkf-proofs}

We first collect the identities and estimates used in the EnKF proofs. These auxiliary results are proved in Section~\ref{app:enkf} of the supplementary materials.

 We begin by finding recursive formulas for the mean errors and the Riccati flow of the covariance updates. Set \(\widehat e_t^N:=\widehat m_t^N-\widehat m_t\),
\(e_t^N:=m_t^N-m_t\), and \(\iota_t:=y_t-H\widehat m_t\).  For $P\in\Splus^{d_u}$, define the forecast closed-loop map
\begin{equation}
  B(P):=A(I-\mathsf K(P)H).
  \label{eq:closed-loop-map}
\end{equation}
The map $B(P)$ propagates a forecast error
through one Kalman analysis correction and the subsequent forecast, with the open-loop dynamics $A$.
The derivative identity  in \eqref{eq:enkf-riccati-difference-identity}  below shows that the same map also propagates covariance perturbations.

\begin{proposition}[Exact square-root identities]
\label{prop:exact-square-root-identities}
The transform in \eqref{eq:symmetric-transform} is well defined and
satisfies
\begin{equation}
  T_t\one=\one,
  \qquad
  X_t\one=0.
  \label{eq:centering-identity}
\end{equation}
Moreover,
\begin{equation}
  P_t^N=\Psi(\widehat P_t^N),
  \qquad
  \widehat P_{t+1}^N=\Phi(\widehat P_t^N),
  \label{eq:exact-forecast-identities}
\end{equation}
and hence
\begin{align}
  \widehat P_t^N=\Phi^t(\widehat P_0^N),
  \qquad \widehat P_t=\Phi^t(\widehat P_0),
  \label{eq:exact-riccati-trajectories-main}
\end{align}
while the mean errors satisfy
\begin{align}
  e_t^N&=(I-K_t^NH)\widehat e_t^N+(K_t^N-K_t)\iota_t,
  \nonumber\\
  \widehat e_{t+1}^N
  &=B(\widehat P_t^N)\widehat e_t^N
  +A(K_t^N-K_t)\iota_t,
  \label{eq:main-exact-enkf-recursions}
\end{align}
where \(K_t^N=\mathsf K(\widehat P_t^N)\) and
\(K_t=\mathsf K(\widehat P_t)\).  These conclusions require neither
invertibility of \(\widehat P_t^N\) nor the condition \(N>d_u\).
\end{proposition}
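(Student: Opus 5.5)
The plan is to verify everything by direct linear algebra on the anomaly matrices, starting with the transform. Since \(R\succ0\), the matrix \(\frac{1}{N-1}Y_t^\top R^{-1}Y_t\) is positive semidefinite, so \(\Omega_t\succeq I_N\) is symmetric positive definite. Hence \(T_t=\Omega_t^{-1/2}\) is well defined, symmetric and positive definite. Because \(\widehat X_t\one=0\), we have \(Y_t\one=H\widehat X_t\one=0\), so \(\Omega_t\one=\one\). Thus \(\one\) is an eigenvector of \(\Omega_t\) with eigenvalue \(1\), and by the spectral calculus it is also an eigenvector of \(\Omega_t^{-1/2}\) with eigenvalue \(1\). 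This gives \(T_t\one=\one\), and also \(T_t^\top\one=\one\) by symmetry. In particular \(X_t\one=\widehat X_tT_t\one=\widehat X_t\one=0\), which is \eqref{eq:centering-identity}.

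For the covariance identity, write \(S:=(N-1)^{-1/2}\widehat X_t\), so that \(\widehat P_t^N=SS^\top\) and \((N-1)^{-1}Y_t^\top R^{-1}Y_t=S^\top H^\top R^{-1}HS\). Then
\[
P_t^N=S\bigl(I_N+S^\top H^\top R^{-1}HS\bigr)^{-1}S^\top .
\]
Apply the Woodbury identity to the middle inverse,
\[
(I+S^\top H^\top R^{-1}HS)^{-1}=I-S^\top H^\top\bigl(R+HSS^\top H^\top\bigr)^{-1}HS .
\]
Substituting yields \(P_t^N=SS^\top-SS^\top H^\top(HSS^\top H^\top+R)^{-1}HSS^\top=\Psi(\widehat P_t^N)\). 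This step uses only the invertibility of \(R\) and \(HSS^\top H^\top+R\), so it requires neither invertibility of \(\widehat P_t^N\) nor \(N>d_u\). The same push-through identity, \(SS^\top H^\top(HSS^\top H^\top+R)^{-1}\), shows that the anomaly formula for \(K_t^N\) equals \(\mathsf K(\widehat P_t^N)\).

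Next I would handle the forecast step. The analyzed members have mean \(m_t^N+X_t\one/N=m_t^N\) by centering, so their anomaly matrix is exactly \(X_t\). Applying \(A\) to every member gives \(\widehat m_{t+1}^N=Am_t^N\) and \(\widehat X_{t+1}=AX_t\), hence \(\widehat P_{t+1}^N=AP_t^NA^\top=\Phi(\widehat P_t^N)\). Induction on \(t\), together with \eqref{eq:KFforecast}, then gives both trajectories in \eqref{eq:exact-riccati-trajectories-main}.

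For the mean errors, subtract \eqref{eq:KFmeananalysis} from the ensemble update. Using \(y_t-H\widehat m_t^N=\iota_t-H\widehat e_t^N\), we get
\[
e_t^N=\widehat e_t^N+K_t^N(\iota_t-H\widehat e_t^N)-K_t\iota_t=(I-K_t^NH)\widehat e_t^N+(K_t^N-K_t)\iota_t .
\]
Multiplying by \(A\) and recalling \(\widehat e_{t+1}^N=Ae_t^N\) and \(B(P)=A(I-\mathsf K(P)H)\) gives the second recursion in \eqref{eq:main-exact-enkf-recursions}.

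All of this is routine. The only point needing care is the eigenvector argument for \(\Omega_t^{-1/2}\), which is also where the symmetric choice of square root matters.
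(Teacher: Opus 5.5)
Your proposal is correct and follows essentially the same route as the paper's proof. It uses the fixed-vector argument \(\Omega_t\one=\one\Rightarrow T_t\one=\one\) for centering, Woodbury applied to \(T_tT_t^\top=\Omega_t^{-1}\) for \(P_t^N=\Psi(\widehat P_t^N)\), the relation \(\widehat X_{t+1}=AX_t\) for the forecast, and direct subtraction of the mean updates for the error recursions, and your extra checks (that the analyzed members have mean \(m_t^N\) and anomalies \(X_t\), and that the anomaly form of \(K_t^N\) equals \(\mathsf K(\widehat P_t^N)\)) just make explicit steps the paper leaves implicit.
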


 Now we turn to Riccati identities for the covariance update, written in a form that does not require $P$ to be invertible. These identities factorize $\Phi^t$ into two parts: a reduction of uncertainty by the observations, and an amplification or decay by the dynamics. For \(t\ge0\), define  the finite-horizon observability Gramian $\mathcal O_t$ and the batch posterior covariance of the initial state $\mathcal C_t(P)$ as 
\begin{align}
  \mathcal O_t
  &:={\sum}_{\ell=0}^{t-1}(A^\ell)^\top H^\top R^{-1}HA^\ell,
  &\mathcal O_0&:=0,
  \label{eq:observability-information-matrix}\\
  \mathcal C_t(P)
  &:=P^{1/2}(I+P^{1/2}\mathcal O_tP^{1/2})^{-1}P^{1/2},
  \label{eq:batch-initial-covariance}
\end{align}
and set the associated closed-loop propagator as 
\begin{equation}
  \mathcal B_0(P):=I,
  \qquad
  \mathcal B_t(P):=B(\Phi^{t-1}(P))\cdots B(P),
  \quad t\ge1.
  \label{eq:closed-loop-product}
\end{equation}

\begin{proposition}[Exact Riccati identities]
\label{prop:exact-riccati-identities}
For every \(P,Q\in\Splus^{d_u}\) and
\(t\ge0\),
\begin{align}
  \Phi^t(P)&=A^t\mathcal C_t(P)(A^t)^\top,
  \label{eq:batch-covariance-formula}\\
  \mathcal B_t(P)&=A^t(I+P\mathcal O_t)^{-1}.
  \label{eq:closed-loop-formula}
\end{align}
Moreover, for every symmetric matrix \(Z\), 
\begin{equation}
\begin{aligned}
  \Phi^t(P)-\Phi^t(Q)
  &=\mathcal B_t(P)(P-Q)\mathcal B_t(Q)^\top,\\
  \mathrm D\Phi^t(P)[Z]
  &=\mathcal B_t(P)Z\mathcal B_t(P)^\top.
\end{aligned}
\label{eq:enkf-riccati-difference-identity}
\end{equation}
In particular,
\begin{align}
  \|\Phi^t(P)-\Phi^t(Q)\|
  &\le \|\mathcal B_t(P)\|\,\|P-Q\|\,\|\mathcal B_t(Q)\|,
  \label{eq:riccati-difference-bound}\\
  \mathrm D\Phi(P)[Z]
  &=B(P)ZB(P)^\top.
  \label{eq:one-step-riccati-derivative}
\end{align}
Here the derivative is taken for the smooth extension of \(\Phi\) to a
neighborhood of \(P\) in the space of symmetric matrices.
\end{proposition}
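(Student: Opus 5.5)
The plan is to prove the two closed-form identities \eqref{eq:batch-covariance-formula} and \eqref{eq:closed-loop-formula} by induction on $t$, and then read off the difference and derivative formulas. Everything will be written without inverting $P$, using the push-through identity $(I+XY)^{-1}X=X(I+YX)^{-1}$.

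\textbf{Step 1: one step.} First I rewrite the analysis map in information-free form. With $G:=H^\top R^{-1}H$, the Woodbury/push-through identity gives
\[
  \Psi(P)=P^{1/2}(I+P^{1/2}GP^{1/2})^{-1}P^{1/2}=(I+PG)^{-1}P,
\]
and $I-\mathsf K(P)H=(I+PG)^{-1}$. The latter holds because
\[
  \mathsf K(P)H=PH^\top(HPH^\top+R)^{-1}H=(I+PG)^{-1}PG .
\]
This settles $t=1$. Note that $\mathcal O_1=G$.

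\textbf{Step 2: induction for \eqref{eq:batch-covariance-formula}.} Suppose $\Phi^t(P)=A^t\mathcal C_t(P)(A^t)^\top$. Write $S:=A^t\mathcal C_t(P)^{1/2}$. Then
\[
  \Psi(SS^\top)=S\bigl(I+S^\top GS\bigr)^{-1}S^\top,
\]
and
\[
  S^\top GS=\mathcal C_t^{1/2}(A^t)^\top GA^t\,\mathcal C_t^{1/2}.
\]
It therefore suffices to show that
\[
  \mathcal C_t^{1/2}\bigl(I+\mathcal C_t^{1/2}M\mathcal C_t^{1/2}\bigr)^{-1}\mathcal C_t^{1/2}=\mathcal C_{t+1}(P),
  \qquad M=(A^t)^\top GA^t,
\]
i.e., that conditioning on $\mathcal O_t$ and then on $M$ equals conditioning on $\mathcal O_{t+1}=\mathcal O_t+M$. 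In the form $\mathcal C(P)=(I+P\mathcal O)^{-1}P$ this is the identity
\[
  (I+\mathcal C_tM)^{-1}(I+P\mathcal O_t)^{-1}P=(I+P\mathcal O_{t+1})^{-1}P,
\]
which holds because $(I+P\mathcal O_t)(I+\mathcal C_tM)=I+P\mathcal O_t+PM$. The key point here is that $(I+P\mathcal O_t)\mathcal C_t=P$. Applying $A$ on both sides then gives the claim.

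\textbf{Step 3: induction for \eqref{eq:closed-loop-formula}.} By Step 1 and Step 2,
\[
  B(\Phi^t(P))=A\bigl(I+A^t\mathcal C_t(A^t)^\top G\bigr)^{-1}.
\]
Multiply this on the right by $\mathcal B_t(P)=A^t(I+P\mathcal O_t)^{-1}$. Push $A^t$ through using $(I+XA^t)^{-1}A^t=A^t(I+YX)^{-1}$-type identities. This reduces the claim to
\[
  (I+\mathcal C_tM)^{-1}(I+P\mathcal O_t)^{-1}=(I+P\mathcal O_{t+1})^{-1},
\]
which is the same factorization as in Step 2.

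\textbf{Step 4: difference and derivative.} For the difference, the formulas give
\[
  \Phi^t(P)-\Phi^t(Q)=A^t\bigl[(I+P\mathcal O_t)^{-1}P-Q(I+\mathcal O_tQ)^{-1}\bigr](A^t)^\top .
\]
The bracket equals $(I+P\mathcal O_t)^{-1}(P-Q)(I+\mathcal O_tQ)^{-1}$. This follows by multiplying out
\[
  P(I+\mathcal O_tQ)-(I+P\mathcal O_t)Q=P-Q,
\]
and using the symmetric forms $(I+P\mathcal O)^{-1}P=P(I+\mathcal O P)^{-1}$. Since $(I+\mathcal O_tQ)^{-1}=\bigl((I+Q\mathcal O_t)^{-1}\bigr)^\top$, the difference is $\mathcal B_t(P)(P-Q)\mathcal B_t(Q)^\top$. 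These expressions are rational in $P$ and well defined on a symmetric neighborhood where $I+P\mathcal O_t$ is invertible, so the extension is smooth. Setting $Q=P+sZ$ and letting $s\to0$ gives the derivative formula. The norm bound \eqref{eq:riccati-difference-bound} is immediate, and $t=1$ gives \eqref{eq:one-step-riccati-derivative}.

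The main obstacle is the bookkeeping in Steps 2 and 3: the ordering of the push-through manipulations and the transposes, while keeping every inverse of the form $I+(\text{psd})(\text{psd})$. Such inverses exist because $PX$ is similar to $P^{1/2}XP^{1/2}\succeq0$, and this is what lets the argument avoid any invertibility assumption on $P$.
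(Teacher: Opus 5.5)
Your proposal is correct and follows essentially the paper's proof: induction on $t$ through the consistency of batch and sequential conditioning, the same push-through step for $\mathcal B_t(P)$, and the same elementary identity for $\Phi^t(P)-\Phi^t(Q)$ (your factorization $(I+P\mathcal O_t)(I+\mathcal C_t M)=I+P\mathcal O_{t+1}$ is the information-form counterpart of the paper's Woodbury update $\mathcal C_{t+1}=\mathcal C_t-\mathcal C_tG_t^\top(R+G_t\mathcal C_tG_t^\top)^{-1}G_t\mathcal C_t$ with $G_t=HA^t$). The only minor variation is that you obtain $\mathrm D\Phi^t(P)$ by differentiating the difference identity at $Q=P+sZ$, whereas the paper uses the chain rule with $\mathrm D\Psi(P)[Z]=(I-\mathsf K(P)H)Z(I-\mathsf K(P)H)^\top$; this is fine because all the identities are purely algebraic and persist on the neighborhood where the relevant inverses exist.
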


Notice that $\mathcal C_t(P)$ is precisely $\operatorname{Cov}(u_0 \mid  \mathcal Y_{t-1})$ when $\operatorname{Cov}(u_0)=P$. Congruence by $A^t$ propagates it forward and gives $\Phi^t(P) = \operatorname{Cov} (u_t \mid \mathcal Y_{t-1})$. Thus $\mathcal C_t(P)$ reduces uncertainty by the observations while dynamical amplification or decay enter through $A^t$.

Equation \eqref{eq:enkf-riccati-difference-identity} writes the
difference between two Riccati trajectories as the initial covariance
difference propagated on the left and right by the corresponding closed-loop
products.  Hence, if
\(\max\{\|\mathcal B_t(P)\|,\|\mathcal B_t(Q)\|\}\le C\rho^t\), then
\(\|\Phi^t(P)-\Phi^t(Q)\|\le C^2\rho^{2t}\|P-Q\|\), which is exponential
loss of dependence on the initial covariance.  In our theory, $P$ and $Q$ may be any forecast covariances arising along the iteration. We therefore introduce a class containing all such covariances, and prove that the required bounds hold uniformly there.  

Relative to the splitting $\R^{d_u}=\mathsf U\oplus\mathsf S$, write
\[
  P=
  \begin{pmatrix}
    P_{uu}&P_{us}\\
    P_{su}&P_{ss}
  \end{pmatrix}.
\]
For $\underline p,\overline p>0$, define the covariance class
\begin{equation}
  \mathfrak C(\underline p,\overline p)
  :=
  \left\{
  P\in\Splus^{d_u}:
  P_{uu}\succeq \underline p I_{\mathsf U},\quad
  \|P\|\leq \overline p
  \right\}.
  \label{eq:covariance-class}
\end{equation}
These classes are used only to state estimates uniformly over possibly
singular covariances, and they are not additional model assumptions.  The condition \eqref{eq:block-diagonal-initial-cov-class-sketch} on $\widehat P_0^0$ in Assumption~\ref{ass:uniform-local-riccati} is a local counterpart, which we formally define in \eqref{eq:local-covariance-class}. 

\begin{theorem}[Uniform Riccati and closed-loop estimates]
\label{thm:enkf-riccati-package}
Under Assumption~\ref{ass:dynamics-initialization}, for every
\(\underline p,\overline p>0\), there are constants
\(0<\underline p_*\leq\overline p_*<\infty\), $C<\infty$, and
\(\rho\in(0,1)\), depending additionally on $\underline p$ and $\overline p$,
such that
\begin{enumerate}
\item for every \(P\in\mathfrak C(\underline p,\overline p)\) and
\(t\geq0\),
\begin{equation}
  \Phi^t(P)\in\mathfrak C(\underline p_*,\overline p_*);
  \label{eq:enkf-package-common-class}
\end{equation}
\item for \(P,Q\in\mathfrak C(\underline p,\overline p)\) and
\(t\ge s\ge0\),
\begin{align}
  \|B(\Phi^{t-1}(P))\cdots B(\Phi^s(P))\|
  &\le C\rho^{t-s},
  \label{eq:enkf-package-closed-loop}\\
  \|\Phi^t(P)-\Phi^t(Q)\|
  &\le C\rho^{2(t-s)}
  \|\Phi^s(P)-\Phi^s(Q)\|;
  \label{eq:enkf-package-forgetting}
\end{align}
\item for every \(M<\infty\), there is \(C=C(M)<\infty\) such that, for all
\(P,Q\in\Splus^{d_u}\) satisfying
\(\max\{\|P\|,\|Q\|\}\le M\),
\begin{equation}
  \|\Psi(P)-\Psi(Q)\|
  +\|\mathsf K(P)-\mathsf K(Q)\|
  \le C\|P-Q\|.
  \label{eq:enkf-package-regularity}
\end{equation}
\end{enumerate}
The empty product in \eqref{eq:enkf-package-closed-loop} is the
identity.  The last estimate requires no lower bound on \(P\) or \(Q\);
the positive definiteness of \(R\) provides the required uniform invertibility.
\end{theorem}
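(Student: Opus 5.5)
We will use the exact identities of Proposition~\ref{prop:exact-riccati-identities}, namely $\Phi^t(P)=A^t\mathcal C_t(P)(A^t)^\top$ and $\mathcal B_t(P)=A^t(I+P\mathcal O_t)^{-1}$, together with the block structure \eqref{eq:block-A-H}. The plan is to establish three facts in order: class invariance, a closed-loop decay estimate, and local Lipschitz regularity. The forgetting bound \eqref{eq:enkf-package-forgetting} will then follow from the closed-loop estimate via \eqref{eq:enkf-riccati-difference-identity}.

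\emph{Step 1 (invariance).} The upper bound comes from two sources. First, $\mathcal C_t(P)\preceq P$. Second, for $t\ge L$, $\mathcal C_t(P)$ is the batch posterior covariance given $t$ observations. Split $v=v_u+v_s$. The stable component is propagated by $A_s^t$, which decays. The unstable component of $A^t\mathcal C_t(P)(A^t)^\top$ is controlled by the observations in the last $L$ windows. Concretely, by monotonicity $\Phi^t(P)=\Phi^{L}(\Phi^{t-L}(P))\preceq\Phi^L(\Phi^{t-L}(P)\text{ restricted})$. It is easier to bound it directly by a suboptimal estimator: estimate $u_{t}$ from $y_{t-L},\dots,y_{t-1}$ using only the unstable observability \eqref{eq:observability-unstable}, treating the stable part's contribution as additional noise of size $\overline p\,\|A_s^{t-L}\|^2$. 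Optimality of the Kalman covariance then gives $\Phi^t(P)\preceq$ the error covariance of this estimator, which is bounded uniformly in $t$.

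For the lower bound on the $uu$ block, we use that the unstable component $u_{t,u}=A_u^{t}u_{0,u}$, and that $u_{0,u}$ has prior covariance $\succeq\underline p I$. Conditioning on observations reduces information by at most the Fisher information $\sum \|R^{-1/2}HA^\ell\cdot\|^2$. Looking backward from time $t$, the information about $u_{t,u}$ accumulated from $y_\ell$ is weighted by $A_u^{-(t-\ell)}$, which is geometrically small. Hence the information is bounded uniformly, and its inverse gives a uniform lower bound. Formally, $(\Phi^t(P))_{uu}^{-1}\preceq (A_u^{-t})^\top P_{uu}^{-1}A_u^{-t}+\sum_{\ell<t}(A_u^{-(t-\ell)})^\top H_u^\top R^{-1}H_uA_u^{-(t-\ell)}$. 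This is a Schur-complement/information inequality: the marginal of a Gaussian posterior is bounded below by the posterior computed with the stable part known exactly. That inequality yields $\underline p_*$.

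\emph{Step 2 (closed-loop decay).} This is the main obstacle, because $P$ may be singular on $\mathsf S$, so Bougerol's contraction does not apply directly. We plan to write $\mathcal B_{t,s}=A^{t-s}(I+\Phi^s(P)\mathcal O_{t-s})^{-1}$, applying \eqref{eq:closed-loop-formula} starting at $\Phi^s(P)$, which lies in the common class by Step 1. The key identity is $(I+P\mathcal O)^{-1}=I-P^{1/2}(I+P^{1/2}\mathcal O P^{1/2})^{-1}P^{1/2}\mathcal O$. Acting on stable directions, $A^{t-s}$ already decays. Acting on unstable directions, $\mathcal O_{t-s}$ restricted to $\mathsf U$ grows like $(A_u^{t-s})^\top A_u^{t-s}$ times $c_o$, by observability applied in windows of length $L$. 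Combined with $P_{uu}\succeq\underline p_*$, this makes $(I+P\mathcal O)^{-1}$ compress unstable directions by roughly $\|A_u^{-(t-s)}\|^2$, which beats the growth of $A^{t-s}$.

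To make this rigorous, we would bound $\|\mathcal B_{t,s}\|$ by a covariance argument. $\mathcal B_{t,s}$ is the error propagator of the Kalman filter, and $\mathcal B_{t,s}\Phi^s(P)\mathcal B_{t,s}^\top\preceq\Phi^t(P)\preceq\overline p_*$. Since $\Phi^s(P)_{uu}\succeq\underline p_*$, this controls $\mathcal B_{t,s}$ on the unstable directions up to the cross terms. The stable-column part is handled via the explicit formula and the decay of $A_s^{t-s}$. This gives uniform boundedness. We would then upgrade to geometric decay by a standard argument: boundedness over a window plus strict contraction. The contraction comes from the fact that the state-error covariance $\Phi$ strictly decreases after $L$ steps relative to the open-loop covariance. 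Alternatively, we would compare against $P+\lambda I\in\Spp$, where Bougerol's Hilbert-metric contraction applies, and use monotonicity of $\mathcal C_t$ in $P$.

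\emph{Step 3.} The forgetting estimate \eqref{eq:enkf-package-forgetting} is immediate from \eqref{eq:enkf-riccati-difference-identity} applied at starting time $s$, using $\Phi^t=\Phi^{t-s}\circ\Phi^s$ and Step 2 for both trajectories. For the regularity estimate \eqref{eq:enkf-package-regularity}, we write $\mathsf K(P)-\mathsf K(Q)=(P-Q)H^\top S_P^{-1}+QH^\top(S_P^{-1}-S_Q^{-1})$, where $S_P=HPH^\top+R\succeq R$. We then use $S_P^{-1}-S_Q^{-1}=S_P^{-1}H(Q-P)H^\top S_Q^{-1}$ and $\|S_P^{-1}\|\le\|R^{-1}\|$. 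The bound for $\Psi$ follows from $\Psi(P)=P-\mathsf K(P)HP$ by the product rule. This gives constants depending only on $M$, $\|H\|$, and $\|R^{-1}\|$.
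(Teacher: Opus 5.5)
\textbf{Gap in Step~2.} This step is the heart of the theorem, and it is left unproved. On its own, the Joseph-form inequality $\mathcal B_{t-s}(\Phi^s(P))\,\Phi^s(P)\,\mathcal B_{t-s}(\Phi^s(P))^\top\preceq\Phi^t(P)$, together with a separate treatment of the stable columns, gives at most uniform \emph{boundedness} of the closed-loop products. Neither of your proposed upgrades to the geometric decay \eqref{eq:enkf-package-closed-loop} works.

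Bougerol's contraction is unavailable under perfect-model dynamics and mere detectability. For $P\succ0$, $\Phi^L(P)=A^L(P^{-1}+\mathcal O_L)^{-1}(A^L)^\top$ with $\mathcal O_L$ singular is only nonexpansive in the Hilbert/Riemannian metric. For example, with $H_s=0$ and block-diagonal $P$, the stable blocks evolve by congruence with $A_s$, which is an isometry of that metric. Three further obstacles remain: the iterates degenerate along $\mathsf S$, so $(\Phi^t(P))^{-1}$ cannot serve as a Lyapunov weight; comparing covariances with $P+\lambda I$ does not control the nonsymmetric propagator $\mathcal B_t(P)$; and a ``strict decrease of the covariance after $L$ steps'' is not a norm contraction of $\mathcal B$.

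What is missing is a quantitative estimate that survives the stable--unstable correlation in a possibly singular $P$. The paper proceeds as follows:
\begin{itemize}
\item It factors $P=L_F\mathsf D_PL_F^\top$ with $F=P_{su}P_{uu}^{-1}$ and writes $\mathcal B_t(P)=L_FA_F^t(I+\mathsf D_P\mathcal O_{F,t})^{-1}L_F^{-1}$.
\item It shows that the unstable information left after minimizing over the decorrelated stable residual satisfies $Q_t\succeq c_Q(A_u^t)^\top A_u^t$ for $t\ge t_0$ (Lemma~\ref{lem:effective-unstable-information}).
\item It shows that the normalized cross-information decays, $\|(A_u^{-t})^\top O_{us,t}\|\le C\gamma^t$ (Lemma~\ref{lem:information-block-bounds}).
\item It then bounds every block of $A_F^t\mathcal M_t^{-1}$.
\end{itemize}
Your heuristic that $(I+P\mathcal O_t)^{-1}$ compresses unstable directions by $\|A_u^{-t}\|^2$ is exactly this statement. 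However, it holds only after the stable residual has been eliminated, and that elimination is where the work lies.

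Your suboptimal-estimator upper bound on $[\Phi^t(P)]_{uu}=A_u^t\Theta_t^{-1}(A_u^t)^\top$ (see \eqref{eq:unstable-covariance-representation}) is valid. For $t\ge L$ it is in fact equivalent to $\Theta_t\succeq c(A_u^t)^\top A_u^t$, so it could replace Lemma~\ref{lem:effective-unstable-information} in the top-left estimate with a more elementary argument. You would still need the block representation and the cross-information bound.

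\textbf{Error in the Step~1 lower bound.} The argument is wrong as written. Conditioning on $u_{0,s}$ replaces the prior covariance of $u_{0,u}$ by $P_{uu}-P_{us}P_{ss}^{+}P_{su}$, which can vanish for singular $P$. The posterior with ``the stable part known exactly'' then yields only the trivial bound.

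Your displayed precision inequality also drops the cross term $H_sA_s^\ell F$ and is false. Take $A=\operatorname{diag}(2,1/2)$, $H_u=0.1$, $H_s=10$, $R=1$, and $P=\bigl(\begin{smallmatrix}1&1\\1&1\end{smallmatrix}\bigr)\in\mathfrak C(1,2)$. Then $[\Phi(P)]_{uu}^{-1}=103.01/4$, while your right-hand side at $t=1$ equals $1.01/4$.

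The fix is to condition instead on the decorrelated residual $u_{0,s}-Fu_{0,u}$, which is independent of $u_{0,u}$. This gives $\Theta_t\preceq P_{uu}^{-1}+O_{uu,t}$, with $O_{uu,t}$ built from $H_uA_u^\ell+H_sA_s^\ell F$. Uniformity then uses $\|F\|\le\overline p/\underline p$, which is the same factorization as above.

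Step~3 matches the paper.
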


The proof is given in  Sections~\ref{app:riccati-stability} and \ref{app:auxiliary-estimates}.  The exponential decay of $\mathcal B_t(P)$ in \eqref{eq:enkf-package-closed-loop} is central to the long-time accuracy of mean and covariance estimates.
We
factor \(P\) into uncertainty on the unstable subspace and an
uncorrelated stable residual. The unstable component can be inferred
from the observations under the finite-window observability condition
\eqref{eq:observability-unstable}, while the stable residual is
propagated solely by the contracting stable dynamics. Together,
observability of the unstable component and dynamical contraction of
the stable residual yield the desired contraction of
\(\mathcal B_t(P)\). Restarting this estimate and using the exact
difference identity in
Proposition~\ref{prop:exact-riccati-identities} gives Riccati
forgetting. The regularity estimate follows from \(R\succ0\) and the
uniform covariance bounds.

We next collect the probabilistic estimates used in the main proofs.  The
centered Gaussian sample covariance has the same distribution as the
covariance formed from \(N-1\) independent Gaussian vectors.  Effective-rank
concentration controls its operator-norm error, while a smallest-singular-value
bound for its unstable projection gives the lower covariance bound needed for
the common class.  The Gaussian sample mean is independent of the centered
anomalies.  In addition, Kalman orthogonality makes the innovations independent
centered Gaussian vectors with uniformly bounded covariances
\(H\widehat P_tH^\top+R\).  These observations give the following result.

\begin{proposition}[Gaussian initialization and innovations]
\label{prop:enkf-probabilistic-package}
Suppose the ensemble is initialized as in \eqref{eq:gaussian-initial-ensemble}.  Under the ensemble-size
condition~\eqref{eq:effective-rank-ensemble-size}, for every
\(\delta\in(0,1)\) there is an event of probability at least
\(1-3\delta/4\) on which
\begin{equation}
  \|\widehat m_0^N-\widehat m_0\|\le\mu_N(\delta),
  \qquad
  \|\widehat P_0^N-\widehat P_0\|\le\Delta_N(\delta),
  \label{eq:enkf-package-initialization}
\end{equation}
and \(\widehat P_0^N\) and \(\widehat P_0\) belong to a common deterministic
covariance class. Under the linear--Gaussian model
\eqref{eq:initial-state-law}--\eqref{eq:observation-model} and
Assumption~\ref{ass:dynamics-initialization}, and when the initial ensemble is
independent of $u_0$ and $(\eta_t)_{t\ge0}$, the innovations are independent
centered Gaussian vectors, independent of the initial ensemble.
Moreover, for every \(\beta\in(0,1)\), there is $C_\beta<\infty$, depending
additionally on $\beta$ but not on $\delta$, such that, with probability at
least \(1-\delta\),
\begin{equation}
  \sum_{t=0}^\infty\beta^t\|\iota_t\|
  \le C_\beta\bigl[1+\sqrt{\log(1/\delta)}\bigr].
  \label{eq:enkf-package-innovations}
\end{equation}
\end{proposition}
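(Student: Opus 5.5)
The proof splits into three independent probabilistic statements, which we then combine by a union bound.

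\emph{Initialization.} Write the initial ensemble as \(\widehat u_0^{(n)}=\widehat m_0+\widehat P_0^{1/2}g_n\) with \(g_n\) i.i.d.\ standard Gaussian. By the Helmert (orthogonal) transformation of the centered ensemble, \(\widehat m_0^N\) is independent of \(\widehat X_0\). Moreover, \(\widehat P_0^N\) has the law of \(\frac1{N-1}\sum_{i=1}^{N-1}z_iz_i^\top\) with \(z_i\stackrel{\mathrm{i.i.d.}}{\sim}\mathcal N(0,\widehat P_0)\). The Koltchinskii--Lounici bound \cite[Corollary~2]{KoltchinskiiLounici2017}, with failure probability \(\delta/4\), gives \(\|\widehat P_0^N-\widehat P_0\|\le\Delta_N(\delta)\).

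For the mean, \(\widehat m_0^N-\widehat m_0\sim\mathcal N(0,\widehat P_0/N)\). The Lipschitz function \(x\mapsto\|x\|\) has \(\mathbb E\|\cdot\|\le\sqrt{\operatorname{Tr}(\widehat P_0)/N}\), and Gaussian concentration with Lipschitz constant \(\sqrt{\|\widehat P_0\|/N}\) gives the bound \(\mu_N(\delta)\) with probability at least \(1-\delta/4\).

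\emph{Common covariance class.} The upper bound is immediate: \(\|\widehat P_0^N\|\le\|\widehat P_0\|+\Delta_N(\delta)\le 2\|\widehat P_0\|\) once \(C_{\mathrm{init}}\) is large. The lower bound on the unstable block is the step needing care. The operator-norm bound alone only yields \((\widehat P_0^N)_{uu}\succeq(\widehat P_0)_{uu}-\Delta_N\), and \(\Delta_N\) relative to \(\lambda_{\min}((\widehat P_0)_{uu})\) is not controlled by universal constants.

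Instead, let \(\Pi_u\) be the spectral projection onto \(\mathsf U\), and write \(\Pi_u z_i=(\widehat P_0)_{uu}^{1/2}w_i\) with \(w_i\) i.i.d.\ standard Gaussian in \(\R^{r_u}\). Then
\[
(\widehat P_0^N)_{uu}=(\widehat P_0)_{uu}^{1/2}\,W\,(\widehat P_0)_{uu}^{1/2},
\qquad W:=\frac1{N-1}\sum_{i=1}^{N-1} w_iw_i^\top .
\]
A smallest-singular-value bound for Gaussian matrices (Davidson--Szarek) gives \(\lambda_{\min}(W)\ge\bigl(1-\sqrt{r_u/(N-1)}-\sqrt{2\log(4/\delta)/(N-1)}\bigr)^2\ge 1/4\) with probability at least \(1-\delta/4\), using \(N-1\ge C_{\mathrm{init}}(r_u+\log(4/\delta))\). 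Hence \((\widehat P_0^N)_{uu}\succeq\tfrac14(\widehat P_0)_{uu}\). Both \(\widehat P_0\) and \(\widehat P_0^N\) therefore lie in \(\mathfrak C(\underline p,\overline p)\) with \(\underline p=\tfrac14\lambda_{\min}((\widehat P_0)_{uu})\) and \(\overline p=2\|\widehat P_0\|\). These constants are deterministic and depend only on \(\widehat P_0\). Since the ensemble coordinates may not be orthogonal, the spectral coordinates enter only through fixed norm-equivalence constants.

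A union bound over the three events gives probability at least \(1-3\delta/4\).

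\emph{Innovations.} Since \(\widehat m_t\) is \(\mathcal Y_{t-1}\)-measurable and \(y_t\) given \(\mathcal Y_{t-1}\) is \(\mathcal N(H\widehat m_t,H\widehat P_tH^\top+R)\), each innovation \(\iota_t\) is centered Gaussian and independent of \(\mathcal Y_{t-1}\). Because \(\iota_0,\dots,\iota_{t-1}\) are \(\mathcal Y_{t-1}\)-measurable, the innovations are jointly Gaussian and mutually independent. They are functions of \(u_0\) and \((\eta_t)\) only, so they are independent of the initial ensemble.

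The covariances satisfy \(\|H\widehat P_tH^\top+R\|\le \|H\|^2\overline p_*+\|R\|\) uniformly in \(t\), by Theorem~\ref{thm:enkf-riccati-package}(1) applied to \(\widehat P_0\in\mathfrak C(\underline p,\overline p)\).

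For the weighted sum, \(S:=\sum_t\beta^t\|\iota_t\|\) is a Lipschitz function of the standard Gaussian sequence \(\xi_t\) with \(\iota_t=\Sigma_t^{1/2}\xi_t\). By Cauchy--Schwarz its Lipschitz constant is at most \(\sup_t\|\Sigma_t\|^{1/2}(1-\beta^2)^{-1/2}\). However, \(\mathbb E S\le\sum_t\beta^t\sqrt{\operatorname{Tr}\Sigma_t}\) involves \(\operatorname{Tr}\Sigma_t\), which could depend on \(d_y\). This is the main obstacle.

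The allowed constants depend on \(H,R,\widehat P_0\), so \(d_y\)-dependence through \(\operatorname{Tr}(R)\) is in principle absorbed. Accordingly, I would bound \(\operatorname{Tr}\Sigma_t\le\|H\|^2\operatorname{Tr}(\widehat P_t)+\operatorname{Tr}(R)\) and absorb it into \(C_\beta\). To bound \(\operatorname{Tr}(\widehat P_t)\) uniformly, use \(\widehat P_t\preceq A^t\widehat P_0(A^t)^\top\) on \(\mathsf S\) together with \eqref{eq:batch-covariance-formula}. Gaussian concentration then gives \(S\le\mathbb E S+L\sqrt{2\log(1/\delta)}\), which is \eqref{eq:enkf-package-innovations}.
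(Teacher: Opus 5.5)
Your proposal is correct and follows the paper's proof essentially step for step: the Wishart/Helmert reduction with Koltchinskii--Lounici for the covariance, Gaussian norm concentration for the mean, and Davidson--Szarek on the unstable block to place both initial covariances in $\mathfrak C(\lambda_u/4,2\|\widehat P_0\|)$, combined by a three-way union bound, followed by Lipschitz Gaussian concentration for the weighted innovation sum. The differences are cosmetic: you derive independence of the innovations from their conditional law given $\mathcal Y_{t-1}$, and you route $\operatorname{Tr}(S_t)$ through $\operatorname{Tr}(\widehat P_t)$, whereas the paper only uses $\sup_t\|\widehat P_t\|<\infty$ and lets the constant depend on the model matrices; for the infinite sum you should, as the paper does, apply concentration to finite truncations and pass to the limit by monotone convergence.
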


The detailed probabilistic estimates are proved in
Sections~\ref{app:gaussian-initialization}
and~\ref{app:auxiliary-estimates}.

\subsubsection{Proof of covariance accuracy}
\label{subsec:proof-covariance-accuracy}

\begin{proof}[Proof of Theorem~\ref{thm:time-uniform-covariance-approximation}]
Proposition~\ref{prop:exact-square-root-identities} shows that the empirical
and exact covariances follow the same deterministic Riccati recursion:
\[
  \widehat P_{t+1}^N=\Phi(\widehat P_t^N),
  \qquad
  \widehat P_{t+1}=\Phi(\widehat P_t),
  \qquad
  P_t^N=\Psi(\widehat P_t^N),
  \quad
  P_t=\Psi(\widehat P_t).
\]
In particular,
\(
  \widehat P_t^N=\Phi^t(\widehat P_0^N)
\)
and
\(
  \widehat P_t=\Phi^t(\widehat P_0)
\).

Now work on the initialization event supplied by
Proposition~\ref{prop:enkf-probabilistic-package}.  By
\eqref{eq:enkf-package-initialization}, the two initial covariances
belong to a common deterministic covariance class and satisfy
\(
  \|\widehat P_0^N-\widehat P_0\|\leq\Delta_N(\delta)
\).
The forgetting estimate~\eqref{eq:enkf-package-forgetting}, applied with
\(s=0\), therefore gives
\[
  \|\widehat P_t^N-\widehat P_t\|
  \le C\rho^{2t}\Delta_N(\delta).
\]
Theorem~\ref{thm:enkf-riccati-package} also keeps both trajectories in a
common bounded covariance class.  Hence its regularity estimate
\eqref{eq:enkf-package-regularity} applies uniformly in time and yields
\[
  \|P_t^N-P_t\|+\|K_t^N-K_t\|
  \leq C\|\widehat P_t^N-\widehat P_t\|.
\]
Combining the last two displays and relabeling \(\rho\) gives
\begin{equation}
  \|\widehat P_t^N-\widehat P_t\|
  +\|P_t^N-P_t\|
  +\|K_t^N-K_t\|
  \le C\rho^t\Delta_N(\delta),
  \qquad t\ge0.
  \label{eq:enkf-covariance-gain-decay}
\end{equation}
The event has probability at least \(1-3\delta/4\ge1-\delta\).  Taking the
supremum proves \eqref{eq:time-uniform-covariance-approximation}.
\end{proof}

\subsubsection{Proof of mean accuracy}
\label{subsec:proof-mean-accuracy}

\begin{proof}[Proof of Theorem~\ref{thm:time-uniform-mean-approximation}]
Recall from Proposition~\ref{prop:exact-square-root-identities} that the
forecast and analysis mean errors satisfy
\begin{align*}
  \widehat e_{t+1}^N
  &=B(\widehat P_t^N)\widehat e_t^N
    +A(K_t^N-K_t)\iota_t,\\
  e_t^N
  &=(I-K_t^NH)\widehat e_t^N+(K_t^N-K_t)\iota_t.
\end{align*}
Work first on the initialization event from
Proposition~\ref{prop:enkf-probabilistic-package}.  On this event,
Theorem~\ref{thm:enkf-riccati-package} keeps the empirical covariance
trajectory in a common class, and
\eqref{eq:enkf-package-closed-loop} gives
\[
  \|B(\widehat P_{t-1}^N)\cdots B(\widehat P_s^N)\|
  \leq C\rho^{t-s},
  \qquad t\geq s.
\]
The covariance proof also gives
\(
  \|K_t^N-K_t\|\leq C\rho^t\Delta_N(\delta)
\)
by \eqref{eq:enkf-covariance-gain-decay}.

Iterating the forecast recursion above yields, for \(t\geq1\),
\begin{equation}
  \widehat e_t^N
  = B(\widehat P_{t-1}^N)\cdots B(\widehat P_0^N)\widehat e_0^N
  +\sum_{\ell=0}^{t-1}
  B(\widehat P_{t-1}^N)\cdots B(\widehat P_{\ell+1}^N)
  A(K_\ell^N-K_\ell)\iota_\ell,
\label{eq:mean-err-iterate-from-initial}
\end{equation}
where the product in the summand is the identity when \(\ell=t-1\).
Applying the closed-loop and gain estimates gives
\[
  \|\widehat e_t^N\|
  \le C\rho^t\|\widehat e_0^N\|
  +C\Delta_N(\delta)
  \sum_{\ell=0}^{t-1}
  \rho^{t-1-\ell}\rho^\ell\|\iota_\ell\|.
\]
Taking \(\rho^{t-1-\ell}\rho^\ell\leq\rho^\ell\) controls the forecast errors. The analysis recursion above and the
uniform boundedness of \(K_t^N\) further give
\[
  \|e_t^N\|
  \leq C\|\widehat e_t^N\|
  +C\rho^t\Delta_N(\delta)\|\iota_t\|.
\]
Consequently,
\begin{equation}
  \sup_{t\ge0}\bigl(\|\widehat e_t^N\|+\|e_t^N\|\bigr)
  \le C\|\widehat e_0^N\|
  +C\Delta_N(\delta)
  \sum_{\ell=0}^\infty\rho^\ell\|\iota_\ell\|.
  \label{eq:enkf-mean-reduction}
\end{equation}
The initialization part of
Proposition~\ref{prop:enkf-probabilistic-package} bounds the first term by
\(C\mu_N(\delta)\) on an event of probability at least
\(1-3\delta/4\).  The innovation part of the same proposition, applied with
failure probability \(\delta/4\) and \(\beta=\rho\), bounds the weighted sum
in \eqref{eq:enkf-mean-reduction} by
\(C[1+\sqrt{\log(4/\delta)}]\).  Since \(\delta<1\), the additive constant is
absorbed by \(\sqrt{\log(4/\delta)}\).  A union bound and
\eqref{eq:enkf-mean-reduction} now yield
\[
  \sup_{t\ge0}\bigl(\|\widehat e_t^N\|+\|e_t^N\|\bigr)
  \le C\left[
  \mu_N(\delta)+\Delta_N(\delta)\sqrt{\log(4/\delta)}
  \right]
\]
with probability at least \(1-\delta\), as claimed.
\end{proof}

\subsection{Localized ensemble Kalman filter}
\label{subsec:localized-proofs}

We first collect the additional estimates required for the localized EnKF proofs. Throughout this section, the
constants in these estimates are uniform in the number of blocks. The detailed statements
and proofs are given in
Sections~\ref{app:localized-identities}--
\ref{app:localized-mean-auxiliary}.
 
To establish the time-uniform accuracy of the covariances, the local empirical covariance $\widehat P_{t,j}^{N,\mathrm{loc}}$ and the full Kalman forecast covariance $\widehat P_t$ are both compared with the block-diagonal Riccati trajectory in \eqref{eq:block-reference-riccati-flow}. The discrepancy between $\widehat P_{t,j}^{N,\mathrm{loc}}$ and $\widehat P_{t,j}^0$ involves the empirical sampling error and the localization bias in the forecast step. We show that the stability of the closed-loop and the Riccati map in Theorem~\ref{thm:enkf-riccati-package} hold blockwise for $\widehat P_t^0$, providing contraction over long time. The overall localization bias incurred by neglecting the cross-block interactions  in the forecast is thus controlled by $O(\varepsilon)$. On the other hand, the deviation of $\widehat P_t$ from $\widehat P_t^0$ is a deterministic localization bias, which is also $O(\varepsilon)$. 
This bias arises from both the analysis and the forecast steps,  and the proof is thus more involved. 
 For the time-uniform accuracy of the mean estimates, the theory centers on an analog of \eqref{eq:mean-err-iterate-from-initial} in the localized algorithm. There, the controlled 
discrepancy between $\widehat P_t$ and $\widehat P_t^0$, along with controlled localization bias and sampling error in the local maps, leads to a bounded gain difference. 

The block-decay norm introduced in \eqref{eq:block-decay-norm} defines a dimension-independent matrix algebra, and controls the block-local norm defined in \eqref{eq:block-local-norm}:
\begin{equation}
  \|M_1M_2\|_{\mathcal J_\alpha}
  \le C\|M_1\|_{\mathcal J_\alpha}\|M_2\|_{\mathcal J_\alpha},
  \qquad
  \|M\|_{\mathrm{loc}}\le C\|M\|_{\mathcal J_\alpha},
  \label{eq:localized-package-block-algebra}
\end{equation}
as proved in Lemma~\ref{lem:block-decay-algebra}.

For \(\underline p,\overline p>0\), define the local covariance class
\begin{equation}
  \mathfrak C_j(\underline p,\overline p)
  :=\left\{P_j\in\Splus^{d_{u,j}}:
  (P_j)_{uu}\succeq \underline p I_{\mathsf U_j},\ \|P_j\|\le \overline p\right\}.
  \label{eq:local-covariance-class}
\end{equation}
These classes are the local
analogs of the global covariance classes in \eqref{eq:covariance-class} and allow for estimates over possibly singular local covariances. The classes are not imposed as additional model assumptions. For a local covariance \(P_j\),  write the local forecast closed-loop map 
\begin{equation}
  B_j(P_j):=A_j\bigl(I-\mathsf K_j(P_j)H_j\bigr).
  \label{eq:local-closed-loop-map}
\end{equation}

  The next result has two parts. The first gives stability of the local forecast closed-loop map and of the local Riccati maps. 
  The second shows the block-diagonal reference trajectory stays within $O(\varepsilon)$ of the fully-coupled Riccati trajectory, uniformly in time.  The second part leverages the persistence of these stability bounds under small perturbations of block-diagonality, as detailed in Section~\ref{sec:localized-deterministic-bias}.  
\begin{theorem}[Local Riccati stability and localization bias]
\label{thm:localized-deterministic-package}
Under Assumptions~\ref{ass:localized-structure} and
\ref{ass:uniform-local-riccati}, there are
\(0<\underline p_{\mathrm{loc}}<\overline p_{\mathrm{loc}}<\infty\),
independent of \(j\) and \(J\), such that the reference local Riccati
trajectories lie in
\(\mathfrak C_j(\underline p_{\mathrm{loc}},\overline p_{\mathrm{loc}})\).
For every
\(P_j,Q_j\in
\mathfrak C_j(\underline p_{\mathrm{loc}},\overline p_{\mathrm{loc}})\)
and all integers \(t\ge s\ge0\), uniformly in the block index,
\begin{align}
  \|B_j(\Phi_j^{t-1}(P_j))\cdots B_j(\Phi_j^s(P_j))\|
  &\le C\rho^{t-s},
  \label{eq:localized-package-closed-loop}\\
  \|\Phi_j^t(P_j)-\Phi_j^t(Q_j)\|
  &\le C\rho^{2(t-s)}
  \|\Phi_j^s(P_j)-\Phi_j^s(Q_j)\|.
  \label{eq:localized-package-forgetting}
\end{align}
The product in \eqref{eq:localized-package-closed-loop} is the
identity when \(t=s\).  If \(\varepsilon\) is sufficiently small and
\(\widehat P_0=\widehat P_0^0\), then for every $t\geq 0$, 
\begin{equation}
  \|\widehat P_t-\widehat P_t^0\|_{\mathrm{loc}}
  \le C\varepsilon,
   \qquad 
  \|\widehat P_t-\widehat P_t^0\|_{\mathcal J_\alpha}
  \le C\varepsilon. \label{eq:localized-package-local-bias}
\end{equation}
The corresponding analysis-covariance and gain differences obey the same
bounds in the respective norms.
\end{theorem}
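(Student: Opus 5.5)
The proof has two parts. The local stability statements come from applying Theorem~\ref{thm:enkf-riccati-package} block by block with uniform constants. The localization bias comes from a perturbative comparison of the coupled and block-diagonal Riccati flows, closed by a bootstrap in the norms $\|\cdot\|_{\mathrm{loc}}$ and $\|\cdot\|_{\mathcal J_\alpha}$.

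\textbf{Local part.} Each pair $(A_j,H_j,R_j)$ satisfies Assumption~\ref{ass:dynamics-initialization} in its own coordinates. I would rerun the proof of Theorem~\ref{thm:enkf-riccati-package} and check that every constant depends only on the quantities in Assumption~\ref{ass:uniform-local-riccati}. These are $a_+,h_+,r_\pm$, the local dimensions, $\kappa_{\mathrm{sp}},C_{\mathrm{sp}},\gamma_s,\gamma_u$, and $L,c_o$.

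Concretely, I would use the batch formula \eqref{eq:batch-covariance-formula} for $\Phi_j^t$:
\begin{itemize}
\item the lower bound on the unstable block follows from \eqref{eq:uniform-local-unstable-observability};
\item the upper bound follows from $\mathcal C_t(P)\preceq P$ together with observability on $\mathsf U_j$ and stable decay.
\end{itemize}
This gives invariant classes $\mathfrak C_j(\underline p_{\mathrm{loc}},\overline p_{\mathrm{loc}})$ containing $\Phi_j^t(\widehat P_{0,j}^0)$. Running the same argument for the closed-loop formula \eqref{eq:closed-loop-formula} gives \eqref{eq:localized-package-closed-loop}. The forgetting bound \eqref{eq:localized-package-forgetting} then follows from the difference identity \eqref{eq:enkf-riccati-difference-identity} restarted at time $s$.

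\textbf{Bias, setup.} Write $\Delta_t:=\widehat P_t-\widehat P_t^0$ with $\Delta_0=0$. Let $\Phi^0$ denote the Riccati map with $D$ in place of $A$, so that $\widehat P_{t+1}^0=\Phi^0(\widehat P_t^0)$. Then
\[
\Delta_{t+1}=\bigl[\Phi^0(\widehat P_t)-\Phi^0(\widehat P_t^0)\bigr]+\bigl[\Phi(\widehat P_t)-\Phi^0(\widehat P_t)\bigr].
\]
The forcing term equals $E\Psi(\widehat P_t)D^\top+D\Psi(\widehat P_t)E^\top+E\Psi(\widehat P_t)E^\top$. As long as $\widehat P_t$ is bounded in $\mathcal J_\alpha$ (hence in $\|\cdot\|_{\mathrm{loc}}$), it is $O(\varepsilon)$ in both norms by \eqref{eq:localized-package-block-algebra} and \eqref{eq:decay-implies-weak-interaction}.

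For the first bracket, I would use a linearization about the reference flow: $\mathrm D\Phi^0(\widehat P_t^0)[Z]=B^0_tZ(B^0_t)^\top$ with $B_t^0=\operatorname{diag}_j B_j(\widehat P_{t,j}^0)$, plus a quadratic remainder $O(\|\Delta_t\|^2)$. Since $R$ and $H$ are block diagonal, $\Psi$ and $\mathsf K$ act entrywise through blockwise formulas. I therefore expect the remainder to be controlled in $\mathcal J_\alpha$ via the algebra property, using $(H\widehat PH^\top+R)^{-1}$ expanded as a Neumann-type perturbation of the block-diagonal inverse.

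\textbf{Bias, closing the bound.} Unrolling the recursion gives
\[
\Delta_t=\sum_{s<t}\mathcal B^0_{t,s+1}\bigl(F_s+O(\|\Delta_s\|^2)\bigr)(\mathcal B^0_{t,s+1})^\top .
\]
Here $\mathcal B^0_{t,s+1}$ is a block-diagonal product whose blocks are bounded by $C\rho^{t-s-1}$, uniformly in $j$, from the local part. Block-diagonal congruence with blockwise-bounded factors preserves both $\|\cdot\|_{\mathrm{loc}}$ and $\|\cdot\|_{\mathcal J_\alpha}$ up to the block bound squared. Hence
\[
\|\Delta_t\|\le \sum_{s<t} C\rho^{2(t-s)}\bigl(\varepsilon+\|\Delta_s\|^2\bigr).
\]
A bootstrap then closes it: assume $\|\Delta_s\|\le 2C'\varepsilon$ for $s<t$, then take $\varepsilon$ small so that $C(\varepsilon+4C'^2\varepsilon^2)/(1-\rho^2)\le 2C'\varepsilon$. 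This gives \eqref{eq:localized-package-local-bias}. The analysis-covariance and gain bounds then follow from the same blockwise Lipschitz expansion of $\Psi$ and $\mathsf K$ in these norms.

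\textbf{Main obstacle.} The hard step is the quadratic remainder estimate in $\mathcal J_\alpha$. It must hold for $\widehat P$ that is not block diagonal and may be singular, and $\Phi$'s inverse $(H\widehat PH^\top+R)^{-1}$ must stay in the decay algebra with $J$-uniform bounds. I would first prove that inverses of $R+H\widehat PH^\top$ with $\|\widehat P-\widehat P^0\|_{\mathcal J_\alpha}$ small lie in the algebra. This uses $R\succeq r_-I$ and the Neumann series around the block-diagonal inverse, so only smallness of the off-diagonal perturbation is needed.
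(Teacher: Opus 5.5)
Your proposal is correct. The local part follows the paper's route: Theorem~\ref{thm:uniform-block-local-riccati} reruns the global Riccati argument block by block, with constants depending only on the uniform parameters.

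One small slip in that part: observability gives the \emph{upper} bound on $[\Phi_j^t(P_j)]_{uu}$, through $\Theta_t\succeq c_1(A_u^t)^\top A_u^t$. The lower bound comes instead from $(P_j)_{uu}\succeq\underline p I$ together with $O_{uu,t}\preceq C(A_u^t)^\top A_u^t$, that is, from expansion dominating the accumulated information.

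For the bias, you and the paper use the same regularity input, namely Lemma~\ref{lem:B3-tube-regularity}. It supplies Neumann-series control of $(HPH^\top+R)^{-1}$ in a tube around $\widehat P_t^0$, Lipschitz bounds on the gain and on $\mathrm D\Phi_D$, and the one-step defect $\|\Phi(P)-\Phi_D(P)\|_\sharp\le c_E\varepsilon$. The two arguments differ in how they close the estimate.

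The paper chooses $T_*$ with $C_B^2\rho_B^{2T_*}\le 1/4$. It shows that $\Phi_D^{T_*}$ contracts toward the reference trajectory by a factor $\theta_*<1$ in a small tube, and separately proves $\|\Phi^{T_*}(P)-\Phi_D^{T_*}(P)\|_\sharp\le c_*\varepsilon$. It then iterates $e_{\ell+1}\le\theta_*e_\ell+c_*\varepsilon$ on the grid $\ell T_*$ and fills in intermediate times by short-time bounds.

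You instead linearize one step at a time about $\widehat P_t^0$. You write $\Delta_t$ exactly by variation of constants through the block-diagonal reference products $B_{t-1}^0\cdots B_{s+1}^0$, and absorb the quadratic remainder by a bootstrap. This is sound for three reasons: $\Delta_0=0$; block-diagonal congruence preserves both norms up to the squared block bounds; and the tube is convex, so the second-order Taylor bound follows from the Lipschitz derivative.

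Your route avoids grid times and the separate finite-step consistency lemma. It also makes explicit the Duhamel structure that reappears in the first-order expansion of Proposition~\ref{prop:B8-first-order-expansion}. The paper's finite-step contraction extends directly to a nonzero initial discrepancy $\mathfrak d_0$, giving the transient $\rho^t\mathfrak d_0$ in Theorem~\ref{thm:B3-localization-bias}. It is also reused verbatim in the Banach fixed-point argument for Theorem~\ref{thm:B8-spatially-decaying-fixed-point}.
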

 The bounds \eqref{eq:localized-package-closed-loop} and \eqref{eq:localized-package-forgetting} follow by applying Theorem \ref{thm:enkf-riccati-package} to each of the blocks of the reference trajectory, each of which evolves independently. This is shown in Section~\ref{sec:localized-local-riccati}. The bounds in \eqref{eq:localized-package-local-bias} are shown in 
Section~\ref{sec:localized-deterministic-bias}, which also gives the more general
estimates for nearby, possibly non-block-diagonal initial covariances,
including their geometrically decaying transient terms.

We now turn to the empirical local covariances.  The local square-root
transforms give the exact diagonal analysis identities and the perturbed
forecast recursion
\begin{equation}
  P_{t,j}^{N,\mathrm{loc}}
  =\Psi_j(\widehat P_{t,j}^{N,\mathrm{loc}}),
  \qquad
  \widehat P_{t+1,j}^{N,\mathrm{loc}}
  =\Phi_j(\widehat P_{t,j}^{N,\mathrm{loc}})+\zeta_{t,j}^N,
  \quad
  \max_j\|\zeta_{t,j}^N\|\le C\varepsilon.
  \label{eq:localized-package-perturbed-recursion}
\end{equation}
 These identities are shown in Section~\ref{app:localized-identities}. 
  Note that the full empirical cross-block covariances need not be small since block Cauchy--Schwarz only provides upper bounds by the diagonal variances, but weak interaction
 limits their contribution to each local forecast covariance to
$O(\varepsilon)$.
To control the sampling error in the ensemble initialization, a union bound over the $J$  local Gaussian covariance
estimates gives,
\[
  \max_j\|\widehat P_{0,j}^{N,\mathrm{loc}}-\widehat P_{0,j}\|
  \le\Delta_{N,\mathrm{loc}}(\delta).
\]  This argument is detailed in Section~\ref{sec:localized-gaussian-initialization}. 
Combining these estimates with the preceding finite-step argument gives the
following proposition,  which uniformly bounds the error between the local empirical covariances and the block-diagonal reference covariance trajectory, on a high probability event over the ensemble initialization.

\begin{proposition}[Localized covariance initialization and stability]
\label{prop:localized-covariance-package}
Suppose that \(\widehat P_0=\widehat P_0^0\), initialize the localized
ensemble as in \eqref{eq:localized-gaussian-initial-ensemble}, and
assume the small-interaction and ensemble-size conditions of
Theorem~\ref{thm:B6-localized-covariance-accuracy}.  For every
\(\delta\in(0,1)\), there is an event 
\(\mathcal G_{N,\delta}^{\mathrm{cov,loc}} \in \sigma(\widehat X_0^{\mathrm{loc}})\),  
of probability at least \(1-\delta\), on
which the local empirical covariances remain in a common stability class and
\begin{align}
  \max_j\|\widehat P_{t,j}^{N,\mathrm{loc}}-
  \widehat P_{t,j}^0\|
  &\le C\rho^t\Delta_{N,\mathrm{loc}}(\delta)+C\varepsilon,
  \label{eq:localized-package-forecast-covariance}\\
  \max_j\|P_{t,j}^{N,\mathrm{loc}}-
  P_{t,j}^0\|
  +\max_j\|K_{t,j}^{N,\mathrm{loc}}-
   K_{t,j}^0\|
  &\le C\rho^t\Delta_{N,\mathrm{loc}}(\delta)+C\varepsilon.
  \label{eq:localized-package-analysis-gain}
\end{align}
\end{proposition}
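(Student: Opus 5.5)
We will prove Proposition~\ref{prop:localized-covariance-package} by treating the empirical local trajectory as a perturbed version of the local Riccati recursion and comparing it with the reference flow $\widehat P_{t,j}^0=\Phi_j^t(\widehat P_{0,j}^0)$.

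\emph{Step 1: the initialization event.} Since $\widehat P_0=\widehat P_0^0$ is block diagonal, each block of the initial ensemble is an i.i.d.\ Gaussian sample with covariance $\widehat P_{0,j}$. For each $j$ we apply the effective-rank bound \cite[Corollary~2]{KoltchinskiiLounici2017} at level $\delta/(2J)$. We also apply a smallest-singular-value bound to the projection of the centered anomalies onto $\mathsf U_j$, at level $\delta/(2J)$. This second bound is where the term $r_{u,j}\lesssim r_{\mathrm{eff},j}$ from \eqref{eq:local-unstable-dimension-effective-rank} and the condition \eqref{eq:B6-local-ensemble-size} enter. A union bound over the $2J$ estimates gives an event $\mathcal G_{N,\delta}^{\mathrm{cov,loc}}\in\sigma(\widehat X_0^{\mathrm{loc}})$ of probability at least $1-\delta$. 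On this event, $\max_j\|\widehat P_{0,j}^{N,\mathrm{loc}}-\widehat P_{0,j}^0\|\le\Delta_{N,\mathrm{loc}}(\delta)$, and $\widehat P_{0,j}^{N,\mathrm{loc}}\in\mathfrak C_j(\underline p_0/2,2\overline p_0)$. The lower bound holds because $\Delta_{N,\mathrm{loc}}\le c\,\underline p_0$ once $C_{\mathrm{init,loc}}$ is large.

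\emph{Step 2: perturbed forgetting.} Write $\widehat P_{t,j}^{N,\mathrm{loc}}=\Phi_j^t(\widehat P^{N,\mathrm{loc}}_{0,j})+\sum_{s<t}\bigl[\Phi_j^{t-s-1}(\widehat P_{s+1,j}^{N,\mathrm{loc}})-\Phi_j^{t-s-1}(\Phi_j(\widehat P_{s,j}^{N,\mathrm{loc}}))\bigr]$. This telescoping uses the perturbed recursion \eqref{eq:localized-package-perturbed-recursion}, so the $s$-th bracket compares two Riccati trajectories whose starting points differ by $\zeta_{s,j}^N$.

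\begin{itemize}
\item If both starting points lie in a fixed class $\mathfrak C_j(\underline p_{\mathrm{loc}},\overline p_{\mathrm{loc}})$, Theorem~\ref{thm:localized-deterministic-package}, i.e.\ \eqref{eq:localized-package-forgetting} with the exact difference identity \eqref{eq:enkf-riccati-difference-identity}, bounds the bracket by $C\rho^{2(t-s-1)}\|\zeta^N_{s,j}\|\le C\rho^{2(t-s-1)}\varepsilon$.
\item Summing the geometric series gives $C\varepsilon$.
\item The first term, compared with $\Phi_j^t(\widehat P_{0,j}^0)$ by forgetting, gives $C\rho^{2t}\Delta_{N,\mathrm{loc}}(\delta)$.
\end{itemize}

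Together these yield \eqref{eq:localized-package-forecast-covariance}.

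\emph{Step 3: class invariance (main obstacle).} The forgetting estimate is only available for trajectories starting in a common class, and the perturbations $\zeta_{s,j}^N$ could push iterates out of it. We plan an induction on $t$ that enlarges the class slightly. By Theorem~\ref{thm:enkf-riccati-package}(1) applied blockwise, with constants uniform in $j$ by Assumption~\ref{ass:uniform-local-riccati}, the reference trajectories started in $\mathfrak C_j(\underline p_0/2,2\overline p_0)$ stay in $\mathfrak C_j(\underline p_*,\overline p_*)$. We then take the common class to be $\mathfrak C_j(\underline p_*/2,2\overline p_*)$, which we also apply the stability estimates to. Suppose the iterates up to time $t$ lie in it and the bound from Step 2 holds up to $t$. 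Then the distance to the reference is at most $C(\Delta_{N,\mathrm{loc}}+\varepsilon)$. This is below $\underline p_*/2$ provided $\varepsilon\le\varepsilon_0$ and $C_{\mathrm{init,loc}}$ is large, so the iterate at time $t+1$ stays in the class and the induction closes.

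A subtlety is that the intermediate points $\Phi_j^{r}(\Phi_j(\widehat P_{s,j}^{N,\mathrm{loc}})+\zeta)$ must also lie in the class. We will handle this by applying the class-invariance statement (1) to starting points in the slightly larger class, shrinking the constants once.

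\emph{Step 4: analysis covariances and gains.} With all local forecast covariances uniformly bounded, the local version of the regularity estimate \eqref{eq:enkf-package-regularity} gives \[\|P_{t,j}^{N,\mathrm{loc}}-P_{t,j}^0\|+\|K_{t,j}^{N,\mathrm{loc}}-K_{t,j}^0\|\le C\|\widehat P_{t,j}^{N,\mathrm{loc}}-\widehat P_{t,j}^0\|.\] This uses $P_{t,j}^{N,\mathrm{loc}}=\Psi_j(\widehat P_{t,j}^{N,\mathrm{loc}})$ from \eqref{eq:localized-exact-analysis-identity}, and it proves \eqref{eq:localized-package-analysis-gain}.
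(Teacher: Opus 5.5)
Your proposal is correct and follows essentially the paper's route: a union-bound initialization event in $\sigma(\widehat X_0^{\mathrm{loc}})$, the nonlinear telescoping identity driven by the perturbed local recursion, uniform local Riccati forgetting, a bootstrap for class invariance, and local Lipschitz regularity of $\Psi_j$ and $\mathsf K_j$. Your induction on $t$ is the paper's first-exit-time argument, with fixed classes in place of the tubes $\mathcal U_{t,j}$. Two small remarks follow. First, the smallest-singular-value event is unnecessary here, because $\Delta_{N,\mathrm{loc}}(\delta)\le\underline p_0/2$ already gives the unstable lower bound by operator-norm closeness. Second, the $O(\varepsilon)$ bound on $\zeta_{s,j}^N$ holds only when all diagonal blocks are bounded at time $s$, so it must be derived inside the induction (as the paper does before the exit time) rather than taken as given.
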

See Section \ref{sec:localized-uniform-empirical-covariance-control} for the proof of this statement.  
For the mean estimates, let
\(\mathcal F_0^{X,N}:=\sigma(\widehat X_0^{\mathrm{loc}})\) and define the local initial-mean and innovation moment scales 
\begin{align}
  \mu_{N,\mathrm{loc}}^{(q)}
  &:={\left(\frac{\max_j\operatorname{Tr}(\widehat P_{0,j})}{N}\right)^{1/2}}
  +{\left(\frac{\max_j\|\widehat P_{0,j}\|
  (q+\log(2J))}{N}\right)^{1/2}},
  \label{eq:localized-package-initial-scale}\\
  \mathcal I_{q,\mathrm{loc}}
  &:=\sqrt{\sup_t\max_j\operatorname{Tr}((S_t)_{jj})}
  +\sqrt{\sup_t\max_j\|(S_t)_{jj}\|(q+\log(2J))},
  \label{eq:localized-package-innovation-scale}
\end{align}
where \(S_t:=H\widehat P_tH^\top+R\).  For a random block vector \(Z\), set  
\begin{equation}
  \|Z\|_{q\mid X}
  :=\left(\mathbb E[\|Z\|_{\max}^q\mid
  \mathcal F_0^{X,N}]\right)^{1/q}.
\label{eq:def-random-block-vec-norm}
\end{equation}

To obtain the mean accuracy, an error decomposition analogous to \eqref{eq:mean-err-iterate-from-initial} requires bounded gain error $K_t^{N,\mathrm{loc}} - K_t$ and closed-loop propagator contraction. The gain error splits into a sampling error and a localization bias. The sampling error comes from the discrepancy between two uncoupled local reference trajectories, initialized at the exact and empirical local covariances. The localization bias measures the deviation from the reference trajectories. The controlled gain error also ensures that the closed-loop maps are small perturbations of their uncoupled counterparts, and therefore the closed-loop propagator remains exponentially stable. Besides, Gaussian block-maximum estimates control the initial sample mean and the innovations.

\begin{proposition}[Localized mean estimates]
\label{prop:localized-mean-package}
Under the conditions of Proposition~\ref{prop:localized-covariance-package}
and the linear--Gaussian model
\eqref{eq:initial-state-law}--\eqref{eq:observation-model}, assume in addition
that the initial ensemble is independent of 
$u_0$ and $(\eta_t)_{t\geq 0}$.
Then, $\widehat m_0^{N,\mathrm{loc}} - \widehat m_0$ and $(\iota_t)_{t\geq 0}$ are independent of $\mathcal F_0^{X,N}$. The independence makes their unconditional and conditional expectations identical, and therefore, for every \(q\ge1\), on
\(\mathcal G_{N,\delta}^{\mathrm{cov,loc}}\),
\begin{equation}
  \|\widehat m_0^{N,\mathrm{loc}}-\widehat m_0\|_{q\mid X}
  \le C\mu_{N,\mathrm{loc}}^{(q)},
  \qquad 
  \sup_{t\ge0}\|\iota_t\|_{q\mid X}
  \le C\mathcal I_{q,\mathrm{loc}}.
  \label{eq:localized-package-random-inputs}
\end{equation}

Moreover,  on
\(\mathcal G_{N,\delta}^{\mathrm{cov,loc}}\), the gain error admits a decomposition
\begin{equation}
  K_t^{N,\mathrm{loc}}-K_t
  =\Gamma_t^{\mathrm{sam}}+\Gamma_t^{\mathrm{loc}},
  \qquad
  \|\Gamma_t^{\mathrm{sam}}\|_{\mathrm{loc}}
  \le C\rho^t\Delta_{N,\mathrm{loc}}(\delta),
  \quad
  \|\Gamma_t^{\mathrm{loc}}\|_{\mathrm{loc}}
  \le C\varepsilon,
  \label{eq:localized-package-gain-decomposition}
\end{equation}
and, for \(B_t^{N,\mathrm{loc}}:=A(I-K_t^{N,\mathrm{loc}}H)\),
\begin{equation}
  \|B_{t-1}^{N,\mathrm{loc}}\cdots B_s^{N,\mathrm{loc}}\|_{\mathrm{loc}}
  \le C\rho^{t-s},
  \qquad t\ge s\ge0.
  \label{eq:localized-package-mean-stability}
\end{equation}
The product is the identity when \(t=s\).
\end{proposition}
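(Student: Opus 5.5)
The plan is to prove the three parts of Proposition~\ref{prop:localized-mean-package} in order: the random-input bounds, the gain decomposition, and the closed-loop stability in the block-local norm.

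\emph{Random inputs.} The initial ensemble is i.i.d.\ Gaussian, so the sample mean $\widehat m_0^{N,\mathrm{loc}}$ is independent of the centered anomalies $\widehat X_0^{\mathrm{loc}}$. By hypothesis the ensemble is also independent of $u_0$ and $(\eta_t)$. The innovations $\iota_t=y_t-H\widehat m_t$ are functions of $u_0$ and the noises only, because the Kalman filter does not use the ensemble. Both families are therefore independent of $\mathcal F_0^{X,N}$, and their conditional moments equal their unconditional moments.

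Next, $\widehat m_{0,j}^{N,\mathrm{loc}}-\widehat m_{0,j}\sim\mathcal N(0,\widehat P_{0,j}/N)$. By Kalman orthogonality, $\iota_{t,j}\sim\mathcal N(0,(S_t)_{jj})$. For a block-maximum of $J$ centered Gaussian vectors $g_j\sim\mathcal N(0,\Sigma_j)$, I will combine $\mathbb E\|g_j\|\le\sqrt{\operatorname{Tr}\Sigma_j}$ with Gaussian concentration of the Lipschitz map $g\mapsto\|g\|$ (constant $\|\Sigma_j\|^{1/2}$). A union bound with a tail integration then gives
\[
\bigl(\mathbb E\max_j\|g_j\|^q\bigr)^{1/q}\le C\bigl(\max_j\sqrt{\operatorname{Tr}\Sigma_j}+\sqrt{\max_j\|\Sigma_j\|(q+\log 2J)}\bigr).
\]
This yields \eqref{eq:localized-package-random-inputs}. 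It remains to note that $\sup_t\|(S_t)_{jj}\|$ is finite uniformly in $J$. This follows from \eqref{eq:localized-package-local-bias}, together with the class bound on $\widehat P_t^0$ and $\|H_j\|\le h_+$, $R_j\preceq r_+I$.

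\emph{Gain decomposition.} Insert the uncoupled reference flow started from the empirical local covariances, $\widetilde P_{t,j}:=\Phi_j^t(\widehat P_{0,j}^{N,\mathrm{loc}})$. Set $\Gamma_t^{\mathrm{sam}}:=\operatorname{diag}_j\bigl(\mathsf K_j(\widetilde P_{t,j})-K_{t,j}^0\bigr)$ and $\Gamma_t^{\mathrm{loc}}:=K_t^{N,\mathrm{loc}}-\operatorname{diag}_j\mathsf K_j(\widetilde P_{t,j})+(K_t^0-K_t)$.

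For the sampling term, $\Gamma_t^{\mathrm{sam}}$ is block diagonal, so its $\mathrm{loc}$-norm is the maximum over blocks. On $\mathcal G_{N,\delta}^{\mathrm{cov,loc}}$ both initial local covariances lie in a common class. The forgetting estimate \eqref{eq:localized-package-forgetting} together with local gain regularity (Theorem~\ref{thm:enkf-riccati-package}(3) applied blockwise) then gives $C\rho^t\Delta_{N,\mathrm{loc}}(\delta)$.

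For the localization term, $K_t^0-K_t$ is bounded in $\mathrm{loc}$-norm by $C\varepsilon$ by Theorem~\ref{thm:localized-deterministic-package}. The first piece is block diagonal, and $\widehat P_{t,j}^{N,\mathrm{loc}}-\widetilde P_{t,j}$ is controlled by iterating the perturbed recursion \eqref{eq:localized-package-perturbed-recursion}. Writing each difference by a telescoping sum and applying \eqref{eq:localized-package-forgetting} from each time $s$ gives
\[
\sum_s C\rho^{2(t-s)}\|\zeta_{s,j}^N\|\le C\varepsilon.
\]
This requires every intermediate trajectory $\Phi_j^{t-s}(\widehat P_{s,j}^{N,\mathrm{loc}})$ to stay in the stability class, which is exactly what Proposition~\ref{prop:localized-covariance-package} provides on the event.

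\emph{Closed-loop stability.} This is the main obstacle: the coupled products must contract in $\|\cdot\|_{\mathrm{loc}}$ uniformly in $J$. Write $B_t^{N,\mathrm{loc}}=\widetilde B_t+\Delta_t$, where $\widetilde B_t:=\operatorname{diag}_j B_j(\widetilde P_{t,j})$ and
\[
\Delta_t=E(I-K_t^{N,\mathrm{loc}}H)-D\bigl(K_t^{N,\mathrm{loc}}-\operatorname{diag}_j\mathsf K_j(\widetilde P_{t,j})\bigr)H.
\]
By \eqref{eq:localized-package-closed-loop}, the products of $\widetilde B$ are block diagonal with norm at most $C\rho^{t-s}$, hence the same bound holds in $\mathrm{loc}$-norm. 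The perturbation satisfies $\|\Delta_t\|_{\mathrm{loc}}\le C\varepsilon$, using $\|E\|_{\mathrm{loc}}\le\varepsilon$, uniform bounds on the local gains, and the bound just proved for the block-diagonal gain difference. A discrete variation-of-constants (Grönwall) expansion of the products, using submultiplicativity of $\|\cdot\|_{\mathrm{loc}}$, then gives
\[
\|B_{t-1}^{N,\mathrm{loc}}\cdots B_s^{N,\mathrm{loc}}\|_{\mathrm{loc}}\le C(\rho+C\varepsilon)^{t-s}.
\]
For $\varepsilon\le\varepsilon_0$ this is at most $C\rho'^{\,t-s}$ with $\rho'<1$, which is \eqref{eq:localized-package-mean-stability}.

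The delicate point is that the sampling part of the gain error, $C\rho^t\Delta_{N,\mathrm{loc}}$, may not be small at early times. I avoid this by building $\widetilde B_t$ from the empirical-start reference $\widetilde P_{t,j}$, not from $\widehat P^0_{t,j}$. This way only $O(\varepsilon)$ perturbations enter $\Delta_t$, and the sampling error never has to be small for the Grönwall step.
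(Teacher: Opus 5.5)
Your proposal is correct. The random-input bounds and the gain decomposition are essentially the paper's. Compare Lemmas~\ref{lem:B7-gaussian-block-maximum} and \ref{lem:B7-local-innovation-moments} and Proposition~\ref{prop:B7-refined-gain-decomposition}: there, the auxiliary trajectory $\widehat Q_{t,j}^N$ is your $\widetilde P_{t,j}$, and $K_t^{\mathrm{marg}}=K_t^0$ because $\widehat P_0=\widehat P_0^0$.

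The closed-loop step takes a genuinely different route. The paper (Lemma~\ref{lem:B7-localized-closed-loop-stability}) perturbs around the exact coupled closed loop $B_t=A(I-K_tH)$. Its products are stable by Corollary~\ref{cor:B3-full-closed-loop-stability}, which itself rests on Theorem~\ref{thm:B3-localization-bias}. The paper then bounds $\sup_t\|B_t^{N,\mathrm{loc}}-B_t\|_{\mathrm{loc}}\le C[\Delta_{N,\mathrm{loc}}(\delta)+\varepsilon]$ and applies Lemma~\ref{lem:B3-robust-products}. This requires enlarging $C_{\mathrm{init,loc}}$ so that the sampling part falls below the robustness threshold.

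You instead center at the block-diagonal closed loop along the empirically started uncoupled flow. Your defect $E(I-K_t^{N,\mathrm{loc}}H)-D(K_t^{N,\mathrm{loc}}-\widetilde K_t)H$ is therefore uniformly $O(\varepsilon)$, and the binomial expansion gives the explicit rate $\rho+C\varepsilon$.

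Your route never uses stability of the coupled Kalman closed loop for this step; it stays entirely within the local Riccati theory. The paper's route is a two-line perturbation that reuses the uniform bound on $K_t^{N,\mathrm{loc}}-K_t$ it has just proved. Your route instead needs the separate piece $K_t^{N,\mathrm{loc}}-\widetilde K_t$ and the empirically started reference products.

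The saving is only in constants, not hypotheses. Smallness of the sampling error, $\Delta_{N,\mathrm{loc}}(\delta)\le\vartheta_{\mathrm{emp}}$ (Corollary~\ref{cor:B5-entry-empirical-regime}), is still what places $\widehat P_{0,j}^{N,\mathrm{loc}}$ in the class where \eqref{eq:localized-package-closed-loop} applies to your $\widetilde B_t$. It is also what validates your forced-Riccati comparison.

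Two bookkeeping points.

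\emph{Telescoping estimate.} For $\widehat P_{t,j}^{N,\mathrm{loc}}-\widetilde P_{t,j}$, the objects that must lie in a common class are the starting pairs $\widehat P_{s+1,j}^{N,\mathrm{loc}}$ and $\Phi_j(\widehat P_{s,j}^{N,\mathrm{loc}})$, not the intermediate trajectories $\Phi_j^{t-s}(\widehat P_{s,j}^{N,\mathrm{loc}})$. The second member of each pair is not given by Proposition~\ref{prop:localized-covariance-package}. It is covered by \eqref{eq:uniform-local-common-trajectory-class} together with the enlarged-class forgetting in Theorem~\ref{thm:uniform-block-local-riccati}; this is exactly Proposition~\ref{prop:forced-block-local-riccati}.

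\emph{Gain constants.} Take the gain Lipschitz constant from Lemma~\ref{lem:uniform-local-gain-analysis-regularity} rather than from Theorem~\ref{thm:enkf-riccati-package}(3), so that it is visibly uniform in $j$ and $J$.
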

The proofs of these estimates are given in Section~\ref{app:localized-mean-auxiliary}. 
We now apply these ingredients to prove our main accuracy results for the localized EnKF. The
covariance proof separates sampling error from localization bias, while the
mean proof uses the same decomposition inside a stable random recursion.

\subsubsection{Proof of localized covariance accuracy}

\begin{proof}[Proof of Theorem~\ref{thm:B6-localized-covariance-accuracy}]
Recall \(P_{t,j}^0:=\Psi_j(\widehat P_{t,j}^0)\). The forecast and analysis
covariance errors decompose through the block-diagonal reference trajectory as
\begin{align*}
  \widehat P_{t,j}^{N,\mathrm{loc}}-\widehat P_{t,j}
  &=(\widehat P_{t,j}^{N,\mathrm{loc}}-\widehat P_{t,j}^0)
    +(\widehat P_{t,j}^0-\widehat P_{t,j}),\\
  P_{t,j}^{N,\mathrm{loc}}-P_{t,j}
  &=(P_{t,j}^{N,\mathrm{loc}}-P_{t,j}^0)
    +(P_{t,j}^0-P_{t,j}).
\end{align*}
We control the first term in each line by empirical stability and the second
by deterministic localization bias.

Work on the event \(\mathcal G_{N,\delta}^{\mathrm{cov,loc}}\) supplied by
Proposition~\ref{prop:localized-covariance-package}.  Its forecast estimate
\eqref{eq:localized-package-forecast-covariance} gives
\[
  \max_j\|\widehat P_{t,j}^{N,\mathrm{loc}}-\widehat P_{t,j}^0\|
  \leq C\rho^t\Delta_{N,\mathrm{loc}}(\delta)+C\varepsilon.
\]
Because \(\widehat P_0=\widehat P_0^0\), the deterministic bias estimate
\eqref{eq:localized-package-local-bias} in
Theorem~\ref{thm:localized-deterministic-package} 
yields
\[
  \max_j\|\widehat P_{t,j}^0-\widehat P_{t,j}\|
  \leq C\varepsilon.
\]
The first decomposition and the triangle inequality therefore give
\[
  \max_j\|\widehat P_{t,j}^{N,\mathrm{loc}}-\widehat P_{t,j}\|
  \leq C\rho^t\Delta_{N,\mathrm{loc}}(\delta)+C\varepsilon.
\]

For the analysis covariance,
\eqref{eq:localized-package-analysis-gain} and the corresponding
conclusion of Theorem~\ref{thm:localized-deterministic-package} give
\begin{align*}
  \max_j\|P_{t,j}^{N,\mathrm{loc}}-P_{t,j}^0\|
  &\le C\rho^t\Delta_{N,\mathrm{loc}}(\delta)+C\varepsilon,\\
  \max_j\|P_{t,j}^0-P_{t,j}\|
  &\le C\varepsilon.
\end{align*}
Thus the second decomposition implies
\[
  \max_j\|P_{t,j}^{N,\mathrm{loc}}-P_{t,j}\|
  \leq C\rho^t\Delta_{N,\mathrm{loc}}(\delta)+C\varepsilon.
\]
The event has probability at least \(1-\delta\).  Adding the forecast and
analysis bounds and taking the supremum over \(t\geq0\) proves \eqref{eq:B6-time-uniform-covariance-accuracy}.
\end{proof}

\subsubsection{Proof of localized mean accuracy}

\begin{proof}[Proof of Theorem~\ref{thm:B7-localized-mean-accuracy}]
Set
\[
  \widehat e_t^{N,\mathrm{loc}}
  :=\widehat m_t^{N,\mathrm{loc}}-\widehat m_t,
  \qquad
  e_t^{N,\mathrm{loc}}:=m_t^{N,\mathrm{loc}}-m_t.
\]
Analogous to \eqref{eq:main-exact-enkf-recursions}, subtracting the exact Kalman mean updates from the localized ensemble updates
gives
\begin{align}
  e_t^{N,\mathrm{loc}}
  &=(I-K_t^{N,\mathrm{loc}}H)\widehat e_t^{N,\mathrm{loc}}
  +(K_t^{N,\mathrm{loc}}-K_t)\iota_t,
  \label{eq:B7-localized-analysis-mean-error}\\
  \widehat e_{t+1}^{N,\mathrm{loc}}
  &=B_t^{N,\mathrm{loc}}\widehat e_t^{N,\mathrm{loc}}
  +A(K_t^{N,\mathrm{loc}}-K_t)\iota_t,
  \label{eq:B7-localized-forecast-mean-error}
\end{align}
where \(B_t^{N,\mathrm{loc}}=A(I-K_t^{N,\mathrm{loc}}H)\).

Proposition~\ref{prop:localized-exact-covariance-identities} shows that the
empirical gains and closed-loop factors are $\mathcal F_0^{X,N}$-measurable.
Condition on \(\mathcal F_0^{X,N}\) and work on
\(\mathcal G_{N,\delta}^{\mathrm{cov,loc}}\).  Iterating the forecast recursion
in \eqref{eq:B7-localized-forecast-mean-error} yields, for \(t\geq1\),
\begin{equation}
  \widehat e_t^{N,\mathrm{loc}}
  ={}B_{t-1}^{N,\mathrm{loc}}\cdots B_0^{N,\mathrm{loc}}
  \widehat e_0^{N,\mathrm{loc}}
  +\sum_{s=0}^{t-1}
  B_{t-1}^{N,\mathrm{loc}}\cdots B_{s+1}^{N,\mathrm{loc}}
  A(K_s^{N,\mathrm{loc}}-K_s)\iota_s,
\label{eq:mean-err-iterate-from-initial-localized} 
\end{equation}
where the product in the summand is the identity when \(s=t-1\).  This parallels \eqref{eq:mean-err-iterate-from-initial}.

We now invoke Proposition~\ref{prop:localized-mean-package} following the
structure of this recursion.  Recall that \eqref{eq:localized-package-mean-stability}
controls the propagators, while
\eqref{eq:localized-package-gain-decomposition} splits each 
gain error
into a geometrically decaying sampling contribution and a persistent
\(O(\varepsilon)\) localization contribution.  Finally,
\eqref{eq:localized-package-random-inputs} controls the conditional
moments of the initial error and innovations.  The block-local norm
action~\eqref{eq:B7-local-norm-action} and conditional Minkowski therefore give
\begin{equation*}
  \|\widehat e_t^{N,\mathrm{loc}}\|_{q\mid X}
  \le C\rho^t\mu_{N,\mathrm{loc}}^{(q)}
  +C\mathcal I_{q,\mathrm{loc}}
  \sum_{s=0}^{t-1}\rho^{t-1-s}
  \left[\rho^s\Delta_{N,\mathrm{loc}}(\delta)+\varepsilon\right].
\end{equation*}
For \(t\ge1\), the sampling convolution equals
\(t\rho^{t-1}\Delta_{N,\mathrm{loc}}(\delta)\) and is therefore bounded by
\(C\Delta_{N,\mathrm{loc}}(\delta)\); the localization convolution is bounded
by \(C\varepsilon\).  Hence
\begin{equation}
  \sup_{t\ge0}\|\widehat e_t^{N,\mathrm{loc}}\|_{q\mid X}
  \le C\mu_{N,\mathrm{loc}}^{(q)}
  +C\bigl[\Delta_{N,\mathrm{loc}}(\delta)+\varepsilon\bigr]
  \mathcal I_{q,\mathrm{loc}}.
  \label{eq:B7-forecast-mean-moment-bound}
\end{equation}
For the analysis error, the identity \eqref{eq:B7-localized-analysis-mean-error},
the uniform gain bounds, the gain decomposition
\eqref{eq:localized-package-gain-decomposition}, and the innovation estimate
\eqref{eq:localized-package-random-inputs} give
\[
  \|e_t^{N,\mathrm{loc}}\|_{q\mid X}
  \leq C\|\widehat e_t^{N,\mathrm{loc}}\|_{q\mid X}
  +C\left[\rho^t\Delta_{N,\mathrm{loc}}(\delta)+\varepsilon\right]
  \mathcal I_{q,\mathrm{loc}}.
\]
Combining this estimate with
\eqref{eq:B7-forecast-mean-moment-bound} yields
\begin{equation}
  \sup_{t\ge0}\Bigl(
  \|\widehat m_t^{N,\mathrm{loc}}-\widehat m_t\|_{q\mid X}
  +\|m_t^{N,\mathrm{loc}}-m_t\|_{q\mid X}
  \Bigr)
  \le C\mu_{N,\mathrm{loc}}^{(q)}
  +C\bigl[\Delta_{N,\mathrm{loc}}(\delta)+\varepsilon\bigr]
  \mathcal I_{q,\mathrm{loc}}.
  \label{eq:B7-time-uniform-localized-mean-detailed}
\end{equation}
Finally, the uniform local dimension and covariance bounds imply
\[
  \mu_{N,\mathrm{loc}}^{(q)}
  \le C N^{-1/2}\sqrt{q+\log(2J)},
  \qquad
  \mathcal I_{q,\mathrm{loc}}
  \le C\sqrt{q+\log(2J)}.
\]
Substitution proves \eqref{eq:B7-time-uniform-localized-mean}.
\end{proof}

\section*{Acknowledgments}
DSA was partly funded by NSF CAREER award
DMS-2237628. The authors used OpenAI Codex to assist with proof auditing and grammatical editing.
Claude was also used for grammatical editing. The authors assume full responsibility for all content.

\bibliographystyle{siamplain}
\bibliography{references}

\appendix

\section{Ensemble Kalman filter}
\label{app:enkf}

This supplement collects the technical results used in the EnKF proofs in
Section~\ref{subsec:enkf-proofs}.  
Section~\ref{app:proof-aux-propositions} establishes exact EnKF and Riccati identities,
Section~\ref{app:riccati-stability}
develops Riccati stability and forgetting,
Section~\ref{app:gaussian-initialization} treats Gaussian initialization, and
Section~\ref{app:auxiliary-estimates} records covariance, gain, and innovation
estimates.  The Gaussian reduction in
Section~\ref{app:gaussian-initialization} is also used in
Section~\ref{app:localized-enkf}.

Throughout the supplement, $C<\infty$ and $\rho\in(0,1)$ denote constants
that may change from line to line and depend only on the fixed model parameters
and the indicated covariance-class endpoints. In localized statements, these
constants are independent of $J$ and $N$ unless stated otherwise.

\subsection{Exact identities}\label{app:proof-aux-propositions}

Here we prove Propositions~\ref{prop:exact-square-root-identities} and \ref{prop:exact-riccati-identities}.

\begin{proof}[Proof of Proposition~\ref{prop:exact-square-root-identities} ]
    The matrix added to \(I_N\) in \eqref{eq:symmetric-transform} is
positive semidefinite, so its principal inverse square root exists.  Since
\(\widehat X_t\one=0\), the matrix under that square root fixes \(\one\),
and therefore \(T_t\one=\one\) and \(X_t\one=0\).  Woodbury's identity gives
\[
  T_tT_t^\top
  =I_N-\frac1{N-1}\widehat X_t^\top H^\top
  (H\widehat P_t^NH^\top+R)^{-1}H\widehat X_t.
\]
It follows directly that
\[
   P_t^N =  \frac{X_tX_t^\top}{N-1}
  =\widehat P_t^N-\widehat P_t^NH^\top
  (H\widehat P_t^NH^\top+R)^{-1}H\widehat P_t^N
  =\Psi(\widehat P_t^N).
\]
The forecast relation \(\widehat X_{t+1}=AX_t\) proves the identity for $\widehat P_{t+1}^N$.  Finally, subtracting the exact and ensemble analysis-mean
updates, and then forecasting, proves
\eqref{eq:main-exact-enkf-recursions}.
\end{proof}

\begin{proof}[Proof of Proposition~\ref{prop:exact-riccati-identities} ]
    Since $P, Q, \mathcal O_t$ are positive semidefinite,  Sylvester's determinant identity shows that \(I+P\mathcal O_t\) and
\(I+\mathcal O_tP\) are invertible.  The push-through identity then gives
\begin{equation}
  \mathcal C_t(P)=(I+P\mathcal O_t)^{-1}P
  =P(I+\mathcal O_tP)^{-1}\succeq0.
\label{eq:Ct-equiv-expression}
\end{equation}
Set \(G_t=HA^t\).  From
\(\mathcal O_{t+1}=\mathcal O_t+G_t^\top R^{-1}G_t\), Woodbury's identity
gives
\[
  \mathcal C_{t+1}(P)
  =\mathcal C_t(P)-\mathcal C_t(P)G_t^\top
  (R+G_t\mathcal C_t(P)G_t^\top)^{-1}G_t\mathcal C_t(P).
\]
Congruence by \(A^{t+1}\) proves the first formula by induction.  The same
update implies
\[
  (I-\mathsf K(\Phi^t(P))H)A^t(I+P\mathcal O_t)^{-1}
  =A^t(I+P\mathcal O_{t+1})^{-1},
\]
which proves the closed-loop formula.  Finally, the elementary identity
\[
  (I+P\mathcal O_t)^{-1}P-Q(I+\mathcal O_tQ)^{-1}
  =(I+P\mathcal O_t)^{-1}(P-Q)(I+\mathcal O_tQ)^{-1}
\]
gives the difference formula after congruence by \(A^t\).  Taking operator
norms proves the norm bound.  Direct differentiation also gives
\[
  \mathrm D\Psi(P)[Z]
  =(I-\mathsf K(P)H)Z(I-\mathsf K(P)H)^\top.
\]
Congruence by \(A\) and the chain rule along the Riccati trajectory prove
the derivative identities.
\end{proof}

\subsection{Riccati stability and forgetting}
\label{app:riccati-stability}

Recall the covariance class $\mathfrak C(\underline p,\overline p)$ from
\eqref{eq:covariance-class}. The results below provide the detailed
estimates underlying the uniform bounds, stability, and forgetting statements
in Section~\ref{subsec:enkf-proofs}.

Riccati contraction on the positive-definite cone is classical; see, for
example, \cite{Bougerol1993}. Perfect-model filters with degenerate initial
covariances are studied in \cite{BocquetEtAl2017}. Neither result directly
provides the uniform Euclidean estimates over the positive-semidefinite class needed here, so we retain the argument, while reusing its common factorization and information matrices throughout.

 Our analysis centers on the exponential decay of $\|\mathcal B_t(P)\|$ in Theorem~\ref{thm:enkf-riccati-package}. We rewrite $\mathcal B_t(P)$ in coordinates that separate the uncertainty encoded by $P$ into an unstable component and an uncorrelated stable residual.
Fix $P\in\mathfrak C(\underline p,\overline p)$. Define \begin{equation}
  F:=P_{su}P_{uu}^{-1},
  \qquad
  S_c:=P_{ss}-P_{su}P_{uu}^{-1}P_{us}.
  \label{eq:F-Schur}
\end{equation}
Then we have the decomposition
\begin{equation}
  P
  =
  L_F
 \mathsf{D}_P 
  L_F^\top,
  \qquad
  L_F:=
  \begin{pmatrix}I&0\\F&I\end{pmatrix},\quad  \mathsf{D}_P:=\begin{pmatrix}P_{uu}&0\\0&S_c\end{pmatrix}.
  \label{eq:covariance-LDL}
\end{equation}
Under these coordinates, define the transformed system
\begin{equation}
  A_F:=L_F^{-1}AL_F
  =
  \begin{pmatrix}
    A_u&0\\
    A_sF-FA_u&A_s
  \end{pmatrix},
  \qquad
  H_F:=HL_F=\begin{pmatrix}H_u+H_sF&H_s\end{pmatrix}.
  \label{eq:transformed-system}
\end{equation}
 Let $\mathcal O_{F,t}=L_F^\top\mathcal O_tL_F$ be the observability
information matrix for $(A_F,H_F)$. By Proposition~\ref{prop:exact-riccati-identities},
\begin{equation}
  \mathcal B_t(P)
  = A^t(I+P\mathcal O_t)^{-1}=
  L_F A_F^t(I+\mathsf D_P\mathcal O_{F,t})^{-1}L_F^{-1}.
  \label{eq:transformed-closed-loop}
\end{equation}
Write $\mathcal M_t := I + \mathsf D_P \mathcal O_{F,t}$. We will establish that $\|A_F^t \mathcal M_t^{-1}\|$ decays exponentially in time, through observability on the unstable component and dynamical decay on the stable residual.

\subsubsection{Preliminary estimates for the spectral splitting}

\begin{lemma}[Stable and backward-stable powers]
\label{lem:power-bounds}
There are constants $C_s,C_u<\infty$ and
$\gamma_s,\gamma_u\in(0,1)$ such that
\begin{equation}
  \|A_s^t\|\leq C_s\gamma_s^t,
  \qquad
  \|A_u^{-t}\|\leq C_u\gamma_u^t,
  \qquad t\geq0.
  \label{eq:power-bounds}
\end{equation}
\end{lemma}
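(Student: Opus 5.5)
This is the standard consequence of the spectral radius formula (Gelfand's formula), applied separately to $A_s$ and to $A_u^{-1}$. By Assumption~\ref{ass:dynamics-initialization}, $\rho(A_s)<1$ and $\rho(A_u^{-1})<1$; the latter makes sense because $A_u$ has no eigenvalue of modulus at most one and is therefore invertible.

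For the stable block, fix any $\gamma_s$ with $\rho(A_s)<\gamma_s<1$, for instance $\gamma_s=(1+\rho(A_s))/2$. Gelfand's formula $\lim_{t\to\infty}\|A_s^t\|^{1/t}=\rho(A_s)$ gives an integer $t_0$ such that $\|A_s^t\|\le\gamma_s^t$ for all $t\ge t_0$. For the finitely many indices $0\le t<t_0$, the ratio $\|A_s^t\|/\gamma_s^t$ is a finite number. Setting
\[
  C_s:=\max\Bigl\{1,\max_{0\le t<t_0}\|A_s^t\|\gamma_s^{-t}\Bigr\}
\]
then gives $\|A_s^t\|\le C_s\gamma_s^t$ for every $t\ge0$.

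The unstable block is handled identically. Apply the same argument to the matrix $A_u^{-1}$, choosing $\gamma_u$ with $\rho(A_u^{-1})<\gamma_u<1$, and use $(A_u^{-1})^t=A_u^{-t}$.

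Alternatively, one can avoid the limit argument by building an adapted norm. One route uses a Jordan form with off-diagonal entries scaled by a small $\eta$. Another uses the series $\sum_t\gamma^{-2t}(A_s^t)^\top A_s^t$, which converges because $\|A_s^t\|$ eventually decays faster than $\gamma^t$. Either yields $\|A_s\|_\ast\le\gamma_s$ in a norm equivalent to the Euclidean one, and the constant $C_s$ is the resulting condition number.

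No step is a genuine obstacle. The only point needing care is that $C_s,C_u$ depend on $A$ through its nonnormality (Jordan structure and eigenvector conditioning), not merely through $\rho$. This is why, in the localized setting, uniformity across blocks had to be imposed explicitly in \eqref{eq:uniform-local-spectral-rates}.
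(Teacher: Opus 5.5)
Your proposal is correct and follows the paper's own argument: pick $\gamma_s\in(\rho(A_s),1)$ and $\gamma_u\in(\rho(A_u^{-1}),1)$, then apply the standard spectral-radius bound for matrix powers to $A_s$ and $A_u^{-1}$. You spell that bound out via Gelfand's formula, handling the finitely many early times with a maximum, whereas the paper only cites it.
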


\begin{proof}
The bounds in \eqref{eq:power-bounds} follow from
$\rho(A_s)<1$ and $\rho(A_u^{-1})<1$: choose
$\gamma_s\in(\rho(A_s),1)$ and
$\gamma_u\in(\rho(A_u^{-1}),1)$ and use the standard spectral-radius
bound for matrix powers.
\end{proof}

\begin{lemma}[Uniform covariance factorization]
\label{lem:covariance-factorization}
Let $P\in\mathfrak C(\underline p,\overline p)$ as in 
\eqref{eq:covariance-class}. 
Then $S_c$ defined in \eqref{eq:F-Schur} satisfies $S_c \succeq0$. 
Moreover, there is $C_{\underline p,\overline p}<\infty$ such that, uniformly over
$P\in\mathfrak C(\underline p,\overline p)$,
\begin{equation}
  \|F\|+\|L_F\|+\|L_F^{-1}\|+\|S_c\|
  \leq C_{\underline p,\overline p}.
  \label{eq:factor-uniform-bounds}
\end{equation}
\end{lemma}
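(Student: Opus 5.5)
The plan is to obtain both claims directly from the block structure of $P$ and the two defining inequalities of $\mathfrak C(\underline p,\overline p)$. No spectral information about $A$ is needed.

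\emph{Step 1: positivity of $S_c$.} Since $P_{uu}\succeq\underline p I\succ0$, the block factorization \eqref{eq:covariance-LDL} is an exact algebraic identity. Multiplying out $L_F\mathsf D_PL_F^\top$ returns the blocks $P_{uu}$, $P_{uu}F^\top=P_{us}$, $FP_{uu}=P_{su}$, and $FP_{uu}F^\top+S_c=P_{ss}$. This uses $P_{su}=P_{us}^\top$ and the symmetry of $P_{uu}^{-1}$. Because $L_F$ is invertible, with $L_F^{-1}=\begin{pmatrix}I&0\\-F&I\end{pmatrix}$, we get $\mathsf D_P=L_F^{-1}P L_F^{-\top}\succeq0$ by congruence. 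Its lower diagonal block is $S_c$, so $S_c\succeq0$. This is the standard Schur-complement characterization of positive semidefiniteness.

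\emph{Step 2: bounding $F$.} Operator norms of blocks are bounded by the norm of the whole matrix, so $\|P_{su}\|\le\|P\|\le\overline p$. Moreover $\|P_{uu}^{-1}\|\le\underline p^{-1}$ because $P_{uu}\succeq\underline p I$. Hence $\|F\|\le\overline p/\underline p$. One could refine this via Cauchy--Schwarz for PSD blocks, $\|P_{uu}^{-1/2}P_{us}\|\le\|P_{ss}\|^{1/2}$, which gives $\|F\|\le(\overline p/\underline p)^{1/2}$, but the crude bound suffices.

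\emph{Step 3: the remaining quantities.} We have $\|L_F\|,\|L_F^{-1}\|\le1+\|F\|$ by the triangle inequality on the block form, splitting off the identity and the single off-diagonal block $\pm F$. Since $0\preceq S_c\preceq P_{ss}$, with the upper bound because $P_{su}P_{uu}^{-1}P_{us}\succeq0$, we get $\|S_c\|\le\|P_{ss}\|\le\overline p$. Summing gives $C_{\underline p,\overline p}=\overline p+3(1+\overline p/\underline p)$.

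Throughout, norms are taken in the fixed coordinates adapted to the spectral splitting. If those coordinates are not orthonormal, one instead uses the norm induced by them, and constants change only by the fixed condition number, consistent with the remark after Assumption~\ref{ass:dynamics-initialization}. No step is a genuine obstacle. The only point needing care is to check that the block norm bound $\|P_{su}\|\le\|P\|$ holds in the chosen coordinates, which is immediate once we work in the coordinate norm.
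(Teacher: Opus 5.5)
Your proposal is correct and follows essentially the same route as the paper. Both arguments get $S_c\succeq0$ from $P\succeq0$ with $P_{uu}\succ0$, get $\|F\|\le\overline p/\underline p$ from $\|P_{uu}^{-1}\|\le\underline p^{-1}$ and $\|P_{su}\|\le\|P\|$, and get the rest from the triangular form of $L_F$ and $0\preceq S_c\preceq P_{ss}\preceq\overline p I$. You additionally write out the congruence $\mathsf D_P=L_F^{-1}PL_F^{-\top}$ and an explicit constant, both of which the paper leaves implicit.
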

\begin{proof}
The Schur complement $S_c$ is positive semidefinite because
$P\succeq0$ and $P_{uu}\succ0$. Also,
\[
  \|P_{uu}^{-1}\|\leq \underline p^{-1},
  \qquad
  \|P_{us}\|\leq\|P\|\leq \overline p,
\]
so $\|F\|\leq \overline p/\underline p$. The remaining bounds follow from the explicit
triangular form of $L_F$ and from
$0\preceq S_c\preceq P_{ss}\preceq \overline p I$.
\end{proof}

Write
\begin{equation}
  \mathcal O_{F,t}
  =
  \begin{pmatrix}
    O_{uu,t}&O_{us,t}\\
    O_{su,t}&O_{ss,t}
  \end{pmatrix}.
  \label{eq:transformed-Gramian-blocks}
\end{equation}
It follows that
\begin{equation}
  \mathcal M_t=I+\mathsf D_P\mathcal O_{F,t}
  =
  \begin{pmatrix}
    I+P_{uu}O_{uu,t}&P_{uu}O_{us,t}\\
    S_cO_{su,t}& \Lambda_{s,t}
  \end{pmatrix}, \qquad \Lambda_{s,t}:= I + S_c O_{ss,t}.
\label{eq:def-Lambda-st}
\end{equation}
\begin{lemma}[Matrix inverse]
\label{lem:inverse-block-M_t}
The inverse of $\mathcal M_t$ is given by
\begin{equation}
  \mathcal M_t^{-1}
  =
  \begin{pmatrix}
    X_{11,t}&X_{12,t}\\
    X_{21,t}&X_{22,t}
  \end{pmatrix},
  \label{eq:calM-inverse-blocks}
\end{equation}
where, with
\begin{equation}
  C_{s,t}:=\Lambda_{s,t}^{-1}S_c, \quad
  Q_t
  :=
  O_{uu,t}-O_{us,t}C_{s,t}O_{su,t}, \quad  \Theta_t:=P_{uu}^{-1}+Q_t, 
  \label{eq:effective-Q}
\end{equation}
one has
\begin{align}
  X_{11,t}
  &=\Theta_t^{-1}P_{uu}^{-1},
  \label{eq:X11}
  \\
  X_{12,t}
  &=-\Theta_t^{-1}O_{us,t}\Lambda_{s,t}^{-1},
  \label{eq:X12}
  \\
  X_{21,t}
  &=-C_{s,t}O_{su,t}X_{11,t},
  \label{eq:X21}
  \\
  X_{22,t}
  &=\Lambda_{s,t}^{-1}
  +C_{s,t}O_{su,t}\Theta_t^{-1}O_{us,t}\Lambda_{s,t}^{-1}.
  \label{eq:X22}
\end{align}
\end{lemma}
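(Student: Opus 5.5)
The plan is a block Gaussian elimination of $\mathcal M_t$, eliminating the stable block first, with the blocks ordered as in \eqref{eq:def-Lambda-st}. Two invertibility facts are needed first. For $\Lambda_{s,t}=I+S_cO_{ss,t}$: $S_c\succeq0$ by Lemma~\ref{lem:covariance-factorization}, and $O_{ss,t}\succeq0$ because it is a principal block of the congruence $\mathcal O_{F,t}=L_F^\top\mathcal O_tL_F\succeq0$. The same Sylvester/push-through argument used in \eqref{eq:Ct-equiv-expression} then shows that $\Lambda_{s,t}$ is invertible. It also shows that $C_{s,t}=\Lambda_{s,t}^{-1}S_c=S_c(I+O_{ss,t}S_c)^{-1}=S_c^{1/2}(I+S_c^{1/2}O_{ss,t}S_c^{1/2})^{-1}S_c^{1/2}$ is symmetric positive semidefinite.

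The second fact is $\Theta_t=P_{uu}^{-1}+Q_t\succ0$. Here $Q_t$ is the Schur complement of the stable block in the matrix
\[
\begin{pmatrix}O_{uu,t}&O_{us,t}\\O_{su,t}&O_{ss,t}+O_{ss,t}C'\,O_{ss,t}\end{pmatrix}.
\]
A cleaner way to see $Q_t\succeq0$ is to write it as $Q_t=O_{uu,t}-O_{us,t}S_c^{1/2}(I+S_c^{1/2}O_{ss,t}S_c^{1/2})^{-1}S_c^{1/2}O_{su,t}$. This equals the Schur complement, relative to the identity block, of the positive semidefinite matrix
\[
\begin{pmatrix}O_{uu,t}&O_{us,t}S_c^{1/2}\\S_c^{1/2}O_{su,t}&I+S_c^{1/2}O_{ss,t}S_c^{1/2}\end{pmatrix}
=\begin{pmatrix}I&0\\0&S_c^{1/2}\end{pmatrix}\mathcal O_{F,t}\begin{pmatrix}I&0\\0&S_c^{1/2}\end{pmatrix}+\begin{pmatrix}0&0\\0&I\end{pmatrix},
\]
so $Q_t\succeq0$. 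Since $P_{uu}^{-1}\succ0$, it follows that $\Theta_t\succ0$ and is invertible.

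With both facts in hand, eliminate the $(2,1)$ entry. Left-multiply the second block row by $\Lambda_{s,t}^{-1}$, so the second equation of $\mathcal M_t\binom{x_1}{x_2}=\binom{b_1}{b_2}$ becomes $x_2=\Lambda_{s,t}^{-1}b_2-C_{s,t}O_{su,t}x_1$. Substituting into the first equation gives $(I+P_{uu}O_{uu,t}-P_{uu}O_{us,t}C_{s,t}O_{su,t})x_1=b_1-P_{uu}O_{us,t}\Lambda_{s,t}^{-1}b_2$. The left-hand operator equals $P_{uu}(P_{uu}^{-1}+Q_t)=P_{uu}\Theta_t$. Hence
\[
x_1=\Theta_t^{-1}P_{uu}^{-1}b_1-\Theta_t^{-1}O_{us,t}\Lambda_{s,t}^{-1}b_2,
\]
which gives $X_{11,t}$ and $X_{12,t}$ as in \eqref{eq:X11}--\eqref{eq:X12}. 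Substituting back into $x_2$ gives $X_{21,t}=-C_{s,t}O_{su,t}X_{11,t}$ and $X_{22,t}=\Lambda_{s,t}^{-1}-C_{s,t}O_{su,t}X_{12,t}$, and the latter expands to \eqref{eq:X22}. Since $\mathcal M_t$ is square and this solution exists and is unique for every right-hand side, the block formula is the inverse.

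The only genuine obstacle is the invertibility bookkeeping in the first two paragraphs. Everything else is routine algebra. The key point is that $S_c$ may be singular, so $S_c^{-1}$ must never be used; the $S_c^{1/2}$ factorization handles this.
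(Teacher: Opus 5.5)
Your proof is correct. The elimination step is the same block Gaussian elimination the paper uses. The difference lies in how you secure invertibility before eliminating.

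\textbf{The paper's route.} It obtains invertibility of $\Lambda_{s,t}$ and of $\mathcal M_t$ from the determinant identities $\det(I+S_cO_{ss,t})=\det(I+S_c^{1/2}O_{ss,t}S_c^{1/2})>0$ and $\det\mathcal M_t=\det(I+\mathsf D_P^{1/2}\mathcal O_{F,t}\mathsf D_P^{1/2})>0$. It then reads off invertibility of the Schur complement $I+P_{uu}Q_t=P_{uu}\Theta_t$ from the block determinant factorization. This needs no sign information on $Q_t$ at all.

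\textbf{Your route.} You prove $Q_t\succeq0$ directly. You identify $Q_t$ as the Schur complement of the positive-definite block $I+S_c^{1/2}O_{ss,t}S_c^{1/2}$ in a positive semidefinite matrix obtained from $\mathcal O_{F,t}$ by congruence with $\operatorname{diag}(I,S_c^{1/2})$. You then let the elimination itself establish invertibility of $\mathcal M_t$. Your steps are equivalences because $\Lambda_{s,t}$ and $P_{uu}\Theta_t$ are invertible, so they give existence as well as uniqueness.

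\textbf{Comparison.} The paper's argument is shorter for this lemma alone. Yours yields more, namely $\Theta_t\succeq P_{uu}^{-1}\succ0$. The paper needs this later, to form $\Theta_t^{-1/2}$ and to use $\Theta_t\succeq Q_t$, and obtains $Q_t\succeq0$ only at the end of Lemma~\ref{lem:effective-unstable-information}, via the regularization $S_c+\xi I$ with $\xi\downarrow0$. Your $S_c^{1/2}$ congruence gives it directly, without any limit.

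\textbf{One cleanup.} Delete the sentence displaying the matrix with lower-right block $O_{ss,t}+O_{ss,t}C'O_{ss,t}$. The symbol $C'$ is never defined, and the identification cannot hold in general. The Schur complement of that $(2,2)$ block equals $Q_t$ only if the block is $C_{s,t}^{-1}=S_c^{-1}+O_{ss,t}$, which does not exist when $S_c$ is singular. The $S_c^{1/2}$ argument that follows that sentence is the one that works, and it stands on its own.
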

\begin{proof}
The matrix
\(\Lambda_{s,t}=I+S_cO_{ss,t}\) is invertible. Indeed,
\(S_c\) and \(O_{ss,t}\) are positive semidefinite, so Sylvester's
determinant identity gives
\[
  \det(I+S_cO_{ss,t})
  =
  \det(I+S_c^{1/2}O_{ss,t}S_c^{1/2})>0.
\]
Similarly,
\[
  \det(\mathcal M_t)
  =
  \det\bigl(
    I+\mathsf D_P^{1/2}
    \mathcal O_{F,t}\mathsf D_P^{1/2}
  \bigr)>0,
\]
so \(\mathcal M_t\) is invertible. Since both
\(\mathcal M_t\) and \(\Lambda_{s,t}\) are invertible, the Schur
complement of \(\Lambda_{s,t}\),
\[
  I+P_{uu}Q_t
  =
  P_{uu}\Theta_t,
\]
is also invertible. Since \(P_{uu}\) is invertible, so is
\(\Theta_t\). Block Gaussian elimination now gives
\eqref{eq:X11}--\eqref{eq:X22}.
\end{proof}

We now provide estimates for the components of $A_F^t \mathcal M_t^{-1}$.
\begin{lemma}[Stable information and normalized cross-information]
\label{lem:information-block-bounds}
There are constants $C_o<\infty$ and $\gamma\in(0,1)$, depending only on
$A,H,R,\underline p,\overline p$, such that, uniformly over
$P\in\mathfrak C(\underline p,\overline p)$ and $t\geq0$,
\begin{equation}
  \|O_{ss,t}\|\leq C_o,
  \label{eq:Oss-bound}
\end{equation}
and
\begin{equation}
  \|(A_u^{-t})^\top O_{us,t}\|
  \leq C_o\gamma^t.
  \label{eq:normalized-Ous-bound}
\end{equation}
\end{lemma}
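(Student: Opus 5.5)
The approach is to compute the transformed output matrices explicitly and bound each block sum term by term, using only the power bounds of Lemma~\ref{lem:power-bounds} and the uniform bound $\|F\|\le C_{\underline p,\overline p}$ from Lemma~\ref{lem:covariance-factorization}.

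\emph{Step 1: the transformed output matrices.} Since $A_F^\ell=L_F^{-1}A^\ell L_F$, we have $H_FA_F^\ell=HL_FL_F^{-1}A^\ell L_F=HA^\ell L_F$. The block-diagonal form \eqref{eq:block-A-H} then gives
\[
  H_FA_F^\ell=\bigl(G_{u,\ell}\ \ G_{s,\ell}\bigr),
  \qquad
  G_{u,\ell}:=H_uA_u^\ell+H_sA_s^\ell F,
  \qquad
  G_{s,\ell}:=H_sA_s^\ell .
\]
Because $\mathcal O_{F,t}=L_F^\top\mathcal O_tL_F=\sum_{\ell=0}^{t-1}(H_FA_F^\ell)^\top R^{-1}H_FA_F^\ell$, the blocks in \eqref{eq:transformed-Gramian-blocks} are
\[
  O_{ss,t}=\sum_{\ell=0}^{t-1}G_{s,\ell}^\top R^{-1}G_{s,\ell},
  \qquad
  O_{us,t}=\sum_{\ell=0}^{t-1}G_{u,\ell}^\top R^{-1}G_{s,\ell}.
\]

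\emph{Step 2: the bound \eqref{eq:Oss-bound}.} Lemma~\ref{lem:power-bounds} gives $\|G_{s,\ell}\|\le\|H\|C_s\gamma_s^\ell$. Summing the geometric series yields
\[
  \|O_{ss,t}\|\le\|R^{-1}\|\,\|H\|^2C_s^2\,(1-\gamma_s^2)^{-1},
\]
uniformly in $t$ and in $P$. This bound does not even depend on $F$.

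\emph{Step 3: the bound \eqref{eq:normalized-Ous-bound}.} Write
\[
  (A_u^{-t})^\top O_{us,t}=\sum_{\ell=0}^{t-1}\bigl(G_{u,\ell}A_u^{-t}\bigr)^\top R^{-1}G_{s,\ell},
  \qquad
  G_{u,\ell}A_u^{-t}=H_uA_u^{-(t-\ell)}+H_sA_s^\ell FA_u^{-t}.
\]
For $\ell\le t-1$, Lemma~\ref{lem:power-bounds} and Lemma~\ref{lem:covariance-factorization} give
\[
  \|G_{u,\ell}A_u^{-t}\|\le \|H\|\bigl(C_u\gamma_u^{t-\ell}+C_sC_{\underline p,\overline p}C_u\gamma_s^\ell\gamma_u^t\bigr).
\]
Multiplying by $\|R^{-1}\|\,\|G_{s,\ell}\|\le C\gamma_s^\ell$ and summing over $\ell$ produces two sums.
\begin{itemize}
\item The first is bounded by $C\sum_{\ell<t}\gamma_u^{t-\ell}\gamma_s^\ell\le Ct\bar\gamma^{\,t}$, where $\bar\gamma:=\max(\gamma_u,\gamma_s)$.
\item The second is bounded by $C\gamma_u^t\sum_\ell\gamma_s^{2\ell}\le C\gamma_u^t$.
\end{itemize}
Choosing any $\gamma\in(\bar\gamma,1)$ absorbs the factor $t$, since $t\bar\gamma^{\,t}\le C\gamma^t$. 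This gives \eqref{eq:normalized-Ous-bound} with constants depending only on $A,H,R,\underline p,\overline p$.

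The only delicate point is the convolution in Step 3. The unstable directions are normalized by $A_u^{-t}$ rather than $A_u^{-\ell}$. Decay therefore comes from the product $\gamma_u^{t-\ell}\gamma_s^\ell$, which is exponentially small for every $\ell$, at the cost of a harmless linear factor $t$. The $F$-dependent cross term is where uniformity over the class $\mathfrak C(\underline p,\overline p)$ enters, and it is handled by the bound on $\|F\|$.
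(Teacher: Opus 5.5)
Your proposal is correct and follows the paper's proof essentially step for step: you derive the same explicit block formulas for $O_{ss,t}$ and $O_{us,t}$, bound the stable block by a geometric series, and control the normalized cross block by the convolution $\sum_{\ell<t}\gamma_u^{t-\ell}\gamma_s^{\ell}$ plus a $C\gamma_u^t$ term. The only cosmetic difference is that the paper writes the $F$-dependent contribution as $(A_u^{-t})^\top F^\top O_{ss,t}$ and reuses the bound on $O_{ss,t}$, whereas you bound that contribution term by term.
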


\begin{proof}
Recall $\mathcal O_{F,t} = L_F^\top \mathcal O_t L_F = \sum_{\ell=0}^{t-1}(A_F^{\ell})^\top H_F^\top R^{-1} H_F A_F^{\ell}$ and the block form of $\mathcal O_{F,t}$ in \eqref{eq:transformed-Gramian-blocks}. For a transformed initial state $(u,s)$, one has
\begin{equation}
  H_FA_F^{\ell}
  \begin{pmatrix}u\\0\end{pmatrix}
  =H_uA_u^{\ell} u+H_sA_s^{\ell} Fu,
  \qquad 
  H_FA_F^{\ell}
  \begin{pmatrix}0\\s\end{pmatrix}
  =H_sA_s^{\ell} s.
\end{equation}
Consequently,
\begin{align}
  O_{ss,t}
  &=
  \sum_{\ell=0}^{t-1}
  (A_s^{\ell})^\top H_s^\top R^{-1}H_sA_s^{\ell},
  \label{eq:Oss-explicit}
  \\
  O_{us,t}
  &=
  \sum_{\ell=0}^{t-1}
  \bigl(H_uA_u^{\ell}+H_sA_s^{\ell} F\bigr)^\top
  R^{-1}H_sA_s^{\ell} = \left(\sum_{\ell=0}^{t-1}
   (A_u^{\ell})^\top H_u^\top R^{-1}H_sA_s^{\ell}\right) + F^\top O_{ss,t}.
  \label{eq:Ous-explicit}
\end{align}
The geometric bound on $A_s^{\ell}$ in Lemma~\ref{lem:power-bounds} proves \eqref{eq:Oss-bound}.

For the first contribution in \eqref{eq:Ous-explicit},
\begin{equation*}
  (A_u^{-t})^\top
  \sum_{\ell=0}^{t-1}
  (A_u^{\ell})^\top H_u^\top R^{-1}H_sA_s^{\ell}
  =
  \sum_{\ell=0}^{t-1}
  (A_u^{-(t-\ell)})^\top
  H_u^\top R^{-1}H_sA_s^{\ell}.
\end{equation*}
Its norm is bounded by
\[
  C\sum_{\ell=0}^{t-1}
  \gamma_u^{t-\ell}\gamma_s^{\ell}.
\]
For any $\gamma\in(\max\{\gamma_u,\gamma_s\},1)$, this convolution is at
most $C_\gamma\gamma^t$. The second contribution satisfies
\begin{align*}
  &\left\|
  (A_u^{-t})^\top F^\top
  O_{ss,t}
  \right\|
  \leq C\gamma_u^t,
\end{align*}
using \eqref{eq:factor-uniform-bounds} and \eqref{eq:Oss-bound}.
After increasing $C_o$, this proves \eqref{eq:normalized-Ous-bound}.
\nc
\end{proof}

\begin{lemma}[Effective unstable information]
\label{lem:effective-unstable-information}
The matrices $C_{s,t}$ and $Q_t$ defined in \eqref{eq:effective-Q} are symmetric positive semidefinite, and with $\Lambda_{s,t}$ as in \eqref{eq:def-Lambda-st},
\begin{equation}
  \|\Lambda_{s,t}^{-1}\|+\|C_{s,t}\|\leq C
  \label{eq:stable-inverse-bounds}
\end{equation}
uniformly over $P\in\mathfrak C(\underline p,\overline p)$ and $t$. Moreover, there are $t_0\geq1$ and $c_Q>0$
such that
\begin{equation}
  Q_t\succeq c_Q(A_u^t)^\top A_u^t,
  \qquad t\geq t_0,
  \label{eq:Q-lower-bound}
\end{equation}
uniformly over $P\in\mathfrak C(\underline p,\overline p)$.
\end{lemma}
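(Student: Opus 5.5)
The plan is to write everything through a factorization of the transformed Gramian and use push-through/Woodbury identities to get exact expressions. For the symmetry and boundedness claims, the push-through identity gives
\[
C_{s,t}=(I+S_cO_{ss,t})^{-1}S_c=S_c^{1/2}\bigl(I+S_c^{1/2}O_{ss,t}S_c^{1/2}\bigr)^{-1}S_c^{1/2}.
\]
This is manifestly symmetric and satisfies $0\preceq C_{s,t}\preceq S_c$. Hence $\|C_{s,t}\|\le\|S_c\|\le C_{\underline p,\overline p}$ by Lemma~\ref{lem:covariance-factorization}. For $\Lambda_{s,t}^{-1}$ I use $\Lambda_{s,t}^{-1}=I-C_{s,t}O_{ss,t}$, which follows from $\Lambda_{s,t}^{-1}(I+S_cO_{ss,t})=I$. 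Then \eqref{eq:Oss-bound} gives $\|\Lambda_{s,t}^{-1}\|\le 1+C_{\underline p,\overline p}C_o$. This proves \eqref{eq:stable-inverse-bounds}.

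For positivity of $Q_t$, write $\mathcal O_{F,t}=G^\top G$, where $G=[G_u\ G_s]$ stacks the blocks $R^{-1/2}H_FA_F^\ell$, $\ell=0,\dots,t-1$, split according to the $(u,s)$ coordinates. Then $O_{uu,t}=G_u^\top G_u$, $O_{us,t}=G_u^\top G_s$ and $O_{ss,t}=G_s^\top G_s$, so
\[
Q_t=G_u^\top\bigl(I-G_sC_{s,t}G_s^\top\bigr)G_u .
\]
Inserting the symmetric form of $C_{s,t}$ and applying Woodbury yields
\[
I-G_sS_c^{1/2}\bigl(I+S_c^{1/2}G_s^\top G_sS_c^{1/2}\bigr)^{-1}S_c^{1/2}G_s^\top=\bigl(I+G_sS_cG_s^\top\bigr)^{-1}.
\]
Since $\|G_sS_cG_s^\top\|\le\|S_c\|\,\|O_{ss,t}\|\le C_{\underline p,\overline p}C_o$, this matrix is bounded below by $c_1 I$ with $c_1:=(1+C_{\underline p,\overline p}C_o)^{-1}$. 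Hence $Q_t\succeq c_1O_{uu,t}\succeq0$, uniformly over the class. It therefore remains to show $O_{uu,t}\succeq c(A_u^t)^\top A_u^t$ for $t\ge t_0$.

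By \eqref{eq:Ous-explicit}'s companion formula,
\[
u^\top O_{uu,t}u=\sum_{\ell=0}^{t-1}\|R^{-1/2}(H_uA_u^\ell u+H_sA_s^\ell Fu)\|^2 .
\]
I keep only the last window $\ell=t-L,\dots,t-1$ (for $t\ge L$) and use $\|a+b\|^2\ge\tfrac12\|a\|^2-\|b\|^2$. Applying \eqref{eq:observability-unstable} to $v=A_u^{t-L}u$ bounds the main part below by $\tfrac12 c_o\|A_u^{t-L}u\|^2$.

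The perturbation from the stable cross term $H_sA_s^\ell F$ is the step needing care. It is at most $C\gamma_s^{2(t-L)}\|u\|^2$, using $\|F\|\le C_{\underline p,\overline p}$ and Lemma~\ref{lem:power-bounds}. Writing $\|u\|\le\|A_u^{-(t-L)}\|\,\|A_u^{t-L}u\|\le C_u\gamma_u^{t-L}\|A_u^{t-L}u\|$, the perturbation is at most $C(\gamma_s\gamma_u)^{2(t-L)}\|A_u^{t-L}u\|^2$. Choosing $t_0\ge L$ so large that this coefficient is below $c_o/4$ gives
\[
u^\top O_{uu,t}u\ge\tfrac{c_o}{4}\|A_u^{t-L}u\|^2 .
\]
Finally, $\|A_u^tu\|\le\|A_u\|^L\|A_u^{t-L}u\|$ converts this bound to $u^\top O_{uu,t}u\ge\tfrac{c_o}{4}\|A_u\|^{-2L}\|A_u^tu\|^2$. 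Combined with the previous step, this proves \eqref{eq:Q-lower-bound} with $c_Q=c_1c_o\|A_u\|^{-2L}/4$.

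All constants depend only on $A,H,R,\underline p,\overline p$, so the bounds are uniform over $P\in\mathfrak C(\underline p,\overline p)$ as required.
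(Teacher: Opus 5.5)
Your proof is correct, and for the lower bound it takes a different route from the paper.

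The paper proves \eqref{eq:Q-lower-bound} through a regularized variational formula. It replaces $S_c$ by $S_c+\xi I$ and writes $u^\top Q_t^\xi u$ as an infimum over stable nuisance vectors $w$. The quantity minimized is a prior penalty on $w$ plus the observation misfit of $H_uA_u^\ell u+H_sA_s^\ell(Fu+w)$. It then restricts to the last $L$ observations, absorbs the $w$-dependent stable terms into the penalty $(\overline p+1)^{-1}\|w\|^2$ for large $t$, and finally lets $\xi\downarrow0$.

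You instead factor $\mathcal O_{F,t}=G^\top G$ and use Woodbury to get the exact representation $Q_t=G_u^\top(I+G_sS_cG_s^\top)^{-1}G_u$. This representation holds for singular $S_c$ without any regularization or limiting step. It gives symmetry and $Q_t\succeq0$ at once, together with the two-sided comparison $(1+\|S_c\|\,\|O_{ss,t}\|)^{-1}O_{uu,t}\preceq Q_t\preceq O_{uu,t}$.

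What your route buys is a clean separation of the two mechanisms. Bounded stable uncertainty costs only a fixed factor. The observability argument then concerns $O_{uu,t}$ alone, where the only perturbation is the deterministic cross term $H_sA_s^\ell Fu$ rather than a free minimization variable $w$. The paper's formulation has a direct Bayesian reading: it measures the unstable information left after profiling out the stable component. In its lower-bound step it uses only the late-time decay of $A_s^\ell$, not the uniform bound $\|O_{ss,t}\|\le C_o$ on which your comparison factor relies. Both arguments ultimately rest on the same ingredients: $\|S_c\|\le\overline p$, stability of $A_s$, and finite-window observability.

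Your treatment of \eqref{eq:stable-inverse-bounds}, via $\Lambda_{s,t}^{-1}=I-C_{s,t}O_{ss,t}$, is an equivalent rewriting of the paper's formula.
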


\begin{proof}
The push-through identity gives
\begin{equation}
  C_{s,t}
  =
  S_c^{1/2}
  \bigl(I+S_c^{1/2}O_{ss,t}S_c^{1/2}\bigr)^{-1}
  S_c^{1/2}.
  \label{eq:Cs-symmetric-form}
\end{equation}
Thus $C_{s,t}$ is symmetric positive semidefinite and
$\|C_{s,t}\|\leq\|S_c\|\leq \overline p$ since $S_c \preceq P_{ss} \preceq \overline p I$. Also,
\[
  \Lambda_{s,t}^{-1}
  =
  I-S_c^{1/2}
  \bigl(I+S_c^{1/2}O_{ss,t}S_c^{1/2}\bigr)^{-1}
  S_c^{1/2}O_{ss,t},
\]
so \eqref{eq:factor-uniform-bounds} and  \eqref{eq:Oss-bound}  give a uniform bound on $\Lambda_{s,t}^{-1}$.

We next prove \eqref{eq:Q-lower-bound}, by viewing $Q_t$ as a Schur complement.  For $\xi\in(0,1]$, define the regularized matrix 
$S_{c,\xi}=S_c+\xi I$ and set
\[
  C_{s,t}^{\xi}
  :=
  (S_{c,\xi}^{-1}+O_{ss,t})^{-1},
  \qquad
  Q_t^\xi
  :=
  O_{uu,t}-O_{us,t}C_{s,t}^{\xi}O_{su,t}.
\]
We first prove a statement for $Q_t^\xi$ and then recover the result for $Q_t$ by letting $\xi \downarrow 0$.
The Schur-complement variational formula gives, for $u\in\mathsf U$,
\begin{align}
  u^\top Q_t^\xi u
  =
  \inf_{w\in\mathsf S}
  \left\{
  w^\top S_{c,\xi}^{-1}w
  +
  \sum_{\ell=0}^{t-1}
  \left\|
  R^{-1/2}
  \left[
  H_uA_u^{\ell} u+H_sA_s^{\ell}(Fu+w)
  \right]
  \right\|^2
  \right\}.
  \label{eq:Q-variational}
\end{align}
Since $0\preceq S_c\preceq \overline p I$, one has
\begin{equation}
  S_{c,\xi}^{-1}\succeq (\overline p+1)^{-1}I.
  \label{eq:regularized-covariance-penalty}
\end{equation}

We will show that the terms with $A_u^\ell$ dominate for large enough $t$. Let $t\geq L$ and write $v=A_u^{t-L}u$. Restricting the observation sum in
\eqref{eq:Q-variational} to the last $L$ terms and using
$\|a+b\|^2\geq\frac12\|a\|^2-\|b\|^2$ gives
\begin{equation*}
  \sum_{\ell=0}^{L-1}
  \left\|
  R^{-1/2}
  \left[
  H_uA_u^{\ell}v
  +H_sA_s^{t-L+\ell}(Fu+w)
  \right]
  \right\|^2
  \geq
  \frac{c_o}{2}\|v\|^2
  -C\gamma_s^{2(t-L)}\|Fu+w\|^2.
\end{equation*}
By \eqref{eq:factor-uniform-bounds}, there is $C_F<\infty$ such that
\[
  \|Fu+w\|^2
  \leq 2C_F^2\|u\|^2+2\|w\|^2.
\]
Set $h=t-L$ and $\kappa=(\overline p+1)^{-1}$. The preceding estimates therefore give
\begin{align*}
  w^\top S_{c,\xi}^{-1}w
  +
  \sum_{\ell=0}^{t-1}  \left\|
  R^{-1/2}
  \left[
  H_uA_u^{\ell} u+H_sA_s^{\ell}(Fu+w)
  \right]
  \right\|^2 
  &\geq
  \frac{c_o}{2}\|A_u^hu\|^2
  +\bigl(\kappa-2C\gamma_s^{2h}\bigr)\|w\|^2 \\
  &\qquad
  -2CC_F^2\gamma_s^{2h}\|u\|^2.
\end{align*}
Since $u=A_u^{-h}A_u^hu$, Lemma~\ref{lem:power-bounds} implies
\[
  \|u\|^2
  \leq C_u^2\gamma_u^{2h}\|A_u^hu\|^2.
\]
Choose $h_0$ so large that
\[
  2C\gamma_s^{2h}\leq\frac\kappa2,
  \qquad
  2CC_F^2C_u^2(\gamma_s\gamma_u)^{2h}
  \leq\frac{c_o}{4},
  \qquad h\geq h_0.
\]
Such an $h_0$ exists because $\gamma_s,\gamma_u\in(0,1)$. Setting
$t_0=L+h_0$ yields, uniformly in $w$ and $\xi$,
\begin{equation}
  w^\top S_{c,\xi}^{-1}w
  +
  \sum_{\ell=0}^{t-1}  \left\|
  R^{-1/2}
  \left[
  H_uA_u^{\ell} u+H_sA_s^{\ell}(Fu+w)
  \right]
  \right\|^2 
  \geq
  \frac{c_o}{4}\|A_u^{t-L}u\|^2,
  \qquad t\geq t_0.
  \label{eq:variational-lower-final}
\end{equation}
Since
\[
  \|A_u^tu\|\leq\|A_u^{L}\|\,\|A_u^{t-L}u\|,
\]
\eqref{eq:variational-lower-final} implies
\[
  Q_t^\xi\succeq c_Q(A_u^t)^\top A_u^t
\]
with $c_Q>0$ independent of $\xi$, $P$, and $t\geq t_0$.
Finally,
\[
  C_{s,t}^{\xi}
  =
  (I+S_{c,\xi}O_{ss,t})^{-1}S_{c,\xi}
  \longrightarrow C_{s,t}
\]
as $\xi\downarrow0$. Since each $Q_t^\xi$ is the Schur complement
of the positive-definite stable block in the positive-semidefinite quadratic
form appearing in \eqref{eq:Q-variational}, one has $Q_t^\xi\succeq0$.
Hence $Q_t^\xi\to Q_t$ implies $Q_t\succeq0$, and the lower bound
passes to the limit.
\end{proof}

\subsubsection{Detailed closed-loop estimate}
\label{subsec:detailed-closed-loop-estimate}

We now prove the time-zero case of \eqref{eq:enkf-package-closed-loop} in Theorem~\ref{thm:enkf-riccati-package}, uniformly over
positive-semidefinite, possibly singular covariances in
$\mathfrak C(\underline p,\overline p)$.

\begin{proof}[Proof of bound \eqref{eq:enkf-package-closed-loop} for $s=0$]
Fix $P\in\mathfrak C(\underline p,\overline p)$. 
We first control the upper row of
$A_F^t\mathcal M_t^{-1}$.  This is where observability offsets the growth of
the unstable dynamics. 

The
powers of the transformed dynamics satisfy the exact identity
\begin{equation}
  A_F^t
  =L_F^{-1}A^tL_F
  =
  \begin{pmatrix}
    A_u^t&0\\
    A_s^tF-FA_u^t&A_s^t
  \end{pmatrix}.
  \label{eq:AF-power}
\end{equation}
Combining \eqref{eq:calM-inverse-blocks} and \eqref{eq:AF-power}, the top row of
$A_F^t\mathcal M_t^{-1}$ is
\[
  \begin{pmatrix}A_u^tX_{11,t}&A_u^tX_{12,t}\end{pmatrix}.
\] 
For $t\geq t_0$, the definition of $\Theta_t$ and  Lemma~\ref{lem:effective-unstable-information} give
\[
  \Theta_t\succeq Q_t\succeq c_Q(A_u^t)^\top A_u^t.
\]
Therefore
\begin{equation}
  \Theta_t^{-1}
  \preceq
  c_Q^{-1}A_u^{-t}(A_u^{-t})^\top.
  \label{eq:T-inverse-bound}
\end{equation}
Taking operator norms in \eqref{eq:T-inverse-bound} gives
\begin{equation}
  \|\Theta_t^{-1/2}\|
  \leq C\|A_u^{-t}\|
  \leq C\gamma_u^t.
  \label{eq:T-half-decay}
\end{equation}
Moreover, congruence of \eqref{eq:T-inverse-bound} by $A_u^t$ yields
\begin{equation}
  \|A_u^t\Theta_t^{-1/2}\|^2
  =
  \|A_u^t\Theta_t^{-1}(A_u^t)^\top\|
  \leq c_Q^{-1}.
  \label{eq:Au-T-half-bound}
\end{equation}
Using $X_{11,t}=\Theta_t^{-1}P_{uu}^{-1}$, we therefore obtain
\begin{align}
  \|A_u^tX_{11,t}\|
  &\leq
  \|A_u^t\Theta_t^{-1/2}\|
  \|\Theta_t^{-1/2}\|
  \|P_{uu}^{-1}\|
  \leq C\gamma_u^t.
  \label{eq:top-left-decay}
\end{align}

Similarly, congruence of \eqref{eq:T-inverse-bound} by $O_{us,t}$ together with the bound \eqref{eq:normalized-Ous-bound} give 
\begin{equation}
  \|\Theta_t^{-1/2}O_{us,t}\|^2=\|O_{su,t}\Theta_t^{-1}O_{us,t}\|\leq c_Q^{-1}\|(A_u^{-t})^\top O_{us,t}\|^2\leq C\gamma^{2t}.
  \label{eq:T-half-Ous-bound}
\end{equation}
Equations \eqref{eq:X12}, \eqref{eq:Au-T-half-bound},
\eqref{eq:T-half-Ous-bound}, and \eqref{eq:stable-inverse-bounds} imply
\begin{align}
  \|A_u^tX_{12,t}\|
  &\leq
  \|A_u^t\Theta_t^{-1/2}\|
  \|\Theta_t^{-1/2}O_{us,t}\|
  \|\Lambda_{s,t}^{-1}\|
  \leq C\gamma^t.
  \label{eq:top-right-decay}
\end{align}

It remains to control the bottom row of $A_F^t \mathcal M_t^{-1}$.  It can be written as
\begin{align}
  &(A_s^tF-FA_u^t)
  \begin{pmatrix}X_{11,t}&X_{12,t}\end{pmatrix}
  +A_s^t
  \begin{pmatrix}X_{21,t}&X_{22,t}\end{pmatrix}
  \\
  &\quad=
  A_s^t
  \begin{pmatrix}
    FX_{11,t}+X_{21,t}&FX_{12,t}+X_{22,t}
  \end{pmatrix}
  -F
  \begin{pmatrix}
    A_u^tX_{11,t}&A_u^tX_{12,t}
  \end{pmatrix}.
\label{eq:bottom-row-A_FM_tinv}
\end{align}
The unstable components appearing there
are already covered by the preceding estimates, while the remaining terms are
multiplied by the stable dynamics.  We first record uniform bounds on the
lower blocks of $\mathcal M_t^{-1}$. Recall from \eqref{eq:X21} in Lemma~\ref{lem:inverse-block-M_t} that $X_{21,t} = - C_{s,t} O_{su,t} \Theta_t^{-1} P_{uu}^{-1}$. From
\eqref{eq:T-half-decay} and \eqref{eq:T-half-Ous-bound},
\begin{equation}
  \|\Theta_t^{-1}O_{us,t}\|
  \leq
  \|\Theta_t^{-1/2}\|
  \|\Theta_t^{-1/2}O_{us,t}\|
  \leq C(\gamma_u\gamma)^t.
  \label{eq:T-inverse-Ous}
\end{equation}
Since $\Theta_t$ and $P_{uu}$ are symmetric,
\[
  \|O_{su,t}\Theta_t^{-1}P_{uu}^{-1}\|
  =
  \|P_{uu}^{-1}\Theta_t^{-1}O_{us,t}\| \leq \|P_{uu}^{-1}\|\|\Theta_t^{-1}O_{us,t}\|,
\]
which is uniformly bounded, by \eqref{eq:T-inverse-Ous} together with $P \in \mathfrak{C}(\underline p, \overline p)$. Combining this estimate with the bound on $C_{s,t}$ in \eqref{eq:stable-inverse-bounds} shows that $X_{21,t}$
is uniformly bounded.  Equation \eqref{eq:T-half-Ous-bound} also shows that
$O_{su,t}\Theta_t^{-1}O_{us,t}$ is uniformly bounded. Hence
\eqref{eq:X22}, together with \eqref{eq:stable-inverse-bounds}, shows that
$X_{22,t}$ is uniformly bounded.
Equations~\eqref{eq:X11}--\eqref{eq:X12},
\eqref{eq:T-half-decay}, and
\eqref{eq:T-half-Ous-bound} also show that \(X_{11,t}\) and
\(X_{12,t}\) are uniformly bounded.

In \eqref{eq:bottom-row-A_FM_tinv}, the first term decays exponentially because $A_s^t$ does and all matrices in
brackets are uniformly bounded. The second decays by the top-row estimates
and the uniform bound on $F$ in \eqref{eq:factor-uniform-bounds}. Hence, combining the top and bottom row estimates, there are $C<\infty$ and
$\rho\in(0,1)$ such that
\begin{equation}
  \|A_F^t(I+\mathsf D_P\mathcal O_{F,t})^{-1}\|
  \leq C\rho^t,
  \qquad t\geq t_0.
  \label{eq:transformed-decay}
\end{equation}

For the finitely many times $0\leq t<t_0$, the same estimate follows after
increasing $C$. To justify uniformity over $\mathfrak{C}(\underline p, \overline p)$, note that the parameter set determined
by
\[
  \underline p I\preceq P_{uu}\preceq \overline p I,
  \qquad
  0\preceq S_c\preceq \overline p I,
  \qquad
  \|F\|\leq \overline p/\underline p
\]
is compact. For fixed $t$, the maps from $(P_{uu},S_c,F)$ to $A_F^t$ and
$\mathcal O_{F,t}$ are continuous, and the preceding determinant argument
shows that $I+\mathsf D_P\mathcal O_{F,t}$ is invertible throughout this compact set.
The inverse therefore depends continuously on the parameters and has a
uniformly bounded norm. Finally, \eqref{eq:factor-uniform-bounds} and
\eqref{eq:transformed-closed-loop} transfer
\eqref{eq:transformed-decay} to the original coordinates and prove
the claimed time-zero estimate.
\end{proof}

\begin{remark}[Purely unstable dynamics]
If $\mathsf S=\{0\}$, the proof above applies with all stable-block terms
omitted.
\end{remark}

\subsubsection{Uniform bounds along Riccati trajectories}

The closed-loop estimate proved above starts at time zero. For
the later mean analysis, we need the same estimate for products starting at
arbitrary times. This requires showing that every Riccati iterate remains in
a common class of the form \eqref{eq:covariance-class}.

\begin{lemma}[Uniform upper and lower bounds on the unstable covariance]
\label{lem:unstable-covariance-bounds}
For every \(\underline p,\overline p>0\), there exist
\(0<\underline p_*\leq \overline p_{u,*}<\infty\) such that
\begin{equation}
  \underline p_*I_{\mathsf U}
  \preceq[\Phi^t(P)]_{uu}
  \preceq \overline p_{u,*}I_{\mathsf U},
  \qquad
  P\in\mathfrak C(\underline p,\overline p),\quad t\geq0.
  \label{eq:unstable-covariance-bounds}
\end{equation}
\end{lemma}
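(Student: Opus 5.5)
The plan is to compute \([\Phi^t(P)]_{uu}\) exactly in the factorized coordinates of \eqref{eq:covariance-LDL}. It will turn out to equal \(A_u^t\Theta_t^{-1}(A_u^t)^\top\), with \(\Theta_t\) as in \eqref{eq:effective-Q}. The two-sided bound then reduces to two-sided comparisons of \(\Theta_t\) with \((A_u^t)^\top A_u^t\).

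\emph{Step 1: the exact formula.} By Proposition~\ref{prop:exact-riccati-identities} and \eqref{eq:Ct-equiv-expression}, \(\Phi^t(P)=A^t(I+P\mathcal O_t)^{-1}P(A^t)^\top\). Inserting \(P=L_F\mathsf D_PL_F^\top\), \(A^tL_F=L_FA_F^t\) and \(\mathcal O_{F,t}=L_F^\top\mathcal O_tL_F\) (as in \eqref{eq:transformed-closed-loop}) gives
\[
  \Phi^t(P)=L_F\,A_F^t\mathcal M_t^{-1}\mathsf D_P(A_F^t)^\top L_F^\top .
\]
The first block row of \(L_F\) is \((I\ 0)\), so the \(uu\) block of \(L_FZL_F^\top\) is \(Z_{uu}\). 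By \eqref{eq:AF-power}, the top row of \(A_F^t\) is \((A_u^t\ 0)\), and the top row of \(A_F^t\mathcal M_t^{-1}\) is \((A_u^tX_{11,t}\ A_u^tX_{12,t})\). Since \(\mathsf D_P\) is block diagonal, using \eqref{eq:X11} we get
\[
  [\Phi^t(P)]_{uu}=A_u^tX_{11,t}P_{uu}(A_u^t)^\top=A_u^t\Theta_t^{-1}(A_u^t)^\top ,
\]
valid for all \(t\ge0\). By Lemma~\ref{lem:inverse-block-M_t}, \(\Theta_t\) is invertible, and it is symmetric positive definite since \(\Theta_t\succeq P_{uu}^{-1}\succ0\) by Lemma~\ref{lem:effective-unstable-information}.

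\emph{Step 2: upper bound.} Since \(Q_t\succeq0\), we have \(\Theta_t\succeq P_{uu}^{-1}\), so \(\Theta_t^{-1}\preceq P_{uu}\preceq\overline pI\). Hence \([\Phi^t(P)]_{uu}\preceq\overline p\|A_u^t\|^2I\), which handles \(t<t_0\). For \(t\ge t_0\), \eqref{eq:T-inverse-bound} gives \(A_u^t\Theta_t^{-1}(A_u^t)^\top\preceq c_Q^{-1}I\). Taking the maximum over the two ranges defines \(\overline p_{u,*}\).

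\emph{Step 3: lower bound.} This is the step needing a new estimate: an upper bound \(\Theta_t\preceq C(A_u^t)^\top A_u^t\) uniform in \(t\ge0\) and \(P\in\mathfrak C(\underline p,\overline p)\).

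First, since \(C_{s,t}\succeq0\) by Lemma~\ref{lem:effective-unstable-information}, we have \(Q_t\preceq O_{uu,t}\). Then, from the proof of Lemma~\ref{lem:information-block-bounds},
\[
  O_{uu,t}=\sum_{\ell<t}(H_uA_u^\ell+H_sA_s^\ell F)^\top R^{-1}(H_uA_u^\ell+H_sA_s^\ell F).
\]
For \(v\in\mathsf U\), write \(A_u^\ell v=A_u^{-(t-\ell)}A_u^tv\) and \(v=A_u^{-t}A_u^tv\). Lemma~\ref{lem:power-bounds} then gives
\[
  \|A_u^\ell v\|\le C_u\gamma_u^{t-\ell}\|A_u^tv\|,\qquad \|v\|\le C_u\gamma_u^t\|A_u^tv\| .
\]
Together with \(\|F\|\le\overline p/\underline p\) and \(\|A_s^\ell\|\le C_s\gamma_s^\ell\), this yields
\[
  v^\top O_{uu,t}v\le C\sum_{\ell<t}\bigl(\gamma_u^{2(t-\ell)}+\gamma_s^{2\ell}\gamma_u^{2t}\bigr)\|A_u^tv\|^2\le C\|A_u^tv\|^2 .
\]
Likewise, \(v^\top P_{uu}^{-1}v\le\underline p^{-1}\|v\|^2\le\underline p^{-1}C_u^2\|A_u^tv\|^2\). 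Hence \(\Theta_t\preceq C_\Theta(A_u^t)^\top A_u^t\) uniformly.

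Inverting (legitimate since \(A_u^t\) is invertible) gives \(\Theta_t^{-1}\succeq C_\Theta^{-1}A_u^{-t}(A_u^{-t})^\top\). Congruence by \(A_u^t\) then yields \([\Phi^t(P)]_{uu}\succeq C_\Theta^{-1}I\), so we may take \(\underline p_*:=\min\{C_\Theta^{-1},\overline p_{u,*}\}\).

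The main care point is Step 1: the block algebra with the possibly singular \(S_c\) must be handled through the invertible \(\mathcal M_t\), not through \(P^{-1}\), which may not exist.
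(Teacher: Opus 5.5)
Your proposal is correct and follows essentially the same route as the paper: you derive $[\Phi^t(P)]_{uu}=A_u^t\Theta_t^{-1}(A_u^t)^\top$ from the factorization $\mathcal C_t(P)=L_F\mathcal M_t^{-1}\mathsf D_PL_F^\top$ and the identity $X_{11,t}P_{uu}=\Theta_t^{-1}$. You then get the upper bound from $\Theta_t\succeq Q_t\succeq c_Q(A_u^t)^\top A_u^t$ for $t\ge t_0$ plus finitely many early times, and the lower bound from $Q_t\preceq O_{uu,t}\preceq C(A_u^t)^\top A_u^t$ and $P_{uu}^{-1}\preceq C(A_u^t)^\top A_u^t$, followed by inversion and congruence, exactly as the paper does. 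The only differences are cosmetic: you extract the $uu$ block through the conjugated dynamics $A_F^t$ rather than the block-diagonal form of $A$, and you treat $t<t_0$ via $\Theta_t^{-1}\preceq P_{uu}$ rather than via a uniform bound $\Theta_t\succeq c_1(A_u^t)^\top A_u^t$.
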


\begin{proof}  
Fix $P\in\mathfrak C(\underline p,\overline p)$ and use the factorization,
transformed system, and information blocks in
\eqref{eq:F-Schur}--\eqref{eq:transformed-Gramian-blocks}.  
Equation \eqref{eq:Ct-equiv-expression} gives
\[
  \mathcal C_t(P)
  =
  L_F\mathcal M_t^{-1}\mathsf D_PL_F^\top.
\]
Since the upper-left block of
\(\mathcal M_t^{-1}\mathsf D_P\) is
\(X_{11,t}P_{uu}=\Theta_t^{-1}\), it follows from \eqref{eq:batch-covariance-formula} that
\begin{equation}
  \bigl[\Phi^t(P)\bigr]_{uu}
  =
  A_u^t\Theta_t^{-1}(A_u^t)^\top.
  \label{eq:unstable-covariance-representation}
\end{equation}
Recall from \eqref{eq:effective-Q} that $\Theta_t = P_{uu}^{-1} + Q_t$. For $t\geq t_0$, Lemma~\ref{lem:effective-unstable-information} gives the
required lower bound on $\Theta_t$.  For $t<t_0$, 
$\Theta_t\succeq P_{uu}^{-1}\succeq\overline p^{-1}I$, and since $A_u^t (A_u^t)^\top \preceq \|A_u^t\|^2 I$ with only finitely many times to consider, it follows that, uniformly in $P$ and $t$,
\begin{equation}
  \Theta_t\succeq c_1(A_u^t)^\top A_u^t,
  \qquad t\geq0.
  \label{eq:T-lower}
\end{equation}

For the reverse bound, \(0\preceq Q_t\preceq O_{uu,t}\) because
\(C_{s,t}\succeq0\).\nc The explicit form of the
unstable information block and the bounds
\eqref{eq:power-bounds}--\eqref{eq:factor-uniform-bounds} give, for
$v\in\mathsf U$,
\begin{equation}
  v^\top O_{uu,t}v
  \leq
  C\left[
  \sum_{\ell=1}^{t}\gamma_u^{2\ell}
  +\gamma_u^{2t}\sum_{\ell=0}^{t-1}\gamma_s^{2\ell}
  \right]
  \|A_u^tv\|^2
  \leq C\|A_u^tv\|^2.
\end{equation}
Here we used
$\|A_u^\ell v\|\leq C\gamma_u^{t-\ell}\|A_u^tv\|$ and
$\|v\|\leq C\gamma_u^t\|A_u^tv\|$, both direct consequences of
\eqref{eq:power-bounds}. The latter inequality also gives
 $P_{uu}^{-1} \preceq \underline p^{-1} I \nc \preceq C(A_u^t)^\top A_u^t$, and hence
\begin{equation}
  \Theta_t\preceq C_1(A_u^t)^\top A_u^t,
  \qquad t\geq0.
  \label{eq:T-upper}
\end{equation}

Since $A_u$ is invertible, inversion of
\eqref{eq:T-lower}--\eqref{eq:T-upper} yields
\begin{equation}
  C_1^{-1}A_u^{-t}(A_u^{-t})^\top
  \preceq \Theta_t^{-1}
  \preceq
  c_1^{-1}A_u^{-t}(A_u^{-t})^\top.
\end{equation}
Congruence by $A_u^t$ and \eqref{eq:unstable-covariance-representation} give
\begin{equation}
  C_1^{-1}I
  \preceq
  \bigl[\Phi^t(P)\bigr]_{uu}
  \preceq
  c_1^{-1}I.
\end{equation}
This proves the claim with $\underline p_*=C_1^{-1}$ and $\overline p_{u,*}=c_1^{-1}$.
\end{proof}

\begin{lemma}[Uniform boundedness of the full covariance trajectory]
\label{lem:full-covariance-bound}
For every \(\underline p,\overline p>0\), there exists
\(\overline p_*<\infty\) such that
\begin{equation}
  \|\Phi^t(P)\|\leq\overline p_*,
  \qquad
  P\in\mathfrak C(\underline p,\overline p),\quad t\geq0.
  \label{eq:full-covariance-bound}
\end{equation}
Consequently,
\begin{equation}
  \Phi^t(P)\in\mathfrak C(\underline p_*,\overline p_*),
  \qquad
  P\in\mathfrak C(\underline p,\overline p),\quad t\geq0,
  \label{eq:trajectory-in-common-class}
\end{equation}
where \(\underline p_*\) is given by
Lemma~\ref{lem:unstable-covariance-bounds}.
\end{lemma}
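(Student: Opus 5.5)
We use the representation $\Phi^t(P)=A^t\mathcal C_t(P)(A^t)^\top$ from Proposition~\ref{prop:exact-riccati-identities}, combined with $\mathcal C_t(P)=L_F\mathcal M_t^{-1}\mathsf D_PL_F^\top$ from the proof of Lemma~\ref{lem:unstable-covariance-bounds}. Since $A^tL_F=L_FA_F^t$, this gives
\[
  \Phi^t(P)=L_F\,A_F^t\mathcal M_t^{-1}\,\mathsf D_P\,L_F^\top (A^t)^\top .
\]
Directly estimating $(A^t)^\top$ is hopeless because of the unstable growth. Instead, we exploit symmetry and positivity. Since $\Phi^t(P)\succeq0$, its norm is controlled by the norms of its diagonal blocks. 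Indeed, for a PSD block matrix, $\|P\|\le\|P_{uu}\|+\|P_{ss}\|$, which follows from $|x^\top P_{us}y|\le (x^\top P_{uu}x)^{1/2}(y^\top P_{ss}y)^{1/2}$. Lemma~\ref{lem:unstable-covariance-bounds} already gives $[\Phi^t(P)]_{uu}\preceq\overline p_{u,*}I$, so the task reduces to bounding $[\Phi^t(P)]_{ss}$ uniformly.

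For the stable block, we use the monotonicity of the analysis map: $\Psi(P)\preceq P$, since the subtracted term is PSD. Consequently, $\Phi(P)\preceq APA^\top$. The stable block of $APA^\top$ is $A_sP_{ss}A_s^\top$ plus nothing else, because $A$ is block diagonal. However, iterating the full inequality mixes blocks only through $\Psi$, and $\Psi$ does not preserve block structure. We therefore argue as follows. By induction, $\Phi^{t}(P)\preceq A\Phi^{t-1}(P)A^\top$, and restricting to the stable block gives
\[
  [\Phi^t(P)]_{ss}\preceq A_s[\Phi^{t-1}(P)]_{ss}A_s^\top .
\]
This holds because compressing a Loewner inequality onto $\mathsf S$ preserves it, and $(AMA^\top)_{ss}=A_sM_{ss}A_s^\top$ for block-diagonal $A$. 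Iterating, we obtain $[\Phi^t(P)]_{ss}\preceq A_s^tP_{ss}(A_s^t)^\top$. Lemma~\ref{lem:power-bounds} then yields $\|[\Phi^t(P)]_{ss}\|\le C_s^2\gamma_s^{2t}\overline p\le C_s^2\overline p$.

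Combining the two diagonal bounds gives $\|\Phi^t(P)\|\le \overline p_{u,*}+C_s^2\overline p=:\overline p_*$, uniformly in $t$ and in $P\in\mathfrak C(\underline p,\overline p)$. Together with the lower bound $[\Phi^t(P)]_{uu}\succeq\underline p_*I_{\mathsf U}$ from Lemma~\ref{lem:unstable-covariance-bounds}, this establishes \eqref{eq:trajectory-in-common-class}, after enlarging $\overline p_*$ if necessary so that $\underline p_*\le\overline p_*$.

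The main point to check carefully is the stable-block compression step. Here we must make sure that the coordinates are the fixed spectral ones from \eqref{eq:block-A-H}, so that $A$ really is block diagonal, and that the norm $\|\cdot\|$ is taken in these adapted coordinates. By norm equivalence, this only changes constants. The off-diagonal block bound via Cauchy--Schwarz for PSD matrices is routine.
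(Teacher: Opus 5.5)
Your proof is correct and follows essentially the paper's argument: positive semidefiniteness reduces the problem to the diagonal blocks, the unstable block is bounded by Lemma~\ref{lem:unstable-covariance-bounds}, and the stable block is bounded by $\|A_s^t\|^2\|P\|$ via monotonicity. The only differences are cosmetic. You obtain $[\Phi^t(P)]_{ss}\preceq A_s^tP_{ss}(A_s^t)^\top$ by iterating the one-step inequality $\Psi(P)\preceq P$, whereas the paper uses $0\preceq\mathcal C_t(P)\preceq P$ in the batch formula. Your opening detour through $L_FA_F^t\mathcal M_t^{-1}$ is not needed.
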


\begin{proof} 
By \eqref{eq:batch-covariance-formula},
$\Phi^t(P)=A^t\mathcal C_t(P)(A^t)^\top$ with
$0\preceq\mathcal C_t(P)\preceq P$. Hence, 
\begin{equation}
  \bigl\|[\Phi^t(P)]_{ss}\bigr\|=\left\|A_s^t[\mathcal C_t(P)]_{ss}(A_s^t)^\top\right\|\leq\|A_s^t\|^2\|P\|\leq C_s^2\overline p.
  \label{eq:stable-covariance-bound}
\end{equation}
The unstable block is bounded by
Lemma~\ref{lem:unstable-covariance-bounds}. Positive semidefiniteness bounds
the cross block by the geometric mean of the two diagonal-block norms.
Consequently,
\[
  \|\Phi^t(P)\|
  \leq\left(\overline p_{u,*}^{1/2}+C_s\overline p^{1/2}\right)^2,
\]
which proves \eqref{eq:full-covariance-bound} and
\eqref{eq:trajectory-in-common-class}.
\end{proof}

\begin{proof}[Proof of Theorem~\ref{thm:enkf-riccati-package}]
Lemmas~\ref{lem:unstable-covariance-bounds} and
\ref{lem:full-covariance-bound} give \eqref{eq:enkf-package-common-class}.  The time-zero estimate proved
in Section~\ref{subsec:detailed-closed-loop-estimate}, applied to the common
class in \eqref{eq:trajectory-in-common-class}, can be restarted at
any time because the Riccati recursion is autonomous.  This gives \eqref{eq:enkf-package-closed-loop}.

Applying the exact difference identity in
Proposition~\ref{prop:exact-riccati-identities} to the two trajectories at time
\(s\) gives
\[
  \Phi^t(P)-\Phi^t(Q)
  =\mathcal B_{t-s}(\Phi^s(P))
   [\Phi^s(P)-\Phi^s(Q)]
   \mathcal B_{t-s}(\Phi^s(Q))^\top.
\]
The shifted closed-loop estimate proves \eqref{eq:enkf-package-forgetting}.  Finally, since $R \succ 0$,
the matrices \(HPH^\top+R\) have uniformly bounded inverses. On $\|P\| \leq M$, the remaining $P$-factor in $\mathsf K(P)$ is also uniformly bounded. The resolvent
identity therefore shows that \(\mathsf K\) is Lipschitz on this set; since
\(\Psi(P)=P-\mathsf K(P)HP\), the same is true of \(\Psi\).  This proves \eqref{eq:enkf-package-regularity}.
\end{proof}

\subsection{Gaussian initialization}
\label{app:gaussian-initialization}

This section proves the Gaussian initialization estimates used for the EnKF.
The Gaussian reduction below is also used for the localized EnKF. Recall that
$r_u=\dim(\mathsf U)$.

\subsubsection{Gaussian reduction}

\begin{lemma}[Mean--covariance independence and Wishart reduction]
\label{lem:gaussian-wishart-reduction}
The random variables $\widehat m_0^N$ and $\widehat P_0^N$ are independent,
\begin{equation}
  \widehat m_0^N-\widehat m_0
  \sim\mathcal N\left(0,\frac{\widehat P_0}{N}\right),
  \label{eq:initial-mean-distribution}
\end{equation}
and
\begin{equation}
  \widehat P_0^N
  \stackrel{\mathrm d}{=}
  \widehat P_0^{1/2}
  \left(\frac1{N-1}GG^\top\right)
  \widehat P_0^{1/2},
  \label{eq:sample-covariance-wishart}
\end{equation}
where $G\in\R^{d_u\times (N-1)}$ has i.i.d.\ standard Gaussian entries.
In particular,
\begin{equation}
  (\widehat P_0^N)_{uu}
  \stackrel{\mathrm d}{=}
  (\widehat P_0)_{uu}^{1/2}
  \left(\frac1{N-1}G_uG_u^\top\right)
  (\widehat P_0)_{uu}^{1/2},
  \label{eq:unstable-wishart-reduction}
\end{equation}
where $G_u\in\R^{r_u\times (N-1)}$ has i.i.d.\ standard Gaussian entries.
\end{lemma}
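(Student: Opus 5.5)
The plan is to write the i.i.d.\ initial ensemble as $\widehat u_0^{(n)}=\widehat m_0+\widehat P_0^{1/2}z_n$, with $z_1,\ldots,z_N\stackrel{\mathrm{i.i.d.}}{\sim}\mathcal N(0,I_{d_u})$, and then use the classical Helmert orthogonal transformation. Let $Z=[z_1,\ldots,z_N]\in\R^{d_u\times N}$ and choose an orthogonal $N\times N$ matrix $U$ whose first column is $\one/\sqrt N$. Set $W:=ZU$. Rotational invariance of the standard Gaussian matrix (acting on the right) shows that $W$ again has i.i.d.\ standard Gaussian entries. Its first column is $\sqrt N\,\bar z$, where $\bar z=N^{-1}\sum_n z_n$, and we write $G\in\R^{d_u\times(N-1)}$ for the remaining columns.

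The mean identity follows immediately: $\widehat m_0^N-\widehat m_0=\widehat P_0^{1/2}\bar z=N^{-1/2}\widehat P_0^{1/2}(\sqrt N\bar z)$, and this has law $\mathcal N(0,\widehat P_0/N)$, which gives \eqref{eq:initial-mean-distribution}.

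For the anomalies, $\widehat X_0=\widehat P_0^{1/2}Z(I-N^{-1}\one\one^\top)$. The projector $I-N^{-1}\one\one^\top$ equals $U_{\perp}U_{\perp}^\top$, where $U_\perp$ consists of the last $N-1$ columns of $U$. Hence $\widehat X_0\widehat X_0^\top=\widehat P_0^{1/2}ZU_\perp U_\perp^\top Z^\top\widehat P_0^{1/2}=\widehat P_0^{1/2}GG^\top\widehat P_0^{1/2}$. This identity holds pathwise, not just in distribution, and it gives \eqref{eq:sample-covariance-wishart}. Because $\widehat P_0^N$ is a function of $G$ alone and $\widehat m_0^N$ is a function of the first column of $W$ alone, and the columns of $W$ are independent, the two statistics are independent.

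For the unstable block, I will avoid $(\widehat P_0^{1/2})_{uu}$, which in general differs from $(\widehat P_0)_{uu}^{1/2}$. Instead, I will note that each column $\widehat P_0^{1/2}g_k$ of $\widehat P_0^{1/2}G$ is $\mathcal N(0,\widehat P_0)$, and these columns are independent. Projecting onto $\mathsf U$, the vectors $\Pi_u\widehat P_0^{1/2}g_k$ are i.i.d.\ $\mathcal N(0,(\widehat P_0)_{uu})$. Such a vector has the same law as $(\widehat P_0)_{uu}^{1/2}h_k$ with $h_k\sim\mathcal N(0,I_{r_u})$. Stacking the $h_k$ into $G_u$ and using $(\widehat P_0^N)_{uu}=(N-1)^{-1}\sum_k(\Pi_u\widehat P_0^{1/2}g_k)(\Pi_u\widehat P_0^{1/2}g_k)^\top$ yields \eqref{eq:unstable-wishart-reduction}.

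The only delicate point is this last step: the equality in distribution has to be established at the level of the column vectors rather than by taking blocks of the square root. Apart from that, the whole argument is routine.
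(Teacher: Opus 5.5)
Your proposal is correct and follows essentially the same route as the paper. Both write the ensemble as $\widehat m_0+\widehat P_0^{1/2}z_n$, rotate the columns by an orthogonal matrix whose first column is $N^{-1/2}\one$, and use rotational invariance to separate the sample mean from the remaining $N-1$ Gaussian columns that generate $\widehat X_0\widehat X_0^\top$. Your column-by-column argument for the unstable block, which avoids $(\widehat P_0^{1/2})_{uu}$, is a careful spelling-out of the paper's one-line ``projection onto $\mathsf U$'' step.
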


\begin{proof}
These are the standard normal-sampling identities; see
\cite[Chapter~3]{Muirhead1982}. For completeness, they remain valid when
$\widehat P_0$ is singular: write the centered sample matrix as
$Z=\widehat P_0^{1/2}G_N$ where $G_N=[g_1, \dots, g_N]\in \R^{d_u \times N}$ contains i.i.d.\ standard Gaussian vectors. Rotate  its columns by an orthogonal matrix whose
first column is $N^{-1/2}\one$. Rotational invariance makes the first rotated
column, $\sqrt N(\widehat m_0^N-\widehat m_0)$, independent of the remaining
$N-1$ columns. These are $\widehat P_0^{1/2}$ times standard Gaussian columns, whose sum of outer products is
$\widehat X_0\widehat X_0^\top$. This proves
\eqref{eq:initial-mean-distribution}--\eqref{eq:sample-covariance-wishart};
projection onto $\mathsf U$ gives \eqref{eq:unstable-wishart-reduction}.
\end{proof}

\subsubsection{Concentration estimates}
The next result establishes the bounds \eqref{eq:enkf-package-initialization} in Proposition~\ref{prop:enkf-probabilistic-package}.
\begin{proposition}[Gaussian initialization event]
\label{prop:gaussian-initialization}
For every \(\delta\in(0,1)\), with probability at least \(1-3\delta/4\),
\begin{align}
  \|\widehat m_0^N-\widehat m_0\|
  &\leq\mu_N(\delta),
  \label{eq:initial-mean-concentration}\\
  \|\widehat P_0^N-\widehat P_0\|
  &\leq\Delta_N(\delta),
  \label{eq:initial-covariance-concentration}\\
  (\widehat P_0^N)_{uu}
  &\succeq
  \left(
  1-\sqrt{\frac{r_u}{N-1}}-\sqrt{\frac{2\log(4/\delta)}{N-1}}
  \right)_+^2
  (\widehat P_0)_{uu}.
  \label{eq:initial-unstable-lower-bound}
\end{align}
\end{proposition}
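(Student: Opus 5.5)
The plan is to establish the three bounds separately, each on an event of probability at least $1-\delta/4$, and then combine them with a union bound. Every bound starts from the Gaussian reduction in Lemma~\ref{lem:gaussian-wishart-reduction}.

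For \eqref{eq:initial-mean-concentration}, we use \eqref{eq:initial-mean-distribution} and write $\widehat m_0^N-\widehat m_0=N^{-1/2}\widehat P_0^{1/2}g$ with $g$ standard Gaussian. The map $g\mapsto\|N^{-1/2}\widehat P_0^{1/2}g\|$ is Lipschitz with constant $(\|\widehat P_0\|/N)^{1/2}$. Its expectation is at most $(\operatorname{Tr}(\widehat P_0)/N)^{1/2}$ by Jensen's inequality. Gaussian Lipschitz concentration then gives a deviation of at most $\sqrt{2\|\widehat P_0\|\log(4/\delta)/N}$ with probability at least $1-\delta/4$. This is exactly $\mu_N(\delta)$.

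For \eqref{eq:initial-covariance-concentration}, the representation \eqref{eq:sample-covariance-wishart} says that $\widehat P_0^N$ has the same law as the sample covariance of $N-1$ i.i.d.\ $\mathcal N(0,\widehat P_0)$ vectors, with known mean. We will invoke \cite[Corollary~2, Eq.~(2.8)]{KoltchinskiiLounici2017} with $N-1$ samples and deviation parameter $\log(4/\delta)$. This yields $\|\widehat P_0^N-\widehat P_0\|\le C_0\|\widehat P_0\|[\sqrt{(r+\log(4/\delta))/(N-1)}+(r+\log(4/\delta))/(N-1)]$ with probability at least $1-\delta/4$, where $r=r_{\mathrm{eff}}(\widehat P_0)$. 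This is precisely $\Delta_N(\delta)$, and the constant $C_0$ was defined to match. If $\widehat P_0$ is singular we apply the result on its range. This changes nothing, because the effective rank and the norm are intrinsic to that range.

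For \eqref{eq:initial-unstable-lower-bound}, we use \eqref{eq:unstable-wishart-reduction}. It suffices to show $\frac1{N-1}G_uG_u^\top\succeq s^2I_{\mathsf U}$, where $s=(1-\sqrt{r_u/(N-1)}-\sqrt{2\log(4/\delta)/(N-1)})_+$, since congruence by $(\widehat P_0)_{uu}^{1/2}$ preserves the Loewner order. Here $G_u^\top$ is an $(N-1)\times r_u$ standard Gaussian matrix. The Davidson--Szarek (Gordon) bound gives $\mathbb E\,s_{\min}(G_u^\top)\ge\sqrt{N-1}-\sqrt{r_u}$, and $s_{\min}$ is $1$-Lipschitz in the entries. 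Gaussian concentration therefore gives $s_{\min}(G_u^\top)\ge\sqrt{N-1}-\sqrt{r_u}-\sqrt{2\log(4/\delta)}$ with probability at least $1-\delta/4$. Dividing by $\sqrt{N-1}$ and squaring the positive part gives the claim; when the bracket is nonpositive the claim is trivial because $(\cdot)_+=0$.

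A union bound over the three events, each of probability at least $1-\delta/4$, gives probability at least $1-3\delta/4$. No independence between the mean and covariance events is needed, although Lemma~\ref{lem:gaussian-wishart-reduction} provides it. The step most likely to need care is the covariance bound. The Koltchinskii--Lounici statement must be matched exactly, both in the use of $N-1$ samples after the Wishart reduction and in its tail form $t=\log(4/\delta)$, so that the resulting expression coincides with the definition of $\Delta_N(\delta)$. The other two steps are standard Gaussian concentration.
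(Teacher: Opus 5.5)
Your proposal is correct and matches the paper's proof essentially step for step: Gaussian Lipschitz concentration with Jensen's bound on the expectation for the mean, the Wishart reduction to $N-1$ centered samples followed by Koltchinskii--Lounici for the covariance, and the Davidson--Szarek smallest-singular-value bound applied to the unstable block. As in the paper, a union bound over the three events, each failing with probability at most $\delta/4$, completes the argument.
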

\begin{proof}
Recall that \eqref{eq:initial-mean-distribution} and Gaussian concentration for
the Euclidean norm give, with probability at least $1-\delta/4$,
\[
  \|\widehat m_0^N-\widehat m_0\|
  \leq
  \mathbb E\|\widehat m_0^N-\widehat m_0\|
  +\sqrt{\frac{2\|\widehat P_0\|\log(4/\delta)}{N}}.
\]
The expectation is at most
$\sqrt{\operatorname{Tr}(\widehat P_0)/N}$, proving
\eqref{eq:initial-mean-concentration}.

By Lemma~\ref{lem:gaussian-wishart-reduction}, the centered empirical
covariance has the same distribution as a sample covariance based on $N-1$
i.i.d. centered Gaussian vectors with covariance $\widehat P_0$.
The effective-rank covariance concentration inequality of
\cite{KoltchinskiiLounici2017} therefore gives
\eqref{eq:initial-covariance-concentration} with probability at
least $1-\delta/4$.

By \eqref{eq:unstable-wishart-reduction}, for every $x\in\mathsf U$,  with the smallest singular value of a matrix  denoted by $s_{\min}(\cdot)$,
\begin{equation*}
  x^\top(\widehat P_0^N)_{uu}x=\frac1{N-1}\left\|G_u^\top(\widehat P_0)_{uu}^{1/2}x\right\|^2\geq\frac{s_{\min}(G_u)^2}{N-1}x^\top(\widehat P_0)_{uu}x.
\end{equation*}
The Gaussian smallest-singular-value inequality gives, with probability at
least $1-\delta/4$,
\[
  s_{\min}(G_u)
  \geq
  \left(\sqrt{N-1}-\sqrt{r_u}-\sqrt{2\log(4/\delta)}\right)_+;
\]
see, for example, \cite[Theorem~II.13]{DavidsonSzarek2001}. This proves
\eqref{eq:initial-unstable-lower-bound}. A union bound completes
the proof.
\end{proof}

The next result establishes the claim in Proposition~\ref{prop:enkf-probabilistic-package} that \(\widehat P_0^N\) and \(\widehat P_0\) belong to a common deterministic
covariance class. 

\begin{corollary}[A fixed Riccati-stability class]
\label{cor:gaussian-fixed-stability-class}
Set
\begin{equation}
  \lambda_u:=\lambda_{\min}((\widehat P_0)_{uu})>0.
  \label{eq:lambda-u-definition}
\end{equation}
If \eqref{eq:effective-rank-ensemble-size} holds with
$C_{\mathrm{init}}$ sufficiently large, then
\begin{equation}
  \left(
  1-\sqrt{\frac{r_u}{N-1}}-\sqrt{\frac{2\log(4/\delta)}{N-1}}
  \right)_+^2\geq\frac14,
  \qquad
  \Delta_N(\delta)\leq\|\widehat P_0\|,
  \label{eq:fixed-class-sample-conditions}
\end{equation}
and, with probability at least $1-3\delta/4$,
\begin{equation}
  \widehat P_0,\widehat P_0^N
  \in
  \mathfrak C\left(\frac{\lambda_u}{4},
  2\|\widehat P_0\|\right).
  \label{eq:common-random-stability-class}
\end{equation}
In particular, $\widehat P_0^N$ need not be positive definite on the stable
subspace: operator-norm accuracy together with positive variance on every
unstable direction is sufficient.
\end{corollary}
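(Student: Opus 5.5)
The plan is to verify the two deterministic inequalities in \eqref{eq:fixed-class-sample-conditions} directly from the ensemble-size condition \eqref{eq:effective-rank-ensemble-size}. On the event of Proposition~\ref{prop:gaussian-initialization}, they then yield the class membership \eqref{eq:common-random-stability-class}. No new probabilistic input is needed, so the event and its probability \(1-3\delta/4\) are inherited from that proposition.

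For the first inequality, \eqref{eq:effective-rank-ensemble-size} gives \(r_u/(N-1)\le C_{\mathrm{init}}^{-1}\) and \(\log(4/\delta)/(N-1)\le C_{\mathrm{init}}^{-1}\). Hence
\[
1-\sqrt{\frac{r_u}{N-1}}-\sqrt{\frac{2\log(4/\delta)}{N-1}}\ge 1-(1+\sqrt2)C_{\mathrm{init}}^{-1/2}\ge \tfrac12
\]
once \(C_{\mathrm{init}}\ge 4(1+\sqrt2)^2\). Squaring gives the bound \(\tfrac14\).

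For the second inequality, set \(x:=(r_{\mathrm{eff}}(\widehat P_0)+\log(4/\delta))/(N-1)\le C_{\mathrm{init}}^{-1}\). Then \(\Delta_N(\delta)=C_0\|\widehat P_0\|(\sqrt x+x)\le 2C_0C_{\mathrm{init}}^{-1/2}\|\widehat P_0\|\), using \(x\le1\). This is at most \(\|\widehat P_0\|\) once \(C_{\mathrm{init}}\ge 4C_0^2\). Because \(C_0\) is universal, \(C_{\mathrm{init}}\) remains a universal constant.

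For class membership, \(\widehat P_0\) itself satisfies \((\widehat P_0)_{uu}\succeq\lambda_u I\succeq(\lambda_u/4)I\) and \(\|\widehat P_0\|\le2\|\widehat P_0\|\) trivially. On the event, \eqref{eq:initial-unstable-lower-bound} combined with the first inequality gives \((\widehat P_0^N)_{uu}\succeq\tfrac14(\widehat P_0)_{uu}\succeq(\lambda_u/4)I_{\mathsf U}\). The triangle inequality, together with \eqref{eq:initial-covariance-concentration} and the second inequality, gives \(\|\widehat P_0^N\|\le\|\widehat P_0\|+\Delta_N(\delta)\le2\|\widehat P_0\|\). Since \(\widehat P_0^N\succeq0\) automatically, both matrices lie in \(\mathfrak C(\lambda_u/4,2\|\widehat P_0\|)\).

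The final remark follows from the definition \eqref{eq:covariance-class}: it imposes a lower bound only on the \(uu\)-block, so singularity of \(\widehat P_0^N\) on \(\mathsf S\), for instance when \(N-1<d_u\), is allowed. The only step needing care is checking that \(C_{\mathrm{init}}\) can be chosen universally, which holds because every threshold used depends only on \(C_0\) and numerical constants.
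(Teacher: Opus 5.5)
Your proposal is correct and follows the paper's proof essentially line by line. You derive both inequalities in \eqref{eq:fixed-class-sample-conditions} directly from the ensemble-size condition, and then obtain class membership on the event of Proposition~\ref{prop:gaussian-initialization} via \eqref{eq:initial-unstable-lower-bound} and the triangle inequality. The only difference is that you give explicit thresholds $C_{\mathrm{init}}\ge\max\{4(1+\sqrt2)^2,\,4C_0^2\}$, where the paper invokes $\sqrt{a}+\sqrt{b}\le\sqrt{2(a+b)}$ and simply says ``after increasing $C_{\mathrm{init}}$''.
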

\begin{proof}
The ensemble-size condition and the inequality
\begin{equation}
  \sqrt{r_u}+\sqrt{2\log(4/\delta)}
  \leq\sqrt{2\bigl(r_u+2\log(4/\delta)\bigr)}
\end{equation}
show that the first inequality in
\eqref{eq:fixed-class-sample-conditions} holds after increasing
$C_{\mathrm{init}}$ if necessary. The definition of $\Delta_N(\delta)$
similarly gives $\Delta_N(\delta)\leq\|\widehat P_0\|$, proving
\eqref{eq:fixed-class-sample-conditions}. On the event of
Proposition~\ref{prop:gaussian-initialization}, it follows that
\begin{equation}
  (\widehat P_0^N)_{uu}
  \succeq\frac14(\widehat P_0)_{uu}
  \succeq\frac{\lambda_u}{4}I_{\mathsf U}.
\end{equation}
Moreover,
\begin{equation}
  \|\widehat P_0^N\|
  \leq
  \|\widehat P_0\|+
  \|\widehat P_0^N-\widehat P_0\|
  \leq2\|\widehat P_0\|.
\end{equation}
The same bounds, with room to spare, hold for $\widehat P_0$ itself.
\end{proof}

\subsection{Covariance, gain, and innovation estimates}
\label{app:auxiliary-estimates}

\subsubsection{Analysis and gain regularity}
The next result refines the bound \eqref{eq:enkf-package-regularity} in Theorem~\ref{thm:enkf-riccati-package}, making the dependence on $\overline p$ explicit. 

\begin{lemma}[Analysis and gain regularity]
\label{lem:analysis-covariance-difference}
\label{lem:gain-lipschitz}
For every \(P,Q\in\Splus^{d_u}\),
\begin{equation}
  \Psi(P)-\Psi(Q)
  =(I-\mathsf K(P)H)(P-Q)(I-\mathsf K(Q)H)^\top.
  \label{eq:analysis-covariance-difference}
\end{equation}
If \(\|P\|,\|Q\|\le\overline p\), then
\begin{align}
  \|\Psi(P)-\Psi(Q)\|
  &\le C_\Psi(\overline p)\|P-Q\|,
  \label{eq:analysis-map-lipschitz-bounded-set}\\
  \|\mathsf K(P)-\mathsf K(Q)\|
  &\le C_K(\overline p)\|P-Q\|,
  \label{eq:gain-lipschitz}
\end{align}
where one may take
\begin{equation}
  C_\Psi(\overline p)
  =\left(1+\frac{\overline p\|H\|^2}{\lambda_{\min}(R)}\right)^2,
  \qquad
  C_K(\overline p)
  =\frac{\|H\|}{\lambda_{\min}(R)}
  +\frac{\overline p\|H\|^3}{\lambda_{\min}(R)^2}.
  \label{eq:gain-lipschitz-constant}
\end{equation}
\end{lemma}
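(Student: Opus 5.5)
The plan is to derive the difference identity \eqref{eq:analysis-covariance-difference} algebraically first, and then to read off the two Lipschitz bounds from it and from a resolvent identity for the gain. Throughout, write $S(P):=HPH^\top+R$. Since $R\succeq\lambda_{\min}(R)I$ and $HPH^\top\succeq0$, we always have $\|S(P)^{-1}\|\le\lambda_{\min}(R)^{-1}$, with no lower bound on $P$ needed.

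\emph{Step 1: the identity.} Put $K_P=\mathsf K(P)$ and $K_Q=\mathsf K(Q)$. Expand the right-hand side
\[
(I-K_PH)(P-Q)(I-K_QH)^\top
\]
and use three facts: $(I-K_PH)P=\Psi(P)$ and $P(I-K_PH)^\top=\Psi(P)$ by symmetry; $\Psi(P)=(I-K_PH)P(I-K_PH)^\top+K_PRK_P^\top$ (the Joseph form); and the optimality relation $K_P S(P)=PH^\top$. Expanding and cancelling should leave exactly $\Psi(P)-\Psi(Q)$. A cleaner route, which I would try first, is to check the identity directly:
\begin{itemize}
\item $(I-K_PH)P(I-K_QH)^\top=\Psi(P)-\Psi(P)H^\top K_Q^\top$;
\item $(I-K_PH)Q(I-K_QH)^\top=\Psi(Q)-K_PH\Psi(Q)$.
\end{itemize}
It then suffices to show $\Psi(P)H^\top K_Q^\top=K_PH\Psi(Q)$. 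Both sides equal $K_P R K_Q^\top$, because $\Psi(P)H^\top=PH^\top-K_PHPH^\top=K_P\bigl(S(P)-HPH^\top\bigr)=K_PR$, and symmetrically $H\Psi(Q)=RK_Q^\top$. This is the only step with real content.

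\emph{Step 2: the bound on $\Psi$.} Since $\|K_PH\|\le\|P\|\,\|H\|^2/\lambda_{\min}(R)$, we get $\|I-K_PH\|\le 1+\overline p\|H\|^2/\lambda_{\min}(R)$, and the same bound holds for $Q$. Taking norms in the identity then gives \eqref{eq:analysis-map-lipschitz-bounded-set} with the stated $C_\Psi$.

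\emph{Step 3: the bound on $\mathsf K$.} Write
\[
K_P-K_Q=(P-Q)H^\top S(P)^{-1}+QH^\top\bigl(S(P)^{-1}-S(Q)^{-1}\bigr),
\]
and apply the resolvent identity $S(P)^{-1}-S(Q)^{-1}=-S(P)^{-1}H(P-Q)H^\top S(Q)^{-1}$. The first term is bounded by $\|H\|\lambda_{\min}(R)^{-1}\|P-Q\|$ and the second by $\overline p\|H\|^3\lambda_{\min}(R)^{-2}\|P-Q\|$, which gives $C_K$.

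The main obstacle is only the cancellation in Step 1; Steps 2 and 3 are routine.
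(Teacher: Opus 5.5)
Your proof is correct, and Steps 2 and 3 follow the paper's argument: the same bound on $\|I-\mathsf K(P)H\|$, and the same resolvent splitting of $\mathsf K(P)-\mathsf K(Q)$.

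The difference is in Step 1. The paper sets $G:=H^\top R^{-1}H$ and uses push-through and Woodbury to write $I-\mathsf K(P)H=(I+PG)^{-1}$, $\Psi(P)=(I+PG)^{-1}P$ and $\Psi(Q)=Q(I+GQ)^{-1}$. It then gets the identity in one line from $P-Q=P(I+GQ)-(I+PG)Q$.

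You instead expand the product in gain form and cancel the cross terms using $\Psi(P)H^\top=\mathsf K(P)R$ and $H\Psi(Q)=R\,\mathsf K(Q)^\top$. This needs only the defining relation $\mathsf K(P)(HPH^\top+R)=PH^\top$ and the symmetry of $\Psi$. It is more elementary and needs no invertibility beyond that of $HPH^\top+R$.

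The paper's information form is shorter, and it matches how the closed-loop factor appears elsewhere in the paper. For example, the $t=1$ case of $\mathcal B_t(P)=A^t(I+P\mathcal O_t)^{-1}$ is exactly $B(P)=A(I+PG)^{-1}$.

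The Joseph-form route you mention first is not needed, since your direct verification is complete.
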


\begin{proof}
Set \(G:=H^\top R^{-1}H\).  The push-through and Woodbury identities give
\[ I - \mathsf K(P)H = (I + PG)^{-1},  \quad 
  \Psi(P)=(I+PG)^{-1}P,
  \quad
  \Psi(Q)=Q(I+GQ)^{-1}.
\]
Using
\(P-Q=P(I+GQ)-(I+PG)Q\) yields 
\eqref{eq:analysis-covariance-difference}.  Moreover,
\[
  \|I-\mathsf K(P)H\|,
  \|I-\mathsf K(Q)H\|
  \le 1+\frac{\overline p\|H\|^2}{\lambda_{\min}(R)},
\]
which proves \eqref{eq:analysis-map-lipschitz-bounded-set}.

For the gain estimate, set
\(S_P:=HPH^\top+R\) and \(S_Q:=HQH^\top+R\).  Then
\[
  \|S_P^{-1}\|,\|S_Q^{-1}\|\le \lambda_{\min}(R)^{-1},
  \qquad
  S_P^{-1}-S_Q^{-1}=S_P^{-1}H(Q-P)H^\top S_Q^{-1}.
\]
Substitution into
\[
  \mathsf K(P)-\mathsf K(Q)
  =(P-Q)H^\top S_P^{-1}
  +QH^\top(S_P^{-1}-S_Q^{-1})
\]
proves \eqref{eq:gain-lipschitz}--\eqref{eq:gain-lipschitz-constant}.
\end{proof}

\subsubsection{Innovation estimates}
 Recall from Section~\ref{subsec:enkf-proofs} that $\iota_t:=y_t-H\widehat m_t$, 
 and from Section~\ref{subsec:localized-proofs} that  $S_t:=H\widehat P_tH^\top+R$.
Set 
\begin{equation}
  M_\iota:=\sup_{t\ge0}\|S_t\|,
  \qquad
  \tau_\iota:=\sup_{t\ge0}\operatorname{Tr}(S_t).
  \label{eq:innovation-covariance-bounds}
\end{equation}

Recall that \(\mathcal Y_{t-1}:=\sigma(y_0,\ldots,y_{t-1})\), with
\(\mathcal Y_{-1}\) the trivial sigma-algebra.
Under the linear--Gaussian model
\eqref{eq:initial-state-law}--\eqref{eq:observation-model}, the exact Kalman
forecast mean and covariance satisfy
\begin{equation}
  \widehat m_t=\mathbb E[u_t\mid\mathcal Y_{t-1}],
  \qquad
  \widehat P_t=\operatorname{Cov}(u_t\mid\mathcal Y_{t-1}).
  \label{eq:kalman-conditional-moments}
\end{equation}

The next result establishes the statements about the innovations in Proposition~\ref{prop:enkf-probabilistic-package}.
\begin{lemma}[Gaussian innovations and a weighted all-time bound]
\label{lem:weighted-innovation-bound}
Under the linear--Gaussian model
\eqref{eq:initial-state-law}--\eqref{eq:observation-model}, the innovations
\((\iota_t)_{t\geq0}\) are independent and
\begin{equation}
  \iota_t\sim\mathcal N(0,S_t).
  \label{eq:innovation-distribution}
\end{equation}
Under Assumption~\ref{ass:dynamics-initialization}, the constants in \eqref{eq:innovation-covariance-bounds} are finite.
If an initial ensemble is independent of the initial state $u_0$ and observation noises $(\eta_t)_{t \geq 0}$,
then the initial ensemble \nc is also independent of the innovation sequence.
Whenever the constants in \eqref{eq:innovation-covariance-bounds} are finite, for every
\(\beta\in(0,1)\) and \(\delta\in(0,1)\), with probability at least \(1-\delta\),
\begin{equation}
  \sum_{t=0}^\infty\beta^t\|\iota_t\|
  \leq
  \frac{\sqrt{\tau_\iota}}{1-\beta}
  +\sqrt{\frac{2M_\iota\log(1/\delta)}{1-\beta^2}}.
  \label{eq:weighted-innovation-bound}
\end{equation}
\end{lemma}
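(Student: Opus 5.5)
The proof has four parts: identify the innovations as a Gaussian martingale-difference sequence, bound $M_\iota$ and $\tau_\iota$ via the Riccati estimates, read off independence from the initial ensemble, and apply Gaussian concentration to the weighted sum.

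\emph{Gaussian innovations.} Since $(u_0,(\eta_t))$ is jointly Gaussian, every $y_t$ is a linear functional of it, so the innovations are jointly Gaussian. By \eqref{eq:kalman-conditional-moments} we have $\iota_t=H(u_t-\widehat m_t)+\eta_t$. Here $u_t-\widehat m_t$ has conditional law $\mathcal N(0,\widehat P_t)$ given $\mathcal Y_{t-1}$, and $\eta_t$ is independent of $(u_t,\mathcal Y_{t-1})$. Hence, conditionally on $\mathcal Y_{t-1}$, $\iota_t\sim\mathcal N(0,S_t)$, and $S_t$ is deterministic because the Riccati recursion does not depend on data. A conditional law that does not depend on the conditioning variables means $\iota_t$ is independent of $\mathcal Y_{t-1}$. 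Each $\iota_s$ with $s<t$ is $\mathcal Y_{t-1}$-measurable, so the sequence is independent and \eqref{eq:innovation-distribution} holds.

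\emph{Finiteness of $M_\iota$ and $\tau_\iota$.} Assumption~\ref{ass:dynamics-initialization} puts $\widehat P_0$ in $\mathfrak C(\lambda_u,\|\widehat P_0\|)$. Theorem~\ref{thm:enkf-riccati-package} then gives $\|\widehat P_t\|\le\overline p_*$ for all $t$. This yields $\|S_t\|\le\|H\|^2\overline p_*+\|R\|$ and $\operatorname{Tr}(S_t)\le d_y\|S_t\|<\infty$.

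\emph{Independence from the initial ensemble.} Each $\iota_t$ is a measurable function of $(u_0,\eta_0,\dots,\eta_t)$, so it is independent of any ensemble that is independent of $u_0$ and $(\eta_t)$.

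\emph{Weighted bound.} Write $\iota_t=S_t^{1/2}g_t$ with $g_t$ i.i.d.\ standard Gaussian, and set $F(g):=\sum_t\beta^t\|S_t^{1/2}g_t\|$. This is a countable sum, so I will first work with finite truncations and then pass to the limit by monotone convergence. Since $\mathbb E\|\iota_t\|\le\sqrt{\operatorname{Tr}S_t}\le\sqrt{\tau_\iota}$, we get $\mathbb E F\le\sqrt{\tau_\iota}/(1-\beta)$. For the Lipschitz constant, Cauchy--Schwarz gives
\[
|F(g)-F(g')|\le\sum_t\beta^t\sqrt{M_\iota}\,\|g_t-g'_t\|\le\sqrt{M_\iota}\Bigl(\sum_t\beta^{2t}\Bigr)^{1/2}\|g-g'\|,
\]
so $F$ is Lipschitz with constant $L_F=\sqrt{M_\iota/(1-\beta^2)}$ in the $\ell^2$ norm of the stacked Gaussian vector. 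Gaussian concentration for Lipschitz functions, $\mathbb P(F\ge\mathbb E F+r)\le e^{-r^2/(2L_F^2)}$, applied with $r=\sqrt{2L_F^2\log(1/\delta)}$, gives \eqref{eq:weighted-innovation-bound}. Concentration holds uniformly for each truncation, and the truncations increase to $F$, so the bound passes to the full sum. The main care point is this infinite-dimensional step, which the truncation argument handles; the rest is routine.
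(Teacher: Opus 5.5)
Your proposal is correct and follows essentially the same route as the paper's proof: Gaussian, independent innovations from the Kalman conditional moments; finiteness of $M_\iota$ and $\tau_\iota$ from the uniform Riccati bound on $\widehat P_t$; and the weighted bound via Gaussian Lipschitz concentration on the truncations $F_T$ (Lipschitz constant $\sqrt{M_\iota/(1-\beta^2)}$, mean at most $\sqrt{\tau_\iota}/(1-\beta)$), followed by a monotone limit in $T$. The only cosmetic difference is that you derive independence from the fact that the conditional law of $\iota_t$ given $\mathcal Y_{t-1}$ does not depend on the data, whereas the paper uses uncorrelatedness plus joint Gaussianity; both arguments are valid.
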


\begin{proof}
The vector $(u_0,y_0,y_1,\ldots)$ is jointly Gaussian, and
\eqref{eq:kalman-conditional-moments} gives
\[
  \mathbb E[\iota_t\mid\mathcal Y_{t-1}]=0,
  \qquad
  \operatorname{Cov}(\iota_t\mid\mathcal Y_{t-1})=S_t.
\]
Here $S_t$ is deterministic. If $s<t$, conditioning gives
$\mathbb E[\iota_t\iota_s^\top]=0$; joint Gaussianity therefore gives
independence and \eqref{eq:innovation-distribution}. Independence from an
initial ensemble follows from the ensemble's assumed independence from $u_0$ and the
observation noises.

Under Assumption~\ref{ass:dynamics-initialization},
Lemma~\ref{lem:full-covariance-bound}, applied to $\widehat P_0$, gives
$\sup_t\|\widehat P_t\|<\infty$. The asserted finiteness of $M_\iota$ and
$\tau_\iota$ follows immediately from 
$S_t:=H\widehat P_tH^\top+R$.

Write $\iota_t=S_t^{1/2}z_t$, where $(z_t)_{t\geq0}$ are i.i.d.
standard Gaussian vectors in $\R^{d_y}$. For $T\geq0$, define
\[
  F_T(z_0,\ldots,z_T)
  :=\sum_{t=0}^T\beta^t\|S_t^{1/2}z_t\|.
\]
By Cauchy--Schwarz, $F_T$ is Lipschitz on the product Euclidean space with
constant at most
\[
  \left(\sum_{t=0}^T\beta^{2t}\|S_t\|\right)^{1/2}
  \leq\sqrt{\frac{M_\iota}{1-\beta^2}}.
\]
Moreover,
\[
  \mathbb E F_T\leq\sum_{t=0}^T\beta^t\bigl(\mathbb E\|\iota_t\|^2\bigr)^{1/2}\leq\frac{\sqrt{\tau_\iota}}{1-\beta}.
\]
Gaussian concentration therefore gives, with probability at least $1-\delta$,
\[
  F_T
  \leq
  \frac{\sqrt{\tau_\iota}}{1-\beta}
  +\sqrt{\frac{2M_\iota\log(1/\delta)}{1-\beta^2}}.
\]
The bound is uniform in $T$. Letting $T\to\infty$ and using monotone
convergence proves \eqref{eq:weighted-innovation-bound}.
\end{proof}

    \section{Localized ensemble Kalman filter}
\label{app:localized-enkf}

This section contains the localization-specific results used in
Section~\ref{subsec:localized-proofs}.
Sections~\ref{app:localized-identities}--
\ref{sec:localized-deterministic-bias} develop the deterministic block-local
theory, Sections~\ref{sec:localized-empirical-covariance}
and~\ref{app:localized-mean-auxiliary} control local sampling errors and the
mean, 
and Section~\ref{sec:localized-steady-state-decay} treats steady-state
covariance and asymptotic spatial decay.

\subsection{Block-decay algebra and exact localized covariance identities}
\label{app:localized-identities}
Throughout Section \ref{subsec:localized-proofs}, we use the fact that the block-decay norm defines a dimension-independent matrix algebra. While this fact is classical \cite{Jaffard1990,benzi2017localization}, we include a proof for completeness.
\begin{lemma}[Block-decay algebra]
\label{lem:block-decay-algebra}
There is a constant $C_\alpha<\infty$, depending only on $\alpha$, such that
for all compatible block matrices,
\begin{equation}
  \|M_1M_2\|_{\mathcal J_\alpha}
  \le C_\alpha\|M_1\|_{\mathcal J_\alpha}
  \|M_2\|_{\mathcal J_\alpha}.
  \label{eq:block-decay-product}
\end{equation}
Moreover,
\begin{equation}
  \|M\|_{\mathrm{loc}}
  \le Z_\alpha\|M\|_{\mathcal J_\alpha},
  \qquad
  \|M^\top\|_{\mathcal J_\alpha}=\|M\|_{\mathcal J_\alpha}.
  \label{eq:block-decay-controls-local}
\end{equation}
All constants are independent of $J$ and of the local block dimensions.
\begin{proof}
The transpose identity is immediate, and
\[
  \sum_{k=1}^J\|M_{jk}\|
  \le \|M\|_{\mathcal J_\alpha}
  \sum_{k=1}^J(1+|j-k|)^{-\alpha}
  \le Z_\alpha\|M\|_{\mathcal J_\alpha},
\]
(and its column analog) proves the local-norm estimate. For the product,
the standard polynomial-weight convolution bound
\[
  \sup_{j,k}\sum_{i=1}^J
  \frac{(1+|j-k|)^\alpha}
  {(1+|j-i|)^\alpha(1+|i-k|)^\alpha}
  \le C_\alpha
\]
follows by splitting according to which of $|j-i|$ and $|i-k|$ is at least
$|j-k|/2$ and using $\alpha>1$; see also \cite{Jaffard1990}. Therefore
\[
  (1+|j-k|)^\alpha\|(M_1M_2)_{jk}\|
  \le C_\alpha\|M_1\|_{\mathcal J_\alpha}
  \|M_2\|_{\mathcal J_\alpha},
\]
which proves \eqref{eq:block-decay-product}.
\end{proof}

\end{lemma}

Define the full empirical forecast covariance (which is not formed by the localized EnKF algorithm)  by
\[
  \widehat P_{t+1}^{N,\mathrm{loc}}
  :=\frac1{N-1}\widehat X_{t+1}^{\mathrm{loc}}
  (\widehat X_{t+1}^{\mathrm{loc}})^\top.
\] 
Recall that $\mathcal F_0^{X,N}:=\sigma(\widehat X_0^{\mathrm{loc}})$.  
In the next result, 
we characterize the exact covariance recursion specified by the localized square-root EnKF. We show that the anomaly matrices, empirical covariances, and local gains are measurable with respect to $\mathcal F_0^{X,N}$, and that replacing $A=D+E$ by $D$ in the Riccati map incurs an error of at most order $\varepsilon$ over a single step.
\begin{proposition}[Exact local covariance recursion]
\label{prop:localized-exact-covariance-identities}
For every \(t\ge0\), the localized square-root EnKF satisfies:
\begin{enumerate}
\item The global analyzed anomaly matrix is centered,
\[
  X_t^{\mathrm{loc}}\one=0.
\]
\item Its diagonal covariance blocks are the exact local Kalman analyses,
\begin{equation}
  P_{t,j}^{N,\mathrm{loc}}
  =\Psi_j(\widehat P_{t,j}^{N,\mathrm{loc}}),
  \qquad j=1,\ldots,J.
  \label{eq:localized-exact-analysis-block}
\end{equation}
\item The full and local forecast covariances satisfy
\begin{equation}
  \widehat P_{t+1}^{N,\mathrm{loc}}
  =A P_t^{N,\mathrm{loc}}A^\top,
  \qquad
  \widehat P_{t+1,j}^{N,\mathrm{loc}}
  =\sum_{i,k=1}^J A_{ji}P_{t,ik}^{N,\mathrm{loc}}A_{jk}^\top.
  \label{eq:localized-exact-forecast-block}
\end{equation}
\item The empirical cross-covariance blocks obey
\begin{equation}
  \|P_{t,jk}^{N,\mathrm{loc}}\|
  \le
  \|P_{t,j}^{N,\mathrm{loc}}\|^{1/2}
  \|P_{t,k}^{N,\mathrm{loc}}\|^{1/2}.
  \label{eq:localized-cross-covariance-bound}
\end{equation}
\item For all 
$t\ge0$,  the anomaly matrices, empirical covariances, local
gains, and matrices 
$A(I-K_t^{N,\mathrm{loc}}H)$  are
$\mathcal F_0^{X,N}$-measurable.
\end{enumerate}

If, in addition, for some $\overline p<\infty$,
\begin{equation}
  \max_{1\le j\le J}\|P_{t,j}^{N,\mathrm{loc}}\|\le \overline p,
  \label{eq:localized-diagonal-covariance-upper-bound}
\end{equation}
then with the residual denoted by $\zeta_{t,j}^N$, 
\begin{equation}
  \widehat P_{t+1,j}^{N,\mathrm{loc}}
  =\Phi_j(\widehat P_{t,j}^{N,\mathrm{loc}})
  +\zeta_{t,j}^N,
  \qquad
  \max_j\|\zeta_{t,j}^N\|
  \le \overline p(2a\varepsilon+\varepsilon^2),
  \label{eq:localized-perturbed-local-riccati}
\end{equation}
where \(a:=\max_j\|A_j\|\).
\end{proposition}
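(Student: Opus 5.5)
The plan is to prove the five structural items blockwise by transferring the nonlocalized square-root identities of Proposition~\ref{prop:exact-square-root-identities} to each block, and then to obtain the perturbed recursion by expanding the full forecast $A=D+E$ and bounding the cross terms. For item 1, observe that $\widehat X_{t,j}^{\mathrm{loc}}\one=0$ for every $j$. Hence $(Y_{t,j}^{\mathrm{loc}})^\top R_j^{-1}Y_{t,j}^{\mathrm{loc}}\one=0$, so $\Omega_{t,j}^{\mathrm{loc}}\one=\one$ and $T_{t,j}^{\mathrm{loc}}\one=\one$. Therefore $X_{t,j}^{\mathrm{loc}}\one=0$ for every $j$, and stacking the blocks gives $X_t^{\mathrm{loc}}\one=0$.

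For item 2, apply the Woodbury computation from the proof of Proposition~\ref{prop:exact-square-root-identities} verbatim, with $(H,R,\widehat X_t)$ replaced by $(H_j,R_j,\widehat X_{t,j}^{\mathrm{loc}})$; this yields \eqref{eq:localized-exact-analysis-block}. For item 3, centering gives $\widehat X_{t+1}^{\mathrm{loc}}=AX_t^{\mathrm{loc}}$, because the forecast is linear and the mean is subtracted. Hence $\widehat P_{t+1}^{N,\mathrm{loc}}=AP_t^{N,\mathrm{loc}}A^\top$, and reading off the $(j,j)$ block gives the double sum. Item 4 is block Cauchy--Schwarz for the Gram matrix $X_t^{\mathrm{loc}}(X_t^{\mathrm{loc}})^\top/(N-1)$, namely $\|X_{t,j}X_{t,k}^\top\|\le\|X_{t,j}\|\|X_{t,k}\|$. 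For item 5, argue by induction on $t$. The anomalies $\widehat X_{t}^{\mathrm{loc}}$ are produced from $\widehat X_0^{\mathrm{loc}}$ by the deterministic maps $\widehat X\mapsto A[\widehat X_{j}T_{j}(\widehat X_j)]_j$, and these maps involve no means or observations. The empirical covariances, local gains and the matrices $A(I-K_t^{N,\mathrm{loc}}H)$ are continuous functions of these anomalies, so all of them are $\mathcal F_0^{X,N}$-measurable.

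For \eqref{eq:localized-perturbed-local-riccati}, write $A_{ji}=D_{ji}+E_{ji}$ in the double sum, using $D_{ji}=\delta_{ji}A_j$. The $DD$ term equals $A_jP_{t,j}^{N,\mathrm{loc}}A_j^\top=A_j\Psi_j(\widehat P_{t,j}^{N,\mathrm{loc}})A_j^\top=\Phi_j(\widehat P_{t,j}^{N,\mathrm{loc}})$ by item 2. The residual $\zeta_{t,j}^N$ therefore collects the two mixed terms $\sum_kA_jP_{t,jk}E_{jk}^\top$ and its transpose, together with the term $\sum_{i,k}E_{ji}P_{t,ik}E_{jk}^\top$. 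By item 4 and \eqref{eq:localized-diagonal-covariance-upper-bound}, every block satisfies $\|P_{t,ik}^{N,\mathrm{loc}}\|\le\overline p$. Each mixed term is then bounded by $a\overline p\sum_k\|E_{jk}\|\le a\overline p\|E\|_{\mathrm{loc}}\le a\overline p\varepsilon$. The $EE$ term is bounded by $\overline p(\sum_i\|E_{ji}\|)(\sum_k\|E_{jk}\|)\le\overline p\varepsilon^2$, using \eqref{eq:decay-implies-weak-interaction}. Summing the three bounds gives $\overline p(2a\varepsilon+\varepsilon^2)$, uniformly in $j$.

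The main point needing care is the last step. The full empirical cross-covariances $P_{t,ik}^{N,\mathrm{loc}}$ need not be small, since they are only controlled through item 4. The estimate therefore has to obtain its smallness entirely from the $\ell^1$ row sums of $E$ rather than from any decay of $P$. I expect this to work directly, because the row-sum part of $\|\cdot\|_{\mathrm{loc}}$ is exactly what appears.
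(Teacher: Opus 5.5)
Your proposal is correct and follows essentially the same route as the paper: per-block centering via $\Omega_{t,j}^{\mathrm{loc}}\one=\one$, the blockwise Woodbury identity, $\widehat X_{t+1}^{\mathrm{loc}}=AX_t^{\mathrm{loc}}$, submultiplicativity for the cross blocks, induction for measurability, and the $A=D+E$ expansion of the $(j,j)$ block. As in the paper, the residual is bounded using only the $\ell^1$ block-row sums of $E$ together with the uniform bound $\|P_{t,ik}^{N,\mathrm{loc}}\|\le\overline p$.
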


\begin{proof}
Because \(\widehat X_{t,j}^{\mathrm{loc}}\one=0\), one has
\(\Omega_{t,j}^{\mathrm{loc}}\one=\one\), and hence
\(T_{t,j}^{\mathrm{loc}}\one=\one\). Therefore every local analyzed anomaly
matrix is centered, and so is their block-row stacking.
Since every anomaly update is a deterministic function of the preceding
anomalies and the fixed model matrices, the measurability assertion follows
by induction.

Applying the calculation in Proposition~\ref{prop:exact-square-root-identities} to
each block gives
\[
  \frac1{N-1}X_{t,j}^{\mathrm{loc}}
  (X_{t,j}^{\mathrm{loc}})^\top
  =\Psi_j(\widehat P_{t,j}^{N,\mathrm{loc}}),
\]
which proves \eqref{eq:localized-exact-analysis-block}. Since the
forecast~\eqref{eq:localized-full-forecast} is global and deterministic,
\(\widehat X_{t+1}^{\mathrm{loc}}=A X_t^{\mathrm{loc}}\), and therefore \eqref{eq:localized-exact-forecast-block} follows by taking diagonal
blocks.

For the cross-covariance estimate, write
\[
  P_{t,jk}^{N,\mathrm{loc}}
  =\frac1{N-1}X_{t,j}^{\mathrm{loc}}
  (X_{t,k}^{\mathrm{loc}})^\top.
\]
Submultiplicativity of the operator norm yields
\[
  \|P_{t,jk}^{N,\mathrm{loc}}\|
  \le
  \left\|\frac1{\sqrt{N-1}}X_{t,j}^{\mathrm{loc}}\right\|
  \left\|\frac1{\sqrt{N-1}}X_{t,k}^{\mathrm{loc}}\right\|,
\]
and the two factors are respectively
\(\|P_{t,j}^{N,\mathrm{loc}}\|^{1/2}\) and
\(\|P_{t,k}^{N,\mathrm{loc}}\|^{1/2}\). This proves
\eqref{eq:localized-cross-covariance-bound}.

Finally, write \(A=D+E\). The \((j,j)\)-block of the forecast covariance in \eqref{eq:localized-exact-forecast-block} is \nc 
\begin{align*}
  \widehat P_{t+1,j}^{N,\mathrm{loc}}
  &=A_j P_{t,j}^{N,\mathrm{loc}}A_j^\top
  +(DP_t^{N,\mathrm{loc}}E^\top)_{jj}
  +(EP_t^{N,\mathrm{loc}}D^\top)_{jj}
  +(EP_t^{N,\mathrm{loc}}E^\top)_{jj}.
\end{align*}
The first term equals
\(\Phi_j(\widehat P_{t,j}^{N,\mathrm{loc}})\) by
\eqref{eq:localized-exact-analysis-block}. Under
\eqref{eq:localized-diagonal-covariance-upper-bound},
\eqref{eq:localized-cross-covariance-bound} gives
\(\|P_{t,ik}^{N,\mathrm{loc}}\|\le \overline p\) for all \(i,k\). Hence
\begin{align*}
  \|(DP_t^{N,\mathrm{loc}}E^\top)_{jj}\|
  &\le a\overline p\sum_i\|E_{ji}\|,
  &
  \|(EP_t^{N,\mathrm{loc}}D^\top)_{jj}\|
  &\le a\overline p\sum_i\|E_{ji}\|,
  \\
  \|(EP_t^{N,\mathrm{loc}}E^\top)_{jj}\|
  &\le \overline p\left(\sum_i\|E_{ji}\|\right)^2.
\end{align*}
Since each block-row sum of \(E\) is bounded by
\(\|E\|_{\mathrm{loc}}\le\varepsilon\), summing these estimates proves
\eqref{eq:localized-perturbed-local-riccati}.
\end{proof}

\begin{remark}[Role of the local transforms]
The matrices \(T_{t,j}^{\mathrm{loc}}\) may differ across blocks. Stacking their
columns still defines a valid centered global ensemble, but it also creates
cross-block empirical covariances. These cross-covariances are neither removed
nor assumed small. Note that \eqref{eq:localized-cross-covariance-bound} shows
that bounded local variances control them, while
\eqref{eq:localized-perturbed-local-riccati} shows that their effect on
a local forecast covariance is multiplied by the weak interaction.
\end{remark}

\subsection{Uniform block-local Riccati theory}
\label{sec:localized-local-riccati}
 In this section, we develop the block-wise Riccati stability estimates
 for
the block-diagonal reference trajectory in Theorem~\ref{thm:localized-deterministic-package}. The proof ideas are straightforward: we apply the global Riccati estimates from Section~\ref{app:riccati-stability} and leverage the uniform bounds on the model parameters in each block given in Assumption~\ref{ass:uniform-local-riccati}. This assumption also ensures $\widehat P_{0,j}^0 \in \mathfrak C_j(\underline p_0, \overline p_0)$. 
  In what follows, 
the subscripts $0$, $\mathrm{loc}$, and $\mathrm{tr}$
in the covariance class parameters 
distinguish the
initialization class, the common local stability class, and the uniform
trajectory class, respectively. 
\begin{theorem}[Uniform block-local Riccati estimates]
\label{thm:uniform-block-local-riccati}
Under Assumption~\ref{ass:uniform-local-riccati}, there are constants
\[
  0<\underline p_{\mathrm{loc}}<\overline p_{\mathrm{loc}}<\infty,
  \qquad
  0<\underline p_{\mathrm{tr}}<\overline p_{\mathrm{tr}}<\infty,
  \qquad
  C_B,C_\Phi<\infty,
  \qquad
  \rho_B,\rho_\Phi\in(0,1),
\]
independent of $j$ and $J$, with the following properties.

First, every local reference trajectory lies strictly inside the common local  
covariance class:
\begin{equation}
  \widehat P_{t,j}^0
  \in
  \mathfrak C_j\left(2\underline p_{\mathrm{loc}},
  \frac12\overline p_{\mathrm{loc}}\right),
  \qquad t\ge0,
  \quad j=1,\ldots,J.
  \label{eq:reference-inner-local-class}
\end{equation}

Moreover, every trajectory initialized in the common covariance class remains
in a uniform trajectory class:
\begin{equation}
  \Phi_j^t(P_j)
  \in\mathfrak C_j(\underline p_{\mathrm{tr}},\overline p_{\mathrm{tr}}),
  \qquad
  P_j\in\mathfrak C_j(\underline p_{\mathrm{loc}},\overline p_{\mathrm{loc}}),
  \quad t\ge0.
  \label{eq:uniform-local-common-trajectory-class}
\end{equation}

Second, for every
$P_j\in\mathfrak C_j(\underline p_{\mathrm{loc}},\overline p_{\mathrm{loc}})$
and all integers $t\ge s\ge0$,
\begin{equation}
  \left\|
  B_j(\Phi_j^{t-1}(P_j))\cdots
  B_j(\Phi_j^s(P_j))
  \right\|
  \le C_B\rho_B^{t-s},
  \label{eq:uniform-local-shifted-closed-loop}
\end{equation}
where the product is the identity when $t=s$.

Third, for every $P_j,Q_j\in
\mathfrak C_j(\underline p_{\mathrm{loc}},\overline p_{\mathrm{loc}})$ and every
$t\ge0$,
\begin{equation}
  \|\Phi_j^t(P_j)-\Phi_j^t(Q_j)\|
  \le C_\Phi\rho_\Phi^{2t}\|P_j-Q_j\|.
  \label{eq:uniform-local-riccati-forgetting}
\end{equation}
More generally, for integers $t\ge s\ge0$,
\begin{equation}
  \|\Phi_j^t(P_j)-\Phi_j^t(Q_j)\|
  \le C_\Phi\rho_\Phi^{2(t-s)}
  \|\Phi_j^s(P_j)-\Phi_j^s(Q_j)\|.
  \label{eq:uniform-local-shifted-forgetting}
\end{equation}
The closed-loop and forgetting estimates in \eqref{eq:uniform-local-shifted-closed-loop}--\eqref{eq:uniform-local-shifted-forgetting} also hold, with possibly different
constants, for any fixed
$0<\underline p^\sharp<\overline p^\sharp<\infty$ whenever the initial
covariances belong to
$\mathfrak C_j(\underline p^\sharp,\overline p^\sharp)$. These constants remain
uniform in $j$ and $J$.
\end{theorem}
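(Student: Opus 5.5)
We will prove Theorem~\ref{thm:uniform-block-local-riccati} by applying the global theory of Section~\ref{app:riccati-stability} to each block separately. The only new work is to check that every constant produced there depends on the model only through quantities that Assumption~\ref{ass:uniform-local-riccati} bounds uniformly in \(j\) and \(J\).

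\textbf{Step 1: uniformity of the global constants.} We first audit Lemmas~\ref{lem:power-bounds}--\ref{lem:full-covariance-bound} and the proof of the time-zero closed-loop bound. Each constant there depends only on the following data:
\begin{itemize}
\item the power bounds \(C_s,C_u,\gamma_s,\gamma_u\), which for block \(j\) are the uniform constants in \eqref{eq:uniform-local-spectral-rates};
\item the observability window \(L\) and constant \(c_o\), uniform by \eqref{eq:uniform-local-unstable-observability};
\item the bounds \(\|A_j\|\le a_+\), \(\|H_j\|\le h_+\), \(r_-I\preceq R_j\preceq r_+I\);
\item the class endpoints \(\underline p,\overline p\);
\item the condition number \(\kappa_{\mathrm{sp}}\) of the spectral coordinates, which transfers estimates back to the Euclidean norm.
\end{itemize}
The quantities \(t_0\), \(c_Q\), \(C_o\), and \(\gamma\) in Lemmas~\ref{lem:information-block-bounds}--\ref{lem:effective-unstable-information} are explicit in these data, so they are uniform.

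The one non-explicit step is the compactness argument covering the finitely many times \(t<t_0\). We will replace it with an explicit bound. Since \(\mathcal B_t(P)=A^t(I+P\mathcal O_t)^{-1}\) and \(I-\mathsf K(P)H=(I+PG)^{-1}\), Lemma~\ref{lem:gain-lipschitz} gives
\[
\|B(P)\|\le a_+\bigl(1+\overline p\,h_+^2/r_-\bigr)
\]
along a trajectory that stays in a bounded class. Multiplying at most \(t_0\) such factors gives a uniform constant. The same reasoning applied to Lemmas~\ref{lem:unstable-covariance-bounds}--\ref{lem:full-covariance-bound} yields, for each pair of endpoints \((\underline p^\sharp,\overline p^\sharp)\), uniform trajectory-class endpoints, closed-loop constants, and forgetting constants. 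This settles the final sentence of the theorem.

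\textbf{Step 2: choice of classes.} Assumption~\ref{ass:uniform-local-riccati} places \(\widehat P^0_{0,j}\in\mathfrak C_j(\underline p_0,\overline p_0)\). Step~1 with these endpoints gives \(\underline p_*,\overline p_*\) such that \(\widehat P^0_{t,j}\in\mathfrak C_j(\underline p_*,\overline p_*)\) for all \(t\) and \(j\). We then set
\[
\underline p_{\mathrm{loc}}:=\underline p_*/2,\qquad \overline p_{\mathrm{loc}}:=2\overline p_*.
\]
This gives \eqref{eq:reference-inner-local-class}, with strict inclusion as required. Applying Step~1 again with endpoints \((\underline p_{\mathrm{loc}},\overline p_{\mathrm{loc}})\) produces \(\underline p_{\mathrm{tr}},\overline p_{\mathrm{tr}}\), which gives \eqref{eq:uniform-local-common-trajectory-class}. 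We may enlarge \(\overline p_{\mathrm{tr}}\) and shrink \(\underline p_{\mathrm{tr}}\) so that the inequalities are strict.

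\textbf{Step 3: closed-loop and forgetting bounds.} The time-zero closed-loop bound holds on the trajectory class \(\mathfrak C_j(\underline p_{\mathrm{tr}},\overline p_{\mathrm{tr}})\). Since the local Riccati map is autonomous, we can restart it at \(\Phi_j^s(P_j)\), which lies in that class; this gives \eqref{eq:uniform-local-shifted-closed-loop}.

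For forgetting, we apply the difference identity of Proposition~\ref{prop:exact-riccati-identities} for block \(j\) at time \(s\):
\[
\Phi_j^t(P_j)-\Phi_j^t(Q_j)=\mathcal B_{t-s}\bigl(\Phi_j^s(P_j)\bigr)\bigl[\Phi_j^s(P_j)-\Phi_j^s(Q_j)\bigr]\mathcal B_{t-s}\bigl(\Phi_j^s(Q_j)\bigr)^\top .
\]
Bounding both propagators by \eqref{eq:uniform-local-shifted-closed-loop} gives \eqref{eq:uniform-local-shifted-forgetting} with \(C_\Phi=C_B^2\) and \(\rho_\Phi=\rho_B\). Taking \(s=0\) gives \eqref{eq:uniform-local-riccati-forgetting}.

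\textbf{Main obstacle.} The hard part is Step~1: making sure no constant silently depends on the block. There are two places where this could happen.
\begin{itemize}
\item The compactness argument for \(t<t_0\) gives constants with no explicit dependence on the data. The explicit product bound above replaces it.
\item The change to spectral coordinates, including its conditioning, must be uniform in \(j\). This is exactly what \(\kappa_{\mathrm{sp}}\) provides.
\end{itemize}
If either of these fails, the remedy is to track the relevant constant explicitly through the Schur-complement bounds of Lemma~\ref{lem:effective-unstable-information}.
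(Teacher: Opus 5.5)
Your proposal is correct and takes essentially the same approach as the paper. Both proofs apply the global Riccati lemmas blockwise using the uniform spectral, observability and coefficient bounds, make the same choice $\underline p_{\mathrm{loc}}=\underline p_*/2$, $\overline p_{\mathrm{loc}}=2\overline p_*$, and obtain the closed-loop and forgetting bounds from the same autonomous restart and exact difference identity (giving $C_\Phi=C_B^2$, $\rho_\Phi=\rho_B$). The one place you go beyond the paper is the finitely many times $t<t_0$: the paper's global argument uses compactness there, and its local proof only asserts uniformity, citing among other things the bounded local dimensions. Your explicit bound $\prod_{\ell<t_0}\|B_j(\Phi_j^\ell(P_j))\|$, with $\|B_j(P)\|\le a_+(1+\|P\|h_+^2/r_-)$, makes that step visibly uniform in $j$ and $J$ without needing the local dimension bounds.
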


\begin{proof}
This is the uniform blockwise application of
Section~\ref{app:riccati-stability}.
Indeed, for the stability of the local Riccati map $\Phi_j^t$,  \eqref{eq:uniform-local-spectral-rates} supplies its forward-stable
and backward-unstable power bounds, while
\eqref{eq:uniform-local-unstable-observability} supplies its finite-window
observability lower bound. Applying Lemmas~\ref{lem:unstable-covariance-bounds}
and \ref{lem:full-covariance-bound} to
$\mathfrak C_j(\underline p_0,\overline p_0)$ gives, uniformly in $j$,
\[
  (\widehat P_{t,j}^0)_{uu}\succeq \underline p_*I_{\mathsf U_j},
  \qquad
  \|\widehat P_{t,j}^0\|\le \overline p_*,
  \qquad t\ge0.
\]
Set
\[
  \underline p_{\mathrm{loc}}:=\frac12\underline p_*,
  \qquad
  \overline p_{\mathrm{loc}}:=2\overline p_*.
\]
This proves \eqref{eq:reference-inner-local-class}.

Applying the same two lemmas to
$\mathfrak C_j(\underline p_{\mathrm{loc}},\overline p_{\mathrm{loc}})$ gives
\eqref{eq:uniform-local-common-trajectory-class}. Applying the closed-loop
restart and forgetting argument from the proof of
Theorem~\ref{thm:enkf-riccati-package} blockwise to the common class
$\mathfrak C_j(\underline p_{\mathrm{loc}},\overline p_{\mathrm{loc}})$ then give
\eqref{eq:uniform-local-shifted-closed-loop}--\eqref{eq:uniform-local-shifted-forgetting}. Applying the same results to any
fixed class $\mathfrak C_j(\underline p^\sharp,\overline p^\sharp)$ proves the
final assertion. All constants depend only on the endpoints of the relevant
covariance class, the uniform local dimension and coordinate-conditioning
bounds, $a_+,h_+,r_-^{-1}$, and the spectral and observability constants in
Assumption~\ref{ass:uniform-local-riccati}; hence none depends on $j$ or $J$.
\end{proof}
Recall the coupled  forecast-to-forecast Riccati map \(\Phi\) and the coupled forecast closed-loop map  \(B\). 
 Define the corresponding maps
obtained by replacing \(A\) with its block-diagonal part \(D\): 
\begin{equation}
  \Phi_D(P):=D\Psi(P)D^\top,
  \qquad
  B_D(P):=D(I-\mathsf K(P)H).
  \label{eq:B3-full-reference-maps}
\end{equation}
 When $P$ is block-diagonal, they stack local maps diagonally.  Stability estimates for these mappings follow from the previous result. 
\begin{corollary}[Block-diagonal stability]
\label{cor:block-diagonal-reference-stability}
For a block-diagonal covariance $P^0$, with blocks
$P_1,\ldots,P_J$, the maps $\Phi_D$ and $B_D$
act blockwise as
\[
  \Phi_D(P^0)
  =\operatorname{diag}(\Phi_1(P_1),\ldots,\Phi_J(P_J)),
  \qquad
  B_D(P^0)
  =\operatorname{diag}(B_1(P_1),\ldots,B_J(P_J)).
\]
Let
\[
  \mathfrak C_{\mathrm{loc}}(\underline p,\overline p)
  :=
  \left\{
    \operatorname{diag}(P_1,\ldots,P_J):
    P_j\in\mathfrak C_j(\underline p,\overline p)\ \text{for every }j
  \right\}.
\]
Then, for  block-diagonal $P^0,Q^0\in
\mathfrak C_{\mathrm{loc}}(\underline p_{\mathrm{loc}},\overline p_{\mathrm{loc}})$,
\begin{equation}
  \|\Phi_D^t(P^0)-\Phi_D^t(Q^0)\|_{\mathrm{loc}}
  \le C_\Phi\rho_\Phi^{2t}\|P^0-Q^0\|_{\mathrm{loc}}.
  \label{eq:block-diagonal-reference-forgetting}
\end{equation}
Moreover, the corresponding block-diagonal closed-loop products satisfy
\begin{equation}
  \left\|
  B_D(\Phi_D^{t-1}(P^0))\cdots
  B_D(\Phi_D^s(P^0))
  \right\|_{\mathrm{loc}}
  \le C_B\rho_B^{t-s}.
  \label{eq:block-diagonal-reference-closed-loop}
\end{equation}
\end{corollary}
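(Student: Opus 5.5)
The plan is to reduce everything to Theorem~\ref{thm:uniform-block-local-riccati} by observing that block-diagonal structure is preserved exactly along the block-diagonal flow. First, I will check the blockwise action. Because $H$ and $R$ are block diagonal by \eqref{eq:block-local-observations}, for block-diagonal $P^0=\operatorname{diag}(P_1,\ldots,P_J)$ the matrix $HP^0H^\top+R$ is block diagonal with blocks $H_jP_jH_j^\top+R_j$. Its inverse is therefore block diagonal as well. It follows that $\mathsf K(P^0)=\operatorname{diag}(\mathsf K_j(P_j))$ and $\Psi(P^0)=\operatorname{diag}(\Psi_j(P_j))$. Multiplying by the block-diagonal $D$ then gives the stated formulas for $\Phi_D(P^0)$ and $B_D(P^0)$. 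By induction, $\Phi_D^t(P^0)=\operatorname{diag}(\Phi_j^t(P_j))$ for every $t$.

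For the forgetting estimate \eqref{eq:block-diagonal-reference-forgetting}, note that $\Phi_D^t(P^0)-\Phi_D^t(Q^0)$ is block diagonal. For a block-diagonal matrix $M$, the definition \eqref{eq:block-local-norm} gives $\|M\|_{\mathrm{loc}}=\max_j\|M_{jj}\|$, since each block row and each block column contains a single nonzero block. Each diagonal block $\Phi_j^t(P_j)-\Phi_j^t(Q_j)$ is controlled by \eqref{eq:uniform-local-riccati-forgetting}, because $P_j,Q_j\in\mathfrak C_j(\underline p_{\mathrm{loc}},\overline p_{\mathrm{loc}})$ by the definition of $\mathfrak C_{\mathrm{loc}}$. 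This yields the bound $C_\Phi\rho_\Phi^{2t}\|P_j-Q_j\|\le C_\Phi\rho_\Phi^{2t}\max_k\|P_k-Q_k\|=C_\Phi\rho_\Phi^{2t}\|P^0-Q^0\|_{\mathrm{loc}}$. Taking the maximum over $j$ proves \eqref{eq:block-diagonal-reference-forgetting} with the same constants, uniformly in $J$.

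For the closed-loop estimate \eqref{eq:block-diagonal-reference-closed-loop}, I will use the blockwise action along the trajectory: $B_D(\Phi_D^r(P^0))=\operatorname{diag}(B_j(\Phi_j^r(P_j)))$. A product of block-diagonal matrices is block diagonal, with $j$-th block equal to the product of the $j$-th blocks. The $j$-th block of the product is therefore exactly the local product in \eqref{eq:uniform-local-shifted-closed-loop}, whose norm is at most $C_B\rho_B^{t-s}$ for each $j$. The loc-norm identity for block-diagonal matrices then gives \eqref{eq:block-diagonal-reference-closed-loop}; when $t=s$ the product is the identity, which has loc-norm $1\le C_B$.

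There is no genuine obstacle here. The only points needing care are two. The first is that the uniformity in $J$ is inherited entirely from Theorem~\ref{thm:uniform-block-local-riccati}, which applies because every $P_j$ lies in the common class. The second is the exact evaluation of the loc-norm on block-diagonal matrices, which ensures no factor depending on $J$ appears.
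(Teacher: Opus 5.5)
Your proposal is correct and takes the same route as the paper. The paper's proof likewise notes that $\|\cdot\|_{\mathrm{loc}}$ of a block-diagonal matrix equals the maximum operator norm of its diagonal blocks, and then takes the maximum over $j$ in Theorem~\ref{thm:uniform-block-local-riccati}; you additionally spell out the blockwise action of $\Phi_D$ and $B_D$ (from block-diagonality of $D$, $H$, $R$) and the $t=s$ case, which the paper leaves implicit.
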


\begin{proof}
For a block-diagonal matrix, the symmetric local norm in
\eqref{eq:block-local-norm} equals the maximum operator norm of its
diagonal blocks. Both claims therefore follow by taking the maximum over $j$
in Theorem~\ref{thm:uniform-block-local-riccati}.
\end{proof} 
In particular, for the reference trajectory, set \(B_t^0:=D(I-K_t^0H) = B_D(\widehat P_t^0)\). Corollary~\ref{cor:block-diagonal-reference-stability} gives
\begin{equation}
  \|B_{t-1}^0\cdots B_s^0\|_{\mathrm{loc}}
  \le C_B\rho_B^{t-s},
  \qquad t\ge s.
  \label{eq:B3-reference-product-stability}
\end{equation}
We also note that $\Phi_D^t(\widehat P_0^0) = \widehat P_t^0$. 

\subsubsection{A common neighborhood and regularity estimates}
\label{subsec:common-local-neighborhood}
In this section, we show that the stability estimates of the previous section, along with regularity estimates for the Kalman gain and analysis mappings, hold uniformly over local neighborhoods. 
These results are needed to control perturbations of the reference trajectories. 
Define 
\begin{equation}
  \vartheta_{\mathrm{loc}}
  :=\frac12\min\left\{\underline p_{\mathrm{loc}},
  \frac12\overline p_{\mathrm{loc}}\right\}
  \label{eq:local-stability-radius}
\end{equation}
and
\begin{equation}
  \mathcal U_{t,j}
  :=\left\{P_j\succeq0:
  \|P_j-\widehat P_{t,j}^0\|\le\vartheta_{\mathrm{loc}}\right\}.
  \label{eq:local-stability-neighborhood}
\end{equation}

\begin{lemma}[Uniform local stability neighborhood]
\label{lem:uniform-local-stability-neighborhood}
For every $t\ge0$ and $j=1,\ldots,J$,
\begin{equation}
  \mathcal U_{t,j}
  \subset
  \mathfrak C_j(\underline p_{\mathrm{loc}},\overline p_{\mathrm{loc}}).
  \label{eq:local-neighborhood-contained}
\end{equation}
Consequently, all closed-loop and Riccati estimates in
Theorem~\ref{thm:uniform-block-local-riccati} hold with the same constants for
covariances in these neighborhoods.
\end{lemma}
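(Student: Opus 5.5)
The claim is a direct consequence of Weyl-type perturbation bounds applied to the two defining inequalities of $\mathfrak C_j$. We use the inner-class membership \eqref{eq:reference-inner-local-class} from Theorem~\ref{thm:uniform-block-local-riccati} and the choice of radius \eqref{eq:local-stability-radius}.

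Fix $t\ge0$, a block $j$, and $P_j\in\mathcal U_{t,j}$, and set $\Delta:=P_j-\widehat P_{t,j}^0$. By definition $\|\Delta\|\le\vartheta_{\mathrm{loc}}$, and $P_j\succeq0$ holds by construction of $\mathcal U_{t,j}$.

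\emph{Upper bound.} By the triangle inequality and \eqref{eq:reference-inner-local-class},
\[
\|P_j\|\le\|\widehat P_{t,j}^0\|+\|\Delta\|\le\tfrac12\overline p_{\mathrm{loc}}+\vartheta_{\mathrm{loc}}.
\]
Since $\vartheta_{\mathrm{loc}}\le\frac14\overline p_{\mathrm{loc}}$, this gives $\|P_j\|\le\frac34\overline p_{\mathrm{loc}}\le\overline p_{\mathrm{loc}}$.

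\emph{Lower bound on the unstable block.} The $uu$ block is a compression of $P_j$ to $\mathsf U_j$ in the fixed adapted coordinates. Hence $\|\Delta_{uu}\|\le\|\Delta\|$, and so $\Delta_{uu}\succeq-\vartheta_{\mathrm{loc}}I_{\mathsf U_j}$. Combining this with \eqref{eq:reference-inner-local-class},
\[
(P_j)_{uu}\succeq(2\underline p_{\mathrm{loc}}-\vartheta_{\mathrm{loc}})I_{\mathsf U_j}\succeq\tfrac32\underline p_{\mathrm{loc}}I_{\mathsf U_j},
\]
where the last step uses $\vartheta_{\mathrm{loc}}\le\frac12\underline p_{\mathrm{loc}}$. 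The only subtlety is that the operator norm here is Euclidean in the adapted coordinates. If the class is instead measured in original coordinates, one absorbs the condition-number bound $\kappa_{\mathrm{sp}}$ into the constants. This is harmless because $\kappa_{\mathrm{sp}}$ is uniform in $j$ and $J$, and I would state the norms in adapted coordinates as the paper does for the class definition.

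These two bounds show $P_j\in\mathfrak C_j(\underline p_{\mathrm{loc}},\overline p_{\mathrm{loc}})$, which proves \eqref{eq:local-neighborhood-contained}.

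\emph{Consequence.} The constants in Theorem~\ref{thm:uniform-block-local-riccati} depend only on the class endpoints $\underline p_{\mathrm{loc}},\overline p_{\mathrm{loc}}$, which are uniform in $j$ and $J$. The closed-loop and forgetting estimates \eqref{eq:uniform-local-shifted-closed-loop}--\eqref{eq:uniform-local-shifted-forgetting} therefore apply verbatim, with the same constants, to any covariances in $\mathcal U_{t,j}$. No step poses a real obstacle.
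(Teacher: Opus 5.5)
Your proposal is correct and follows the paper's proof: the triangle inequality together with $\vartheta_{\mathrm{loc}}\le\frac14\overline p_{\mathrm{loc}}$ gives the upper bound, and the compression bound $\|\Delta_{uu}\|\le\|\Delta\|$ together with $\vartheta_{\mathrm{loc}}\le\frac12\underline p_{\mathrm{loc}}$ gives the lower bound on the unstable block. Both rely on the inner-class membership \eqref{eq:reference-inner-local-class}. Your intermediate constant $\frac32\underline p_{\mathrm{loc}}$ is slightly sharper than the paper's $\underline p_{\mathrm{loc}}$, but nothing depends on this.
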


\begin{proof}
Let $P_j\in\mathcal U_{t,j}$. By
\eqref{eq:reference-inner-local-class},
\[
  (\widehat P_{t,j}^0)_{uu}\succeq2\underline p_{\mathrm{loc}}I,
  \qquad
  \|\widehat P_{t,j}^0\|\le\frac12\overline p_{\mathrm{loc}}.
\]
Since $\vartheta_{\mathrm{loc}}\le \underline p_{\mathrm{loc}}/2$,
\[
  (P_j)_{uu}
  \succeq
  (\widehat P_{t,j}^0)_{uu}
  -\vartheta_{\mathrm{loc}}I
  \succeq \underline p_{\mathrm{loc}}I.
\]
Similarly, $\vartheta_{\mathrm{loc}}\le \overline p_{\mathrm{loc}}/4$ gives
\[
  \|P_j\|
  \le\|\widehat P_{t,j}^0\|+\vartheta_{\mathrm{loc}}
  \le\frac34\overline p_{\mathrm{loc}}<\overline p_{\mathrm{loc}}.
\]
This proves \eqref{eq:local-neighborhood-contained}.
\end{proof}

\begin{lemma}[Uniform local gain and analysis regularity]
\label{lem:uniform-local-gain-analysis-regularity}
There are constants $L_K,L_\Psi<\infty$, independent of $j$ and $J$, such
that, for every
$P_j,Q_j\in\mathfrak C_j(\underline p_{\mathrm{loc}},\overline p_{\mathrm{loc}})$,
\begin{align}
  \|\mathsf K_j(P_j)-\mathsf K_j(Q_j)\|
  &\le L_K\|P_j-Q_j\|,
  \label{eq:uniform-local-gain-lipschitz}
  \\
  \|\Psi_j(P_j)-\Psi_j(Q_j)\|
  &\le L_\Psi\|P_j-Q_j\|.
  \label{eq:uniform-local-analysis-lipschitz}
\end{align}
For any fixed $0<\underline p^\sharp<\overline p^\sharp<\infty$, 
the same
conclusions, with constants depending on $\overline p^\sharp$, hold on
$\mathfrak C_j(\underline p^\sharp,\overline p^\sharp)$, uniformly in $j$ and
$J$.
\end{lemma}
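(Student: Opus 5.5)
The plan is to reduce the statement to the global regularity estimate in Lemma~\ref{lem:analysis-covariance-difference}, applied to each block with the local data $(H_j,R_j)$ in place of $(H,R)$. That lemma imposes no lower bound on the covariances. It only requires $\|P_j\|,\|Q_j\|\le\overline p$, and it supplies explicit constants
\[
  C_\Psi(\overline p)=\Bigl(1+\tfrac{\overline p\|H_j\|^2}{\lambda_{\min}(R_j)}\Bigr)^2,
  \qquad
  C_K(\overline p)=\tfrac{\|H_j\|}{\lambda_{\min}(R_j)}+\tfrac{\overline p\|H_j\|^3}{\lambda_{\min}(R_j)^2}.
\]
Its proof uses only the identity $I-\mathsf K_j(P_j)H_j=(I+P_jH_j^\top R_j^{-1}H_j)^{-1}$, the resolvent identity for $S_P^{-1}-S_Q^{-1}$, and the bound $\|S_P^{-1}\|\le\lambda_{\min}(R_j)^{-1}$. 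Each of these holds verbatim for the local maps $\mathsf K_j$, $\Psi_j$ of \eqref{eq:local-gain-map}--\eqref{eq:local-analysis-map}, since they are the global maps for the single-block system $(A_j,H_j,R_j)$.

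First, for $P_j,Q_j\in\mathfrak C_j(\underline p_{\mathrm{loc}},\overline p_{\mathrm{loc}})$ we have $\|P_j\|,\|Q_j\|\le\overline p_{\mathrm{loc}}$. Second, I substitute the uniform bounds of Assumption~\ref{ass:uniform-local-riccati}, namely $\|H_j\|\le h_+$ and $\lambda_{\min}(R_j)\ge r_-$. This gives
\[
  L_\Psi:=\Bigl(1+\tfrac{\overline p_{\mathrm{loc}}h_+^2}{r_-}\Bigr)^2,
  \qquad
  L_K:=\tfrac{h_+}{r_-}+\tfrac{\overline p_{\mathrm{loc}}h_+^3}{r_-^2}.
\]
Both constants depend only on $h_+$, $r_-$ and $\overline p_{\mathrm{loc}}$. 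Theorem~\ref{thm:uniform-block-local-riccati} makes $\overline p_{\mathrm{loc}}$ independent of $j$ and $J$, so $L_K$ and $L_\Psi$ are as well. Third, for an arbitrary class $\mathfrak C_j(\underline p^\sharp,\overline p^\sharp)$ the same substitution with $\overline p^\sharp$ in place of $\overline p_{\mathrm{loc}}$ gives constants depending only on $\overline p^\sharp$, $h_+$ and $r_-$. The lower endpoint $\underline p^\sharp$ never enters.

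No step is a real obstacle. The only point needing care is to confirm that the proof of Lemma~\ref{lem:analysis-covariance-difference} does not implicitly use the global dimension or invertibility of $P$. It uses neither: Woodbury and push-through hold for positive semidefinite $P$ because $R_j\succ0$.
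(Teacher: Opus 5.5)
Your proposal is correct and follows the paper's proof: apply the dimension-free estimates of Lemma~\ref{lem:analysis-covariance-difference} blockwise with $(H,R)=(H_j,R_j)$, then insert the uniform bounds $\|H_j\|\le h_+$, $R_j\succeq r_-I$, and $\|P_j\|,\|Q_j\|\le\overline p_{\mathrm{loc}}$ (or $\overline p^\sharp$). Your explicit constants $L_K,L_\Psi$ are exactly what that argument yields, and you correctly observe that the lower endpoint of the covariance class never enters.
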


\begin{proof}
Apply the dimension-free argument of
Lemma~\ref{lem:analysis-covariance-difference} with
\((H,R)=(H_j,R_j)\).  Its explicit bounds depend only on an upper bound for
\(\|P_j\|\) and \(\|Q_j\|\), an upper bound for \(\|H_j\|\), and a positive
lower bound for \(R_j\).  Assumption~\ref{ass:uniform-local-riccati} supplies
these uniformly in \(j\), proving \eqref{eq:uniform-local-gain-lipschitz}--\eqref{eq:uniform-local-analysis-lipschitz}.  
Replacing
$\overline p_{\mathrm{loc}}$ by any fixed $\overline p^\sharp$ proves the final
assertion.
\end{proof}

The next estimate is conditional on the perturbed and reference trajectories
remaining in one common covariance class.  This hypothesis will be verified
separately in each application.

\begin{proposition}[Forced local Riccati estimate]
\label{prop:forced-block-local-riccati}
Let $(Q_{t,j})_{t\ge0}$ and $(P_{t,j})_{t\ge0}$ be positive-semidefinite
matrices satisfying
\[
  Q_{t+1,j}=\Phi_j(Q_{t,j})+\zeta_{t,j},
  \qquad
  P_{t+1,j}=\Phi_j(P_{t,j}).
\]
Suppose that there are fixed $\underline p^\sharp,\overline p^\sharp>0$ such that
\begin{equation}
  Q_{t,j},\ P_{t,j},\ \Phi_j(Q_{t,j})
  \in\mathfrak C_j(\underline p^\sharp,\overline p^\sharp)
  \qquad
  \text{for all }t\ge0\text{ and }j.
  \label{eq:forced-local-common-class}
\end{equation}
Then there are $C<\infty$ and $\rho\in(0,1)$, independent of $j$, $J$, and
$t$, such that
\begin{equation}
  \max_j\|Q_{t,j}-P_{t,j}\|
  \le C\rho^t\max_j\|Q_{0,j}-P_{0,j}\|
  +C\sum_{s=0}^{t-1}\rho^{t-1-s}\max_j\|\zeta_{s,j}\|.
  \label{eq:forced-local-riccati-estimate}
\end{equation}
In particular, if
$\sup_{s,j}\|\zeta_{s,j}\|\le\zeta_*$, then
\[
  \sup_{t\ge0}\max_j\|Q_{t,j}-P_{t,j}\|
  \le C\max_j\|Q_{0,j}-P_{0,j}\|
  +\frac{C}{1-\rho}\zeta_*.
\]
\end{proposition}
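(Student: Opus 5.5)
We would fix a block $j$ and compare the forced trajectory with the unforced one through an auxiliary chain of unforced Riccati trajectories, a discrete variation-of-constants (telescoping) argument. For $0\le s\le t$ set $R^{(s)}_{t,j}:=\Phi_j^{t-s}(Q_{s,j})$. Then $R^{(t)}_{t,j}=Q_{t,j}$ and $R^{(0)}_{t,j}=\Phi_j^t(Q_{0,j})$, so
\[
  Q_{t,j}-P_{t,j}
  =\bigl[\Phi_j^t(Q_{0,j})-\Phi_j^t(P_{0,j})\bigr]
  +\sum_{s=0}^{t-1}\bigl[\Phi_j^{t-s-1}(Q_{s+1,j})-\Phi_j^{t-s-1}(\Phi_j(Q_{s,j}))\bigr].
\]
Each summand compares two unforced trajectories that run for $t-s-1$ steps from the initial points $Q_{s+1,j}=\Phi_j(Q_{s,j})+\zeta_{s,j}$ and $\Phi_j(Q_{s,j})$. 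These initial points differ exactly by $\zeta_{s,j}$.

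The key input is the final assertion of Theorem~\ref{thm:uniform-block-local-riccati}. It gives the forgetting estimate \eqref{eq:uniform-local-riccati-forgetting} for any fixed class $\mathfrak C_j(\underline p^\sharp,\overline p^\sharp)$, with constants uniform in $j$ and $J$. Hypothesis \eqref{eq:forced-local-common-class} places all relevant initial points in this class: $Q_{0,j}$ and $P_{0,j}$ for the first bracket, and $Q_{s+1,j}$ and $\Phi_j(Q_{s,j})$ for the summands. This is exactly why the hypothesis includes $\Phi_j(Q_{t,j})$. Applying the estimate gives
\[
  \|Q_{t,j}-P_{t,j}\|\le C_\Phi\rho_\Phi^{2t}\|Q_{0,j}-P_{0,j}\|+C_\Phi\sum_{s=0}^{t-1}\rho_\Phi^{2(t-1-s)}\|\zeta_{s,j}\|.
\]
Taking $\max_j$ on both sides and setting $\rho:=\rho_\Phi^2$ yields \eqref{eq:forced-local-riccati-estimate}. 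The ``in particular'' bound then follows by summing the geometric series, which is at most $\zeta_*/(1-\rho)$.

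The only delicate point is the applicability of the forgetting estimate. It requires both starting points to lie in the fixed class. It does not require the intermediate iterates to stay there, because Theorem~\ref{thm:uniform-block-local-riccati} already controls the unforced trajectories once they are started in the class (via \eqref{eq:uniform-local-common-trajectory-class} and the shifted closed-loop bounds). I would therefore check that the statement is invoked with the class $\mathfrak C_j(\underline p^\sharp,\overline p^\sharp)$ itself and not with $\mathfrak C_j(\underline p_{\mathrm{loc}},\overline p_{\mathrm{loc}})$. Alternatively, I would use the exact difference identity \eqref{eq:enkf-riccati-difference-identity} for $\Phi_j$ together with the closed-loop bound \eqref{eq:uniform-local-shifted-closed-loop} for the class $\sharp$. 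No lower bound on $\zeta_{s,j}$ or positivity of it is needed, since the identity holds for arbitrary positive-semidefinite endpoints.
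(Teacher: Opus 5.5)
Your proposal is correct and matches the paper's proof. Both use the nonlinear telescoping identity through the unforced trajectories $\Phi_j^{t-1-s}(Q_{s+1,j})$ and $\Phi_j^{t-1-s}(\Phi_j(Q_{s,j}))$, then apply the uniform forgetting estimate of Theorem~\ref{thm:uniform-block-local-riccati} in the fixed class $\mathfrak C_j(\underline p^\sharp,\overline p^\sharp)$ (which is why $\Phi_j(Q_{t,j})$ appears in the hypothesis), and finish by taking the maximum over $j$ with $\rho=\rho_\Phi^2$.
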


\begin{proof}
For $t\ge1$, the nonlinear telescoping identity gives
\begin{equation*}
  Q_{t,j}-P_{t,j}
  =\Phi_j^t(Q_{0,j})-\Phi_j^t(P_{0,j})
  +\sum_{s=0}^{t-1}
  \left[
    \Phi_j^{t-1-s}(Q_{s+1,j})
    -\Phi_j^{t-1-s}(\Phi_j(Q_{s,j}))
  \right].
\end{equation*}
All starting points in this identity belong to the common class in \eqref{eq:forced-local-common-class}. The shifted forgetting estimate
in Theorem~\ref{thm:uniform-block-local-riccati} therefore yields
\[
  \|Q_{t,j}-P_{t,j}\|
  \le C\rho^t\|Q_{0,j}-P_{0,j}\|
  +C\sum_{s=0}^{t-1}\rho^{t-1-s}
  \|Q_{s+1,j}-\Phi_j(Q_{s,j})\|.
\]
The last difference equals $\zeta_{s,j}$. Taking the maximum over $j$ proves \eqref{eq:forced-local-riccati-estimate}; the uniform estimate follows
by summing the geometric series.
\end{proof}

\subsection{Deterministic localization bias}
\label{sec:localized-deterministic-bias}
 We now prove the localization-bias estimates \eqref{eq:localized-package-local-bias}
in Theorem~\ref{thm:localized-deterministic-package}. 
These estimates bound the deterministic discrepancy between the
block-diagonal reference trajectory and the fully coupled trajectory.
Theorem~\ref{thm:B3-localization-bias} establishes a stronger version
that allows for non-block-diagonal initial covariances and captures the
geometric decay of the resulting initial transients. 

Let
\(\operatorname{bdiag}(P):=\operatorname{diag}(P_1,\ldots,P_J)\). For the deterministic initial bias, set  
\begin{equation}
  \mathfrak d_0:=\|\widehat P_0-\widehat P_0^0\|_{\mathrm{loc}},
  \qquad
  \mathfrak d_{0,\alpha}:=
  \|\widehat P_0-\widehat P_0^0\|_{\mathcal J_\alpha}.
  \label{eq:B3-initial-bias}
\end{equation}

\begin{theorem}[Spatially decaying deterministic localization bias]
\label{thm:B3-localization-bias}
Under Assumptions~\ref{ass:localized-structure} and
\ref{ass:uniform-local-riccati}, there exist constants
$\varepsilon_*,\mathfrak d_*,\mathfrak d_{*,\alpha}>0$,
$C_{\mathrm{bias}}<\infty$, and $\rho_{\mathrm{bias}}\in(0,1)$, independent
of $J$, such that the following holds.  If
\[
  0\le\varepsilon\le\varepsilon_*,
  \qquad
  \mathfrak d_0\le\mathfrak d_*,
  \qquad
  \mathfrak d_{0,\alpha}\le\mathfrak d_{*,\alpha},
\]
then, for every $t\ge0$,
\begin{align}
  \|\widehat P_t-\widehat P_t^0\|_{\mathrm{loc}}
  &\le C_{\mathrm{bias}}\rho_{\mathrm{bias}}^t\mathfrak d_0
  +C_{\mathrm{bias}}\varepsilon,
  \label{eq:B3-forecast-bias}\\
  \|\widehat P_t-\widehat P_t^0\|_{\mathcal J_\alpha}
  &\le C_{\mathrm{bias}}\rho_{\mathrm{bias}}^t
  \mathfrak d_{0,\alpha}+C_{\mathrm{bias}}\varepsilon.
  \label{eq:B3-decay-bias}
\end{align}
Consequently,
\begin{align}
  \|\widehat P_t-\operatorname{bdiag}(\widehat P_t)\|_{\mathcal J_\alpha}
  &\le C_{\mathrm{bias}}\rho_{\mathrm{bias}}^t
  \label{eq:B3-diagonal-actual-offdiagonal-bias}
  \mathfrak d_{0,\alpha}+C_{\mathrm{bias}}\varepsilon,\\
  \max_j\| \widehat P_{t,j}-\widehat P_{t,j}^0\|
  &\le C_{\mathrm{bias}}\rho_{\mathrm{bias}}^t\mathfrak d_0
  +C_{\mathrm{bias}}\varepsilon.
  \label{eq:B3-diagonal-offdiagonal-bias}
\end{align}
The same estimates hold, in the corresponding norms, for the analysis
covariance and gain differences
$\Psi(\widehat P_t)-\Psi(\widehat P_t^0)$ and
$\mathsf K(\widehat P_t)-\mathsf K(\widehat P_t^0)$.
In particular, for
a block-diagonal initialization $\widehat P_0 = \widehat P_0^0$, 
\begin{equation}
  \|(\widehat P_t)_{jk}\|
  \le C_{\mathrm{bias}}\varepsilon(1+|j-k|)^{-\alpha},
  \qquad j\ne k,
  \quad t\ge0.
  \label{eq:B3-transient-cross-decay}
\end{equation}
\end{theorem}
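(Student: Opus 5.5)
We compare the coupled trajectory $\widehat P_t=\Phi^t(\widehat P_0)$ with the reference $\widehat P_t^0=\Phi_D^t(\widehat P_0^0)$ through a forced linearized recursion. Set $\Delta_t:=\widehat P_t-\widehat P_t^0$. The one-step identity will be
\[
  \Delta_{t+1}=\Phi(\widehat P_t)-\Phi_D(\widehat P_t^0)
  =\bigl[\Phi_D(\widehat P_t)-\Phi_D(\widehat P_t^0)\bigr]
  +\bigl[\Phi(\widehat P_t)-\Phi_D(\widehat P_t)\bigr].
\]
The second bracket is the interaction forcing $E\Psi(\widehat P_t)D^\top+D\Psi(\widehat P_t)E^\top+E\Psi(\widehat P_t)E^\top$. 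The first bracket is handled with the exact difference identity of Lemma~\ref{lem:analysis-covariance-difference} (conjugated by $D$):
\[
  \Phi_D(P)-\Phi_D(Q)=D(I-\mathsf K(P)H)(P-Q)(I-\mathsf K(Q)H)^\top D^\top
  =B_D(P)(P-Q)B_D(Q)^\top .
\]
Therefore $\Delta_{t+1}=B_D(\widehat P_t)\Delta_tB_D(\widehat P_t^0)^\top+G_t$. We then write $B_D(\widehat P_t)=B_t^0+D(\mathsf K(\widehat P_t^0)-\mathsf K(\widehat P_t))H$, whose correction is $O(\|\Delta_t\|)$ in either norm. Iterating gives a variation-of-constants formula driven by the reference products $B_{t-1}^0\cdots B_s^0$, which contract by \eqref{eq:B3-reference-product-stability}, plus quadratic remainders in $\Delta$.

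The estimates will be run in both $\|\cdot\|_{\mathrm{loc}}$ and $\|\cdot\|_{\mathcal J_\alpha}$ at once. For the $\mathcal J_\alpha$ estimate we also need $\|B_{t-1}^0\cdots B_s^0\|_{\mathcal J_\alpha}\le C\rho^{t-s}$. This follows because these products are block diagonal, and for block-diagonal matrices $\|\cdot\|_{\mathcal J_\alpha}$ equals the max block norm. The forcing obeys $\|G_t\|_{\mathcal J_\alpha}\le C\varepsilon(1+\|\Psi(\widehat P_t)\|_{\mathcal J_\alpha})$ by Lemma~\ref{lem:block-decay-algebra} and $\|E\|_{\mathcal J_\alpha}\le Z_\alpha^{-1}\varepsilon$; similarly $\|G_t\|_{\mathrm{loc}}\le C\varepsilon$. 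To bound the gain and analysis differences in $\mathcal J_\alpha$, we note that $(H\widehat P_tH^\top+R)^{-1}$ is a small perturbation of the block-diagonal $(H\widehat P_t^0H^\top+R)^{-1}$, and a Neumann series inverts it in the Banach algebra. This needs $\|\Delta_t\|_{\mathcal J_\alpha}$ small, which is the role of $\mathfrak d_{*,\alpha}$ and $\varepsilon_*$. The output will be $\|\mathsf K(\widehat P_t)-\mathsf K(\widehat P_t^0)\|+\|\Psi(\widehat P_t)-\Psi(\widehat P_t^0)\|\le C\|\Delta_t\|$ in both norms.

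The main obstacle is closing the nonlinear recursion uniformly in time, since the quadratic terms need an a priori smallness of $\Delta_t$. I would use a bootstrap. Suppose that for $s<t$ we have $\|\Delta_s\|_{\mathcal J_\alpha}\le 2C_{\mathrm{bias}}(\rho^s\mathfrak d_{0,\alpha}+\varepsilon)\le\vartheta$, with $\vartheta$ small enough that $\widehat P_{s,j}$ lies in the neighborhood $\mathcal U_{s,j}$ of Lemma~\ref{lem:uniform-local-stability-neighborhood}, where the Lipschitz constants of Lemma~\ref{lem:uniform-local-gain-analysis-regularity} apply. Summing the variation-of-constants formula with kernel $\rho^{t-1-s}$, as in Proposition~\ref{prop:forced-block-local-riccati}, yields
\[
  \|\Delta_t\|\le C\rho^t\|\Delta_0\|+C\varepsilon+C\sum_{s<t}\rho^{t-1-s}\|\Delta_s\|^2 .
\]
The quadratic sum is at most $C\vartheta\cdot(\text{bootstrap bound})$, so choosing $\vartheta,\varepsilon_*,\mathfrak d_*,\mathfrak d_{*,\alpha}$ small recovers the bound with constant $C_{\mathrm{bias}}$ and closes the induction. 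A slightly slower rate $\rho_{\mathrm{bias}}>\rho$ absorbs the convolution of $\rho^s$ with $\rho^{t-1-s}$. The $\mathrm{loc}$-norm bootstrap runs in parallel, with $\|\cdot\|_{\mathrm{loc}}$ controlled by $\|\cdot\|_{\mathcal J_\alpha}$.

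The consequences then follow directly. The bound \eqref{eq:B3-diagonal-offdiagonal-bias} comes from taking diagonal blocks. The bound \eqref{eq:B3-diagonal-actual-offdiagonal-bias} holds because $\widehat P_t-\operatorname{bdiag}(\widehat P_t)$ is the off-diagonal part of $\Delta_t$, given that $\widehat P_t^0$ is block diagonal. When $\widehat P_0=\widehat P_0^0$ we have $\mathfrak d_{0,\alpha}=0$, and the definition of $\|\cdot\|_{\mathcal J_\alpha}$ gives \eqref{eq:B3-transient-cross-decay}. The analysis and gain statements follow from the Lipschitz bounds established inside the bootstrap.
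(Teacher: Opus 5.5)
Your proof is correct, but it takes a different route from the paper's. The paper never linearizes step by step. In Lemma~\ref{lem:B3-finite-step-perturbation} it shows that the uncoupled finite-step map $\Phi_D^{T_*}$ contracts toward the reference trajectory on a tube, using the derivative bounds \eqref{eq:B3-derivative-bound}--\eqref{eq:B3-derivative-regularity} and the fundamental theorem of calculus. It also shows that $\Phi^{T_*}$ and $\Phi_D^{T_*}$ differ by $c_*\varepsilon$ over that fixed window. The grid recursion $e_{\ell+1}\le\theta_*e_\ell+c_*\varepsilon$ and short-time interpolation between grid times then give the result with $\rho_{\mathrm{bias}}=\theta_*^{1/T_*}$.

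You instead apply the exact identity $\Phi_D(P)-\Phi_D(Q)=B_D(P)(P-Q)B_D(Q)^\top$ at every step. This gives the forced recursion $\Delta_{t+1}=B_t^0\Delta_t(B_t^0)^\top+F_t\Delta_t(B_t^0)^\top+G_t$ with $F_t=D(\mathsf K(\widehat P_t^0)-\mathsf K(\widehat P_t))H$. You close it by variation of constants against the block-diagonal reference products, plus a bootstrap on the quadratic term with a slightly slower rate.

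What your route buys:
\begin{itemize}
\item It needs only Lipschitz regularity of the full gain in the tube (your Neumann-series step, i.e.\ \eqref{eq:B3-gain-regularity}), not Lipschitz control of $\mathrm D\Phi_D$.
\item It yields the bound at every time directly, with no grid interpolation.
\item It exhibits the $O(\varepsilon)$ bias as the linear response to $G_t$, which is the structure behind Proposition~\ref{prop:B8-first-order-expansion}.
\end{itemize}
What the paper's route buys: it packages the key estimate as a contraction property of the finite-step map on a tube, and reuses that almost verbatim in the Banach fixed-point argument for Theorem~\ref{thm:B8-spatially-decaying-fixed-point}.

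Two small corrections. First, the local neighborhoods $\mathcal U_{s,j}$ and the blockwise constants of Lemma~\ref{lem:uniform-local-gain-analysis-regularity} are not the relevant tools here. $\widehat P_t$ is not block diagonal, and its gain involves the full inverse $(H\widehat P_tH^\top+R)^{-1}$; the global tube regularity you describe is what carries the argument. Second, to get the transient $\rho^t\mathfrak d_0$ rather than $\rho^t\mathfrak d_{0,\alpha}$ in \eqref{eq:B3-forecast-bias}, run the local-norm recursion with local-norm quantities throughout. Use the $\mathcal J_\alpha$ bound, if at all, only to guarantee tube membership.
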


Essentially, Theorem~\ref{thm:B3-localization-bias} establishes the time-uniform stability of the Riccati trajectory upon small perturbations in the initial covariance and from the weak interactions of the forecast dynamics. The proof proceeds in two steps. First, in a small neighborhood of the block-diagonal covariances, also referred to as a tube, the regularity of the involved maps are preserved. Second, the uncoupled Riccati map $\Phi_D$ defined in \eqref{eq:B3-full-reference-maps} remains exponentially stable in a tube, while the localization bias between $\Phi$ and $\Phi_D$ over finite steps is $O(\varepsilon)$ within the tube. Lemmas~\ref{lem:B3-tube-regularity} and \ref{lem:B3-finite-step-perturbation} make these claims precise,
 and we prove Theorem~\ref{thm:B3-localization-bias} after establishing them.  

In the following, we normalize the block-decay norm so that both norms are submultiplicative, allowing us to introduce a single notation and treat the two cases simultaneously.
Let $C_\alpha$ be
the product constant in \eqref{eq:block-decay-product} and define the
normalized block-decay norm
\begin{equation}
  \|M\|_{\alpha,*}:=C_\alpha\|M\|_{\mathcal J_\alpha}.
  \label{eq:normalized-block-decay-norm}
\end{equation}
Then $\|M_1M_2\|_{\alpha,*}\le
\|M_1\|_{\alpha,*}\|M_2\|_{\alpha,*}$. 
In this section
$\|\cdot\|_\sharp$ denotes either $\|\cdot\|_{\mathrm{loc}}$ or
$\|\cdot\|_{\alpha,*}$. For block-diagonal matrices, these two norms are equivalent.  We denote by $\|\cdot\|_{\mathrm{op},\sharp}$ the operator norm induced by $\|\cdot\|_{\sharp}$, defined for a linear map $T:\mathcal S_+^{d_u}\to \mathcal S_+^{d_u}$ as
\begin{align*}
    \|T\|_{\mathrm{op},\sharp}:=\sup_{\|Z\|_{\sharp}=1}\|T[Z]\|_{\sharp}.
\end{align*}

\begin{lemma}[Regularity in local and decay tubes]
\label{lem:B3-tube-regularity}
For either choice of $\|\cdot\|_\sharp$, there exist constants
$\bar\vartheta_\sharp>0$ and
$C,L_K,L_B,L_0,L_1,c_E<\infty$, independent of $J$, such that the
following holds.  Define  the tube 
\[
  \mathcal V_{t,\sharp}:=
  \{P\in\Splus^{d_u}:\|P-\widehat P_t^0\|_\sharp
  \le\bar\vartheta_\sharp\}.
\]
For $P,Q\in\mathcal V_{t,\sharp}$,
\begin{align}
  \|P\|_\sharp&\le C,
  &
  \|(HPH^\top+R)^{-1}\|_\sharp&\le C,
  \label{eq:B3-inverse-bound}
  \\
  \|\mathsf K(P)\|_\sharp&\le C,
  &
  \|\mathsf K(P)-\mathsf K(Q)\|_\sharp
  &\le L_K\|P-Q\|_\sharp.
  \label{eq:B3-gain-regularity}
\end{align}
For the block-diagonal closed-loop map $B_D$  defined in \eqref{eq:B3-full-reference-maps}, 
\begin{equation}
  \|B_D(P)\|_\sharp\le C,
  \qquad
  \|B_D(P)-B_D(Q)\|_\sharp
  \le L_B\|P-Q\|_\sharp.
  \label{eq:B3-BD-regularity}
\end{equation}
Moreover,
\begin{equation}
  \mathrm D\Phi_D(P)[Z]=B_D(P)ZB_D(P)^\top,
  \label{eq:B3-derivative-identity}
\end{equation}
and
\begin{align}
  \|\mathrm D\Phi_D(P)\|_{\mathrm{op},\sharp}&\le L_0,
  \label{eq:B3-derivative-bound}
  \\
  \|\mathrm D\Phi_D(P)-\mathrm D\Phi_D(Q)\|_{\mathrm{op},\sharp}
  &\le L_1\|P-Q\|_\sharp.
  \label{eq:B3-derivative-regularity}
\end{align}
Finally, for $\varepsilon\le1$,
\begin{equation}
  \|\Phi(P)-\Phi_D(P)\|_\sharp\le c_E\varepsilon.
  \label{eq:B3-one-step-defect}
\end{equation}
\end{lemma}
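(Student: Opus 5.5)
The plan is to prove everything in the order stated, treating both norms $\|\cdot\|_\sharp$ in parallel. The two facts that make this possible are that both norms are submultiplicative, and that both coincide, up to fixed constants, with $\max_j\|(\cdot)_{jj}\|$ on block-diagonal matrices. The anchor is the reference trajectory $\widehat P_t^0$. It is block diagonal, and by \eqref{eq:reference-inner-local-class} its blocks satisfy $\|\widehat P_{t,j}^0\|\le\overline p_{\mathrm{loc}}/2$. Hence $\|\widehat P_t^0\|_\sharp\le C$ uniformly in $t$ and $J$. Choosing $\bar\vartheta_\sharp\le 1$, the triangle inequality gives $\|P\|_\sharp\le C$ on $\mathcal V_{t,\sharp}$.

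For the inverse bound \eqref{eq:B3-inverse-bound}, write $S(P)=HPH^\top+R$ and split it as $S(P)=S_0+\Delta$ with
\[
S_0=H\widehat P_t^0H^\top+R,\qquad \Delta=H(P-\widehat P_t^0)H^\top .
\]
The matrix $S_0$ is block diagonal with blocks $\succeq r_-I$. Therefore $\|S_0^{-1}\|_\sharp\le C$ for $\sharp=\mathrm{loc}$ directly, and for $\sharp=(\alpha,*)$ after paying a constant $C_\alpha$. Since $H$ is block diagonal with $\|H_j\|\le h_+$, we also have $\|\Delta\|_\sharp\le C\bar\vartheta_\sharp$. Shrinking $\bar\vartheta_\sharp$ so that $\|S_0^{-1}\|_\sharp\|\Delta\|_\sharp\le 1/2$, a Neumann series in the Banach algebra gives $\|S(P)^{-1}\|_\sharp\le 2\|S_0^{-1}\|_\sharp$.

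This is the main obstacle, and the Neumann series is how I would get past it. The difficulty is that positive definiteness only controls the operator norm, not the $\mathrm{loc}$ or $\mathcal J_\alpha$ norms of the inverse. Working in the algebra sidesteps this: the series converges, and it equals the true inverse because the operator norm is dominated by $\|\cdot\|_\sharp$.

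The gain bounds in \eqref{eq:B3-gain-regularity} then follow from $\mathsf K(P)=PH^\top S(P)^{-1}$ and submultiplicativity. The Lipschitz estimate comes from the resolvent identity
\[
S(P)^{-1}-S(Q)^{-1}=S(P)^{-1}H(Q-P)H^\top S(Q)^{-1},
\]
exactly as in Lemma~\ref{lem:analysis-covariance-difference}, with every operator norm replaced by $\|\cdot\|_\sharp$. Because $\|D\|_\sharp\le Ca_+$, the bounds \eqref{eq:B3-BD-regularity} for $B_D(P)=D(I-\mathsf K(P)H)$ follow immediately. The derivative identity \eqref{eq:B3-derivative-identity} is the formula $\mathrm D\Psi(P)[Z]=(I-\mathsf K(P)H)Z(I-\mathsf K(P)H)^\top$, proved in Proposition~\ref{prop:exact-riccati-identities}, followed by congruence with $D$. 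That formula needs no invertibility of $P$, so it holds on all of $\mathcal V_{t,\sharp}$.

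For the derivative bounds, submultiplicativity and $\|M^\top\|_\sharp=\|M\|_\sharp$ give $\|\mathrm D\Phi_D(P)[Z]\|_\sharp\le\|B_D(P)\|_\sharp^2\|Z\|_\sharp$, which is \eqref{eq:B3-derivative-bound} with $L_0=C^2$. For \eqref{eq:B3-derivative-regularity}, write
\[
B_PZB_P^\top-B_QZB_Q^\top=(B_P-B_Q)ZB_P^\top+B_QZ(B_P-B_Q)^\top ,
\]
which gives $L_1=2CL_B$.

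Finally, for the one-step defect \eqref{eq:B3-one-step-defect}, set $\Psi=\Psi(P)$ and use
\[
\Phi(P)-\Phi_D(P)=E\Psi D^\top+D\Psi E^\top+E\Psi E^\top .
\]
This requires $\|\Psi(P)\|_\sharp\le C$, which holds because $\Psi(P)=P-\mathsf K(P)HP$ and both terms are already bounded in $\|\cdot\|_\sharp$. It also requires $\|E\|_\sharp\le C\varepsilon$, which is exactly \eqref{eq:decay-implies-weak-interaction}. Together with $\varepsilon\le1$, these give $\|\Phi(P)-\Phi_D(P)\|_\sharp\le c_E\varepsilon$. All constants depend only on $a_+,h_+,r_-,\overline p_{\mathrm{loc}},\alpha$, so none depends on $J$.
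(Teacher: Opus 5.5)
Your proposal is correct and follows the paper's proof essentially step for step. The shared steps are:
\begin{itemize}
\item the uniform boundedness of the block-diagonal reference $\widehat P_t^0$;
\item a Neumann series for $HPH^\top+R=S_0\bigl[I+S_0^{-1}H(P-\widehat P_t^0)H^\top\bigr]$ in the submultiplicative norm $\|\cdot\|_\sharp$;
\item the resolvent identity for the gain and $B_D$ Lipschitz bounds, and the product-rule splitting of $\mathrm D\Phi_D(P)-\mathrm D\Phi_D(Q)$;
\item the expansion $\Phi(P)-\Phi_D(P)=E\Psi(P)D^\top+D\Psi(P)E^\top+E\Psi(P)E^\top$ with $\|E\|_\sharp\le C\varepsilon$.
\end{itemize}
One small point about the constant: obtain $\|S(P)^{-1}\|_\sharp\le 2\|S_0^{-1}\|_\sharp$ by expanding $S(P)^{-1}=\sum_{k\ge0}(-S_0^{-1}\Delta)^kS_0^{-1}$. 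If you bound $(I+S_0^{-1}\Delta)^{-1}$ first, the normalized decay norm gives $\|I\|_{\alpha,*}=C_\alpha\ge1$ rather than $1$, and the factor $2$ is lost (though only the constant changes).
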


\begin{proof}
The reference matrices $\widehat P_t^0$ are block diagonal 
with blocks in the common covariance class~\eqref{eq:reference-inner-local-class}, hence  uniformly bounded in both norms. The matrices in the tube are therefore bounded.
Likewise,
$S_t^0:=H\widehat P_t^0H^\top+R$ and $(S_t^0)^{-1}$ are block diagonal and
uniformly bounded in both norms.  Since $H$ is block diagonal,
\[
  HPH^\top+R
  =S_t^0\bigl[I+(S_t^0)^{-1}H(P-\widehat P_t^0)H^\top\bigr].
\]
Choose the tube radius so that the second factor differs from the identity by
at most $1/2$ in $\|\cdot\|_\sharp$.  The Neumann series gives the inverse
bound in \eqref{eq:B3-inverse-bound}. 
The gain matrix $\mathsf K(P)$ is therefore bounded, by its definition.

For $S_P:=HPH^\top+R$ and $S_Q:=HQH^\top+R$, the resolvent identity
\[
  S_P^{-1}-S_Q^{-1}=S_P^{-1}H(Q-P)H^\top S_Q^{-1}
\]
and submultiplicativity imply a Lipschitz inverse bound.  Substituting this
into
\[
  \mathsf K(P)-\mathsf K(Q)
  =(P-Q)H^\top S_P^{-1}
  +QH^\top(S_P^{-1}-S_Q^{-1})
\]
proves \eqref{eq:B3-gain-regularity};
\eqref{eq:B3-BD-regularity} follows because $D$ and $H$ are block diagonal
and uniformly bounded 
and 
\[
B_D(P) - B_D(Q) = D(\mathsf K(Q)-\mathsf K(P))H.
\]

The derivative identity is the standard Riccati derivative formula; see \eqref{eq:one-step-riccati-derivative}. 
It gives \eqref{eq:B3-derivative-bound},
while
\begin{equation*}
  \mathrm D\Phi_D(P)[Z]-\mathrm D\Phi_D(Q)[Z]
  =(B_D(P)-B_D(Q))ZB_D(P)^\top
  +B_D(Q)Z(B_D(P)-B_D(Q))^\top
\end{equation*}
proves \eqref{eq:B3-derivative-regularity}.

Finally, $A=D+E$ gives
\[
  \Phi(P)-\Phi_D(P)
  =E\Psi(P)D^\top+D\Psi(P)E^\top+E\Psi(P)E^\top.
\]
The analysis covariance is uniformly bounded in the tube.  
By \eqref{eq:decay-implies-weak-interaction}, 
$\|E\|_\sharp\le C\varepsilon$ in
both cases.  Submultiplicativity proves \eqref{eq:B3-one-step-defect}.
\end{proof}

\subsubsection{A finite-step perturbation principle}

\begin{lemma}[Stable flow under a uniformly small perturbation]
\label{lem:B3-finite-step-perturbation}
Fix either norm $\|\cdot\|_\sharp$ above.  Under
Lemma~\ref{lem:B3-tube-regularity} and \eqref{eq:B3-reference-product-stability}, there exist
$T_*\ge1$, $\vartheta_*>0$, $\theta_*\in(0,1)$, and $c_*<\infty$, independent of
$J$, such that:

\begin{enumerate}
\item if $\|P-\widehat P_t^0\|_\sharp\le\vartheta_*$, then
\begin{equation}
  \|\Phi_D^{T_*}(P)-\widehat P_{t+T_*}^0\|_\sharp
  \le \theta_*\|P-\widehat P_t^0\|_\sharp;
  \label{eq:B3-finite-step-contraction}
\end{equation}
\item for $\varepsilon$ sufficiently small and the same $P$,
\begin{equation}
  \|\Phi^{T_*}(P)-\Phi_D^{T_*}(P)\|_\sharp
  \le c_*\varepsilon;
  \label{eq:B3-finite-step-consistency}
\end{equation}
\item all intermediate reference and perturbed iterates remain in the tubes of
Lemma~\ref{lem:B3-tube-regularity}.
\end{enumerate}
\end{lemma}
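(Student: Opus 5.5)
The plan is to treat the three assertions in order. Part 1 comes from the linearization of $\Phi_D$ along the reference trajectory together with the closed-loop decay \eqref{eq:B3-reference-product-stability}. Part 2 comes from a finite-step Gronwall-type telescoping that uses the one-step defect \eqref{eq:B3-one-step-defect}. Part 3 follows by choosing $\vartheta_*$ and the admissible range of $\varepsilon$ small enough that every iterate stays in the tubes $\mathcal V_{s,\sharp}$ of Lemma~\ref{lem:B3-tube-regularity}. Throughout, only the tube estimates of that lemma and submultiplicativity of $\|\cdot\|_\sharp$ are used, so all constants are independent of $J$.

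First fix $T_*$ by choosing it so that $C_B^2\rho_B^{2T_*}\le 1/4$. This is possible because \eqref{eq:B3-reference-product-stability} holds in the $\mathrm{loc}$ norm, and it transfers to $\|\cdot\|_{\alpha,*}$ since the products are block diagonal and the two norms are equivalent there, with $J$-independent constants. Next show inductively that, for $\|P-\widehat P_t^0\|_\sharp\le\vartheta$, the iterate $\Phi_D^s(P)$ stays in $\mathcal V_{t+s,\sharp}$ for $s\le T_*$. By \eqref{eq:B3-derivative-bound} and the mean value inequality, one step of $\Phi_D$ is Lipschitz with constant $L_0$ inside the tube (the tube is convex). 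Hence $\|\Phi_D^s(P)-\widehat P_{t+s}^0\|_\sharp\le L_0^s\vartheta$, and choosing $\vartheta_*\le \bar\vartheta_\sharp L_0^{-T_*}/2$ keeps every iterate well inside the tubes.

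For the contraction, write the Taylor expansion of each step around the reference: $\Delta_{s+1}=B_{t+s}^0\Delta_s(B_{t+s}^0)^\top+r_s$. The remainder satisfies $\|r_s\|_\sharp\le L_1\|\Delta_s\|_\sharp^2$ by \eqref{eq:B3-derivative-identity}--\eqref{eq:B3-derivative-regularity}. Unrolling over $T_*$ steps gives
\[
\Delta_{T_*}=\mathcal P\Delta_0\mathcal P^\top+\sum_s(\cdots)r_s(\cdots)^\top,
\]
where $\mathcal P=B^0_{t+T_*-1}\cdots B^0_t$. The leading term is bounded by $\tfrac14\|\Delta_0\|_\sharp$. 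The remainder sum is bounded by $C(T_*)L_1\vartheta_*\|\Delta_0\|_\sharp$, using the bound $L_0^s\|\Delta_0\|$ on the intermediate errors. Shrinking $\vartheta_*$ further then gives $\theta_*=1/2$. This step carries most of the bookkeeping but is routine.

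For Part 2, compare $Q_s=\Phi^s(P)$ with $R_s=\Phi_D^s(P)$:
\[
Q_{s+1}-R_{s+1}=[\Phi(Q_s)-\Phi_D(Q_s)]+[\Phi_D(Q_s)-\Phi_D(R_s)].
\]
The first bracket is at most $c_E\varepsilon$, and the second at most $L_0\|Q_s-R_s\|_\sharp$. This is legitimate only while $Q_s$ lies in the tube, so that step needs care. Inductively, $\|Q_s-R_s\|_\sharp\le c_E\varepsilon\sum_{i<s}L_0^i$. Requiring $\varepsilon$ small enough that this is at most $\bar\vartheta_\sharp/2$ for all $s\le T_*$ keeps $Q_s$ in $\mathcal V_{t+s,\sharp}$, which closes the induction and gives $c_*=c_E\sum_{i<T_*}L_0^i$. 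The main obstacle I expect is ensuring that every quantity (the tube radii, the norm equivalence on block-diagonal products, and the $\Phi$ versus $\Phi_D$ defect in $\|\cdot\|_{\alpha,*}$) is genuinely $J$-independent; the structure of the argument itself is standard.
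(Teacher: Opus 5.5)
Your proposal is correct and follows essentially the paper's argument: the same choice of $T_*$ from the reference closed-loop decay \eqref{eq:B3-reference-product-stability}, a linearization of $\Phi_D^{T_*}$ along the reference trajectory for the contraction, and the same recursion $\|Q_{s+1}-R_{s+1}\|_\sharp\le c_E\varepsilon+L_0\|Q_s-R_s\|_\sharp$ with a tube bootstrap for the consistency estimate. The only difference is cosmetic: you control the nonlinearity through an explicit quadratic Taylor remainder unrolled over $T_*$ steps, whereas the paper shrinks the tube until $\sup\|\mathrm D\Phi_D^{T_*}\|_{\mathrm{op},\sharp}\le\theta_*$ and applies the fundamental theorem of calculus, and it later reuses that uniform derivative bound in the proof of Theorem~\ref{thm:B8-spatially-decaying-fixed-point}.
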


\begin{proof}
Choose $T_*$ so that $C_B^2\rho_B^{2T_*}\le1/4$.  The derivative of
$\Phi_D^{T_*}$ at $\widehat P_t^0$ is
\[
  \mathrm D\Phi_D^{T_*}(\widehat P_t^0)[Z]
  =\mathcal B_{t,T_*}^0Z(\mathcal B_{t,T_*}^0)^\top,
  \qquad
  \mathcal B_{t,T_*}^0:=B_{t+T_*-1}^0\cdots B_t^0; 
\]
 see, for example, \eqref{eq:enkf-riccati-difference-identity}.  Therefore, \eqref{eq:B3-reference-product-stability} of the block-diagonal $\mathcal B_{t,T_*}^0$ implies that the operator norm is at most $1/4$ in either norm $\| \cdot \|_\sharp$ after increasing $T_*$ if necessary.  
Lemma~\ref{lem:B3-tube-regularity} gives uniform bounds and a
uniform Lipschitz constant for the derivative of every one-step map.  By the
chain rule, the derivative of the fixed-length composition is uniformly
Lipschitz in a sufficiently small tube, with a constant independent of $t$ and
$J$.  Shrink the tube so that
\[
  \sup_{\|P-\widehat P_t^0\|_\sharp\le\vartheta_*}
  \|\mathrm D\Phi_D^{T_*}(P)\|_{\mathrm{op},\sharp}\le \theta_*<1.
\]
The fundamental theorem of calculus proves
\eqref{eq:B3-finite-step-contraction}.  Short-time Lipschitz estimates rule
out exit from the regularity tubes during the $T_*$ intermediate steps, by a bootstrap argument. 

For the consistency estimate, start the coupled and reference recursions from
the same $P$, and write
\(P_\ell^A:=\Phi^\ell(P)\) and
\(P_\ell^D:=\Phi_D^\ell(P)\).  By a bootstrap argument,  before exit from the tubes,  \eqref{eq:B3-one-step-defect} and \eqref{eq:B3-derivative-bound} give  
\[
  \|P_{\ell+1}^A-P_{\ell+1}^D\|_\sharp
  \le c_E\varepsilon+L_0\|P_\ell^A-P_\ell^D\|_\sharp.
\]
Since $T_*$ is fixed, iteration gives
\[
  \|P_{T_*}^A-P_{T_*}^D\|_\sharp
  \le c_E\varepsilon\sum_{\ell=0}^{T_*-1}L_0^{\ell}=:c_*\varepsilon.
\]
After decreasing $\vartheta_*$ and the admissible interaction amplitude, the
same estimate ensures that the coupled intermediate iterates remain in the
regularity tubes.  This proves all assertions.
\end{proof}

\begin{proof}[Proof of Theorem~\ref{thm:B3-localization-bias}]
Apply Lemma~\ref{lem:B3-finite-step-perturbation}  
in either norm $\| \cdot \|_\sharp$.
At grid times $\ell T_*$, letting $e_\ell := \|\widehat P_{\ell T_*} - \widehat P_{\ell T_*}^0\|_\sharp$, the error obeys
\[
  e_{\ell+1}\le \theta_*e_{\ell}+c_*\varepsilon.
\]
Choose the initial-error and interaction thresholds so that
$e_0+c_*\varepsilon/(1-\theta_*)$ is strictly below the contraction radius.  Then
induction closes the tube bootstrap and gives
\[
  e_{\ell}\le \theta_*^{\ell}e_0+\frac{c_*}{1-\theta_*}\varepsilon.
\]
Uniform short-time estimates between consecutive grid times give
\eqref{eq:B3-forecast-bias} and \eqref{eq:B3-decay-bias}, with
$\rho_{\mathrm{bias}}=\theta_*^{1/T_*}$ after adjusting constants.

The diagonal and off-diagonal deviations 
\eqref{eq:B3-diagonal-offdiagonal-bias} and \eqref{eq:B3-diagonal-actual-offdiagonal-bias} 
are controlled by \eqref{eq:B3-decay-bias}.
The gain estimate follows from
\eqref{eq:B3-gain-regularity}.  The analysis estimate follows from the exact
identity
\[
  \Psi(P)-\Psi(Q)
  =(I-\mathsf K(P)H)(P-Q)(I-\mathsf K(Q)H)^\top
\]
and the uniform gain 
bound~\eqref{eq:B3-gain-regularity}
in the tube.  Finally,
\eqref{eq:B3-transient-cross-decay} is the entrywise meaning of
\eqref{eq:B3-decay-bias} when the reference covariance is block diagonal.
\end{proof}
 The next lemma demonstrates that a small enough perturbation of an exponentially stable product remains exponentially stable. 
\begin{lemma}[Robustness of exponentially stable products]
\label{lem:B3-robust-products}
Let a sequence of matrices $(C_t^0)_{t\ge0}$ satisfy, in a submultiplicative norm,
\[
  \|C_{t-1}^0\cdots C_s^0\|\le C_{\mathrm{ref}}\rho_{\mathrm{ref}}^{t-s},
  \qquad t\ge s,
\]
which implies $\sup_t\|C_t^0\| \le C_{\mathrm{fac}}:= C_{\mathrm{ref}}\rho_{\mathrm{ref}}$. 
There exists $\eta_*>0$, depending
only on $C_{\mathrm{ref}}$ and $\rho_{\mathrm{ref}}$,
such that
$\sup_t\|C_t-C_t^0\|\le\eta_*$ implies
\begin{equation}
  \|C_{t-1}\cdots C_s\|\le C\rho^{t-s},
  \qquad t\ge s,
  \label{eq:B3-robust-product-bound}
\end{equation}
for some $C<\infty$ and $\rho\in(0,1)$.
\end{lemma}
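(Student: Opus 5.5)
The plan is a block-length (grid) argument: pick a window length $T$ on which the reference products contract by a factor of at least $1/2$, and show that perturbing each factor by at most $\eta_*$ changes a length-$T$ product by only a small amount. First I record the factor bound. Taking $t=s+1$ in the hypothesis gives $\|C_s^0\|\le C_{\mathrm{ref}}\rho_{\mathrm{ref}}=C_{\mathrm{fac}}$. If $\sup_t\|C_t-C_t^0\|\le\eta_*\le1$, then every perturbed factor also satisfies $\|C_t\|\le C_{\mathrm{fac}}+1=:\bar c$. Next I fix an integer $T\ge1$ with $C_{\mathrm{ref}}\rho_{\mathrm{ref}}^{T}\le 1/4$. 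Both $T$ and $\bar c$ depend only on $C_{\mathrm{ref}}$ and $\rho_{\mathrm{ref}}$.

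The key step is a telescoping comparison over a window of length $m\le T$:
\[
  C_{s+m-1}\cdots C_s-C_{s+m-1}^0\cdots C_s^0
  =\sum_{i=s}^{s+m-1}C_{s+m-1}\cdots C_{i+1}(C_i-C_i^0)C_{i-1}^0\cdots C_s^0 .
\]
By submultiplicativity, each term is at most $\bar c^{\,s+m-1-i}\,\eta_*\,C_{\mathrm{ref}}$. Summing over $i$ gives
\[
  \|C_{s+m-1}\cdots C_s-C_{s+m-1}^0\cdots C_s^0\|\le T\max(1,\bar c)^{T}C_{\mathrm{ref}}\,\eta_*=:K_T\,\eta_*,
\]
where $K_T$ does not depend on $s$. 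I then choose $\eta_*:=\min\{1,1/(4K_T)\}$. With this choice every full window of length $T$ satisfies $\|C_{s+T-1}\cdots C_s\|\le 1/4+1/4=1/2$, and every partial window of length $m<T$ satisfies $\|C_{s+m-1}\cdots C_s\|\le C_{\mathrm{ref}}+1/4$.

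To finish, for $t\ge s$ I write $t-s=kT+r$ with $0\le r<T$ and split the product into $k$ full windows and one remainder window. Submultiplicativity gives
\[
  \|C_{t-1}\cdots C_s\|\le (C_{\mathrm{ref}}+1/4)\,2^{-k}.
\]
Since $2^{-k}\le 2\cdot 2^{-(t-s)/T}$, this yields \eqref{eq:B3-robust-product-bound} with $\rho=2^{-1/T}$ and $C=2(C_{\mathrm{ref}}+1/4)$. The empty product at $t=s$ is the identity. For it we need only that $\|I\|\le C$, which holds in the operator norm and in $\|\cdot\|_{\mathrm{loc}}$. If the norm in use has $\|I\|>1$, as the normalized decay norm may, I enlarge $C$ to cover that case. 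The constants depend only on $C_{\mathrm{ref}}$ and $\rho_{\mathrm{ref}}$ (and on $\|I\|$ in that last case).

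The main thing to get right is that $\eta_*$ must not depend on $s$ or on the length of the product. The telescoping bound grows exponentially in the window length, so it cannot be applied to the whole product. That is why it is applied only over windows of the fixed length $T$, with the contraction $1/2$ per window then chained together. No other step needs more than submultiplicativity.
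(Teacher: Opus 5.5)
Your proposal is correct and matches the paper's proof essentially step for step. You fix $T$ with $C_{\mathrm{ref}}\rho_{\mathrm{ref}}^T\le 1/4$ and take $\eta_*\le 1$ so every perturbed factor is bounded by $C_{\mathrm{fac}}+1$. You telescope over a window of length $T$ to push every length-$T$ perturbed product below $1/2$, then chain full windows with a remainder. Your explicit handling of the remainder window fills in what the paper leaves as ``splitting into complete blocks and a remainder.'' One small point: the hypothesis at $t=s$ already gives $\|I\|\le C_{\mathrm{ref}}$, so the empty product needs no separate enlargement of $C$.
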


\begin{proof}
Choose $T\ge1$ so that
$C_{\mathrm{ref}}\rho_{\mathrm{ref}}^T\le1/4$.  For $\eta_*\le1$, all
perturbed factors have norm at most $C_{\mathrm{fac}}^+:=C_{\mathrm{fac}}+1$.  The telescoping identity
for a product of $T$ factors gives
\[
  \|C_{s+T-1}\cdots C_s-C_{s+T-1}^0\cdots C_s^0\|
  \le T(C_{\mathrm{fac}}^+)^{T-1}\eta_*.
\]
Choose $\eta_*$ so that the last quantity is at most $1/4$.  Every product of
$T$ perturbed factors then has norm at most $1/2$.  Splitting a general
product into complete blocks of length $T$ and a remainder proves
\eqref{eq:B3-robust-product-bound}.
\end{proof}
Using this lemma, we can now show stability of the fully-coupled 
closed-loop maps. 
Note that we have not assumed hyperbolicity or detectability of the fully coupled pair $(A,H).$ 
\begin{corollary}[Full coupled closed-loop stability]
\label{cor:B3-full-closed-loop-stability}
Under the smallness conditions of
Theorem~\ref{thm:B3-localization-bias}, after decreasing the thresholds if
necessary, let
\begin{equation}
  B_t:=A(I-\mathsf K(\widehat P_t)H).
\label{eq:B_t-def}
\end{equation}
There exist $C_{\mathrm{full}}<\infty$ and
$\rho_{\mathrm{full}}\in(0,1)$, independent of $J$, such that
\begin{equation}
  \|B_{t-1}\cdots B_s\|_{\sharp}
  \le C_{\mathrm{full}}\rho_{\mathrm{full}}^{t-s},
  \qquad t\ge s\ge0.
  \label{eq:B3-full-closed-loop-bound}
\end{equation}
\end{corollary}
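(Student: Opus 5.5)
We will obtain \eqref{eq:B3-full-closed-loop-bound} by applying Lemma~\ref{lem:B3-robust-products} with reference factors $C_t^0:=B_t^0=D(I-K_t^0H)=B_D(\widehat P_t^0)$ and perturbed factors $C_t:=B_t$. The reference hypothesis holds by \eqref{eq:B3-reference-product-stability}. That bound is stated in $\|\cdot\|_{\mathrm{loc}}$. Since each $B_t^0$ is block diagonal, the reference products are block diagonal, and on block-diagonal matrices $\|\cdot\|_{\alpha,*}$ and $\|\cdot\|_{\mathrm{loc}}$ are equivalent with $J$-independent constants. So the same reference estimate holds in $\|\cdot\|_{\alpha,*}$, after adjusting $C_{\mathrm{ref}}$. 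Both norms are submultiplicative, as Lemma~\ref{lem:B3-robust-products} requires. The lemma then supplies a threshold $\eta_*$ depending only on $C_{\mathrm{ref}},\rho_{\mathrm{ref}}$, hence independent of $J$. It remains to show that $\sup_t\|B_t-B_t^0\|_\sharp\le\eta_*$ once the thresholds are small.

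We split the factor difference as
\[
  B_t-B_t^0=E\bigl(I-\mathsf K(\widehat P_t)H\bigr)
  +D\bigl(\mathsf K(\widehat P_t^0)-\mathsf K(\widehat P_t)\bigr)H.
\]
For the first term, $\|E\|_\sharp\le C\varepsilon$ by \eqref{eq:decay-implies-weak-interaction}. Also $\|\mathsf K(\widehat P_t)\|_\sharp\le C$ by \eqref{eq:B3-gain-regularity}, provided $\widehat P_t$ lies in the tube $\mathcal V_{t,\sharp}$. Since $H$ is block diagonal and uniformly bounded, the first term is $O(\varepsilon)$.

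For the second term, \eqref{eq:B3-gain-regularity} together with the bias bounds \eqref{eq:B3-forecast-bias}--\eqref{eq:B3-decay-bias} of Theorem~\ref{thm:B3-localization-bias} give
\[
  \|D(\mathsf K(\widehat P_t^0)-\mathsf K(\widehat P_t))H\|_\sharp
  \le C L_K\bigl(C_{\mathrm{bias}}\rho_{\mathrm{bias}}^t\mathfrak d_{0,\sharp}+C_{\mathrm{bias}}\varepsilon\bigr),
\]
where $\mathfrak d_{0,\sharp}$ is $\mathfrak d_0$ or $\mathfrak d_{0,\alpha}$. Altogether,
\[
  \sup_t\|B_t-B_t^0\|_\sharp\le C(\varepsilon+\mathfrak d_{0,\sharp}),
\]
with $C$ independent of $J$.

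The main point to check is membership of $\widehat P_t$ in the tube for every $t$. This is where the thresholds have to be decreased. By Theorem~\ref{thm:B3-localization-bias}, $\|\widehat P_t-\widehat P_t^0\|_\sharp\le C_{\mathrm{bias}}(\mathfrak d_{0,\sharp}+\varepsilon)$. We therefore shrink $\varepsilon_*,\mathfrak d_*,\mathfrak d_{*,\alpha}$ so that this quantity is at most $\bar\vartheta_\sharp$, and so that $C(\varepsilon+\mathfrak d_{0,\sharp})\le\eta_*$. All quantities involved are $J$-independent, so the new thresholds are as well.

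Lemma~\ref{lem:B3-robust-products} then yields \eqref{eq:B3-full-closed-loop-bound} in each norm. We take $C_{\mathrm{full}}$ and $\rho_{\mathrm{full}}$ to be the worse of the two resulting pairs of constants.
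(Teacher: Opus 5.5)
Your proposal is correct and follows essentially the same route as the paper. It uses the same splitting $B_t-B_t^0=E(I-\mathsf K(\widehat P_t)H)+D(\mathsf K(\widehat P_t^0)-\mathsf K(\widehat P_t))H$, bounds it via Theorem~\ref{thm:B3-localization-bias}, Lemma~\ref{lem:B3-tube-regularity}, and \eqref{eq:decay-implies-weak-interaction}, and then applies Lemma~\ref{lem:B3-robust-products} to the block-diagonal reference products. You also spell out two points the paper leaves implicit: that $\widehat P_t$ stays in the tube, and that \eqref{eq:B3-reference-product-stability} transfers to $\|\cdot\|_{\alpha,*}$ because the two norms are equivalent on block-diagonal matrices.
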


\begin{proof}
Using $A=D+E$,
\[
  B_t-B_t^0
  =E(I-\mathsf K(\widehat P_t)H)
  +D\bigl(\mathsf K(\widehat P_t^0)-\mathsf K(\widehat P_t)\bigr)H.
\] 
Theorem~\ref{thm:B3-localization-bias},
Lemma~\ref{lem:B3-tube-regularity}, and \eqref{eq:decay-implies-weak-interaction} 
show that this difference is uniformly
$O(\mathfrak d_0+\varepsilon)$ in the local norm and
$O(\mathfrak d_{0,\alpha}+\varepsilon)$ in the decay norm.
Apply Lemma~\ref{lem:B3-robust-products} to the corresponding reference products, as the reference products are block-diagonal and exponentially stable in both norms by \eqref{eq:B3-reference-product-stability}.
\end{proof}

\subsection{Localized initialization and empirical covariance control}
\label{sec:localized-empirical-covariance} 

We now control the sampling error.
Section~\ref{sec:localized-gaussian-initialization} shows that, with
high probability, the initial local empirical covariances are
simultaneously within
\(\Delta_{N,\mathrm{loc}}(\delta)\) of their exact counterparts over
all \(J\) blocks. Section~\ref{sec:localized-uniform-empirical-covariance-control}
then shows that, on this event, the local empirical covariances remain
in a common stability class and stay within a geometrically decaying
sampling error plus an \(O(\varepsilon)\) interaction error of the
block-diagonal reference trajectory, uniformly in time. These
estimates are collected in
Proposition~\ref{prop:localized-covariance-package}.
\subsubsection{Simultaneous Gaussian initialization}
\label{sec:localized-gaussian-initialization}

\begin{proposition}[Simultaneous local Gaussian initialization]
\label{prop:B5-local-gaussian-initialization}
Assume \(\widehat P_0=\widehat P_0^0\), and initialize
\[
  \widehat u_0^{(1),\mathrm{loc}},\ldots,
  \widehat u_0^{(N),\mathrm{loc}}
  \stackrel{\mathrm{i.i.d.}}{\sim}
  \mathcal N(\widehat m_0,\widehat P_0).
\]
Then, for every \(\delta\in(0,1)\), with probability at least
\(1-\delta/4\),
\begin{equation}
  \max_{1\le j\le J}
  \|\widehat P_{0,j}^{N,\mathrm{loc}}-\widehat P_{0,j}\|
  \le \Delta_{N,\mathrm{loc}}(\delta).
  \label{eq:B5-local-covariance-concentration}
\end{equation}
We denote this event by $\mathcal G_{N,\delta}^{\mathrm{cov,loc}}$. Moreover, the vector of local empirical means is independent of the
collection of local empirical covariance blocks.
\end{proposition}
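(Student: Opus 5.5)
The plan is to reduce everything to the global Gaussian reduction of Lemma~\ref{lem:gaussian-wishart-reduction} and then apply the effective-rank concentration blockwise with a union bound. First, apply Lemma~\ref{lem:gaussian-wishart-reduction} to the full localized initial ensemble, which is i.i.d.\ $\mathcal N(\widehat m_0,\widehat P_0)$. This gives two facts. The global sample mean $\widehat m_0^{N,\mathrm{loc}}$ is independent of the full centered anomaly matrix $\widehat X_0^{\mathrm{loc}}$. Moreover, $\widehat X_0^{\mathrm{loc}}(\widehat X_0^{\mathrm{loc}})^\top/(N-1)$ has the law of a sample covariance of $N-1$ i.i.d.\ $\mathcal N(0,\widehat P_0)$ vectors.

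The independence claim then follows directly. The vector of local empirical means is the blockwise splitting $(\Pi_j\widehat m_0^{N,\mathrm{loc}})_j$ of the global mean. Each local covariance block $\widehat P_{0,j}^{N,\mathrm{loc}}=\Pi_j\widehat X_0^{\mathrm{loc}}(\widehat X_0^{\mathrm{loc}})^\top\Pi_j^\top/(N-1)$ is a measurable function of $\widehat X_0^{\mathrm{loc}}$. Independence of these functions is inherited from independence of $\widehat m_0^{N,\mathrm{loc}}$ and $\widehat X_0^{\mathrm{loc}}$.

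For the concentration bound, fix $j$ and project the rotated representation from the proof of Lemma~\ref{lem:gaussian-wishart-reduction} onto $\mathsf X_j$. Then $\widehat P_{0,j}^{N,\mathrm{loc}}$ is equal in law to the sample covariance of $N-1$ i.i.d.\ $\mathcal N(0,\widehat P_{0,j})$ vectors in $\R^{d_{u,j}}$. Marginals of Gaussians are Gaussian, and this step does not need $\widehat P_0$ to be block diagonal. If $\widehat P_{0,j}=0$ the block is trivially exact, so assume $\widehat P_{0,j}\neq0$. Apply the Koltchinskii--Lounici bound \cite[Corollary~2, Equation~(2.8)]{KoltchinskiiLounici2017} with failure probability $\delta/(4J)$. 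This gives, with probability at least $1-\delta/(4J)$,
\[
\|\widehat P_{0,j}^{N,\mathrm{loc}}-\widehat P_{0,j}\|\le C_0\|\widehat P_{0,j}\|\Bigl[\sqrt{\tfrac{r_{\mathrm{eff},j}+\log(4J/\delta)}{N-1}}+\tfrac{r_{\mathrm{eff},j}+\log(4J/\delta)}{N-1}\Bigr].
\]
The right-hand side is at most $\Delta_{N,\mathrm{loc}}(\delta)$ by the definition \eqref{eq:B5-local-covariance-scale}. A union bound over the $J$ blocks then gives \eqref{eq:B5-local-covariance-concentration} with probability at least $1-\delta/4$.

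The main point needing care is the exact form of the Koltchinskii--Lounici tail inequality. Its deviation term must have the shape $\sqrt{(r+u)/n}+(r+u)/n$ at confidence $1-e^{-u}$, with the same universal constant $C_0$ already used in $\Delta_N(\delta)$. With that form, taking $e^{-u}=\delta/(4J)$ produces exactly the $\log(4J/\delta)$ in the definition of $\Delta_{N,\mathrm{loc}}(\delta)$. If the cited bound is stated with expectation plus a separate tail term, I would absorb the constants into $C_0$ exactly as in Proposition~\ref{prop:gaussian-initialization}. Since this constant is universal, the block dimensions $d_{u,j}$ do not enter the bound.
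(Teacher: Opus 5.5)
Your proposal is correct and matches the paper's proof: a blockwise Wishart reduction via Lemma~\ref{lem:gaussian-wishart-reduction}, Koltchinskii--Lounici concentration at failure level $\delta/(4J)$ for each block, and a union bound that needs no independence across blocks. The independence of the local means from the local covariance blocks likewise follows, as in the paper, from the independence of the global sample mean and the centered anomaly matrix.
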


\begin{proof}
For each fixed block, apply
Lemma~\ref{lem:gaussian-wishart-reduction} to the projected ensemble, with
covariance \(\widehat P_{0,j}\).  The effective-rank concentration inequality
of \cite{KoltchinskiiLounici2017}, with tail parameter
\(\log(4J/\delta)\), gives the corresponding covariance bound for each fixed
block with failure probability at most \(\delta/(4J)\). A union bound gives
\eqref{eq:B5-local-covariance-concentration}; no independence across
blocks is used.

Finally, Lemma~\ref{lem:gaussian-wishart-reduction} shows that the global
sample mean is independent of the centered anomaly matrix.  The local means
are projections of the former, whereas the local covariance blocks are
functions of the latter, proving the independence assertion.
\end{proof}

\subsubsection{Uniform empirical covariance control}\label{sec:localized-uniform-empirical-covariance-control}

Define also the maximal local covariance error
\begin{equation}
  \mathfrak e_t^N:=\max_{1\le j\le J}
  \|\widehat P_{t,j}^{N,\mathrm{loc}}-\widehat P_{t,j}^0\|.
  \label{eq:B4-local-empirical-error}
\end{equation}

We now prove that when the initial empirical covariance is close enough to the reference initial covariance and the spatial interaction is weak enough, all the empirical local covariances remain inside of the local stability neighborhood $\mathcal U_{t,j}$ in \eqref{eq:local-stability-neighborhood}, and the maximal local covariance error is controlled. We invoke a bootstrap argument and proceed as in the proof of Proposition~\ref{prop:forced-block-local-riccati}.

\begin{theorem}[Uniform empirical covariance stability and accuracy]
\label{thm:B4-empirical-covariance-stability}
Under Assumptions~\ref{ass:localized-structure} and
\ref{ass:uniform-local-riccati}, there exist constants
\[
  \varepsilon_{\mathrm{emp}}>0,
  \qquad
  \vartheta_{\mathrm{emp}}>0,
  \qquad
  C_{\mathrm{emp}}<\infty,
  \qquad
  \rho_{\mathrm{emp}}\in(0,1),
\]
independent of the number of blocks~$J$ and the ensemble size~$N$, such that
the following holds.  If
\begin{equation}
  0\le\varepsilon\le\varepsilon_{\mathrm{emp}},
  \qquad
  \mathfrak e_0^N\le\vartheta_{\mathrm{emp}},
  \label{eq:B4-smallness-assumptions}
\end{equation}
then, for every $t\ge0$ and every block $j$,
\begin{equation}
  \widehat P_{t,j}^{N,\mathrm{loc}}\in\mathcal U_{t,j}
  \subset\mathfrak C_j(\underline p_{\mathrm{loc}},
  \overline p_{\mathrm{loc}}).
  \label{eq:B4-empirical-covariance-invariance}
\end{equation}
In particular, the local empirical covariances remain in the common stability
class for all time.  Moreover,
\begin{equation}
  \mathfrak e_t^N
  \le C_{\mathrm{emp}}\rho_{\mathrm{emp}}^t \mathfrak e_0^N
  +C_{\mathrm{emp}}\varepsilon,
  \qquad t\ge0.
  \label{eq:B4-empirical-covariance-bound}
\end{equation}

Then the local analysis covariances and gains satisfy
\begin{align}
  \max_j\|P_{t,j}^{N,\mathrm{loc}}-P_{t,j}^0\|
  &\le C_{\mathrm{emp}}\rho_{\mathrm{emp}}^t \mathfrak e_0^N
  +C_{\mathrm{emp}}\varepsilon,
  \label{eq:B4-analysis-covariance-bound}\\
  \max_j\|K_{t,j}^{N,\mathrm{loc}}-K_{t,j}^0\|
  &\le C_{\mathrm{emp}}\rho_{\mathrm{emp}}^t \mathfrak e_0^N
  +C_{\mathrm{emp}}\varepsilon.
  \label{eq:B4-local-gain-bound}
\end{align}
\end{theorem}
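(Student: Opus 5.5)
The plan is a bootstrap (continuity) induction on $t$ that combines the perturbed local recursion of Proposition~\ref{prop:localized-exact-covariance-identities} with the forgetting estimates of Theorem~\ref{thm:uniform-block-local-riccati}. This is the argument of Proposition~\ref{prop:forced-block-local-riccati}, except that its common-class hypothesis \eqref{eq:forced-local-common-class} must now be proved along the way rather than assumed.

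Fix constants $C_\Phi,\rho_\Phi$ from \eqref{eq:uniform-local-shifted-forgetting}, valid on $\mathfrak C_j(\underline p_{\mathrm{loc}},\overline p_{\mathrm{loc}})$. The induction hypothesis at time $t$ is:
\[
  \widehat P_{s,j}^{N,\mathrm{loc}}\in\mathcal U_{s,j}\quad\text{for all } s\le t \text{ and all } j,
\]
together with
\[
  \mathfrak e_s^N\le C_{\mathrm{emp}}\rho_{\mathrm{emp}}^s\mathfrak e_0^N+C_{\mathrm{emp}}\varepsilon\quad\text{for all } s\le t.
\]
The base case holds because $\mathfrak e_0^N\le\vartheta_{\mathrm{emp}}\le\vartheta_{\mathrm{loc}}$.

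For the inductive step, membership in $\mathcal U_{s,j}$ and Lemma~\ref{lem:uniform-local-stability-neighborhood} give $\|\widehat P_{s,j}^{N,\mathrm{loc}}\|\le\overline p_{\mathrm{loc}}$. Since $\Psi_j(P)\preceq P$, this yields $\max_j\|P_{s,j}^{N,\mathrm{loc}}\|\le\overline p_{\mathrm{loc}}$, which is hypothesis \eqref{eq:localized-diagonal-covariance-upper-bound}. Hence $\|\zeta_{s,j}^N\|\le\overline p_{\mathrm{loc}}(2a_+\varepsilon+\varepsilon^2)\le c\varepsilon$ for all $s\le t$.

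Next, write the telescoping identity
\begin{align*}
  \widehat P_{t+1,j}^{N,\mathrm{loc}}-\widehat P_{t+1,j}^0
  &=\Phi_j^{t+1}(\widehat P_{0,j}^{N,\mathrm{loc}})-\Phi_j^{t+1}(\widehat P_{0,j}^0)\\
  &\quad+\sum_{s=0}^{t}\bigl[\Phi_j^{t-s}(\widehat P_{s+1,j}^{N,\mathrm{loc}})-\Phi_j^{t-s}(\Phi_j(\widehat P_{s,j}^{N,\mathrm{loc}}))\bigr].
\end{align*}
Every starting point must lie in a fixed class for forgetting to apply. The points $\widehat P_{s+1,j}^{N,\mathrm{loc}}$ (for $s<t$) and $\widehat P_{0,j}^{N,\mathrm{loc}}$ are in $\mathcal U$ by induction. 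The points $\Phi_j(\widehat P_{s,j}^{N,\mathrm{loc}})$ lie in the trajectory class $\mathfrak C_j(\underline p_{\mathrm{tr}},\overline p_{\mathrm{tr}})$ by \eqref{eq:uniform-local-common-trajectory-class}.

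The new point $\widehat P_{t+1,j}^{N,\mathrm{loc}}=\Phi_j(\widehat P_{t,j}^{N,\mathrm{loc}})+\zeta_{t,j}^N$ is within $c\varepsilon$ of a trajectory-class element. For $\varepsilon$ small, it therefore lies in the slightly enlarged class $\mathfrak C_j(\underline p_{\mathrm{tr}}/2,2\overline p_{\mathrm{tr}})$. By the final assertion of Theorem~\ref{thm:uniform-block-local-riccati}, forgetting holds on this enlarged class with constants $C,\rho$ uniform in $j,J$. This gives
\[
  \mathfrak e_{t+1}^N\le C\rho^{t+1}\mathfrak e_0^N+C\sum_{s=0}^{t}\rho^{t-s}c\varepsilon\le C\rho^{t+1}\mathfrak e_0^N+\tfrac{Cc}{1-\rho}\varepsilon .
\]
Set $C_{\mathrm{emp}}:=\max\{C,Cc/(1-\rho)\}$ and $\rho_{\mathrm{emp}}:=\rho$. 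These constants are fixed before choosing the thresholds, so there is no circularity.

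To close the invariance, choose $\vartheta_{\mathrm{emp}}$ and $\varepsilon_{\mathrm{emp}}$ so that $C_{\mathrm{emp}}\vartheta_{\mathrm{emp}}+C_{\mathrm{emp}}\varepsilon_{\mathrm{emp}}\le\vartheta_{\mathrm{loc}}$. Then $\mathfrak e_{t+1}^N\le\vartheta_{\mathrm{loc}}$, so $\widehat P_{t+1,j}^{N,\mathrm{loc}}\in\mathcal U_{t+1,j}$, which completes the induction. This proves \eqref{eq:B4-empirical-covariance-invariance} and \eqref{eq:B4-empirical-covariance-bound}.

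The main obstacle is the one just handled: forgetting must be applied to the perturbed point $\widehat P_{t+1,j}^{N,\mathrm{loc}}$ before we know it lies in $\mathcal U_{t+1,j}$. I resolve this with the enlarged-class device. If that device fails, the alternative is to use only the one-step Lipschitz bound of $\Phi_j^{t-s}$ on the term $s=t$, which is the identity map, so no forgetting is needed there.

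Finally, the analysis and gain bounds \eqref{eq:B4-analysis-covariance-bound}--\eqref{eq:B4-local-gain-bound} follow from \eqref{eq:localized-exact-analysis-block}, from $P_{t,j}^0=\Psi_j(\widehat P_{t,j}^0)$ and $K_{t,j}^{N,\mathrm{loc}}=\mathsf K_j(\widehat P_{t,j}^{N,\mathrm{loc}})$, and from the uniform Lipschitz estimates of Lemma~\ref{lem:uniform-local-gain-analysis-regularity}. That lemma applies because both arguments lie in $\mathfrak C_j(\underline p_{\mathrm{loc}},\overline p_{\mathrm{loc}})$, by \eqref{eq:reference-inner-local-class} and the invariance just proved. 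Enlarging $C_{\mathrm{emp}}$ by the factor $\max\{L_\Psi,L_K\}$ finishes the proof.
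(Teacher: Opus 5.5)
Your proposal is correct and follows essentially the paper's argument: a bootstrap combining the forcing bound (derived only while the covariances stay in $\mathcal U_{s,j}$), the nonlinear telescoping identity, and local Riccati forgetting. The paper phrases it as a first exit time at threshold $\vartheta_{\mathrm{loc}}/2$, which also puts $\Phi_j(\widehat P_{s,j}^{N,\mathrm{loc}})$ in $\mathcal U_{s+1,j}$ so that a single stability class suffices. Your enlarged-class device for the new point is unnecessary, because the $s=t$ summand involves zero Riccati iterates and equals $\zeta_{t,j}^N$ directly; this is exactly the fallback you mention and the route the paper takes.
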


\begin{proof}
Set
\[
  \lambda:=\rho_\Phi^2\in(0,1),
  \qquad
  C_{\Phi,0}:=\max\{1,C_\Phi\},
  \qquad
  c_\zeta:=(2a_++1)\overline p_{\mathrm{loc}}.
\]
Here $C_\Phi$ and $\rho_\Phi$ are the uniform local forgetting constants in
Theorem~\ref{thm:uniform-block-local-riccati}.  We use
$0\le\varepsilon\le1$, as already imposed in
Assumption~\ref{ass:localized-structure}.

We first choose the smallness constants.  Let
\[
  r_*:=\frac12\vartheta_{\mathrm{loc}}.
\]
Choose $\vartheta_{\mathrm{emp}}>0$ and $\varepsilon_{\mathrm{emp}}>0$ so that
\begin{equation}
  \vartheta_{\mathrm{emp}}\le\frac{\vartheta_{\mathrm{loc}}}{8C_{\Phi,0}},
  \qquad
  c_\zeta\varepsilon_{\mathrm{emp}}\le r_*,
  \qquad
  \frac{C_{\Phi,0}c_\zeta}{1-\lambda}\varepsilon_{\mathrm{emp}}
  \le\frac18\vartheta_{\mathrm{loc}}.
  \label{eq:B4-choice-smallness}
\end{equation}
All these constants depend only on the uniform parameters in
Theorem~\ref{thm:uniform-block-local-riccati} and are independent of $j,J,N$.

Define the first exit time
\[
  \tau_{\mathrm{exit}}:=\inf\{t\ge0:\mathfrak e_t^N>r_*\},
\]
with the convention $\inf\varnothing=\infty$.  By \eqref{eq:B4-smallness-assumptions} and
\eqref{eq:B4-choice-smallness}, one has $\tau_{\mathrm{exit}}\ge1$.
For every $s<\tau_{\mathrm{exit}}$,
\[
  \widehat P_{s,j}^{N,\mathrm{loc}}\in\mathcal U_{s,j}
  \subset\mathfrak C_j(\underline p_{\mathrm{loc}},\overline p_{\mathrm{loc}})
\]
by Lemma~\ref{lem:uniform-local-stability-neighborhood}.  Since analysis
reduces covariance,
\[
  0\preceq P_{s,j}^{N,\mathrm{loc}}
  =\Psi_j(\widehat P_{s,j}^{N,\mathrm{loc}})\preceq \widehat P_{s,j}^{N,\mathrm{loc}},
\]
and hence
\[
  \max_j\|P_{s,j}^{N,\mathrm{loc}}\|\le \overline p_{\mathrm{loc}}.
\]
Proposition~\ref{prop:localized-exact-covariance-identities} therefore gives,
for every $s<\tau_{\mathrm{exit}}$,
\begin{equation}
  \widehat P_{s+1,j}^{N,\mathrm{loc}}=\Phi_j(\widehat P_{s,j}^{N,\mathrm{loc}})+\zeta_{s,j}^N,
  \qquad
  \max_j\|\zeta_{s,j}^N\|\le c_\zeta\varepsilon.
  \label{eq:B4-stopped-forcing-bound}
\end{equation}
Notice that this estimate has been derived only before the first exit; no
invariance conclusion has been used to prove it.

We next establish a stopped error estimate.  Fix an integer
$1\le t\le\tau_{\mathrm{exit}}$.  The nonlinear telescoping identity gives
\begin{equation}
  \widehat P_{t,j}^{N,\mathrm{loc}}-\widehat P_{t,j}^0
  =\Phi_j^t(\widehat P_{0,j}^{N,\mathrm{loc}})-\Phi_j^t(\widehat P_{0,j}^0)
  +
  \sum_{s=0}^{t-1}
  \left[
    \Phi_j^{t-1-s}(\widehat P_{s+1,j}^{N,\mathrm{loc}})
    -\Phi_j^{t-1-s}(\Phi_j(\widehat P_{s,j}^{N,\mathrm{loc}}))
  \right].
  \label{eq:B4-stopped-telescoping}
\end{equation}
We verify carefully that the forgetting estimate may be applied to every term
with a positive iterate.  If $0\le s\le t-2$, then
$s+1\le t-1<\tau_{\mathrm{exit}}$, so
\[
  \widehat P_{s+1,j}^{N,\mathrm{loc}}\in\mathcal U_{s+1,j}.
\]
Furthermore, using \eqref{eq:B4-stopped-forcing-bound},
\begin{equation*}
  \|\Phi_j(\widehat P_{s,j}^{N,\mathrm{loc}})-\widehat P_{s+1,j}^0\|
  \le
  \|\widehat P_{s+1,j}^{N,\mathrm{loc}}-\widehat P_{s+1,j}^0\|
  +\|\zeta_{s,j}^N\|
  \le r_*+c_\zeta\varepsilon
  \le\vartheta_{\mathrm{loc}}.
\end{equation*}
Thus both $\widehat P_{s+1,j}^{N,\mathrm{loc}}$ and $\Phi_j(\widehat P_{s,j}^{N,\mathrm{loc}})$ belong to
$\mathcal U_{s+1,j}$ and hence to the common covariance class
$\mathfrak C_j(\underline p_{\mathrm{loc}},\overline p_{\mathrm{loc}})$.  The final summand,
$s=t-1$, has zero iterates and equals $\zeta_{t-1,j}^N$ directly; it requires
no covariance-class assumption on $\widehat P_{t,j}^{N,\mathrm{loc}}$.

Applying \eqref{eq:uniform-local-riccati-forgetting} to the initial
term and to all positive-iterate summands in
\eqref{eq:B4-stopped-telescoping}, while using $C_{\Phi,0}\ge1$ for the
last summand,
yields 
\begin{equation}
  \mathfrak e_t^N
  \le
  C_{\Phi,0}\lambda^t \mathfrak e_0^N
  +C_{\Phi,0}\sum_{s=0}^{t-1}\lambda^{t-1-s}
  \max_j\|\zeta_{s,j}^N\|
  \le
  C_{\Phi,0}\lambda^t \mathfrak e_0^N
  +\frac{C_{\Phi,0}c_\zeta}{1-\lambda}\varepsilon.
  \label{eq:B4-stopped-error-estimate}
\end{equation}
By the choices in \eqref{eq:B4-choice-smallness}, the right-hand side
is at most $\vartheta_{\mathrm{loc}}/4$.  If
$\tau_{\mathrm{exit}}<\infty$, taking $t=\tau_{\mathrm{exit}}$
in \eqref{eq:B4-stopped-error-estimate} would give
$\mathfrak e_{\tau_{\mathrm{exit}}}^N
\le\vartheta_{\mathrm{loc}}/4<r_*$, contradicting the definition of
$\tau_{\mathrm{exit}}$.  Hence $\tau_{\mathrm{exit}}=\infty$, proving
\eqref{eq:B4-empirical-covariance-invariance}.

Since there is no exit, \eqref{eq:B4-stopped-error-estimate} holds for
every $t\ge1$; the case $t=0$ is immediate.  The bound \eqref{eq:B4-empirical-covariance-bound} follows with
$\rho_{\mathrm{emp}}:=\lambda$ and a constant
$C_{\mathrm{emp}}$ depending only on $C_{\Phi,0},c_\zeta$, and $1-\lambda$.
Finally, \eqref{eq:localized-exact-analysis-block},
\eqref{eq:uniform-local-analysis-lipschitz}, and
\eqref{eq:uniform-local-gain-lipschitz} give
\eqref{eq:B4-analysis-covariance-bound} and
\eqref{eq:B4-local-gain-bound}, 
after enlarging
$C_{\mathrm{emp}}$. 
\end{proof}

\begin{corollary}[Entry into the empirical stability regime]
\label{cor:B5-entry-empirical-regime}
Under the hypotheses of
Proposition~\ref{prop:B5-local-gaussian-initialization}, 
on $\mathcal G_{N,\delta}^{\mathrm{cov,loc}}$,
\begin{equation}
  \mathfrak e_0^N\le \Delta_{N,\mathrm{loc}}(\delta).
  \label{eq:B5-initial-error-to-reference}
\end{equation}
There is a constant \(C_{\mathrm{init,loc}}<\infty\), independent of
\(J,N,\delta\) and depending only on the uniform model parameters, such that
\begin{equation}
  N-1\ge C_{\mathrm{init,loc}}
  \left[
    r_{\mathrm{eff},\mathrm{loc}}+\log\left(\frac{4J}{\delta}\right)
  \right]
  \label{eq:B5-local-ensemble-size}
\end{equation}
implies
\begin{equation}
  \Delta_{N,\mathrm{loc}}(\delta)\le\vartheta_{\mathrm{emp}}.
  \label{eq:B5-entry-condition}
\end{equation}
Consequently, if \(\varepsilon\le\varepsilon_{\mathrm{emp}}\), all conclusions
of Theorem~\ref{thm:B4-empirical-covariance-stability} hold with probability
at least \(1-\delta/4\). 
\end{corollary}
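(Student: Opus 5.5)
The corollary has three assertions, and I will prove them in order, each from a result already in place.

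\textbf{Initial error bound.} The first assertion is immediate from the definitions. Since \(\widehat P_0=\widehat P_0^0\), the reference trajectory starts at the exact covariance, so \(\widehat P_{0,j}^0=\widehat P_{0,j}\) for every \(j\). Hence \(\mathfrak e_0^N=\max_j\|\widehat P_{0,j}^{N,\mathrm{loc}}-\widehat P_{0,j}\|\). On \(\mathcal G_{N,\delta}^{\mathrm{cov,loc}}\), this quantity is at most \(\Delta_{N,\mathrm{loc}}(\delta)\) by \eqref{eq:B5-local-covariance-concentration}, which gives \eqref{eq:B5-initial-error-to-reference}.

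\textbf{Ensemble-size condition.} For \eqref{eq:B5-entry-condition}, I bound each term in the maximum defining \(\Delta_{N,\mathrm{loc}}(\delta)\) uniformly in \(j\). Assumption~\ref{ass:uniform-local-riccati} gives \(\|\widehat P_{0,j}\|=\|\widehat P_{0,j}^0\|\le\overline p_0\). Set
\[
  x:=\frac{r_{\mathrm{eff},\mathrm{loc}}+\log(4J/\delta)}{N-1}.
\]
Since \(r_{\mathrm{eff},j}\le r_{\mathrm{eff},\mathrm{loc}}\) and the map \(x\mapsto\sqrt x+x\) is increasing,
\[
  \Delta_{N,\mathrm{loc}}(\delta)\le C_0\overline p_0(\sqrt x+x).
\]
Condition \eqref{eq:B5-local-ensemble-size} says exactly that \(x\le C_{\mathrm{init,loc}}^{-1}\). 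Choose \(C_{\mathrm{init,loc}}\ge1\) large enough that
\[
  C_0\overline p_0\bigl(C_{\mathrm{init,loc}}^{-1/2}+C_{\mathrm{init,loc}}^{-1}\bigr)\le\vartheta_{\mathrm{emp}}.
\]
This yields \eqref{eq:B5-entry-condition}. The constant \(\vartheta_{\mathrm{emp}}\) from Theorem~\ref{thm:B4-empirical-covariance-stability} depends only on the uniform model parameters, so \(C_{\mathrm{init,loc}}\) depends only on \(C_0\), \(\overline p_0\) and \(\vartheta_{\mathrm{emp}}\). In particular it is independent of \(J\), \(N\) and \(\delta\).

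\textbf{Conclusion.} On \(\mathcal G_{N,\delta}^{\mathrm{cov,loc}}\), the first two steps give \(\mathfrak e_0^N\le\vartheta_{\mathrm{emp}}\). If also \(\varepsilon\le\varepsilon_{\mathrm{emp}}\), then both smallness conditions \eqref{eq:B4-smallness-assumptions} hold. The conclusions of Theorem~\ref{thm:B4-empirical-covariance-stability} are deterministic once their hypotheses are met, so they hold on this whole event. By Proposition~\ref{prop:B5-local-gaussian-initialization}, the event has probability at least \(1-\delta/4\).

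No step here is a real obstacle. The one point needing care is that \(\Delta_{N,\mathrm{loc}}\) is defined through the exact blocks \(\widehat P_{0,j}\), which coincide with the reference blocks only because \(\widehat P_0=\widehat P_0^0\). This identification is what allows the uniform bound \(\overline p_0\) from Assumption~\ref{ass:uniform-local-riccati} to be used in the second step.
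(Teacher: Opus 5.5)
Your proposal is correct and follows the paper's proof essentially step for step. You use $\widehat P_0=\widehat P_0^0$ to identify $\widehat P_{0,j}=\widehat P_{0,j}^0$ and combine this with \eqref{eq:B5-local-covariance-concentration} for the initial error bound. You then use $\|\widehat P_{0,j}\|\le\overline p_0$ to get $\Delta_{N,\mathrm{loc}}(\delta)\le C_0\overline p_0\bigl(C_{\mathrm{init,loc}}^{-1/2}+C_{\mathrm{init,loc}}^{-1}\bigr)\le\vartheta_{\mathrm{emp}}$, and finally apply Theorem~\ref{thm:B4-empirical-covariance-stability} deterministically on the event; your version only makes the monotonicity step and the dependence of $C_{\mathrm{init,loc}}$ on $C_0,\overline p_0,\vartheta_{\mathrm{emp}}$ more explicit.
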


\begin{proof}
Since \(\widehat P_0=\widehat P_0^0\) is block diagonal,
\eqref{eq:B5-local-covariance-concentration} gives
\eqref{eq:B5-initial-error-to-reference}. Moreover,
Assumption~\ref{ass:uniform-local-riccati} gives
\(\max_j\|\widehat P_{0,j}\|\le\overline p_0\). Thus a sufficiently large
constant in \eqref{eq:B5-local-ensemble-size} makes
\[
  \Delta_{N,\mathrm{loc}}(\delta)
  \le C_0\overline p_0
  \bigl(C_{\mathrm{init,loc}}^{-1/2}+C_{\mathrm{init,loc}}^{-1}\bigr)
  \le\vartheta_{\mathrm{emp}}.
\]
The last assertion follows from
Theorem~\ref{thm:B4-empirical-covariance-stability}.
\end{proof}

\begin{remark}[Exact empirical recursion]
The proof does not assume that the empirical recursion remains in a stable
covariance class.  It derives the coupling defect only up to a first exit,
uses local Riccati forgetting to show that such an exit is impossible, and
thereby closes the bootstrap.  The only smallness requirements are the initial
local covariance error and the weak spatial-interaction size~$\varepsilon$.
\end{remark}

\subsection{Localized mean estimates}
\label{app:localized-mean-auxiliary}
This section develops the estimates needed to control the local mean
errors,  centering on the decomposition~\eqref{eq:mean-err-iterate-from-initial-localized}.
Lemma~\ref{lem:B7-gaussian-block-maximum} controls moments of
Gaussian block vectors in the block-maximum norm, and
Lemma~\ref{lem:B7-local-innovation-moments} gives time-uniform
innovation bounds. On the high-probability initialization event,
Proposition~\ref{prop:B7-refined-gain-decomposition} separates the
localized gain error into a geometrically decaying sampling component
and an \(O(\varepsilon)\) localization component. Finally,
Lemma~\ref{lem:B7-localized-closed-loop-stability} establishes
exponential stability of the associated localized closed-loop
products. These estimates are collected in
Proposition~\ref{prop:localized-mean-package}.
\subsubsection{Gaussian block-maximum moments}

\begin{lemma}[Gaussian block-maximum moment bound]
\label{lem:B7-gaussian-block-maximum}
Let \(Z=(Z_1,\ldots,Z_J)\) be a centered jointly Gaussian block vector. If
\(\max_j\|\operatorname{Cov}(Z_j)\|\le M_Z\) and
\(\max_j\operatorname{Tr}(\operatorname{Cov}(Z_j))\le\tau_Z\), then, for
every \(q\ge1\),
\begin{equation}
  \left(\mathbb E\|Z\|_{\max}^q\right)^{1/q}
  \le C\left[\sqrt{\tau_Z}+\sqrt{M_Z(q+\log(2J))}\right].
  \label{eq:B7-gaussian-block-maximum}
\end{equation}
\end{lemma}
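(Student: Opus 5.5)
We bound the block maximum by splitting each block norm into its mean and its fluctuation about the mean. Each block norm $\|Z_j\|$ is a $\sqrt{\|\operatorname{Cov}(Z_j)\|}$-Lipschitz function of a standard Gaussian vector, since $Z_j=\operatorname{Cov}(Z_j)^{1/2}g_j$ with $g_j$ standard Gaussian. We therefore write
\[
  \|Z\|_{\max}\le\max_j\mathbb E\|Z_j\|+\max_j\bigl(\|Z_j\|-\mathbb E\|Z_j\|\bigr)_+ .
\]
By Jensen's inequality, $\mathbb E\|Z_j\|\le(\operatorname{Tr}\operatorname{Cov}(Z_j))^{1/2}\le\sqrt{\tau_Z}$, which accounts for the first term in \eqref{eq:B7-gaussian-block-maximum}. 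Joint Gaussianity across blocks plays no role: we only use the marginal law of each block, together with a union bound.

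For the fluctuation term, set $W_j:=(\|Z_j\|-\mathbb E\|Z_j\|)_+$. Gaussian concentration for Lipschitz functions gives
\[
  \mathbb P(W_j>r)\le e^{-r^2/(2M_Z)}\qquad\text{for every }j,
\]
and a union bound yields $\mathbb P(\max_jW_j>r)\le\min\{1,Je^{-r^2/(2M_Z)}\}$. We then integrate this tail:
\[
  \mathbb E\max_jW_j^q=\int_0^\infty qr^{q-1}\,\mathbb P\Bigl(\max_jW_j>r\Bigr)\,dr .
\]
We split the integral at $r_0:=\sqrt{2M_Z\log(2J)}$. Below $r_0$ we use the trivial bound $1$, which contributes $r_0^q$. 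Above $r_0$ we substitute $r=r_0+s$ and use $r^2\ge r_0^2+s^2$, so that $Je^{-r^2/(2M_Z)}\le\tfrac12e^{-s^2/(2M_Z)}$. This piece is then bounded by $C^q\bigl(r_0^q+M_Z^{q/2}q^{q/2}\bigr)$, using the Gaussian moment identity $\int_0^\infty s^{q-1}e^{-s^2/(2M_Z)}\,ds\asymp(2M_Z)^{q/2}\Gamma(q/2)$ and Stirling's bound $\Gamma(q/2)^{1/q}\le C\sqrt q$.

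Taking $q$-th roots and combining the two pieces gives
\[
  \Bigl(\mathbb E\max_jW_j^q\Bigr)^{1/q}\le C\sqrt{M_Z}\bigl(\sqrt{\log(2J)}+\sqrt q\bigr)\le C'\sqrt{M_Z(q+\log(2J))}.
\]
Minkowski's inequality applied to the initial split then yields \eqref{eq:B7-gaussian-block-maximum}, with a constant independent of $J$ and of the block dimensions.

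The main obstacle is bookkeeping rather than substance. The constants must not depend on $q$ beyond the factor $\sqrt q$, which is why we state the Gamma-function bound explicitly. We also want to avoid the inequality $(a+b)^{q}\le 2^{q-1}(a^q+b^q)$ before taking roots; taking roots first, as above, keeps the constants clean.
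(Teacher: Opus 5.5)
Your argument is correct and is essentially the paper's proof: per-block Lipschitz Gaussian concentration for $\|Z_j\|$ together with the Jensen bound $\mathbb E\|Z_j\|\le\sqrt{\tau_Z}$, then a union bound over the $J$ blocks that uses no cross-block dependence, then integration of the resulting sub-Gaussian tail into moments. The only difference is presentational: you peel off the mean by Minkowski and carry out the tail-to-moment step explicitly by splitting at $\sqrt{2M_Z\log(2J)}$, whereas the paper states the combined tail bound $\mathbb P(\|Z_j\|>\sqrt{\tau_Z}+\sqrt{2M_Zx})\le e^{-x}$ and invokes the standard tail-to-moment implication.
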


\begin{proof}
Gaussian concentration and
\(\mathbb E\|Z_j\|\le
\sqrt{\operatorname{Tr}(\operatorname{Cov}(Z_j))}\) give
\[
  \mathbb P\left(
  \|Z_j\|>\sqrt{\tau_Z}+\sqrt{2M_Zx}
  \right)\le e^{-x},
  \qquad x\ge0.
\]
A union bound over the blocks, followed by the standard sub-Gaussian
tail-to-moment implication, proves
\eqref{eq:B7-gaussian-block-maximum}.  
We use the term \(\log(2J)\) to also cover \(J=1\).
\end{proof}

\subsubsection{Innovation, gain, and stability estimates}

For the mean estimates, 
recall that \(\mathcal F_0^{X,N}:=\sigma(\widehat X_0^{\mathrm{loc}})\),  and define
\begin{align}
  \overline p_{0,\mathrm{loc}}&:=\max_j\|\widehat P_{0,j}\|,
  &\tau_{0,\mathrm{loc}}&:=\max_j\operatorname{Tr}(\widehat P_{0,j}),
  \notag\\
  \mu_{N,\mathrm{loc}}^{(q)}&:=
  \sqrt{\frac{\tau_{0,\mathrm{loc}}}{N}}
  +\sqrt{\frac{\overline p_{0,\mathrm{loc}}(q+\log(2J))}{N}},
  \label{eq:B7-local-initial-moment-scales}\\
  M_{\iota,\mathrm{loc}}&:=\sup_t\max_j\|(S_t)_{jj}\|,
  &\tau_{\iota,\mathrm{loc}}&:=
  \sup_t\max_j\operatorname{Tr}((S_t)_{jj}),
  \notag\\
  \mathcal I_{q,\mathrm{loc}}&:=
  \sqrt{\tau_{\iota,\mathrm{loc}}}
  +\sqrt{M_{\iota,\mathrm{loc}}(q+\log(2J))}.
  \label{eq:B7-local-innovation-scales}
\end{align}
Here  
\(\iota_t:=y_t-H\widehat m_t\) and
\(S_t:=H\widehat P_tH^\top+R\)   are defined in Sections~\ref{subsec:enkf-proofs} and \ref{subsec:localized-proofs}.

The Gaussian block-maximum estimate controls the initial ensemble
mean deviation.
By Lemma~\ref{lem:gaussian-wishart-reduction},
\(\widehat m_0^{N,\mathrm{loc}}-\widehat m_0\) is independent of
\(\mathcal F_0^{X,N}\) and has distribution
\(\mathcal N(0,\widehat P_0/N)\). Applying
Lemma~\ref{lem:B7-gaussian-block-maximum} conditionally, with
\(M_Z=\overline p_{0,\mathrm{loc}}/N\) and
\(\tau_Z=\tau_{0,\mathrm{loc}}/N\), gives
\begin{equation}
  \|\widehat m_0^{N,\mathrm{loc}}-\widehat m_0\|_{q\mid X}
  \le C\mu_{N,\mathrm{loc}}^{(q)}.
  \label{eq:B7-local-initial-mean-moment}
\end{equation}

We next obtain the corresponding time-uniform bound for the innovations.

\begin{lemma}[Local innovation moments]
\label{lem:B7-local-innovation-moments}
Under the linear--Gaussian model
\eqref{eq:initial-state-law}--\eqref{eq:observation-model},
Assumptions~\ref{ass:localized-structure} and
\ref{ass:uniform-local-riccati}, and the smallness conditions of
Theorem~\ref{thm:B3-localization-bias}, assume in addition that the initial
ensemble is independent of the signal and observation noises. Then the
innovations $(\iota_t)_{t\ge0}$ are independent centered Gaussian vectors,
independent of the initial ensemble, and the constants in
\eqref{eq:B7-local-innovation-scales} are finite and independent of
$J$.  Moreover, for every $q\ge1$,
\begin{equation}
  \sup_{t\ge0}
  \|\iota_t\|_{q\mid X}
  =
  \sup_{t\ge0}
  \left(\mathbb E\|\iota_t\|_{\max}^q\right)^{1/q}
  \le C\mathcal I_{q,\mathrm{loc}}.
  \label{eq:B7-local-innovation-moments}
\end{equation}
\begin{proof}
The Gaussian-innovation part of
Lemma~\ref{lem:weighted-innovation-bound} gives the asserted Gaussianity and
independence; the assumed independence of the localized ensemble from the
signal and observation noises also makes it independent of the innovations.
Thus conditional and unconditional innovation moments coincide.
Combining
Theorem~\ref{thm:B3-localization-bias} with the uniform reference bounds in
Theorem~\ref{thm:uniform-block-local-riccati} gives a constant $C<\infty$,
independent of $J$, such that
\[
  \sup_{t\ge0}\max_j\|\widehat P_{t,j}\|\le C.
\]
Because $H$ and $R$ are block diagonal,
\[
  (S_t)_{jj}=H_j\widehat P_{t,j}H_j^\top+R_j.
\]
Assumption~\ref{ass:uniform-local-riccati} therefore gives
\[
  M_{\iota,\mathrm{loc}}
  \le h_+^2C+r_+,
  \qquad
  \tau_{\iota,\mathrm{loc}}
  \le d_{y,\mathrm{loc}}(h_+^2C+r_+).
\]
The bound \eqref{eq:B7-local-innovation-moments} now follows from
Lemma~\ref{lem:B7-gaussian-block-maximum}.
\end{proof}
\end{lemma}

To quantify the localized gain error between $K_t^{N,\mathrm{loc}}$ and $K_t$, we introduce two auxiliary uncoupled local covariance trajectories
\begin{equation}
  \widehat Q_{t,j}:=\Phi_j^t(\widehat P_{0,j}),
  \qquad
  \widehat Q_{t,j}^N
  :=\Phi_j^t(\widehat P_{0,j}^{N,\mathrm{loc}}),
  \label{eq:B7-auxiliary-local-covariances}
\end{equation}
which are initialized at the exact and the empirical local covariances. Denote their block-diagonal gains by
\[
  K_t^{\mathrm{marg}}
  :=\operatorname{diag}(\mathsf K_j(\widehat Q_{t,j}))_{j=1}^J,
  \qquad
  \widetilde K_t^N
  :=\operatorname{diag}(\mathsf K_j(\widehat Q_{t,j}^N))_{j=1}^J.
\]
We decompose the localized gain error into two parts: the deviation of $\widetilde K_t^N$ from $K_t^{\mathrm{marg}}$ is due to sampling error, while  the discrepancies between $K_t^{N,\mathrm{loc}}$ and $\widetilde K_t^N$, and between $K_t^{\mathrm{marg}}$ and $K_t$, are localization bias. The sampling error is propagated by local Riccati maps and is thus controlled by the local Riccati forgetting estimate. For the localization bias, $K_t^{N,\mathrm{loc}}$ and $\widetilde K_t^N$ differ only in whether the forecast is global or local, while $K_t^{\mathrm{marg}}$ and $K_t$ differ in both the forecast and the analysis steps. We therefore apply Proposition~\ref{prop:forced-block-local-riccati} and Theorem~\ref{thm:B3-localization-bias} separately  to these bias terms.

\begin{proposition}[Refined localized gain decomposition]
\label{prop:B7-refined-gain-decomposition}
Assume \(\widehat P_0=\widehat P_0^0\).  If \(\varepsilon\) is sufficiently small
and 
$C_{\mathrm{init,loc}}$
 in \eqref{eq:B6-local-ensemble-size} is sufficiently
large, 
on the event \(\mathcal G_{N,\delta}^{\mathrm{cov,loc}}\in\mathcal F_0^{X,N}\)
defined in Proposition~\ref{prop:B5-local-gaussian-initialization} 
which occurs with
probability at least \(1-\delta\), 
\begin{equation}
  K_t^{N,\mathrm{loc}}-K_t
  =\Gamma_t^{\mathrm{sam}}+\Gamma_t^{\mathrm{loc}},
  \label{eq:B7-gain-decomposition}
\end{equation}
where
\[
  \Gamma_t^{\mathrm{sam}}:=\widetilde K_t^N-K_t^{\mathrm{marg}},
  \qquad
  \Gamma_t^{\mathrm{loc}}:=(K_t^{N,\mathrm{loc}}-\widetilde K_t^N)
  +(K_t^{\mathrm{marg}}-K_t),
\]
and, for constants \(C_\Gamma<\infty\) and \(\rho_\Gamma\in(0,1)\) independent of
\(J,N,\delta,t\),
\begin{align}
  \|\Gamma_t^{\mathrm{sam}}\|_{\mathrm{loc}}
  &\le C_\Gamma\rho_\Gamma^t\Delta_{N,\mathrm{loc}}(\delta),
  \label{eq:B7-sampling-gain-bound}\\
  \|\Gamma_t^{\mathrm{loc}}\|_{\mathrm{loc}}
  &\le C_\Gamma\varepsilon.
  \label{eq:B7-localization-gain-bound}
\end{align}
\begin{proof}
Proposition~\ref{prop:B5-local-gaussian-initialization} 
gives
\(
\mathbb P(\mathcal G_{N,\delta}^{\mathrm{cov,loc}})
\ge1-\delta/4\ge1-\delta
\), and this event depends only on the centered initial anomaly matrix.
After enlarging the sample-size constant and decreasing the model smallness
constants if necessary, Corollary~\ref{cor:B5-entry-empirical-regime} and
Theorem~\ref{thm:B4-empirical-covariance-stability} place the empirical
initial covariances in
$\mathfrak C_j(\underline p_{\mathrm{loc}},\overline p_{\mathrm{loc}})$.
The exact and reference initial covariances already belong to this class, so
the two auxiliary trajectories remain in the uniform trajectory class from
\eqref{eq:uniform-local-common-trajectory-class}.

The two auxiliary trajectories in
\eqref{eq:B7-auxiliary-local-covariances} evolve under the same local
Riccati maps.  Therefore, \eqref{eq:B5-local-covariance-concentration},
\eqref{eq:uniform-local-riccati-forgetting}, and
\eqref{eq:uniform-local-gain-lipschitz}  give
\[
  \|\widetilde K_t^N-K_t^{\mathrm{marg}}\|_{\mathrm{loc}}
  \le C\rho_\Gamma^t\Delta_{N,\mathrm{loc}}(\delta),
\]
which proves \eqref{eq:B7-sampling-gain-bound}.

Next compare the actual empirical local covariance $\widehat P_{t,j}^{N,\mathrm{loc}}$ with the auxiliary
uncoupled covariance $\widehat Q_{t,j}^N$ which has the same initial value.  On
$\mathcal G_{N,\delta}^{\mathrm{cov,loc}}$,
Theorem~\ref{thm:B4-empirical-covariance-stability} ensures $\widehat P_{t,j}^{N,\mathrm{loc}} \in \mathfrak C_j(\underline p_{\mathrm{loc}}, \overline p_{\mathrm{loc}})$ for all time, while
Theorem~\ref{thm:uniform-block-local-riccati} gives $\Phi_j(\widehat P_{t,j}^{N,\mathrm{loc}}), \widehat Q_{t,j}^N \in \mathfrak C_j(\underline p_{\mathrm{tr}},\overline p_{\mathrm{tr}})$.  
Take \[
  \underline p^\sharp:=\min\{\underline p_{\mathrm{loc}},\underline p_{\mathrm{tr}}\},
  \qquad
  \overline p^\sharp:=\max\{\overline p_{\mathrm{loc}},\overline p_{\mathrm{tr}}\}.
\]
Then all three covariances belong to $\mathfrak C(\underline p^\sharp, \overline p^\sharp)$.  
Besides, by \eqref{eq:localized-perturbed-local-riccati}, the recursions of $\widehat P_{t,j}^{N,\mathrm{loc}}$ and $\widehat Q_{t,j}^N$ align with Proposition~\ref{prop:forced-block-local-riccati}, with the residual in the $\widehat P_{t,j}^{N,\mathrm{loc}}$ recursion bounded by
$C\varepsilon$. Proposition~\ref{prop:forced-block-local-riccati} gives
\[
  \sup_{t\ge0}\max_j
  \|\widehat P_{t,j}^{N,\mathrm{loc}}-\widehat Q_{t,j}^N\|
  \le C\varepsilon.
\]
Uniform local gain Lipschitzness \eqref{eq:uniform-local-gain-lipschitz}
 yields 
\begin{equation}
  \|K_t^{N,\mathrm{loc}}-\widetilde K_t^N\|_{\mathrm{loc}}
  \le C\varepsilon.
  \label{eq:B7-actual-auxiliary-gain-bound}
\end{equation}

Because the initial covariance is block diagonal,
\(\widehat P_{0,j}=\widehat P_{0,j}^0\), so
\(\widehat Q_{t,j}=\widehat P_{t,j}^0\) and
\(K_t^{\mathrm{marg}}=K_t^0\).  The gain conclusion of
Theorem~\ref{thm:B3-localization-bias} therefore gives
\[
  \|K_t^{\mathrm{marg}}-K_t\|_{\mathrm{loc}}\le C\varepsilon.
\]
Combining this with \eqref{eq:B7-actual-auxiliary-gain-bound} proves \eqref{eq:B7-localization-gain-bound}.
\end{proof}
\end{proposition}

Recall that $B_t^{N,\mathrm{loc}}:=A(I-K_t^{N,\mathrm{loc}}H)$  in Proposition~\ref{prop:localized-mean-package}. The following lemma establishes that their products are exponentially stable.

\begin{lemma}[Localized closed-loop stability]
\label{lem:B7-localized-closed-loop-stability}
Assume \(\widehat P_0=\widehat P_0^0\).  If \(\varepsilon\) is sufficiently small
and  $C_{\mathrm{init,loc}}$  in \eqref{eq:B6-local-ensemble-size} is sufficiently
large, then there are \(C_m<\infty\) and \(\rho_m\in(0,1)\), independent of
\(J,N,\delta\), such that, on
\(\mathcal G_{N,\delta}^{\mathrm{cov,loc}}\),
\begin{equation}
  \left\|
  B_{t-1}^{N,\mathrm{loc}}\cdots B_s^{N,\mathrm{loc}}
  \right\|_{\mathrm{loc}}
  \le C_m\rho_m^{t-s},
  \qquad t\ge s\ge0.
  \label{eq:B7-localized-closed-loop-product}
\end{equation}
\end{lemma}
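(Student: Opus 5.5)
The plan is to compare each localized closed-loop factor with the block-diagonal reference factor $B_t^0=D(I-K_t^0H)$, whose products are already known to be exponentially stable in $\|\cdot\|_{\mathrm{loc}}$ by \eqref{eq:B3-reference-product-stability}. Once the perturbation is shown to be uniformly small, Lemma~\ref{lem:B3-robust-products} will give \eqref{eq:B7-localized-closed-loop-product}. Note that $\|\cdot\|_{\mathrm{loc}}$ is submultiplicative, so that lemma applies with $C_{\mathrm{ref}}=C_B$ and $\rho_{\mathrm{ref}}=\rho_B$, both independent of $J$.

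Using $A=D+E$, I will write
\[
  B_t^{N,\mathrm{loc}}-B_t^0
  =E\bigl(I-K_t^{N,\mathrm{loc}}H\bigr)
  +D\bigl(K_t^0-K_t^{N,\mathrm{loc}}\bigr)H .
\]
On $\mathcal G_{N,\delta}^{\mathrm{cov,loc}}$, Corollary~\ref{cor:B5-entry-empirical-regime} together with Theorem~\ref{thm:B4-empirical-covariance-stability} keeps every $\widehat P_{t,j}^{N,\mathrm{loc}}$ in $\mathfrak C_j(\underline p_{\mathrm{loc}},\overline p_{\mathrm{loc}})$, uniformly in $t$ and $j$. The local gains are then uniformly bounded by $\overline p_{\mathrm{loc}}h_+/r_-$. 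Because $K_t^{N,\mathrm{loc}}$ and $H$ are block diagonal, $\|I-K_t^{N,\mathrm{loc}}H\|_{\mathrm{loc}}\le C$, and so the first term is at most $C\|E\|_{\mathrm{loc}}\le C\varepsilon$.

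For the second term, $K_t^0-K_t^{N,\mathrm{loc}}$ is block diagonal, so its local norm equals $\max_j\|K_{t,j}^{N,\mathrm{loc}}-K_{t,j}^0\|$. By \eqref{eq:B4-local-gain-bound} and \eqref{eq:B5-initial-error-to-reference}, this is at most $C_{\mathrm{emp}}\rho_{\mathrm{emp}}^t\Delta_{N,\mathrm{loc}}(\delta)+C_{\mathrm{emp}}\varepsilon\le C(\Delta_{N,\mathrm{loc}}(\delta)+\varepsilon)$. Multiplying by $\|D\|_{\mathrm{loc}}\le a_+$ and $\|H\|_{\mathrm{loc}}\le h_+$ gives
\[
  \sup_t\|B_t^{N,\mathrm{loc}}-B_t^0\|_{\mathrm{loc}}\le C\bigl(\Delta_{N,\mathrm{loc}}(\delta)+\varepsilon\bigr).
\]

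The remaining step, and the only real constraint, is to make this quantity at most the threshold $\eta_*$ of Lemma~\ref{lem:B3-robust-products}. That threshold depends only on $C_B$ and $\rho_B$, so it is independent of $J$, $N$ and $\delta$. I will shrink $\varepsilon$ so that $C\varepsilon\le\eta_*/2$. I will also enlarge $C_{\mathrm{init,loc}}$ so that $C\Delta_{N,\mathrm{loc}}(\delta)\le\eta_*/2$; this works exactly as in the proof of Corollary~\ref{cor:B5-entry-empirical-regime}, using $\Delta_{N,\mathrm{loc}}(\delta)\le C_0\overline p_0(C_{\mathrm{init,loc}}^{-1/2}+C_{\mathrm{init,loc}}^{-1})$. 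Lemma~\ref{lem:B3-robust-products} then yields constants $C_m$ and $\rho_m$ depending only on $C_B$, $\rho_B$ and $\eta_*$, hence independent of $J$, $N$ and $\delta$, which proves \eqref{eq:B7-localized-closed-loop-product}.

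An alternative would be to perturb from the coupled Kalman factors $B_t$ of Corollary~\ref{cor:B3-full-closed-loop-stability} using the gain decomposition of Proposition~\ref{prop:B7-refined-gain-decomposition}. The reference comparison above is more direct and avoids invoking the global bias theorem.
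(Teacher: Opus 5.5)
Your argument is correct, but it takes a different route from the paper's.

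\textbf{The paper's route.} The paper perturbs from the coupled Kalman closed loop $B_t=A(I-K_tH)$. It writes $B_t^{N,\mathrm{loc}}-B_t=A(K_t-K_t^{N,\mathrm{loc}})H$ and bounds this by $C[\Delta_{N,\mathrm{loc}}(\delta)+\varepsilon]$ via the gain decomposition of Proposition~\ref{prop:B7-refined-gain-decomposition}. It then applies Lemma~\ref{lem:B3-robust-products} to the products of $B_t$, which are stable by Corollary~\ref{cor:B3-full-closed-loop-stability}. This chain relies on the deterministic coupled bias estimate of Theorem~\ref{thm:B3-localization-bias}, both for $K_t^{\mathrm{marg}}-K_t$ and for the stability of $B_t$. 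It also invokes the robust-product lemma twice: first from $B_t^0$ to $B_t$, then from $B_t$ to $B_t^{N,\mathrm{loc}}$.

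\textbf{Your route.} You perturb once, directly from the block-diagonal reference $B_t^0$. The coupling appears explicitly as $E(I-K_t^{N,\mathrm{loc}}H)$. The remaining gain difference is block diagonal, so Theorem~\ref{thm:B4-empirical-covariance-stability} together with \eqref{eq:B5-initial-error-to-reference} controls it. Neither Theorem~\ref{thm:B3-localization-bias} nor Corollary~\ref{cor:B3-full-closed-loop-stability} is needed, and the perturbation threshold depends only on $C_B$ and $\rho_B$.

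\textbf{Comparison.} Your route is shorter and needs fewer smallness conditions. The paper's route reuses the gain decomposition \eqref{eq:B7-gain-decomposition} and the coupled closed-loop stability that the mean-error argument needs anyway. It also exhibits $B_t^{N,\mathrm{loc}}$ as a perturbation of the true Kalman closed loop rather than of an auxiliary uncoupled one.
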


\begin{proof}
Recall that $B_t:=A(I-K_tH)$ in \eqref{eq:B_t-def} is the full Kalman closed-loop matrix.  By
Corollary~\ref{cor:B3-full-closed-loop-stability}, its products are uniformly
exponentially stable.  Moreover,
\[
  B_t^{N,\mathrm{loc}}-B_t
  =A(K_t-K_t^{N,\mathrm{loc}})H.
\]
Proposition~\ref{prop:B7-refined-gain-decomposition} gives, 
 on
\(\mathcal G_{N,\delta}^{\mathrm{cov,loc}}\), 
\[
  \sup_{t\ge0}
  \|B_t^{N,\mathrm{loc}}-B_t\|_{\mathrm{loc}}
  \le C\bigl[\Delta_{N,\mathrm{loc}}(\delta)+\varepsilon\bigr].
\]
The smallness and ensemble-size conditions in the statement make this smaller
than the perturbation threshold in Lemma~\ref{lem:B3-robust-products}, which proves
\eqref{eq:B7-localized-closed-loop-product}.
\end{proof}


\subsection{Steady-state covariance and asymptotic spatial decay}
\label{sec:localized-steady-state-decay} 
We first establish existence, uniform convergence, and closed-loop
stability for the uncoupled local algebraic Riccati solutions. We then
use the block-decay algebra and finite-step contraction argument to
prove the spatial-decay and first-order perturbation results for the
coupled steady-state covariance stated in
Section~\ref{subsec:localized-steady-state-main}.

\begin{proposition}[Uniform local algebraic Riccati solutions]
\label{prop:B8-local-fixed-points}
Under Assumption~\ref{ass:uniform-local-riccati}, every local Riccati map
$\Phi_j$ has a unique fixed point
$\widehat P_{\infty,j}^0$ in the common covariance class.  The convergence is
uniform in the block index:
\begin{equation}
  \|\Phi_j^t(P_j)-\widehat P_{\infty,j}^0\|
  \le C_{\mathrm{fp}}\rho_{\mathrm{fp}}^t,
  \qquad
  P_j\in\mathfrak C_j(\underline p_{\mathrm{loc}},
  \overline p_{\mathrm{loc}}),
  \label{eq:B8-local-fixed-point-convergence}
\end{equation}
where $C_{\mathrm{fp}}<\infty$ and $\rho_{\mathrm{fp}}\in(0,1)$ are independent of $j$ and
$J$.  The associated closed-loop matrices
\[
  B_{\infty,j}^0
  :=A_j\bigl(I-\mathsf K_j(\widehat P_{\infty,j}^0)H_j\bigr)
\]
 are uniformly exponentially  
 stable.  
\end{proposition}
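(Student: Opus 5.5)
The plan is to obtain everything from the uniform forgetting and closed-loop estimates of Theorem~\ref{thm:uniform-block-local-riccati}, using its final assertion to work in an enlarged class. Set
\[
  \underline p^\sharp:=\min\{\underline p_{\mathrm{loc}},\underline p_{\mathrm{tr}}\},
  \qquad
  \overline p^\sharp:=\max\{\overline p_{\mathrm{loc}},\overline p_{\mathrm{tr}}\}.
\]
The forgetting estimates \eqref{eq:uniform-local-riccati-forgetting}--\eqref{eq:uniform-local-shifted-forgetting} and the closed-loop bound \eqref{eq:uniform-local-shifted-closed-loop} then hold on $\mathfrak C_j(\underline p^\sharp,\overline p^\sharp)$ with constants $C,\rho$ uniform in $j$ and $J$. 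By \eqref{eq:uniform-local-common-trajectory-class}, every $P_j\in\mathfrak C_j(\underline p_{\mathrm{loc}},\overline p_{\mathrm{loc}})$ and every iterate $\Phi_j^t(P_j)$ lie in this enlarged class.

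\emph{Existence.} Fix $P_j$ in the common class and note that $\Phi_j^{t+1}(P_j)-\Phi_j^t(P_j)=\Phi_j^t(\Phi_j(P_j))-\Phi_j^t(P_j)$. Both $P_j$ and $\Phi_j(P_j)$ lie in the enlarged class, so forgetting gives
\[
  \|\Phi_j^{t+1}(P_j)-\Phi_j^t(P_j)\|\le C\rho^{2t}\cdot 2\overline p^\sharp .
\]
Hence the trajectory is Cauchy. Its limit $\widehat P_{\infty,j}^0$ lies in the closed set $\mathfrak C_j(\underline p_{\mathrm{tr}},\overline p_{\mathrm{tr}})$, because the defining conditions $(P)_{uu}\succeq\underline pI$ and $\|P\|\le\overline p$ are closed. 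Continuity of $\Phi_j$ on $\Splus$, which follows from $R_j\succ0$, then shows that the limit is a fixed point. Taking $P_j=\widehat P_{0,j}^0$ and using \eqref{eq:reference-inner-local-class}, the fixed point even lies in $\mathfrak C_j(2\underline p_{\mathrm{loc}},\tfrac12\overline p_{\mathrm{loc}})$, which is the common class.

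\emph{Uniqueness and uniform convergence.} If $P,Q$ are two fixed points in the common class, forgetting gives $\|P-Q\|=\|\Phi_j^t(P)-\Phi_j^t(Q)\|\le C\rho^{2t}\|P-Q\|\to0$, so $P=Q$. For an arbitrary $P_j$ in the common class, both $P_j$ and $\widehat P_{\infty,j}^0$ lie in the enlarged class, and
\[
  \|\Phi_j^t(P_j)-\widehat P_{\infty,j}^0\|=\|\Phi_j^t(P_j)-\Phi_j^t(\widehat P_{\infty,j}^0)\|\le 2C\overline p^\sharp\rho^{2t}.
\]
This proves \eqref{eq:B8-local-fixed-point-convergence} with $\rho_{\mathrm{fp}}=\rho^2$. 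The limit does not depend on the starting point, by uniqueness.

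\emph{Closed-loop stability.} Since $\widehat P_{\infty,j}^0$ is fixed, the product in \eqref{eq:uniform-local-shifted-closed-loop} started at $\widehat P_{\infty,j}^0$ equals $(B_{\infty,j}^0)^{t-s}$. That estimate therefore gives $\|(B_{\infty,j}^0)^t\|\le C_B\rho_B^t$ uniformly in $j$ and $J$, which is the claimed uniform exponential stability.

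The main obstacle is bookkeeping rather than analysis. The forgetting estimate requires both starting points in a single fixed class, while $\Phi_j(P_j)$ and the limit a priori lie only in the trajectory class. This is exactly why I work in the enlarged class $\mathfrak C_j(\underline p^\sharp,\overline p^\sharp)$, relying on the final assertion of Theorem~\ref{thm:uniform-block-local-riccati} to keep the constants uniform in $j$ and $J$.
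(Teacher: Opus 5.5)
Your proposal is correct and follows essentially the same route as the paper. Uniform forgetting on a fixed enlarged class containing both the initial class and the trajectory class makes the trajectory uniformly Cauchy, and continuity makes its limit a fixed point. The same forgetting estimate gives uniqueness and the uniform rate, and the shifted closed-loop product estimate gives uniform stability of $B_{\infty,j}^0$. One small difference is an improvement: you check explicitly that the limit lies in the closed class $\mathfrak C_j(2\underline p_{\mathrm{loc}},\tfrac12\overline p_{\mathrm{loc}})$. This lets you apply the product estimate directly at the fixed point, rather than ``taking the limit'' as the paper does.
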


\begin{proof}
Fix $j$ and set $P_{t,j}:=\Phi_j^t(P_j)$. The trajectory class in
\eqref{eq:uniform-local-common-trajectory-class} and 
the uniform forgetting
estimate \eqref{eq:uniform-local-riccati-forgetting},
applied to a fixed class containing both it and the initial class,
give
\[
  \|P_{t+s,j}-P_{t,j}\|
  =\|\Phi_j^t(P_{s,j})-\Phi_j^t(P_j)\|
  \le C_\Phi\rho_\Phi^{2t}\|P_{s,j}-P_j\|.
\]
The last factor is uniformly bounded, so $P_{t,j}$ is uniformly Cauchy. Its
limit is a fixed point by continuity; the same estimate gives uniqueness and
\eqref{eq:B8-local-fixed-point-convergence}. Taking the limit in the uniform
product estimate of Theorem~\ref{thm:uniform-block-local-riccati} gives the
closed-loop stability.
\end{proof}

\begin{proof}[Proof of Theorem~\ref{thm:B8-spatially-decaying-fixed-point}]
It suffices to prove the second estimate in
\eqref{eq:B8-full-fixed-point-convergence-decay}, since the first then follows
from $\|M\|_{\mathrm{loc}}\le C\|M\|_{\mathcal J_\alpha}$, after enlarging
$C_\infty$ if necessary. We prove the second estimate using the normalized
block-decay norm $\|\cdot\|_{\alpha,*}$ defined in
\eqref{eq:normalized-block-decay-norm}. Since
$\|M\|_{\alpha,*}=C_\alpha\|M\|_{\mathcal J_\alpha}$, the resulting bound is
equivalent to the desired $\mathcal J_\alpha$ estimate, up to a change in
$C_\infty$. 
We show that $\Phi^{t}$ with a finite, large enough $t$, is contractive in a small tube centered at $\widehat P_\infty^0$, and apply Banach's fixed-point theorem to obtain the fixed point.

Define a fixed-point tube $\mathcal V_{\infty, \alpha, *} := \{ P \in  \mathbb S_+^{d_u}: \| P - \widehat P_\infty^0\|_{\alpha, *} \leq  \vartheta\}$. The analysis in Lemma~\ref{lem:B3-finite-step-perturbation} still holds when shifting the tube center to the stationary reference trajectory $\widehat P_t^0 \equiv \widehat P_\infty^0$. Choose the finite step $T_*$ from
Lemma~\ref{lem:B3-finite-step-perturbation}, whose proof also yields 
\begin{equation}
\sup_{P \in \mathcal V_{\infty, \alpha, *}}\|\mathrm D \Phi_D^{T_*}(P)\|_{\mathrm{op},\alpha, *} \leq \theta_* < 1.
\label{eq:spatially-decaying-fixed-point-proof-ref-diff}
\end{equation}
We verify 
that the full $T_*$-step map remains contractive. Recall from Proposition~\ref{prop:exact-riccati-identities} that 
\[
\mathrm D \Phi^{T_*}(P)[Z] = \mathcal B_{T_*}(P) Z \mathcal B_{T_*}(P)^\top, \qquad \mathcal B_{T_*}(P) = B(\Phi^{{T_*}-1}(P))\cdots B(P).
\]
For $P \in \mathcal V_{\infty,\alpha,*}$, \eqref{eq:B3-gain-regularity} gives 
\[
  \|B(P)-B_D(P)\|_{\alpha, *}
  \le C\|E\|_{\alpha, *}\le C\varepsilon.
\]
The coupled and reference trajectories starting from the same point differ by
$O(\varepsilon)$ during the fixed number $T_*$ of intermediate steps, by
\eqref{eq:B3-finite-step-consistency}. Gain Lipschitzness \eqref{eq:B3-gain-regularity} therefore
shows that the corresponding derivative factors $B(\Phi^t(P))$ and $B_D(\Phi_D^t(P))$ differ by $O(\varepsilon)$. Telescoping the product of the $T_*$ derivative factors gives
\begin{equation}
  \sup_{P \in \mathcal V_{\infty, \alpha, *}}
  \|\mathrm D\Phi^{T_*}(P)-\mathrm D\Phi_D^{T_*}(P)\|_{\mathrm{op}, \alpha, *}
  \le C_{T_*}\varepsilon
\label{eq:spatially-decaying-fixed-point-proof-diff-deviation}
\end{equation}
when the tube radius $\vartheta$ is sufficiently small. Therefore, after decreasing $\varepsilon_\infty$, \eqref{eq:spatially-decaying-fixed-point-proof-ref-diff} and \eqref{eq:spatially-decaying-fixed-point-proof-diff-deviation} imply that there are $\theta_\infty \in (0,1)$ and a radius $\vartheta > 0$ such that
\begin{equation}
  \|\Phi^{T_*}(P)-\Phi^{T_*}(Q)\|_{\alpha, *}
  \le \theta_\infty\|P-Q\|_{\alpha, *}
  \label{eq:B8-full-finite-step-contraction}
\end{equation}
whenever $P, Q \in \mathcal V_{\infty, \alpha, *}$. Furthermore, \eqref{eq:B3-finite-step-consistency} gives 
\begin{equation}
  \|\Phi^{T_*}(\widehat P_\infty^0)-\widehat P_\infty^0
  \|_{\alpha, *}\le C\varepsilon.
\label{eq:spatially-decaying-fixed-point-proof-bias}
\end{equation} 
Choose $\varepsilon_\infty$ so that this defect is at most
$(1-\theta_\infty)\vartheta/2$.  Hence, for every positive-semidefinite $P$ in the
closed radius-$\vartheta$ ball,
\[
  \|\Phi^{T_*}(P)-\widehat P_\infty^0\|_{\alpha,*}
  \le \theta_\infty\vartheta+\frac{1-\theta_\infty}{2}\vartheta<\vartheta.
\]
The intersection of this closed ball with $\Splus^{d_u}$ is complete and
is invariant under $\Phi^{T_*}$.  Banach's fixed-point theorem gives a
unique fixed point $P_*$ of $\Phi^{T_*}$ in $\mathcal V_{\infty, \alpha, *}$. \eqref{eq:B8-full-finite-step-contraction} and \eqref{eq:spatially-decaying-fixed-point-proof-bias} imply
\[
  \|P_*-\widehat P_\infty^0\|_{\alpha, *}
  \le \frac{C}{1-\theta_\infty}\varepsilon.
\]
The matrix $\Phi(P_*)$ is another fixed point of $\Phi^{T_*}$.  It also
lies in the same tube: since $\Phi_D(\widehat P_\infty^0)=\widehat P_\infty^0$,
Lemma~\ref{lem:B3-tube-regularity} gives
\[
  \|\Phi(P_*)-\widehat P_\infty^0\|_{\alpha,*}
  \le C\varepsilon+L_0
  \|P_*-\widehat P_\infty^0\|_{\alpha,*}
  \le C'\varepsilon<\vartheta
\]
after decreasing $\varepsilon_\infty$.  Uniqueness therefore implies
$\Phi(P_*)=P_*$.  Set $\widehat P_\infty:=P_*$.  It is positive
semidefinite because it is the limit of the positive-semidefinite iterates of
$\Phi^{T_*}$ started at $\widehat P_\infty^0$. 
Robustness of the
closed-loop products gives stability of $B_\infty$.

The gain and closed-loop bounds in \eqref{eq:B8-fixed-point-decay-bound}
follow from 
decay-norm gain regularity \eqref{eq:B3-gain-regularity} and from $A=D+E$.  Since the reference
fixed point is block diagonal, the entrywise conclusions
\eqref{eq:B8-fixed-point-entry-decay} and
\eqref{eq:B8-fixed-point-diagonal-bias} follow immediately.

Finally, Proposition~\ref{prop:B8-local-fixed-points} implies that the
reference trajectory enters 
$\mathcal V_{\infty,\alpha,*}$
after a dimension-independent
finite time.  Theorem~\ref{thm:B3-localization-bias} then places the coupled
trajectory in the same tube for small interaction and initial discrepancy.
The contraction \eqref{eq:B8-full-finite-step-contraction}, together with
uniform estimates between grid times, proves
\eqref{eq:B8-full-fixed-point-convergence-decay}.
\end{proof}

\begin{proof}[Proof of Proposition~\ref{prop:B8-first-order-expansion}]
Let $X_\varepsilon:=\widehat P_\infty^\varepsilon-
\widehat P_\infty^0$.  Theorem~\ref{thm:B8-spatially-decaying-fixed-point}
gives $\|X_\varepsilon\|_{\alpha,*}=O(\varepsilon)$.  Taylor
expansion of the fixed-point equation in the block-decay Banach algebra gives
\[
  X_\varepsilon
  =B_\infty^0X_\varepsilon(B_\infty^0)^\top
  +\varepsilon G_\infty^{(1)}+\mathcal R_\varepsilon,
\]
where 
\[
  \|\mathcal R_\varepsilon\|_{\alpha, *}
  \le C\bigl(
  \|X_\varepsilon\|_{\alpha,*}^2
  +\varepsilon\|X_\varepsilon\|_{\alpha,*}
  +\varepsilon^2\bigr)
  \le C\varepsilon^2.
\]
The Lyapunov operator
$\mathcal L_\infty(X)=B_\infty^0X(B_\infty^0)^\top$ has powers satisfying 
$\|\mathcal L_\infty^{\ell}\|_{\mathrm{op},\alpha,*}\le C\rho^{2\ell}$ by \eqref{eq:B3-reference-product-stability}.
Thus
$I-\mathcal L_\infty$ is invertible on the block-decay algebra, with inverse
given by the convergent series in \eqref{eq:B8-first-order-series}.
Applying this inverse proves \eqref{eq:B8-first-order-expansion}.
\end{proof}

\end{document}